\newtheorem{thm}{Theorem}[section]
\newtheorem{prop}[thm]{Proposition}
\newtheorem{lemma}[thm]{Lemma}
\newtheorem{remark}[thm]{Remark}
\newtheorem{definition}[thm]{Definition}
\newtheorem{cor}[thm]{Corollary}
\DeclareMathOperator{\ep}{\epsilon}
\numberwithin{equation}{section}
\def\D{{\mathcal{D}}} 
\def\tmJ{\tilde{\mathcal{J}}}
\def\tomega{\tilde{\omega}}
\def\pa{\partial}
\def\bpa{\bar{\partial}}
\def\dpp{{\partial_+}}
\def\dpm{{\partial_-}}
\def\dpps{{\partial^*_+ }}
\def\dpms{{\partial^*_-}}  
\def\om{\omega}
\def\w{\wedge}
\def\drho{{d\rho}}
\def\ep{{\epsilon}}
\def\ba{{\beta^1}}
\def\bb{{\beta^2}}
\def\bc{{\beta^3}}
\def\bd{{\beta^4}}
\def\tbeta{\tilde{\beta}}
\def\xa{{\zeta_1}}
\def\xb{{\zeta_2}}
\def\xc{{\zeta_3}}
\def\xas{{\zeta^2_1}}
\def\xbs{{\zeta^2_2}}
\def\xcs{{\zeta^2_3}}
\def\xaf{{\zeta^4_1}}
\def\xcf{{\zeta^4_3}}
\def\wa{{w_1}}
\def\wb{{w_2}}
\def\wc{{w_3}}
\def\wdd{{w_4}}
\def\baa{c^{1}_{n-1}}
\def\bab{c^{2}_{n-1}}
\def\bba{c^{3}_{n-1}}
\def\bbb{c^{4}_{n-1}}
\def\bca{c^{5}_{n-2}}
\def\bcb{c^{6}_{n-2}}
\def\ga{\gamma}
\def\Th{{\Theta_{12}}}
\def\Thp{{\Theta'_{34}}}
\def\Om{{\Omega}}
\def\La{{\Lambda}}
\def\tbeta{{\tilde{\beta}}}
\def\tN{{{N_T}}}
\def\tpM{{(\partial M)_T}}
\def\CP{{\mathcal{P}}}
\def\Th{{\Theta_{12}}}
\def\Thpp{{\Theta_{12}\Theta'_{34}}}
\newcommand\SmallMatrix[1]{{%
  \tiny\arraycolsep=-.3\arraycolsep\ensuremath{\begin{pmatrix}#1\end{pmatrix}}}}
\newcommand\SmallMatrixE[1]{{%
  \tiny\arraycolsep=-.8\arraycolsep\ensuremath{\begin{pmatrix}#1\end{pmatrix}}}}
\begin{document}

\title[]{Symplectic Boundary Conditions and Cohomology}
\author{ Li-Sheng Tseng}
\address{Department of Mathematics, University of California, Irvine, CA 92697}
\email{lstseng@math.uci.edu}
\author{Lihan Wang}
\address{Department of Mathematics \& Statistics, California State University, Long Beach, CA 90840}
\email{lihan.wang@csulb.edu}

%\date{\today}
%\date{April 10, 2021}

%\keywords{}

\begin{abstract}

We introduce new boundary conditions for differential forms on symplectic manifolds with boundary.  These boundary conditions, dependent on the symplectic structure, allows us to write down elliptic boundary value problems for both second-order and fourth-order symplectic Laplacians and establish Hodge theories for the cohomologies of primitive forms on manifolds with boundary.  We further use these boundary conditions to define a relative version of the primitive cohomologies and to relate primitive cohomologies with Lefschetz maps on manifolds with boundary.  As we show, these cohomologies of primitive forms can distinguish certain K\"ahler structures of  K\"ahler manifolds with boundary.

\bigskip
%\noindent \textbf{Keywords.} Symplectic manifolds, Hodge theory, relative  cohomology, elliptic boundary value problems.
\end{abstract}

\maketitle
\tableofcontents

\maketitle
\tableofcontents

\section{Introduction}

In this paper, we initiate the search for global invariants of differential forms on symplectic manifolds with boundary.  Manifolds with boundary are important in symplectic geometry as they are central for cobordism theory and have appeared in various contexts such as in the study of symplectic filling and symplectic field theory (see, for example, \cites{Eliashberg, Etnyre, EGH}).   The consideration of differential forms on such spaces also has physical motivations and applications.  For instance, they are involved in a system of differential equations with singular source charges of Type II string theory \cite{Tomasiello, TY3}.  Analyzing the solution space of such a physical system would involve solving for differential forms on symplectic manifolds with certain prescribed boundary conditions along the location of source charges.

We begin our study by analyzing cohomologies on symplectic manifolds with boundary.  Of particular interest here are the primitive cohomologies introduced by Tseng-Yau \cite{TY2}. These cohomologies are defined on the space of primitive differential forms.  Roughly, primitive forms are those that are trivial under the interior product with the symplectic form.  (For a precise definition, see Definition 2.1.)  The primitive cohomologies depend on the symplectic form and have significant differences with other known cohomologies \cite{TY2, TTY}. Of note, the primitive cohomologies have associated elliptic Laplacians, which we shall simply refer to here as symplectic Laplacians.
 
One of the main goals of this paper is to define and analyze the unique harmonic representative for each class of the primitive cohomologies. That is, we are interested in the Hodge theory of the symplectic Laplacians on symplectic manifolds with boundary.  As is well-known, conditions on differential forms (and sometimes also of the boundary) are necessary to establish the Hodge theory of elliptic operators on manifolds with boundary. For instance, in Riemannian geometry, the well-known Dirichlet $(D)$ and Neumann $(N)$ boundary conditions on differential forms are needed for the Hodge theory of the Laplace-de Rham operator \cite{F, M1}. Similarly, in complex geometry, in order to establish the Hodge theory of the Dolbeault Laplacian, the $\bpa$-Neumann boundary condition is usually assumed on differential forms in addition to imposing the strongly pseudoconvex condition on the boundary \cite{M2}. In both cases, the boundary conditions on differential forms have garnered wide interests and applications. (For a general reference, see \cite{S} for the Riemannian case and \cite{M2} for the complex case.) In Table \ref{Tab1}, we summarize the well-known boundary conditions involved in the Hodge theory for these two cases.    
 
Clearly, our first task is to identify the boundary conditions that are natural for differential forms on symplectic manifolds.  Heuristically, boundary conditions that have good analytical properties are typically closely related to the natural differential operators on the manifold. Consider for example the boundary conditions in Table \ref{Tab1}. The Dirichlet $(D)$ and the Neumann $(N)$ boundary conditions are defined using the exterior derivative operator $d$ and its adjoint $d^*$, respectively, while the $\bpa$-Neumann boundary condition uses the Dolbeault operator $\bar{\partial}$. Therefore, we should ask what natural differential operators should we work with in the symplectic case?

For any symplectic manifold $(M^{2n}, \omega)$, it was observed by Tseng-Yau  \cite{TY2} that there are two, first-order, linear differential operators that appear in a symplectic decomposition of the standard exterior derivative operator:
$$ d = \dpp + \om \w \dpm~.$$  
The pair $(\dpp, \dpm)$ are dependent on the symplectic structure $\om$ and have good properties: (i) $(\dpp)^2 =(\dpm)^2 =0$; (ii) $\om\w\dpp\dpm=-\om\w\dpm\dpp$; (iii) $[\om, \dpp]=[\om, \om\w\dpm]=0\,$.  
In addition, Tseng-Yau \cite{TY1,TY2} also identified the second-order differential operator, $\dpp\dpm$, as an important operator to study for symplectic manifolds.  

With respect to this triplet of differential operators, $(\dpp, \dpm,\, \dpp\dpm\,)$,  we will introduce {\it symplectic boundary conditions} on forms that are analogous to the standard Dirichlet and Neumann boundary conditions of Riemannian geometry.  In the case of the two first-order operators $(\dpp, \dpm)$, we can straightforwardly define four new boundary conditions which we denote by $D_+, N_+, $ and $D_-, N_-$, as listed in Table \ref{Tab2}.  The case of the second-order operator $\dpp\dpm$ is much more subtle.  Generally, boundary conditions associated with second-order operators are not well-understood or studied.  We however are led to define two boundary conditions, $D_{++}$ and $N_{--}$, associated with $\dpp\dpm$ given in Table \ref{Tab2.5}.

 \begin{table}[t!]\label{Tab1}
\caption{The standard boundary conditions on manifolds with boundary.  The notation $\sigma_{\D}$ denotes the principal symbol of the differential operator $\D$, and $\rho$ is the boundary defining function. }
\renewcommand{\arraystretch}{1.2}
\begin{tabular}{c | c | c }
  & Riemannian $(M, g)$ & Complex $(M, J, g)$ \\
\hline
Cohomology& de Rham cohomology & Dolbeault cohomology \\
&$H^{\ast}(M)$& $H^{p,q}(M)$\\
\hline
Laplacian & $\Delta_d = d\, d^{\ast} + d^{\ast} d$  & $\Delta_{\bar{\partial}}=\bpa\, \bpa^{\ast}+ \bpa^{\ast}\bpa $ \\
\hline
Boundary & Dirichlet $(D)$: $\sigma_{d}(d\rho)\, \eta~|_{\partial M}=0\,$; & 
$\bpa$-Neumann: $\sigma_{\bpa^{\ast}}(d\rho)\, \eta~|_{\partial M}=0\,$,\\
Conditions & Neumann $(N)$: $\sigma_{d^{\ast}}(d\rho)\, \eta~|_{\partial M}=0\,$. & $\partial M$ strongly pseudoconvex\\\hline
\end{tabular}

\

\

\end{table}

\begin{table}[t]
\caption{Symplectic boundary conditions $\{D_+, N_+, D_-, N_-\}$ associated with $(\dpp, \dpm)$.}\label{Tab2}
\renewcommand{\arraystretch}{1.2}
\begin{tabular}{l|c| c}
&  $\dpp$ & $\dpm$ \\
\hline
Dirichlet-type & $(D_+): \sigma_{\partial_{+}}(\drho)\, \eta~|_{\partial M}=0$ &$(D_-): \sigma_{\partial_{-}}(\drho)\, \eta~|_{\partial M}=0$\\ 
\hline
Neumann-type & $(N_+):  \sigma_{\partial^{\ast}_{+}}(\drho)\, \eta~|_{\partial M}=0$ & $(N_-): \sigma_{\partial_{-}^{\ast}}(\drho)\, \eta~|_{\partial M}=0$\\
\hline
\end{tabular}
\end{table}

\begin{table}[t]
\caption{Symplectic boundary conditions associated with $\dpp\dpm\,$.  Notationally, $\sigma_{\dpp\dpm}$ denotes the principal symbol of $\dpp\dpm$,  $\rho$ is the boundary defining function, and $\mathcal{L}_{\vec{n}}$ is the Lie derivative with respect to the inward normal vector $\vec{n}\,$. 
}\label{Tab2.5}
\renewcommand{\arraystretch}{1.2}
\begin{tabular}{c|c}
Boundary Condition&Definition\\
\hline
$D_{++}$& $(D_{+-}): \sigma_{\dpp\dpm}(\drho)\, \eta~|_{\partial M}=0$\\
& $\left\{2\,\dpp\dpm(\rho\,\eta) - \frac{1}{2}\mathcal{L}_{\vec{n}}\big[\partial_{+}\partial_{-}(\rho^2 \eta)\big] \right\}|_{\partial M} =0$\\
\hline
$N_{--}$& $(N_{+-}):  \sigma_{(\dpp\dpm){}^{\ast}}(\drho)\, \eta~|_{\partial M}=0$\\
 & $\left\{2\,(\dpp\dpm)^*(\rho\,\eta) - \frac{1}{2}\mathcal{L}_{\vec{n}}\big[(\dpp\dpm)^*(\rho^2 \eta)\big] \right\}|_{\partial M} =0$\\
\hline
\end{tabular}
\end{table}

The six symplectic boundary conditions $\{D_+, N_+, D_-, N_-, D_{++}, N_{--}\}$ are in general weaker conditions than the standard Dirichlet and Neumann conditions.  However, they should be thought of as the natural boundary conditions associated with $(\dpp, \dpm, \,\dpp\dpm\,)$.  For one, these symplectic conditions arise when considering the adjoint of the three operators and imposing that any boundary integral contributions vanish. Importantly, they are also preserved under the action of the corresponding differential operator: $\dpp,\, \dpm$, or $\dpp\dpm$. For example, if a form $\eta$ satisfies the $D_+$ boundary condition, then $\dpp \eta$ will also satisfy the $D_+$ condition.  We will describe these and other useful properties of the symplectic boundary conditions in detail in Section 3.

The six symplectic boundary conditions in Tables \ref{Tab2} and \ref{Tab2.5}  turn out to be useful in establishing Hodge decompositions of forms.  With the appropriate pairing of symplectic boundary conditions and symplectic Laplacians, we write down in Section 4 systems of partial differential equations on forms that are elliptic.   Having done so, we can then apply standard elliptic theory on manifolds with boundary for these types of systems of equations, standardly referred to as elliptic boundary value problems, to obtain Hodge-type decompositions of forms involving harmonic fields.  Here,  harmonic fields are forms that are, for example, in the $\dpp$ case, both $\dpp$-closed and $\dpps$-closed. (Note the distinction in the boundary case: a harmonic {\it form}, that is a zero of the Laplacian, is not necessarily a harmonic {\it field}.)  We shall show that the space of these harmonic fields satisfying certain symplectic boundary conditions is finite-dimensional. Moreover, we will apply the obtained Hodge decompositions to prove the existence of solutions for several other types of boundary value problems.

Having studied the relevant partial differential equations and Hodge decompositions, we introduce and analyze both the absolute and relative primitive cohomology on symplectic manifolds with boundary in Section 5. We list their definitions in Table \ref{Tab3}, where $\Omega^{k}$ there denotes the space of differential $k$-forms and $P^k$ the subspace of primitive $k$-forms.  We will use the obtained Hodge decompositions to demonstrate that each class of the primitive cohomologies in Table \ref{Tab3} has a unique harmonic field, that satisfies certain symplectic boundary condition, as its representative.  Such harmonic fields may then be used to demonstrate a natural pairing isomorphism between the absolute primitive cohomology and the relative primitive cohomology. 

Additionally, with the six symplectic boundary conditions, we can study Lefschetz maps on manifolds with boundary and establish relations between relative de Rham cohomology and relative primitive cohomology. As is well-known, on closed K\"ahler manifolds, Lefschetz maps of the form
\begin{align*}
L\!:\quad H^{k}(M)\quad \rightarrow&\quad H^{k+2}(M)\\
\quad[\eta] \quad \quad\rightarrow&\quad [\omega \wedge \eta]~,
\end{align*}
can be easily understood by the Hard Lefschetz Theorem. In \cite{TTY}, Tsai-Tseng-Yau studied Lefschetz maps for general, non-K\"ahler symplectic manifolds and showed that the kernels and cokernels of these Lefschetz maps can be characterized by the primitive cohomologies.  Here, we find similar results for cohomologies defined on symplectic manifolds with boundary and further extend their results to the relative cohomology case. We summarize our Lefschetz maps results in Table \ref{Tab4}.

\begin{table}
\caption{Absolute and relative cohomologies on manifolds with boundary.  
}\label{Tab3}
\renewcommand{\arraystretch}{2.0}
\begin{tabular}{c|c|c}
 & Absolute Cohomology & Relative Cohomology\\
\hline
De Rham &$H^{k}(M)=\dfrac{{\ker d \cap \Omega^k(M)}}{ d \Omega^{k-1}\!(M)}$ & $H^{k}(M, \partial M)=\dfrac{\ker d \cap \Omega_D^k(M) }{ d \Omega_D^{k-1}\!(M)}$
\\
\hline
\multirow{4}*[-10pt]{Primitive}
&  $PH^{k}_+(M)=\dfrac{\ker \partial_{+} \cap P^k(M)}{ \partial_{+} P^{k-1}(M)}$ &$PH^{k}_+(M, \partial M)=\dfrac{\ker \partial_{+} \cap P_{D_{+}}^k\!(M)}{ \partial_{+} P_{D_{+}}^{k-1}(M)}$ \\ 
& $PH^n_+(M)=\dfrac{\ker  \partial_{+}\partial_{-} \cap P^{n}(M)}{\partial_{+} P^{n-1}(M)}$  &$PH^n_+(M, \partial M)=\dfrac{\ker  \partial_{+}\partial_{-} \cap P^{n}_{D_{++}}\!\!(M)}{\partial_{+} P_{D_+}^{n-1}(M)}$

\bigstrut\\
\cline{2-3}
& $PH^{k}_-(M)=\dfrac{\ker \partial_{-} \cap P^k(M)}{ \partial_{-} P^{k+1}(M)}$
 &  $PH^{k}_-(M, \partial M)=\dfrac{\ker \partial_{-} \cap P_{D_{-}}^k\!(M) }{ \partial_{-} P_{D_{-}}^{k+1}(M)}$\\
& $PH^{n}_-(M)=\dfrac{\ker \partial_{-} \cap P^n(M)}{ \partial_{+}\partial_{-} P^{n}(M)}$
& $PH^n_-(M, \partial M)=\dfrac{\ker\partial_{-} \cap P_{D_-}^n\!\!(M)}{ \partial_{+}\partial_{-} P_{D_{++}}^{n}(M)}  $\bigstrut
\\
\hline
\end{tabular}
\end{table}

\begin{table}
\caption{Relations of primitive cohomology with Lefschetz map.}\label{Tab4}
\renewcommand{\arraystretch}{1.2}
\begin{tabular}{l |r l}
Cohomology&  & \qquad\qquad$k \leq n$\\ \hline
&$PH^k_+(M)\cong$ &${\rm coker}[L\!: {H^{k-2}(M) \to H^k(M)}]$ \\ 
~~Absolute& &\quad $\oplus \ker [L\!: {H^{k-1}(M)}\to H^{k+1}(M)]$\\
~~Primitive&$PH^k_-(M)\cong$ &${\rm coker}[L\!: {H^{2n-k-1}(M)\to H^{2n-k+1}(M)}] $ \\
& & \quad $\oplus \ker [L\!: {H^{2n-k}(M)}\to H^{2n-k+2}(M)]$\\
\hline
&$PH^k_+(M, \partial M)\cong$ &${\rm coker}[L\!: {H^{k-2}(M, \partial M)}\to {H^{k}(M, \partial M)}]$\\ 
~~Relative& &$\quad\oplus \ker [L\!: {H^{k-1}(M, \partial M)} \to {H^{k+1}(M, \partial M)} ]$\\
~~Primitive&$PH^k_-(M, \partial M)\cong$ &${\rm coker}[L\!: {H^{2n-k-1}(M, \partial M)}\to {H^{2n-k+1}(M, \partial M)}]$ \\ 
&&\quad$\oplus \ker [L\!: {H^{2n-k}(M, \partial M)} \to {H^{2n-k+2}(M, \partial M)}]$\\
\hline
\end{tabular}
\end{table}

To further demonstrate some of their uses, we explicitly calculate the primitive cohomologies for some examples of K\"ahler manifolds with boundary.  These examples show clearly that primitive cohomologies are very different from the standard de Rham cohomologies on manifolds with boundary.  Interestingly, we find that even on a simple K\"ahler manifold that is the product of a three-ball times a three-torus, $B^3 \times T^3$, two different K\"ahler structures can lead to different primitive cohomologies.  In Section 7, we conclude with a discussion connecting our relative primitive cohomology with the differential topological notion of a relative cohomology.  This allows us to propose a relative primitive cohomology with respect to any submanifold, including lagrangians, embedded within a symplectic manifold.

\
\noindent{\it Acknowledgements.~} 
We would like to thank T.-J. Li, Y. S. Poon, M. Schechter, C.-J. Tsai, J. Wang, and S.-T. Yau for helpful comments and discussions.   Additionally, we are grateful to S.-Y. Li, Z. Lu, C.-L. Terng, Q. Zhang, and especially P. Li for their interest and input in this work.  L.-S. Tseng would like to acknowledge the support of a Simons Collaboration Grant for Mathematicians.

\section{Preliminaries }
In this section, we will gather some basic definitions and properties of differential forms and operators in symplectic geometry.  Further background details and proofs of the lemmas and propositions stated here without elaboration can be found in \cite{TY1, TY2}.
 
\subsection{Primitive structures on symplectic manifolds}\label{defsection}
Given a symplectic manifold $(M^{2n}, \omega)$,  let $\Omega^k$ denote the space of smooth $k$-forms on $M$. In local coordinates, we write the symplectic form as $\omega= \frac{1}{2}\sum \omega_{ij}~ dx^{i}\wedge dx^{j}$. The Lefschetz operator $L$ and its dual operator $\Lambda$ acting on a differential $k$-form $\eta\in \Omega^k$ are then defined by
\begin{align*}
L: \Omega^k \rightarrow \Omega^{k+2}, \quad L(\eta)&=\omega \wedge \eta\,,\\
\Lambda: \Omega^k \rightarrow \Omega^{k-2},\quad \Lambda(\eta)&=\frac{1}{2}(\omega^{-1})^{ij}\,\iota_{\frac{\partial}{\partial{x^i}}}\iota_{\frac{\partial}{\partial{x^j}}}\eta\,,
\end{align*} where $\iota$ denotes the interior product, and $\omega^{-1}$ is the inverse matrix of $\omega$. Define also the degree counting operator 
\begin{equation}\label{defH}
H=\underset{k}{\sum}(n-k) {\prod}^k
\end{equation} where $\prod^k: \Omega^{\ast} \rightarrow \Omega^k$ is the projection operator onto forms of degree $k$. The three operators $(L, \Lambda, H)$ together provide a representation of an $sl(2)$ algebra acting on $\Omega^{\ast}$:
\begin{equation*}
[\Lambda, L]=H, \quad \, [H, \Lambda]=2\Lambda, \quad \, [H,L]=-2L.
\end{equation*}

This $sl(2)$ representation leads to a Lefschetz decomposition of forms in terms of irreducible finite-dimensional $sl(2)$ modules. The highest weight states of these irreducible $sl(2)$ modules are the primitive forms, whose space we denote by $P^{\ast}$.
\begin{definition}
A $k$-form $\beta$ is called primitive (i.e. $\beta \in P^k$) if $\Lambda\, \beta=0\,$. This is equivalent to the condition $L^{n-k+1}\beta=0\,$. 
\end{definition} 
As implied by the definition, the degree of the primitive form is constrained to be $k \leq n$. Note also that $P^k=\Omega^k$ when $k=0, 1\,$.  In terms of primitive forms, the Lefschetz decomposition of a form $\eta \in \Omega^k$ can be expressed as
\[
\eta =\underset{r \geq \max(k-n, 0)}{\sum} \om^r \w\beta_{k-2r}.
\] Here, each $\beta_{k-2r}\in P^{k-2r}$ is uniquely determined by $\eta\,$.  
We see that each term of this decomposition can be labeled by a pair $(r,s)$  corresponding to the space 
 \[
 \mathcal{L}^{r,s}=\left\{\eta\in \Omega^{2r+s}~|~ \eta=\om^r\w\beta_s \,\text{with}\, \beta_s \in P^s\right\}.
 \] where $0\leq s \leq (n-r)$.   
Two other maps will also be used in this paper:
\begin{align}
&\Pi: \Omega^k \rightarrow P^k, \quad \text{the projection map for~}k\leq n\,;\text{and} ~ \label{projmap}\\
&\ast_r: \mathcal{L}^{r,s} \rightarrow \mathcal{L}^{n-r-s,s}, \quad \om^r\w\beta_s \rightarrow \om^{n-r-s}\w \beta_s\,.\label{starmap}
\end{align}
The first map is always surjective and the second one is always bijective. The triple $\{L, \Pi, \ast_r\}$ played an essential role in \cite{TTY} for building a long exact sequence relating primitive cohomologies with Lefschetz maps.

\subsection{Differential operators $\partial_{+}, \partial_{-}$, and $d^{\Lambda}$}
 We consider the action of the exterior derivative operator $d$ on $\mathcal{L}^{r,s}$ \cite{TY2}. 
\begin{prop}
$d$ acting on $\mathcal{L}^{r,s}$ leads to at most two terms:
\begin{equation*}
d: \mathcal{L}^{r,s} \rightarrow \mathcal{L}^{r,s+1}\oplus \mathcal{L}^{r+1,s-1}
\end{equation*} with
\begin{align*}
d(\om^r\w\beta_s)&=\om^r\w (d\,\beta_s)=\om^r\w\beta_{s+1}+\om^{r+1}\w\beta_{s-1}\,.
\end{align*} 
 \end{prop}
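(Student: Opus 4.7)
The plan is to exploit the fact that $\omega$ is closed, which lets me pull $d$ past $\omega^r$, and then to use the primitive condition on $\beta_s$ together with uniqueness of the Lefschetz decomposition to restrict the number of surviving components.

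First, since $d\omega = 0$, we have $d(\omega^r \wedge \beta_s) = \omega^r \wedge d\beta_s$. So the entire statement reduces to showing that for $\beta_s \in P^s$, the form $d\beta_s \in \Omega^{s+1}$ has a Lefschetz decomposition with only two terms, namely $d\beta_s = \beta_{s+1} + \omega \wedge \beta_{s-1}$ with $\beta_{s+1} \in P^{s+1}$ and $\beta_{s-1} \in P^{s-1}$. I would write out the a priori Lefschetz decomposition
\[
d\beta_s = \sum_{r \geq 0} \omega^r \wedge \gamma_{s+1-2r}, \qquad \gamma_{s+1-2r} \in P^{s+1-2r},
\]
where the sum ranges over those $r$ for which $0 \leq s+1-2r \leq n$, and then show that $\gamma_{s+1-2r} = 0$ whenever $r \geq 2$.

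The mechanism for killing the higher terms comes from differentiating the primitivity condition. Since $\beta_s \in P^s$, we have $L^{n-s+1}\beta_s = 0$, and applying $d$ (which commutes with $L$ because $\omega$ is closed) gives $L^{n-s+1}(d\beta_s) = 0$. Plugging in the Lefschetz decomposition yields
\[
0 = \sum_{r \geq 0} \omega^{r+n-s+1} \wedge \gamma_{s+1-2r}.
\]
For each $r$, the term $\omega^{r+n-s+1} \wedge \gamma_{s+1-2r}$ lies in the summand $\mathcal{L}^{r+n-s+1,\,s+1-2r}$ of the Lefschetz decomposition, and these summands are independent for distinct $r$. By uniqueness of the Lefschetz decomposition, each term must vanish individually. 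Now, because $\gamma_{s+1-2r}$ is primitive of degree $s+1-2r$, the map $L^m$ is injective on it for $m \leq n-(s+1-2r) = n-s-1+2r$ and annihilates it for $m \geq n-s+2r$. Applied with $m = r+n-s+1$, the condition $m \leq n-s-1+2r$ becomes $r \geq 2$, so in that range the Lefschetz power $L^{r+n-s+1}$ is still injective on $P^{s+1-2r}$, forcing $\gamma_{s+1-2r} = 0$. For $r = 0,1$ the vanishing of $L^{r+n-s+1}\gamma_{s+1-2r}$ is automatic and gives no constraint.

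This collapses the decomposition to $d\beta_s = \gamma_{s+1} + \omega \wedge \gamma_{s-1}$ with both $\gamma$'s primitive, and wedging with $\omega^r$ delivers the claimed inclusion into $\mathcal{L}^{r,s+1} \oplus \mathcal{L}^{r+1,s-1}$. The main subtle step is the bookkeeping in the last paragraph: correctly identifying the injectivity/annihilation range of $L^m$ on primitive forms of a given degree, and matching it against the exponent $r+n-s+1$ to see that precisely $r \geq 2$ is eliminated. Everything else is a direct consequence of $d\omega = 0$ and the uniqueness of Lefschetz components.
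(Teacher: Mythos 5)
Your proof is correct and follows essentially the same route the paper indicates: the paper reduces the proposition to the closedness of $\omega$ together with the two-term formula $d\beta_s=\beta_{s+1}+\omega\wedge\beta_{s-1}$ (deferring details to [TY2]), and your $sl(2)$/Lefschetz-injectivity argument is precisely the standard derivation of that formula. The bookkeeping in your final paragraph --- $L^m$ injective on $P^t$ for $m\le n-t$ and annihilating $P^t$ for $m\ge n-t+1$, matched against the exponent $r+n-s+1$ to eliminate exactly $r\ge 2$ --- checks out.
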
 
This result is a consequence of the closedness of the symplectic form $\omega$  and the following formulas:
\begin{itemize}
  \item If $s <n$, $d\,\beta_s=\beta_{s+1}+\om\w\beta_{s-1}$;
  \item If $s=n$, $d\,\beta_n=\om\w\beta_{n-1}$.
\end{itemize} By this proposition, Tseng-Yau \cite{TY2} defined the decomposition of $d$ into two linear differential operators $(\partial_{+}, \partial_{-})$. 
\begin{definition}
On a symplectic manifold $(M, \omega^{2n})$, we define the first order differential operators $\partial_{+}, \partial_{-}$ by the property:
\begin{align*}
&\partial_{+}:\mathcal{L}^{r,s} \rightarrow \mathcal{L}^{r,s+1}, \qquad\partial_{+}(\om^{r}\w\beta_{s})=\om^{r}\w\beta_{s+1},\\
&\partial_{-}:  \mathcal{L}^{r,s} \rightarrow \mathcal{L}^{r,s-1}, \qquad \partial_{-}(\om^{r}\w\beta_{s})=\om^{r}\w\beta_{s-1},
\end{align*}  such that 
$$d=\partial_{+}+\om \w \partial_{-}~.$$ 
Here, $\beta_s, \beta_{s+1}, \beta_{s-1} \in P^{\ast}$ and $d\,\beta_s=\beta_{s+1}+\om\w\beta_{s-1}$.
\end{definition} 
When acting on primitive forms, $\partial_{+}$ and $\partial_{-}$ can be equivalently written as follows: 
\begin{lemma}\label{pdpdm}
Acting on primitive differential forms, the operators $(\partial_{+}, \partial_{-})$ have the following expressions:
\begin{align*}
\partial_{+}&=d-L\,H^{-1}\Lambda\, d,\\
\partial_{-}&=H^{-1}\Lambda\, d.
\end{align*}
\end{lemma}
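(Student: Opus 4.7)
The plan is to verify both identities by direct computation on a homogeneous primitive form $\beta \in P^s$, using the decomposition $d\beta = \beta_{s+1} + \omega\wedge\beta_{s-1}$ with $\beta_{s\pm 1}$ primitive, which is stated just before the definition of $\partial_\pm$. Since $\beta$ itself lies in $\mathcal{L}^{0,s}$, the definition of the operators immediately gives $\partial_+\beta = \beta_{s+1}$ and $\partial_-\beta = \beta_{s-1}$. So the task reduces to recovering these two primitive components out of $d\beta$ using only the triple $(L,\Lambda,H)$.

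First I would compute $\Lambda d\beta$. The primitive piece $\beta_{s+1}$ contributes nothing, since $\Lambda\beta_{s+1}=0$, so only $\Lambda L\beta_{s-1}$ survives. The $sl(2)$ commutation relation $[\Lambda,L]=H$ together with $\Lambda\beta_{s-1}=0$ yields $\Lambda L\beta_{s-1}=H\beta_{s-1}=(n-s+1)\beta_{s-1}$. Applying $H^{-1}$, which multiplies a form of degree $s-1$ by $\tfrac{1}{n-(s-1)}=\tfrac{1}{n-s+1}$, then returns $\beta_{s-1}$. This establishes $H^{-1}\Lambda d\beta=\partial_-\beta$. The formula for $\partial_+$ follows by subtraction: $d\beta - L\,H^{-1}\Lambda d\beta = (\beta_{s+1}+\omega\wedge\beta_{s-1})-\omega\wedge\beta_{s-1} = \beta_{s+1} = \partial_+\beta$.

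There is no substantive obstacle here; the whole argument is a one-line application of the $sl(2)$ structure and the definitions of $\partial_\pm$. The only subtlety worth flagging is the well-definedness of $H^{-1}$: it is being applied to a form of degree $s-1$ where $s\leq n$, so $n-(s-1)\geq 1>0$ and the inverse makes sense. One should also check the boundary cases of the degree range: for $s=0$ the term $\beta_{s-1}$ is absent and $\Lambda d\beta$ vanishes by degree, matching $\partial_-\beta=0$; for $s=n$ the term $\beta_{s+1}$ is absent and the same cancellation gives $d\beta-LH^{-1}\Lambda d\beta=0=\partial_+\beta$. Both edge cases are handled by the same calculation with the missing summand set to zero.
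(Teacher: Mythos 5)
Your proof is correct, and it is the standard argument: the paper itself omits the proof of this lemma (deferring to \cite{TY1,TY2}), but the computation there is exactly the one you give, namely applying $[\Lambda,L]=H$ to the unique decomposition $d\beta=\beta_{s+1}+\omega\wedge\beta_{s-1}$ to isolate $\beta_{s-1}=H^{-1}\Lambda d\beta$ and then obtaining $\partial_+\beta$ by subtraction. Your attention to the well-definedness of $H^{-1}$ on degree $s-1\leq n-1$ and to the edge cases $s=0$ and $s=n$ is appropriate and complete.
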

In fact, on $P^*$,  
\begin{align}\label{dpproj}
\partial_{+}=\Pi~ d \,.
\end{align} Moreover, the $\dpp$ and $\dpm$ operators have the following properties on general forms:
\begin{prop}
On $(M^{2n}, \omega)$, the symplectic differential operators $(\partial_{+}, \partial_{-})$ satisfy:
\begin{itemize}
\item $\partial_{+}^2=\partial_{-}^2=0\,$;
\item $L\,\partial_{+}\partial_{-}=-L\,\partial_{-}\partial_{+}\,$;
\item $[L, \partial_{+}]=[L, L\,\partial_{-}]=0\,$. 
\end{itemize}
\end{prop}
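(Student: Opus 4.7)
The plan is to derive all three properties as formal consequences of $d^2=0$, $d\om=0$, and uniqueness of the Lefschetz decomposition.

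First I would establish item (iii). Since $d\om=0$, for any $\eta\in\mathcal{L}^{r,s}$ we have $d(L\eta)=L(d\eta)$. Expanding both sides via the decomposition $d=\dpp+L\dpm$ from Definition 2.3 yields
\begin{equation*}
\dpp(L\eta) + L\dpm(L\eta) \;=\; L\dpp\eta + L(L\dpm\eta).
\end{equation*}
The terms $\dpp(L\eta)$ and $L\dpp\eta$ both lie in $\mathcal{L}^{r+1,s+1}$, while $L\dpm(L\eta)$ and $L^2\dpm\eta$ both lie in $\mathcal{L}^{r+2,s-1}$. These two target spaces are distinct summands of the Lefschetz decomposition, so matching components forces $[L,\dpp]=0$ and $[L,L\dpm]=0$.

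Next I would use $0=d^2$ together with the commutations just obtained. Expanding $d^2=(\dpp+L\dpm)^2$ and applying $\dpp L=L\dpp$ and $L\dpm L=L^2\dpm$ collapses the expression to
\begin{equation*}
0 \;=\; \dpp^2 \;+\; L\bigl(\dpp\dpm + \dpm\dpp\bigr) \;+\; L^2\dpm^2.
\end{equation*}
Evaluated on $\eta\in\mathcal{L}^{r,s}$, the three summands land in the pairwise-disjoint spaces $\mathcal{L}^{r,s+2}$, $\mathcal{L}^{r+1,s}$ and $\mathcal{L}^{r+2,s-2}$. Lefschetz uniqueness then forces each to vanish, which directly yields $\dpp^2=0$ and $L\dpp\dpm+L\dpm\dpp=0$, i.e., item (ii).

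Finally, to extract $\dpm^2=0$ from $L^2\dpm^2=0$, I would write $\dpm^2\eta=\om^r\w\gamma_{s-2}$ with $\gamma_{s-2}$ primitive, so that $L^2\dpm^2\eta=\om^{r+2}\w\gamma_{s-2}$. The constraint $r+s\le n$ intrinsic to $\mathcal{L}^{r,s}$, together with the primitivity bound $L^{n-j+1}\beta_j=0$ for primitive $\beta_j$ of degree $j$, ensures that $\gamma_{s-2}\mapsto \om^{r+2}\w\gamma_{s-2}$ is injective on $P^{s-2}$; hence $\gamma_{s-2}=0$ and $\dpm^2=0$. The main bookkeeping concern is exactly this injectivity and tracking edge cases at top degree where some target $\mathcal{L}^{r',s'}$ collapses for dimensional reasons (which makes the corresponding identity trivially true); once those are checked, the entire proposition reduces to the formal identity $d^2=0$ together with $d\om=0$.
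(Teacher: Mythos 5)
Your proof is correct, and it is the standard argument: the paper itself states this proposition without proof, deferring to \cite{TY1,TY2}, where exactly this strategy is used --- expand $d(L\eta)=L(d\eta)$ and $d^2=0$ via $d=\partial_{+}+L\,\partial_{-}$ and match components in the Lefschetz decomposition, using injectivity of $L^m$ on $P^j$ for $m\le n-j$ to strip off the powers of $\omega$. Your handling of the edge cases (targets $\mathcal{L}^{r',s'}$ that vanish for dimensional reasons, and the injectivity of $L^{r+2}$ on $P^{s-2}$ guaranteed by $r+s\le n$) is exactly the bookkeeping required, so nothing is missing.
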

Besides $d, \dpp,$ and $\dpm$, there is one other first-order differential operator, $d^{\Lambda}: \Omega^{k} \rightarrow \Omega^{k-1}\,$, that will be of interest in this paper.  It can written as 
\begin{align}\label{dlambda}
d^{\Lambda}=d\,\Lambda - \Lambda\,d\,. 
\end{align}
and is sometimes called the symplectic adjoint operator since it lowers the degree of a form.  Let us point out that in terms of $d$ and $d^{\Lambda}$, the pair $(\partial_{+}, \partial_{-})$ can be expressed as follows.
\begin{lemma}\label{ex}
On a symplectic manifold $(M, \omega)$, $\partial_{+}$ and $\partial_{-} $ can be expressed as 
\begin{align*}
\partial_{+}&=\frac{1}{H+2R+1}\left[ (H+R+1)d+L\,d^{\Lambda}\right],\\
\partial_{-}&=\frac{1}{(H+2R+1)(H+R)}\left[\Lambda d-(H+R)d^{\Lambda}\right].
\end{align*}
where  the operator $R: \mathcal{L}^{r,s} \to  \mathcal{L}^{r,s}$ is the multiplication
$$R\,(L^r\beta_s)=r\,(L^r\beta_s)\,$$
\end{lemma}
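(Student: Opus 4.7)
The plan is to verify both identities by direct computation on each Lefschetz component $\mathcal{L}^{r,s}$, using the decomposition $\Omega^\ast = \bigoplus_{r,s}\mathcal{L}^{r,s}$ and the fact that $H$ and $R$ act on $\mathcal{L}^{r,s}$ as the scalars $n-2r-s$ and $r$, respectively. The point is that on each summand all rational expressions in $H, R$ appearing in the stated formulas reduce to scalars, so the identities become purely algebraic.

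First I would derive an auxiliary formula for $\Lambda$ on $\mathcal{L}^{r,s}$: by induction on $r$ from the $sl_2$ relation $[\Lambda, L^r] = rL^{r-1}(H-r+1)$ (which follows from $[\Lambda,L]=H$ and $[H,L]=-2L$), one obtains $\Lambda(\omega^r\wedge\beta_s) = r(n-s-r+1)\,\omega^{r-1}\wedge\beta_s$ for $\beta_s\in P^s$. Combining this with the proposition's formula $d(\omega^r\wedge\beta_s) = \omega^r\wedge\beta_{s+1} + \omega^{r+1}\wedge\beta_{s-1}$ and the definition $d^\Lambda = d\Lambda - \Lambda d$ yields the explicit expression
\[d^{\Lambda}(\omega^r\wedge\beta_s) = r\,\omega^{r-1}\wedge\beta_{s+1} - (n-s-r+1)\,\omega^r\wedge\beta_{s-1}.\]

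With $d$ and $d^\Lambda$ in hand on $\mathcal{L}^{r,s}$, the rest reduces to applying the bracketed operators on the right-hand side of each claimed formula while tracking the eigenvalues of $H$ and $R$ on the output summands. For $\dpp$: in $(H+R+1)d + L\,d^{\Lambda}$ the two contributions to $\omega^{r+1}\wedge\beta_{s-1}$ cancel exactly, and the surviving coefficient of $\omega^r\wedge\beta_{s+1}$ simplifies to $(n-s)$; dividing by $(H+2R+1)|_{\mathcal{L}^{r,s+1}}=n-s$ returns $\omega^r\wedge\beta_{s+1}=\dpp(\omega^r\wedge\beta_s)$. For $\dpm$ the analogous bookkeeping in $\Lambda d - (H+R)d^{\Lambda}$ kills the $\omega^{r-1}\wedge\beta_{s+1}$ terms and leaves the coefficient $(n-s-r+1)(n-s+2)$ in front of $\omega^r\wedge\beta_{s-1}$; dividing by $(H+2R+1)(H+R)|_{\mathcal{L}^{r,s-1}}=(n-s+2)(n-r-s+1)$ then recovers $\omega^r\wedge\beta_{s-1}=\dpm(\omega^r\wedge\beta_s)$.

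The one point needing care is that $(H+R)^{-1}$, implicit in the $\dpm$ formula, is singular on the top-Lefschetz subspace $\bigoplus_s\mathcal{L}^{n-s,s}$. This is harmless because the component of $\Lambda d - (H+R)d^{\Lambda}$ landing in $\ker(H+R)$ vanishes identically (by the cancellation noted above), so interpreting $(H+R)^{-1}$ as the pseudo-inverse — zero on $\ker(H+R)$ — makes both formulas unambiguous throughout $\Omega^\ast$. I expect the main source of friction in the actual write-up to be only this careful tracking of the scalar values of $H$ and $R$ on the adjacent Lefschetz summands $\mathcal{L}^{r\pm1,s\mp1}$; there is no conceptual obstacle beyond the arithmetic.
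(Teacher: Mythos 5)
Your computation is correct: the auxiliary identities $\Lambda(\omega^r\wedge\beta_s)=r(n-s-r+1)\,\omega^{r-1}\wedge\beta_s$ and $d^{\Lambda}(\omega^r\wedge\beta_s)=r\,\omega^{r-1}\wedge\beta_{s+1}-(n-s-r+1)\,\omega^r\wedge\beta_{s-1}$ are right, the cancellations and the surviving coefficients $(n-s)$ and $(n-s-r+1)(n-s+2)$ check out against the eigenvalues of $H+2R+1$ and $(H+2R+1)(H+R)$ on the target summands, and your observation that the would-be singular component of $(H+R)^{-1}$ is never hit is accurate. The paper itself gives no proof of this lemma --- it is quoted from Tseng--Yau \cite{TY1,TY2} --- and the argument there is essentially this same componentwise verification on $\mathcal{L}^{r,s}$, so your write-up supplies exactly the omitted computation.
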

In particular, acting on primitive $(r=0)$ forms, $P^*$, the expression for $\dpm$ reduces to
\begin{align}\label{dmdlam}
\partial_{-}= -\dfrac{1}{H}\, d^{\Lambda} = \dfrac{1}{H} \, \Lambda\, d 
\end{align}
which agrees with Lemma \ref{pdpdm}.

\subsection{Conjugate relations}\label{conjsec}
Let $(\omega, J, g)$ be a compatible triple on the symplectic manifold $(M^{2n}, \omega)$ with $J$ being an almost complex structure and $g$ a compatible Riemannian metric on $M$. With respect to the almost complex structure $J$, there is the standard $(p,q)$ decomposition $\Omega^k=\underset{p+q=k}{\oplus}\Omega^{p,q}$. Let us define the operator 
\begin{align}\label{conjop}
\mathcal{J}=\underset{p,q}{\sum}(\sqrt{-1})^{p-q}{\prod}^{p,q}~,
\end{align} 
where ${\prod}^{p,q}$ denotes the projection of a $k$-form onto its $(p,q)$ component.  Notice that $\mathcal{J}^2=(-1)^k$ acting on $k$-forms and also that  $\mathcal{J}$ commutes with both $L$ and $\Lambda$ since
the symplectic form $\omega$ is a $(1,1)$-form with respect to the almost complex structure $J$. Moreover, the operator $\mathcal{J}$ defines the following conjugate relations (\cite{TY1,TY2}) between differential operators:
\begin{lemma}\label{conjugate}
For a compatible triple $(\omega, J, g)$ on a symplectic manifold, let $d^{\ast}, d^{\Lambda \ast}, \partial_{+}^{\ast}$ and $\partial_{-}^{\ast}$ be the adjoint operators of the corresponding differential operators, respectively. Then there are the following conjugate relations:
\begin{itemize}
\item $d^{\Lambda}=\mathcal{J}^{-1}d^{\ast}\,\mathcal{J}$ and $d^{\Lambda \ast}=\mathcal{J}^{-1}d\,\mathcal{J}$;
\item $\mathcal{J}\partial_{+}\,\mathcal{J}^{-1}=\partial_{-}^{\ast}(H+R)$ and 
$\mathcal{J}\partial_{+}^{\ast}\,\mathcal{J}^{-1}=(H+R)\,\partial_{-}\,$.
\end{itemize}
\end{lemma}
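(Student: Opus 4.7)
The plan is to establish the four relations in two stages: first the two involving $d$ and $d^{\Lambda}$, then the two involving $\partial_{\pm}$, which will be obtained by propagating the first pair through the formulas of Lemma \ref{ex}.

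For the first bullet point, I would pass through the symplectic star operator $*_s$. Recall Brylinski's identity $d^{\Lambda} = (-1)^{k+1}*_s\, d\, *_s$ on $k$-forms. On a compatible triple $(\omega, J, g)$, the almost complex structure enters only here: comparing the two star operators pointwise gives a relation of the form $* = \mathcal{J}\,*_s$ (this is where the $(p,q)$-decomposition, and hence $\mathcal{J}$, actually matters). Substituting into the Hodge identity $d^* = (-1)^{k} *^{-1} d\, *$ and tracking signs produces the first relation $d^{\Lambda} = \mathcal{J}^{-1}d^*\mathcal{J}$. The second identity $(d^{\Lambda})^* = \mathcal{J}^{-1}d\,\mathcal{J}$ then drops out by taking the Riemannian adjoint: since $\mathcal{J} = \sum i^{p-q}\Pi^{p,q}$ is unitary (eigenvalues of unit modulus on the orthogonal $(p,q)$-decomposition), $\mathcal{J}^* = \mathcal{J}^{-1}$, and adjoining the first identity gives the second.

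For the third relation $\mathcal{J}\partial_+\mathcal{J}^{-1}=\partial_-^{*}(H+R)$, I would start with the closed-form expression
$$\partial_{+}=\tfrac{1}{H+2R+1}\bigl[(H+R+1)\,d + L\, d^{\Lambda}\bigr]$$
from Lemma \ref{ex} and conjugate by $\mathcal{J}$. Because $\mathcal{J}$ commutes with $L, \Lambda, H, R$ (the first two since $\omega$ is of type $(1,1)$, the last two since they are diagonal in the $(p,q)$-decomposition), the conjugation reduces to computing $\mathcal{J}d\mathcal{J}^{-1}$ and $\mathcal{J}d^{\Lambda}\mathcal{J}^{-1}$; these are read off from the first stage using $\mathcal{J}^{2} = (-1)^{k}$ on $\Omega^{k}$. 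In parallel, I would take the adjoint of the formula of Lemma \ref{ex} for $\partial_-$ and right-multiply by $(H+R)$, producing an expression in $d^*, d^{\Lambda *}, L, H, R$. Using the commutator identities $[H,d^*] = d^*$, $[H, d^{\Lambda *}] = -d^{\Lambda *}$, together with the analogous relations involving $R$ (and the fact that $H, R$ mutually commute), both expressions can be brought to a common form and matched. The fourth relation $\mathcal{J}\partial_{+}^{*}\mathcal{J}^{-1} = (H+R)\partial_{-}$ then follows by taking the Riemannian adjoint of the third, using again $\mathcal{J}^{*}=\mathcal{J}^{-1}$ and self-adjointness of $H, R$.

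The main obstacle will be the bookkeeping in the second stage: the scalar operators $H$ and $R$ do not commute with $d, d^\Lambda, d^*, d^{\Lambda *}$ because each differential operator shifts either the form degree or the Lefschetz weight. Consequently, moving the rational factors $\frac{1}{H+2R+1}$ and $(H+R)$ past the differentials requires systematic use of the commutator identities, and the two sides one is comparing have to be massaged into a single canonical ordering before they visibly agree. The sign coming from $\mathcal{J}^{2} = (-1)^{k}$ needs particular care, since it combines with the sign shifts produced by moving $H$ through $d$ and $d^{\Lambda}$; once these are reconciled the identities are essentially forced by the $sl_2$ representation combined with the conjugate symmetry supplied by $\mathcal{J}$.
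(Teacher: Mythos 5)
The paper never proves this lemma: it is imported from \cite{TY1,TY2}, and the preliminaries section states explicitly that proofs of the results collected there are to be found in those references. So there is no in-paper argument to compare against, and your proposal has to be judged on its own. As a plan it is sound, and it follows the same route as the original proofs: the first pair of identities via the symplectic star and the pointwise relation $\ast=\mathcal{J}\ast_s$, the second pair by conjugating the closed-form expressions of Lemma \ref{ex}. Three points deserve to be made explicit when you execute it. First, pushing $\mathcal{J}$ through $d^{\Lambda}=(-1)^{k+1}\ast_s d\,\ast_s$ uses not only $\ast=\mathcal{J}\ast_s$ but also that $\ast$ and $\mathcal{J}$ commute; this holds because $\ast$ maps $\Omega^{p,q}$ to $\Omega^{n-q,n-p}$, on which $\mathcal{J}$ acts by the same scalar $(\sqrt{-1})^{p-q}$. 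Second, the substitutions in your stage two are asymmetric in sign: conjugating the first bullet gives $\mathcal{J}d^{\Lambda}\mathcal{J}^{-1}=d^{\ast}$ with no sign, whereas $\mathcal{J}d\,\mathcal{J}^{-1}=\mathcal{J}^{2}d^{\Lambda\ast}\mathcal{J}^{-2}=-\,d^{\Lambda\ast}$ because $d^{\Lambda\ast}$ raises degree by one and $\mathcal{J}^{2}=(-1)^{k}$; you flag the sign issue in general terms, but this is the one place an error would propagate silently into the comparison with $\partial_{-}^{\ast}(H+R)$. Third, when you match $\tfrac{1}{H+2R+1}\bigl[L\,d^{\ast}-(H+R+1)d^{\Lambda\ast}\bigr]$ against the adjoint of the Lemma \ref{ex} formula for $\partial_{-}$ times $(H+R)$, the comparison cannot be made term by term in $d^{\ast}$ versus $d^{\Lambda\ast}$: both $L\,d^{\ast}$ and $d^{\Lambda\ast}$ send $\mathcal{L}^{r,s}$ into $\mathcal{L}^{r,s+1}\oplus\mathcal{L}^{r+1,s-1}$, so the verification must be organized by output component, after which the $H,R$ reordering via relations such as $(H+R)L=L(H+R-1)$ closes the computation. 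Your deduction of the fourth identity from the third by adjoints, using $\mathcal{J}^{\ast}=\mathcal{J}^{-1}$ and self-adjointness of $H$ and $R$, is correct as stated.
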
 
This lemma, together with Lemma \ref{ex}, implies the following expressions for $(\partial^{\ast}_{+}, \partial^{\ast}_{-})$.

\begin{lemma}\label{c2}
On a symplectic manifold $(M^{2n}, \omega)$ with a compatible Riemannian metric $g$, the adjoints $(\partial^{\ast}_{+}, \partial^{\ast}_{-})$ have the form
\begin{align*}
\partial^{\ast}_{+}&=[d^{\ast}(H+R+1)+d^{\Lambda\ast}\Lambda](H+2R+1)^{-1},\\
\partial^{\ast}_{-}&=[d^{\ast}(H+R+1)^{-1}L-d^{\Lambda\ast}](H+2R+1)^{-1}.
\end{align*}
\end{lemma}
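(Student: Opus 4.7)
The plan is to take formal Hilbert-space adjoints of the expressions in Lemma \ref{ex}. Three observations will carry the whole computation: first, $(AB)^{\ast}=B^{\ast}A^{\ast}$; second, the operators $H$ and $R$ (and hence any polynomial or rational function of them) are self-adjoint, since on each Lefschetz component $\mathcal{L}^{r,s}\cap\Omega^k$ they act as the real scalars $n-k$ and $r$; third, $L^{\ast}=\Lambda$ with respect to the metric induced by the compatible triple $(\omega,J,g)$. The conjugate relations of Lemma \ref{conjugate} encode the same data and could serve as an alternative route (by $\mathcal{J}$-conjugating the formulas in Lemma \ref{ex} and using $d^{\Lambda\ast}=\mathcal{J}^{-1}d\,\mathcal{J}$ together with $d^{\Lambda}=\mathcal{J}^{-1}d^{\ast}\mathcal{J}$), but I would proceed by direct adjoint.

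For $\partial_{+}^{\ast}$ this is essentially immediate: taking the adjoint of $(H+2R+1)^{-1}[(H+R+1)\,d+L\,d^{\Lambda}]$ moves the prefactor to the right and turns the bracket into $d^{\ast}(H+R+1)+d^{\Lambda\ast}\Lambda$, yielding the stated identity.

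For $\partial_{-}^{\ast}$ I expect the same procedure to produce $[d^{\ast}L-d^{\Lambda\ast}(H+R)](H+R)^{-1}(H+2R+1)^{-1}$; the $d^{\Lambda\ast}$ term then collapses via $(H+R)(H+R)^{-1}=1$, while the $d^{\ast}L(H+R)^{-1}$ piece needs to be massaged into the target form. Here the key step, and the only genuinely subtle point, is the $sl(2)$ commutation: $[H,L]=-2L$ and $[R,L]=L$ combine to give $[L,H+R]=L$, equivalently $(H+R+1)L=L(H+R)$. Inverting this yields $L(H+R)^{-1}=(H+R+1)^{-1}L$, which I will substitute to obtain the claimed expression. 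A nominal invertibility question for $(H+R)^{-1}$ on components where $H+R=0$ is harmless, since $\Lambda d-(H+R)d^{\Lambda}$ vanishes on those components and the composite operator remains well-defined.
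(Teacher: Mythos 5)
Your computation is correct, and every step checks out: the self-adjointness of $H$ and $R$ does follow from the orthogonality of the Lefschetz decomposition for a compatible triple (which also gives $L^{\ast}=\Lambda$), the adjoint of Lemma \ref{ex} produces exactly $[d^{\ast}L-d^{\Lambda\ast}(H+R)](H+R)^{-1}(H+2R+1)^{-1}$, and the identity $(H+R+1)L=L(H+R)$, which follows from $[H,L]=-2L$ and $[R,L]=L$, is precisely what converts $d^{\ast}L(H+R)^{-1}$ into $d^{\ast}(H+R+1)^{-1}L$. However, your route differs from the paper's: the paper states that the lemma follows from Lemma \ref{conjugate} \emph{together with} Lemma \ref{ex}, i.e.\ it obtains $\partial_{+}^{\ast}=\mathcal{J}^{-1}(H+R)\partial_{-}\mathcal{J}$ and $\partial_{-}^{\ast}=\mathcal{J}^{-1}\partial_{+}\mathcal{J}^{-1}\cdot(\cdots)$ from the $\mathcal{J}$-conjugate relations and then substitutes the expressions of Lemma \ref{ex}, deferring details to the references. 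Your direct-adjoint argument avoids $\mathcal{J}$ entirely (and the attendant bookkeeping of $\mathcal{J}^2=(-1)^k$ signs and degree shifts), at the cost of having to verify the $sl(2)$ commutator and the self-adjointness claims yourself; the paper's route recycles identities already recorded in Lemma \ref{conjugate} but is less self-contained as written. One small imprecision: your closing remark locates the $(H+R)^{-1}$ issue on source components where $H+R=0$, but in Lemma \ref{ex} the prefactor $(H+2R+1)^{-1}(H+R)^{-1}$ is applied \emph{after} $\Lambda d-(H+R)d^{\Lambda}$, hence on the target component $\mathcal{L}^{r,s-1}$ where $H+R=n-r-s+1\geq 1$; after taking adjoints the factor sits on the source of $\partial_{-}^{\ast}$, where $\partial_{-}^{\ast}$ vanishes on the offending components because there is no room to raise $s$. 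Either way the composite operator is well defined, so this does not affect the validity of your proof.
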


\begin{cor}\label{c1}
On $P^k$, the adjoints $(\partial^{\ast}_{+}, \partial^{\ast}_{-})$ have the form
\begin{align*}
\partial^{\ast}_{+}&=d^{\ast},\\
\partial^{\ast}_{-}&=[d^{\ast}, \,LH^{-1}]=(n-k)^{-1}d^{\ast}L-(n-k+1)^{-1}Ld^{\ast}.
\end{align*} 
\end{cor}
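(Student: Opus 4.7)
The plan is to specialize the general formulas of Lemma \ref{c2} to primitive $k$-forms by exploiting the eigenvalues of the grading operators $H$ and $R$ together with the defining condition of primitivity. For any $\beta\in P^k$ we have $H\beta=(n-k)\beta$, $R\beta=0$ (since $\beta\in\mathcal{L}^{0,k}$), and $\Lambda\beta=0$. The first identity $\partial^{\ast}_{+}\beta=d^{\ast}\beta$ then follows immediately: in
\[
\partial^{\ast}_{+}\beta=\bigl[d^{\ast}(H+R+1)+d^{\Lambda\ast}\Lambda\bigr](H+2R+1)^{-1}\beta,
\]
the term containing $\Lambda\beta$ vanishes and the remaining scalar factor is $(n-k+1)/(n-k+1)=1$.

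For $\partial^{\ast}_{-}$, applying Lemma \ref{c2} to $\beta$ gives
\[
\partial^{\ast}_{-}\beta=\frac{1}{n-k+1}\bigl[d^{\ast}(H+R+1)^{-1}L\beta-d^{\Lambda\ast}\beta\bigr].
\]
The delicate point is that $L\beta\in\mathcal{L}^{1,k}$ is a $(k{+}2)$-form at $r=1$, so $(H+R+1)^{-1}$ acts on it as multiplication by $(n-k)^{-1}$, not $(n-k+1)^{-1}$. To eliminate the $d^{\Lambda\ast}\beta$ term, I would invoke the identity $d^{\Lambda}=[d,\Lambda]$ from \eqref{dlambda} and take adjoints to obtain the globally valid relation $d^{\Lambda\ast}=[L,d^{\ast}]$. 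Substituting and regrouping the resulting $d^{\ast}L\beta$ and $Ld^{\ast}\beta$ terms yields $\partial^{\ast}_{-}\beta=(n-k)^{-1}d^{\ast}L\beta-(n-k+1)^{-1}Ld^{\ast}\beta$. A short direct computation of $[d^{\ast},LH^{-1}]\beta$, using that $H^{-1}$ acts by $(n-k)^{-1}$ on $\Omega^k$ and by $(n-k+1)^{-1}$ on $\Omega^{k-1}$, then confirms agreement with the stated commutator expression.

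The main bookkeeping obstacle is tracking the changing eigenvalues of $H$ and $R$ as a form moves through the various operators: $L$ shifts both the de Rham degree and the $\mathcal{L}^{r,s}$-layer, so one must reevaluate $(H+R+1)^{-1}$ at the correct stage rather than treating it as a fixed scalar determined by the input $\beta$. Beyond this, the derivation is an elementary manipulation of the formulas of Lemma \ref{c2}, combined with the single adjoint identity $d^{\Lambda\ast}=[L,d^{\ast}]$.
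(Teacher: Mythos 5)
Your proposal is correct and follows exactly the route the paper intends: the corollary is stated as an immediate specialization of Lemma \ref{c2} to $P^k=\mathcal{L}^{0,k}$, and you carry out that specialization carefully, including the one genuinely delicate point (that $(H+R+1)^{-1}$ acts on $L\beta\in\mathcal{L}^{1,k}$ by $(n-k)^{-1}$) and the adjoint identity $d^{\Lambda\ast}=[L,d^{\ast}]$, with the coefficient arithmetic $(n-k+1)^{-1}\bigl[(n-k)^{-1}+1\bigr]=(n-k)^{-1}$ checking out.
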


\begin{remark}
Throughout the paper, we will always assume that the Riemannian metric $g$ used to define the adjoints $(\partial^{\ast}_{+}, \partial^{\ast}_{-}, d^{\Lambda \ast})$ is compatible with the symplectic form $\om$.
\end{remark}

\subsection{Symplectic elliptic complex and Laplacians}\label{symplap}
For symplectic manifolds, there is an elliptic complex on the space of primitive forms $P^*$  \cite{TY2} (see also \cite{Smith, Eastwood, E1}):
$$\begin{CD}
0@>>>P^0@>\partial_{+}>>P^1@>\partial_{+}>>\cdots @>\partial_{+}>>P^{n-1}@>\partial_{+}>>P^n\\
    @. @. @. @.   @.                                                                                                            @VV{\partial_{+}\partial_{-}}V\\
0@<\partial_{-}<<P^1@<\partial_{-}<<P^2@<\partial_{-}<<\cdots @<\partial_{-}<<P^{n-1}@<\partial_{-}<<P^n
\end{CD}$$
Of note is the presence of the second-order differential operator $\dpp\dpm$ that acts on the middle degree primitive space, $P^n$, in the middle of the complex.  Though this elliptic complex involves differential operators of differing orders, we can still easily associate to each element of the complex an elliptic Laplacian operator.  (See, for example, \cite{AB}.) Hence, we define the following symplectic Laplacians as associated with this elliptic complex:
\begin{align} 
\Delta_{+}&=\partial_{+}\partial_{+}^{\ast}+\partial_{+}^{\ast}\partial_{+}, \, \text{on $P^k$, for $k<n$}\,,\label{sympLp}\\
\Delta_{-}&=\partial_{-}\partial_{-}^{\ast}+\partial_{-}^{\ast}\partial_{-},\, \text{on $P^k$, for $k<n$}\,,\label{sympLm}\\
\Delta_{++}&=(\partial_{+}\partial_{-})^{\ast}(\partial_{+}\partial_{-})+(\partial_{+}\partial_{+}^{\ast})^2, \, \text{on $P^n$}\,,\label{sympLpp}\\
\Delta_{--}&=(\partial_{+}\partial_{-})(\partial_{+}\partial_{-})^{\ast}+(\partial_{-}^{\ast}\partial_{-})^2, \, \text{on $P^n$}\,.\label{sympLmm}
\end{align} 
The principal symbol of these symplectic Laplacian operators $\{-\Delta_{+}, -\Delta_{-}, \Delta_{++}, \Delta_{--}\}$ can be straightforwardly calculated and shown to be positive.  (See for example the calculations in Appendix \ref{AppB}.)

\section{Symplectic boundary conditions}
In this section, we present several intrinsically symplectic boundary conditions for differential forms on compact symplectic manifolds with smooth boundary.   We will briefly review first the standard Dirichlet and Neumann boundary conditions for differential forms on Riemannian manifolds.  Again, let $(M^{2n}, \om)$ be a symplectic manifold with boundary $\partial M$ and $(\om, J, g)$ a compatible triple on it.  We will denote throughout any boundary defining function by $\rho$ (i.e. $\rho=0$ on $\partial M$), the associated induced cotangent 1-form by $\drho$,  and the inward dual normal vector field on the boundary by $\vec{n}$ which satisfies $d\rho = g(\vec{n}, \cdot)$ on $\partial M$.  Furthermore, for any differential operator $\D$, we shall use the notation $\sigma_{\!\D}$ to denote its principal symbol.

\subsection{Dirichlet, Neumann and $\mathcal{J}$-conjugate boundary conditions on forms}
We first recall the standard Dirichlet and Neumann boundary conditions: 
  \begin{definition}
We say a differential $k$-form $\eta$ satisfies
\begin{itemize}
\item the Dirichlet $(D)$ boundary condition, i.e. $\eta \in D$, if $\sigma_d(\drho)\,\eta~|_{\partial M}=0\,$; 
\item the Neumann $(N)$ boundary condition, i.e. $\eta \in N$, if $\sigma_{d^{\ast}}(\drho)\,\eta~|_{\partial M}=0\,$.
\end{itemize}
\end{definition}
We note that the Dirichlet condition for forms is equivalent to the condition that $d\rho \w \eta=0$ on $\pa M$; that is, a form without a component in the normal direction would need to vanish on the boundary.  (In the special case where $\eta$ is a function, i.e. a 0-form, the above Dirichlet condition is equivalent to $\eta$ vanishing identically on the boundary.)  In contrast, the Neumann condition corresponds to $\iota_{\vec{n}}\, \eta=0$ on $\pa M$; that is, any form with a component in the normal direction must vanish on the boundary. Here again, $\vec{n}$ is the inward normal along the boundary, and $\iota_{\vec{n}}\, \eta$ is the interior product by $\vec{n}$ on the form $\eta$.

For calculations, it is often convenient to express the boundary conditions in terms of differential operators, without any principal symbols as follows.
\begin{remark}\label{diffequiv}(See, for example \cite{T})
For any first-order differential operator $\mathcal{P}$ and boundary defining function $\rho$, 
\begin{align}\label{sdiff}
\sigma_{\!\mathcal{P}}(\drho)\, \eta~|_{\partial M} = \mathcal{P}(\rho \,\eta)~|_{\partial M}\,.
\end{align}
For instance, for the standard Dirichlet boundary condition, $\sigma_d(\drho)\,\eta~|_{\partial M}=0$ is equivalent to the condition $d(\rho \,\eta)~|_{\partial M}=0\,$. 
\end{remark}

It is also useful to point out that both the Dirichlet and Neumann boundary conditions arise naturally when integrating by parts the exterior derivative operator, $d$. These boundary conditions can be inferred from the Green's formula which we recall here \cite{T}.
\begin{lemma}[Green's formula for first-order differential operators]\label{green}
If $M$ is a smooth, compact manifold with boundary and $\mathcal{P}$ is a first-order differential operator acting on sections of a vector bundle equipped with a metric, then 
\begin{equation}\label{greenf}
(\mathcal{P}\, \phi, \psi)-(\phi,\mathcal{P}^{\ast} \psi)=\int_{\partial M}\langle \sigma_{\mathcal{P}}(\drho)\, \phi, \psi \rangle \,dS
\end{equation} with $\mathcal{P}^{\ast}$ the adjoint operator of $\mathcal{P}$ and $\langle , \rangle$ denoting the metric on the vector bundle and $dS$ the volume form on the boundary.
\end{lemma}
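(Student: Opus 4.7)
The plan is to reduce the claim to a local computation in a collar neighborhood of $\partial M$, apply the classical divergence theorem, and identify the resulting boundary coefficient as the principal symbol. First I would choose a finite open cover $\{U_\alpha\}$ of $M$ such that each $U_\alpha$ intersecting $\partial M$ is contained in a collar chart where $\rho$ serves as one of the coordinates, and choose a subordinate partition of unity $\{\chi_\alpha\}$. Since both sides of \eqref{greenf} are bilinear in $(\phi,\psi)$ and the charts in the interior contribute no boundary term by the standard formal-adjoint identity, the lemma reduces to verifying \eqref{greenf} locally for $(\chi_\alpha\phi,\psi)$ in a boundary chart.

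Inside such a boundary chart, I would use coordinates $(x_1,\ldots,x_{2n-1},\rho)$ with $\rho\geq 0$ and the boundary $\{\rho=0\}$, and write the first-order operator as
\begin{equation*}
P = A(x,\rho)\,\partial_\rho + \sum_{j=1}^{2n-1} B^{j}(x,\rho)\,\partial_{x_j} + C(x,\rho),
\end{equation*}
where $A,B^j,C$ are smooth sections of the appropriate endomorphism bundle. Integrating by parts in each variable against the volume form $dV$, the tangential derivatives $\partial_{x_j}$ produce no boundary contribution because the chart is open in the tangential directions, while integration by parts in $\rho$ on $[0,\infty)\cap\operatorname{supp}\chi_\alpha$ produces exactly one boundary term:
\begin{equation*}
\int_{\partial M\cap U_\alpha} \langle A(x,0)\,\phi,\psi\rangle\,dS.
\end{equation*}
The remaining bulk terms assemble, by the uniqueness of the formal adjoint, to $(\phi, P^{\ast}\psi)$ on $U_\alpha$.

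Next I would identify $A(x,0)$ with $\sigma_P(d\rho)$ on $\partial M$. By the definition of the principal symbol of a first-order operator, $\sigma_P(\xi) = \sum_i a^i(x)\,\xi_i$ where $a^i$ are the coefficients of $\partial_{x_i}$; applied to $\xi = d\rho$ in the collar coordinates, only the $\partial_\rho$ coefficient survives, giving $\sigma_P(d\rho)=A(x,0)$. Summing over $\alpha$ and using $\sum_\alpha \chi_\alpha\equiv 1$ gives the global identity \eqref{greenf}.

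The only subtle step is making sure that the local formal adjoint computed chart-by-chart really agrees with the intrinsic $P^\ast$ defined on all of $M$; this is guaranteed because the tangential integrations by parts introduce no boundary contributions and the remaining zero-order pieces (from differentiating the metric density and the partition-of-unity factors) cancel across overlapping charts. I expect this consistency check, rather than the computation itself, to be the main care point; alternatively, one can phrase the argument in coordinate-free form by observing that $\langle P\phi,\psi\rangle - \langle\phi,P^\ast\psi\rangle$ is the divergence of a vector field whose normal component on $\partial M$ equals $\langle\sigma_P(d\rho)\phi,\psi\rangle$, and then apply Stokes' theorem directly.
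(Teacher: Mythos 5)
Your proposal follows essentially the standard argument, but note that the paper does not actually prove Lemma \ref{green}: it is quoted from Taylor's book \cite{T}. The closest in-paper analogue is the proof of the second-order Green's formula, Lemma \ref{green2}, which proceeds exactly as you do --- partition of unity, a boundary chart in which the last coordinate is the boundary defining function and its coordinate vector field is the unit inward normal, integration by parts, and identification of the surviving coefficient at the boundary with the principal symbol. Your reduction to a boundary chart, the observation that tangential integrations by parts contribute nothing for compactly supported data, the identification $\sigma_P(d\rho)=A(x,0)$, and the patching via $\sum_\alpha\chi_\alpha\equiv 1$ are all correct; the final ``consistency of the formal adjoint'' worry is harmless, since one applies the identity to $\chi_\alpha\phi$ against the globally defined $P^{\ast}$ and sums.

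The one step you assert rather than compute is the sign of the boundary term, and that is precisely where care is needed. With $\rho\ge 0$ on the interior side and $d\rho$ the inward conormal (the paper's convention), the endpoint contribution is
\begin{equation*}
\int_0^\infty \partial_\rho\bigl(\langle A\phi,\psi\rangle\sqrt{g}\bigr)\,d\rho=-\langle A\phi,\psi\rangle\sqrt{g}\,\big|_{\rho=0}\,,
\end{equation*}
so the direct computation produces $(P\phi,\psi)-(\phi,P^{\ast}\psi)=-\int_{\partial M}\langle\sigma_P(\drho)\phi,\psi\rangle\,dS$, with a minus sign; this is also the sign that appears explicitly in the paper's own derivation of \eqref{greens} in the proof of Lemma \ref{green2}. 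A sanity check with $M=[0,\infty)$ and $P=d/d\rho$ on functions gives $(P\phi,\psi)-(\phi,P^{\ast}\psi)=-\phi(0)\psi(0)$ while $\langle\sigma_P(\drho)\phi,\psi\rangle|_{\rho=0}=\phi(0)\psi(0)$. Recovering the $+$ sign of \eqref{greenf} therefore requires fixing an orientation convention for $dS$ relative to the inward normal (equivalently, stating Green's formula with the outward conormal); you should make that convention explicit rather than let the sign pass silently, since as written your integration by parts does not yield the boundary term you claim. None of the paper's applications are affected, as they only use the formula to conclude that the boundary integral vanishes.
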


In particular, for the exterior derivative operator, $d$, the lemma implies for  any $\eta,\, \xi \in \Omega^*$  that
\begin{align*}
(d\eta,\xi)-(\eta, d^{\ast}\xi)=\int_{\partial M}\langle \sigma_{d}(\drho)\, \eta, \xi\rangle \,dS =-\int_{\partial M}\langle \eta, \sigma_{d^{\ast}}(\drho)\,\xi\rangle  \, dS~.
\end{align*}

Another noteworthy property of the Dirichlet and Neumann condition is the following lemma (see for example, \cite{G}). 
\begin{lemma}\label{DN}
The Dirichlet  boundary condition is preserved by $d$ and the Neumann boundary condition is preserved by $d^{\ast}$. That is, for any $\eta \in \Omega^k$, we have 
\begin{align*}
\eta &\in D \Longrightarrow d\eta \in D\, , \\
\eta &\in N \Longrightarrow d^{\ast}\eta \in N\,. 
\end{align*}
\end{lemma}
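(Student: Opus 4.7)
The plan is to reduce both statements to a local coordinate computation in a collar neighborhood of $\partial M$, based on the decomposition of differential forms into tangential and normal parts.

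Fix a boundary collar with coordinates $(\rho, y^1, \ldots, y^{2n-1})$, where $\rho$ is the boundary defining function and $y^i$ are coordinates on $\partial M$; I may assume the metric is adjusted so that $\vec{n} = \partial_\rho$ along $\partial M$. Any $k$-form $\eta$ then decomposes uniquely as $\eta = d\rho \wedge \alpha + \beta$, where $\alpha$ and $\beta$ are forms not involving $d\rho$. Unpacking the two boundary conditions in this decomposition, one finds
\[
\eta \in D \iff \beta|_{\partial M} = 0, \qquad \eta \in N \iff \alpha|_{\partial M} = 0.
\]

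For the Dirichlet claim, I compute directly
\[
d\eta = d\rho \wedge \bigl(\partial_\rho \beta - d'\alpha\bigr) + d'\beta,
\]
where $d'$ is the exterior derivative in the $y$-variables alone. The "tangential part" of $d\eta$, namely the summand not involving $d\rho$, is $d'\beta$. Since $d'$ differentiates only in tangential directions, $\beta|_{\partial M}=0$ immediately forces $d'\beta|_{\partial M}=d'(\beta|_{\partial M})=0$. Therefore $d\eta \in D$.

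For the Neumann claim, I appeal to Hodge duality: with a compatible metric, the Hodge star $\ast$ interchanges the $d\rho$-containing summand and the $d\rho$-free summand, so $\ast D = N$ and $\ast N = D$. Combining this with the identity $d^{\ast} = \pm \ast d \,\ast$ yields the chain
\[
\eta \in N \;\Longrightarrow\; \ast\eta \in D \;\Longrightarrow\; d(\ast\eta) \in D \;\Longrightarrow\; \ast d(\ast\eta) \in N \;\Longrightarrow\; d^{\ast}\eta \in N,
\]
which is the desired preservation. The main bookkeeping obstacle is tracking signs and metric-dependent conventions in the Hodge duality step, but no real difficulty arises; everything reduces to the elementary observation that tangential differentiation preserves tangential vanishing on the boundary.
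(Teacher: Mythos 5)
Your proof is correct. Note that the paper does not actually prove this lemma; it is stated as a standard fact with a citation to Gilkey's book, so there is no in-paper argument to compare against. Your coordinate computation for the Dirichlet half is sound, and it is worth observing that it is just the invariant statement that pullback to $\partial M$ commutes with $d$: writing $i\colon \partial M \hookrightarrow M$ for the inclusion, $\eta \in D$ is equivalent to $i^*\eta = 0$, whence $i^*(d\eta) = d(i^*\eta) = 0$, with no collar or metric needed. Your Neumann half via $\ast D = N$ and $d^* = \pm \ast d \,\ast$ is the standard dual argument; the only point to keep in mind is that the interchange of the two summands under $\ast$ uses that $d\rho$ is orthogonal to the tangential directions along $\partial M$, which your normalization $\vec{n} = \partial_\rho$ (equivalently, the paper's convention $d\rho = g(\vec{n},\cdot)$ on $\partial M$) guarantees. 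The signs you defer to bookkeeping are irrelevant to the vanishing statements, so there is no gap.
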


Besides the Dirichlet and Neumann boundary conditions, let us introduce here two other related boundary conditions which will be useful later on.  Using the conjugate relations in Lemma \ref{conjugate}, we define the following:
\begin{definition}
We say a differential form $\eta$ satisfies 
\begin{itemize}
\item the ${J}$-Dirichlet $(JD)$ boundary condition, i.e. $\eta \in JD$, if $\sigma_{d^{\Lambda \ast}}(\drho)\,\eta~|_{\partial M}\!=\!0\,$;
\item the ${J}$-Neumann $(JN)$ boundary condition, i.e. $\eta \in JN$, if $\sigma_{d^{\Lambda }}(\drho)\,\eta~|_{\partial M}\!=\!0\,$.
\end{itemize}
\end{definition}
The relation between ($JD$, $JN$) and ($D$, $N$) boundary conditions are as follows:
\begin{lemma}With respect to a compatible triple $(\om, J, g)$ on a symplectic manifold $M^{2n}$, any $\eta \in \Omega^k$ satisfies the following:
\begin{align*}
&\eta \in JD \, \Longleftrightarrow\, \mathcal{J}\eta \in D\,,\\
&\eta \in JN \, \Longleftrightarrow\, \mathcal{J}\eta \in N\,.
\end{align*}
\end{lemma}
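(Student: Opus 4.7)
The plan is to derive the equivalences directly from the conjugate relations in Lemma \ref{conjugate}, namely $d^{\Lambda\ast}=\mathcal{J}^{-1}d\,\mathcal{J}$ and $d^{\Lambda}=\mathcal{J}^{-1}d^{\ast}\,\mathcal{J}$, by tracking how the principal symbol behaves under conjugation by $\mathcal{J}$.

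First I would observe that $\mathcal{J}=\sum_{p,q}(\sqrt{-1})^{p-q}\,\Pi^{p,q}$, defined in \eqref{conjop}, is a zeroth-order operator: it is a pointwise bundle endomorphism of $\Lambda^{\ast}T^{\ast}M$. Consequently, for any first-order differential operator $P$ and any cotangent covector $\xi$, the principal symbol satisfies
\begin{equation*}
\sigma_{\mathcal{J}^{-1}P\mathcal{J}}(\xi)\;=\;\mathcal{J}^{-1}\,\sigma_{\!P}(\xi)\,\mathcal{J}\, .
\end{equation*}
Applying this to $P=d$ with $\xi = d\rho$ yields $\sigma_{d^{\Lambda\ast}}(d\rho)=\mathcal{J}^{-1}\sigma_{d}(d\rho)\mathcal{J}$, and applying it to $P=d^{\ast}$ yields $\sigma_{d^{\Lambda}}(d\rho)=\mathcal{J}^{-1}\sigma_{d^{\ast}}(d\rho)\mathcal{J}$.

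For the first equivalence, I would then simply chase the definition: $\eta\in JD$ means $\sigma_{d^{\Lambda\ast}}(d\rho)\,\eta|_{\partial M}=0$, which by the identity above is $\mathcal{J}^{-1}\bigl[\sigma_{d}(d\rho)\,\mathcal{J}\eta\bigr]\big|_{\partial M}=0$. Since $\mathcal{J}$ is pointwise invertible (indeed $\mathcal{J}^2=(-1)^k$ on $k$-forms), this is equivalent to $\sigma_{d}(d\rho)\,(\mathcal{J}\eta)|_{\partial M}=0$, i.e.\ $\mathcal{J}\eta\in D$. The argument for $JN\Leftrightarrow \mathcal{J}\eta\in N$ is identical, using the second conjugate relation and the invertibility of $\mathcal{J}$.

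No step here is a real obstacle; the only point that deserves a line of justification is the assertion that taking a principal symbol commutes with conjugation by a zeroth-order operator, which follows from the symbol calculus (and can alternatively be checked by hand using the differential-operator reformulation in Remark \ref{diffequiv}, writing $d^{\Lambda\ast}(\rho\eta)=\mathcal{J}^{-1}d(\mathcal{J}\rho\eta)=\mathcal{J}^{-1}d(\rho\,\mathcal{J}\eta)$ since $\mathcal{J}$ commutes with multiplication by the scalar function $\rho$). Once this is noted, both equivalences are immediate.
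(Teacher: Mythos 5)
Your proposal is correct and takes essentially the same approach as the paper: both arguments rest on the conjugate relations $d^{\Lambda\ast}=\mathcal{J}^{-1}d\,\mathcal{J}$, $d^{\Lambda}=\mathcal{J}^{-1}d^{\ast}\mathcal{J}$ together with the fact that $\mathcal{J}$ is a pointwise invertible bundle map commuting with multiplication by $\rho$. The paper phrases this via the differential-operator reformulation $\sigma_{P}(d\rho)\,\eta|_{\partial M}=P(\rho\,\eta)|_{\partial M}$ rather than via the symbol-conjugation identity, which is exactly the alternative you note in your final paragraph.
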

\begin{proof}
Using the relations $d^{\Lambda \ast}=\mathcal{J}^{-1} d\, \mathcal{J}$ and $d^{\Lambda}=\mathcal{J}^{-1} d^{\ast} \mathcal{J}$ in Lemma \ref{conjugate} and expressing the boundary conditions in terms of differential operators as in \eqref{sdiff},  we have 
\begin{align*}
\eta \in JD&\Leftrightarrow  d^{\Lambda \ast}(\rho\, \eta)~|_{\partial M}\!=\!0 \Leftrightarrow\mathcal{J}^{-1}\!d\mathcal{J}(\rho\, \eta)~|_{\partial M}\!=\!0  
\Leftrightarrow  d(\rho\,\mathcal{J} \eta)~|_{\partial M}\!=\!0  \Leftrightarrow\mathcal{J}\eta \in D\,,\\
\eta \in JN&\Leftrightarrow d^{\Lambda}(\rho\, \eta)~|_{\partial M}\!=\!0 \Leftrightarrow\mathcal{J}^{-1}\!d^{\ast}\!\mathcal{J}(\rho\, \eta)~|_{\partial M}\!=\!0 
\Leftrightarrow  d^{\ast}(\rho\,\mathcal{J} \eta)~|_{\partial M}\!=\!0 \Leftrightarrow\mathcal{J}\eta \in N.
\end{align*}
\end{proof}
Applying Lemma \ref{DN}, we also obtain the following:
\begin{cor}\label{pre1}
The $JD$  boundary condition is preserved by $d^{\Lambda\ast}$ and the $JN$ boundary condition is preserved by $d^{\Lambda}$. That is, for any $\eta \in \Omega^k$, 
\begin{align*}
\eta &\in JD \Longrightarrow d^{\Lambda \ast}\eta \in JD\,, \\
\eta &\in JN \Longrightarrow d^{\Lambda}\eta \in JN\,. 
\end{align*}
\end{cor}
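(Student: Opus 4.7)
The plan is to deduce this corollary directly from Lemma \ref{DN} by transporting it through the $\mathcal{J}$-conjugation supplied by Lemma \ref{conjugate}, together with the characterizations $\eta \in JD \Leftrightarrow \mathcal{J}\eta \in D$ and $\eta \in JN \Leftrightarrow \mathcal{J}\eta \in N$ proved in the immediately preceding lemma. The general principle being exploited is that if a differential operator $P$ can be written as $P = \mathcal{J}^{-1} Q\, \mathcal{J}$ and $Q$ preserves some boundary condition $B$, then $P$ automatically preserves the $\mathcal{J}$-conjugate condition $\mathcal{J}^{-1}(B)$.

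First I would assume $\eta \in JD$, so that $\mathcal{J}\eta \in D$ by the preceding lemma. Applying the conjugate relation $d^{\Lambda\ast} = \mathcal{J}^{-1} d\, \mathcal{J}$ from Lemma \ref{conjugate}, I compute
\[
\mathcal{J}(d^{\Lambda\ast}\eta) \;=\; \mathcal{J}\,\mathcal{J}^{-1}\,d\,\mathcal{J}\eta \;=\; d(\mathcal{J}\eta).
\]
Since Lemma \ref{DN} tells us that $d$ preserves the Dirichlet condition, $d(\mathcal{J}\eta) \in D$, hence $\mathcal{J}(d^{\Lambda\ast}\eta) \in D$, which by the preceding lemma is exactly the statement $d^{\Lambda\ast}\eta \in JD$.

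The second implication has an identical structure. For $\eta \in JN$, the preceding lemma gives $\mathcal{J}\eta \in N$, and the conjugate relation $d^{\Lambda} = \mathcal{J}^{-1} d^{\ast}\, \mathcal{J}$ yields
\[
\mathcal{J}(d^{\Lambda}\eta) \;=\; d^{\ast}(\mathcal{J}\eta),
\]
which lies in $N$ by the $d^{\ast}$-preservation clause of Lemma \ref{DN}; translating back gives $d^{\Lambda}\eta \in JN$. There is essentially no obstacle in this argument — the corollary is a formal compatibility between $\mathcal{J}$-conjugation of operators and $\mathcal{J}$-conjugation of boundary conditions, both of which have already been recorded in Lemmas \ref{conjugate}, \ref{DN}, and the preceding lemma. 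The only mild point of care is to invoke the equivalence $\sigma_{\!P}(\drho)\eta|_{\partial M}=0 \Leftrightarrow P(\rho\eta)|_{\partial M}=0$ from Remark \ref{diffequiv} when commuting $\mathcal{J}$ past the boundary-defining factor $\rho$, which is immediate since $\rho$ is a real function and $\mathcal{J}$ is $C^\infty(M)$-linear.
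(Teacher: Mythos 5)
Your proposal is correct and follows essentially the same route as the paper's proof: conjugate by $\mathcal{J}$ using Lemma \ref{conjugate}, apply Lemma \ref{DN} to $\mathcal{J}\eta$, and translate back via the equivalence $\eta\in JD \Leftrightarrow \mathcal{J}\eta\in D$ (respectively $JN$ and $N$). No gaps.
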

\begin{proof}
Since $\eta \in JD$ is equivalent to $\mathcal{J}\eta \in D$, it follows that $d\mathcal{J}\eta \in D$. Therefore, $\mathcal{J}^{-1}d\mathcal{J}\eta \in JD$, that is, $d^{\Lambda \ast}\eta \in JD$. By similar arguments, $d^{\Lambda}$ preserves the $JN$ boundary condition. 
\end{proof}

We can give an interpretation for the $JD$ and $JN$ boundary conditions as follows. As mentioned, the $D$ and $N$ boundary conditions are defined with respect to the outward normal vector field $\vec{n}$ along the boundary. For $JD$ and $JN$ boundary conditions, they are instead defined with respect to the ${J} \vec{n}$ vector field.   More specifically, around a point $x \in \partial M$, we can choose a local Darboux basis of one-forms, $\{w_j\}$, such that $w_1=d\rho$ and $\omega=\underset{i}{\sum}w_{2i-1}\wedge w_{2i}\,$.     Let us further choose an almost complex structure $J$ such that $\mathcal{J}w_{2i-1} = -w_{2i}$ and  $\mathcal{J}w_{2i} = w_{2i-1}$ for $i=1, 2, \ldots, n$.  We denote the dual basis of tangent vectors by $\{e_j\}$.  The boundary conditions then correspond to the following:
\begin{align*}
\eta \in JD &\Longrightarrow w_2\w \eta~ |_{\partial M}=0,\\
\eta \in JN & \Longrightarrow \iota_{e_2}\eta ~|_{\partial M}=0.
\end{align*}
Moreover, if the boundary is of contact type, then $\partial M$, being a contact space, has a well-known symplectization that can be mapped to the collar neighborhood of $\partial M$.  In this case, ${J} \vec{n}$ can also be identified with the Reeb vector field on the contact boundary.

\subsection{Symplectic boundary conditions on forms}
\subsubsection{Boundary conditions associated with $\dpp$ and $\dpm$ operators}
With two natural linear first-order operators $\dpp$ and $\dpm$ on symplectic manifolds, we are motivated to define the analogous Dirichlet and Neumann boundary conditions with respect to these operators. 
\begin{definition}\label{sbc1def}
We say a differential $k$-form $\eta\in\Omega^k$ satisfies
\begin{itemize}
\item the $\partial_{+}$-Dirichlet $(D_{+})$ boundary condition, i.e.  $\eta \in D_{+}$, if $\sigma_{\partial_{+}}(\drho)\, \eta~|_{\partial M}\!=\!0\,$; 
\item the $\partial_{-}$-Dirichlet $(D_{-})$ boundary condition, i.e. $\eta \in  D_{-}$, if $\sigma_{\partial_{-}}(\drho)\,\eta~|_{\partial M}\!=\!0\,$;
\item the $\partial_{+}$-Neumann $(N_{+})$ boundary condition, i.e. $\!\eta \in N_{+}$, if $\sigma_{\partial_{+}^{\ast}}(\drho)\,\eta~|_{\partial M}\!=\!0\,$; 
\item the $\partial_{-}$-Neumann $(N_{-})$ boundary condition, i.e. $\!\eta \in  N_{-}$, if $\sigma_{\partial_{-}^{\ast}}(\drho)\, \eta~|_{\partial M}\!=\!0\,$. 
\end{itemize}
\end{definition} 

Just as for $D$ and $N$ boundary conditions, it follows from Lemma \ref{green} for $(\partial_{+}, \partial_{-})$ that:
\begin{align*}
(\partial_{+}\eta, \xi)-(\eta, \partial^{\ast}_{+}\xi)&=\int_{\partial M}\langle \sigma_{\partial_{+}}(\drho)\, \eta, \xi\rangle \, dS=-\int_{\partial M}\langle \eta, \sigma_{\partial_{+}^{\ast}}(\drho)\, \xi\rangle \, dS,\\
(\partial_{-}\eta, \xi)-(\eta, \partial_{-}^{\ast}\xi)&=\int_{\partial M}\langle \sigma_{\partial_{-}}(\drho)\, \eta, \xi \rangle\, dS=-\int_{\partial M}\langle \eta, \sigma_{\partial_{-}^{\ast}}(\drho)\, \xi\rangle\, dS.
\end{align*}
These formulas above imply that the $\{D_{+}, N_{+}\}$ and the $\{D_{-}, N_{-}\}$ boundary conditions are natural from the perspective of integration by parts.  

\subsubsection{Boundary condition associated with the $\dpp\dpm$ operator}
As above, we can also introduce Dirichlet and Neumann type boundary conditions for the $\dpp\dpm$ operator.
\begin{definition} We say a differential form $\eta$ satisfies, 
\begin{itemize}
\item the $\dpp\dpm$-Dirichlet boundary condition $(D_{+-})$, i.e. $\eta \in D_{+-}\,$,
\\ if $ \sigma_{\partial_{+}\partial_{-}}(\drho)\,\eta~ |_{\partial M}=0$\,;
\item the $\dpp\dpm$-Neumann boundary condition $(N_{+-})$, i.e.  $\eta \in N_{+-}\,$,\\ if  $\sigma_{(\partial_{+}\partial_{-})^{\ast}}(\drho)\,\eta~ |_{\partial M}=0\,$.
\end{itemize}
\end{definition}
\begin{remark}
Similar to the first-order case in Remark \ref{diffequiv}, the above second-order boundary conditions can be equivalently expressed differentially as follows:  
\begin{align*}
\eta \in D_{+-}&\,\Longleftrightarrow \,\partial_{+}\partial_{-}(\rho^2\eta)~|_{\partial M}=0\,,\\
\eta \in N_{+-}&\,\Longleftrightarrow \,\partial_{-}^{\ast}\partial_{+}^{\ast}(\rho^2\eta)~|_{\partial M}=0\,.
\end{align*}
\end{remark}
The $D_{+-}$ and $N_{+-}$ boundary conditions, however, by themselves are not sufficient to ensure that $(\dpp\dpm \eta, \xi) = (\eta, (\dpp\dpm)^{\ast} \xi)$.  This can be seen from the following lemma:
\begin{lemma}[Green's formula for second-order differential operators]\label{green2}
If $M$ is a smooth, compact manifold with boundary and $\mathcal{P}$ is a second-order differential operator acting on sections of the vector bundle equipped with a metric, then 
\begin{align}\label{greens}
(\mathcal{P}\, &\phi, \psi)-(\phi,\mathcal{P}^{\ast} \psi)  \\
&= -\int_{\partial M}\langle \left\{2\mathcal{P}(\rho\,\phi) - \frac{1}{2}\mathcal{L}_{\vec{n}}\big[\mathcal{P}(\rho^2 \phi)\big]\right\}, \psi\rangle\, dS+\int_{\partial M}\langle \frac{1}{2}\mathcal{P}(\rho^2 \phi) , \mathcal{L}^{\ast}_{\vec{n}}(\psi)\rangle \, dS\nonumber
\end{align} 
with $\mathcal{P}^{\ast}$ the adjoint operator of $\mathcal{P}$, $dS$ the volume form on $\partial M$, and $\langle , \rangle$ denoting a metric on the vector bundle.
\end{lemma}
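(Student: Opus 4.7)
The plan is to localize the identity to a tubular neighborhood of $\partial M$ and reduce it to elementary identities controlling how $P$ interacts with the defining function $\rho$. First I would observe that $F(\phi,\psi) := (P\phi,\psi) - (\phi, P^{\ast}\psi)$ vanishes whenever $\phi$ or $\psi$ is supported in the interior, since $P^{\ast}$ is the formal adjoint of $P$ there. Hence $F$ defines a bilinear pairing depending only on the $1$-jets of $\phi$ and $\psi$ along $\partial M$, and it suffices to verify \eqref{greens} modulo modifications of $\phi, \psi$ by sections vanishing to order $\rho^2$ on $\partial M$.

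The computational heart of the argument consists of two identities. Working in a local chart where $P = a^{ij}\partial_i\partial_j + b^i\partial_i + c$ and applying the Leibniz rule gives
\begin{equation*}
\tfrac{1}{2}P(\rho^2\phi)\big|_{\partial M} = a^{ij}\partial_i\rho\,\partial_j\rho \cdot \phi\big|_{\partial M} = \sigma_P(\drho)\,\phi\big|_{\partial M},
\end{equation*}
which we interpret as the ``Dirichlet data'' of $\phi$ weighted by the principal symbol. A parallel expansion then shows that the specific combination $\{2P(\rho\phi) - \tfrac{1}{2}\mathcal{L}_{\vec{n}}[P(\rho^2\phi)]\}|_{\partial M}$ encodes the ``Neumann data'' of $\phi$ for the operator $P$: the coefficients $2$ and $\tfrac{1}{2}$ are forced by requiring the lower-order Dirichlet-type contributions from each of $P(\rho\phi)|_{\partial M}$ and $\mathcal{L}_{\vec{n}}P(\rho^2\phi)|_{\partial M}$ to cancel exactly, leaving a combination that detects only $\mathcal{L}_{\vec{n}}\phi|_{\partial M}$ modulo terms in $\phi|_{\partial M}$ which are absorbed by the $\mathcal{L}^{\ast}_{\vec{n}}\psi$ factor on the right-hand side.

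With these identities in hand, I would derive \eqref{greens} by integrating $(P\phi,\psi)$ by parts twice in Fermi coordinates $(x_1,\dots, x_{2n-1},\rho)$ using a partition of unity. Purely tangential second derivatives contribute no boundary terms, while each normal derivative yields a standard first-order boundary integral as in Lemma \ref{green}. Grouping the resulting boundary integrals according to whether they pair against $\psi|_{\partial M}$ or against $\mathcal{L}^{\ast}_{\vec{n}}\psi|_{\partial M}$, and invoking the two identities above to repackage the $\phi$-dependent factors, produces exactly the two terms on the right-hand side of \eqref{greens}.

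The main obstacle I anticipate is the careful bookkeeping in the second identity, since numerous Dirichlet-type terms from $P(\rho\phi)$ and $\mathcal{L}_{\vec{n}}P(\rho^2\phi)$ must conspire to cancel for precisely the coefficients $2$ and $\tfrac{1}{2}$ to appear. A secondary subtlety is independence from the choice of defining function: under $\rho \mapsto h\rho$ with $h|_{\partial M} > 0$, both $P(\rho\phi)|_{\partial M}$ and $P(\rho^2\phi)|_{\partial M}$ individually change, and one must verify that the particular combination in \eqref{greens} is preserved. This can be handled either by direct computation or, more conceptually, by identifying the two boundary expressions intrinsically with the Dirichlet and Neumann data of $\phi$, which together uniquely determine the bilinear pairing $F$ irrespective of how $\rho$ is chosen.
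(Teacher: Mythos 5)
Your proposal is correct and follows essentially the same route as the paper's proof: a partition of unity and coordinates adapted to the boundary, integration by parts producing the two types of boundary integrals (one pairing against $\psi$, one against $\mathcal{L}^{\ast}_{\vec{n}}\psi$), and the Leibniz expansions of $P(\rho\,\phi)$ and $\tfrac{1}{2}P(\rho^2\phi)$ showing that $\tfrac{1}{2}P(\rho^2\phi)|_{\partial M}=\sigma_P(\drho)\,\phi$ and that the combination $2P(\rho\,\phi)-\tfrac{1}{2}\mathcal{L}_{\vec{n}}[P(\rho^2\phi)]$ reproduces exactly the first-order boundary data left over from the integration by parts. The jet-dependence and $\rho$-independence remarks are extra framing the paper does not spell out, but the computational core is identical.
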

\begin{proof}
Let $\dim M = m$.  Using a partition of unity, we may assume that $\phi$ and $\psi$ are supported within a coordinate patch $U$ in $M$.  Hence, we only need to consider the case when $U$ intersects with the boundary $\partial M$. So suppose $U$ is in $\mathbb{R}^m_{+}$ and the coordinates are such that $\frac{\partial }{\partial x_m}$ is the unit inward normal at $\partial M$. In $U$, the second-order operator $\mathcal{P}$ has the form
\begin{align}\label{2ndo}
\mathcal{P}=\underset{i \leq j}{\sum} a_{ij}(x)\frac{\partial^2 }{\partial x_i\partial x_j} +\underset{i}{\sum} b_{i}(x)\frac{\partial }{\partial x_i}+c(x)\, .
\end{align}
where here $i, j = 1, 2, \ldots, m$. Then,
\begin{align*}
(\mathcal{P}\phi, \psi)_{U}&=\int_{U}\bigg[\underset{i\leq j}{\sum}\langle a_{ij}\frac{\partial^2 \phi}{\partial x_i\partial x_j}, \psi\rangle+\underset{i}{\sum}\langle b_{i}\frac{\partial \phi}{\partial x_i}, \psi\rangle+\langle c\, \phi, \psi\rangle\bigg]\sqrt{g}dx.
\end{align*}
Integrating by parts, there are boundary integral contributions coming from the terms involving $\frac{\partial}{\partial x_m}$, and we obtain
\begin{align}
(\mathcal{P}\phi, \psi)_{U}&
=(\phi, \mathcal{P}^{\ast}\psi)_{U} - \int_{U\cap\,\mathbb{R}^{m-1}}\langle \underset{i\leq m}{\sum}a_{im}\frac{\partial \phi}{\partial x_m}+b_{m}\phi -\frac{\partial a_{mm}}{\partial x_m}\phi ,\psi\rangle\sqrt{g(x^{\prime},0)}dx^{\prime}\nonumber\\
&+\int_{U\cap\,\mathbb{R}^{m-1}} \left\{\dfrac{\partial}{\partial x_m} \left[ \langle a_{mm}\phi,\psi\rangle \sqrt{g}\,\right]  - \langle \,\dfrac{\partial(a_{mm} \phi)}{\partial x_m}, \psi \,\rangle \sqrt{g}\, \right\} dx'  \label{d2p1}
\end{align} 
where $dx^{\prime}=dx_1\cdots dx_{m-1}$ and $\sqrt{g(x^{\prime},0)}dx^{\prime}$ is the volume element on $\partial M$.  Now, we can write
\begin{align}
\dfrac{\partial}{\partial x_m}& \left[ \langle a_{mm}\phi,\psi\rangle \sqrt{g}\,\right]  - \langle \,\dfrac{\partial(a_{mm} \phi)}{\partial x_m}, \psi \,\rangle \sqrt{g}= \langle a_{mm} \phi, \mathcal{L}^{\ast}_{\vec{n}}(\psi)\rangle\,, \label{d2pp2} \\
\mathcal{P}(\rho\,\phi) &= P(x_m \, \phi) =  \underset{i\leq m}{\sum} a_{im} \frac{\partial \phi}{\partial x_i} + a_{mm} \frac{\partial \phi}{\partial x_m} +  b_m \phi  + \mathcal{O}(x_m)\label{d2p3}\\
\frac{1}{2}\mathcal{P}(\rho^2 \phi)&=\frac{1}{2}\mathcal{P}(x_m^2 \phi)=a_{mm}\phi +  x_m \left[\underset{i\leq m}{\sum} a_{im} \dfrac{\partial \phi}{\partial x_i} + a_{mm} \dfrac{\partial \phi}{\partial x_m} +  b_m \phi  \right] + \mathcal{O}(x^2_m)\label{d2p4}
\end{align}
Using \eqref{d2p3}-\eqref{d2p4}, we find along the boundary (i.e. $x_m\, =0$) that
\begin{align} \label{d2p5}
\left\{2\mathcal{P}(\rho\, \phi)- \dfrac{1}{2}\mathcal{L}_{\vec{n}}\left[\mathcal{P}(\rho^2 \phi)\right]\right\}_{x_m=\,0}=\underset{i\leq m}{\sum}a_{im}\frac{\partial \phi}{\partial x_i}+b_m \phi-\frac{\partial a_{mm}}{\partial x_m}\phi\,.
\end{align}
The statement then follows substituting \eqref{d2pp2} and \eqref{d2p4}-\eqref{d2p5} into \eqref{d2p1}.
\end{proof}

The above lemma leads us to the following definitions:
\begin{definition}\label{sbc2def}
We say a differential form $\eta$ satisfies 
\begin{itemize}
\item the $D_{++}$ boundary condition if 
\begin{enumerate}
\item  $\eta\in D_{+-}\,$, that is, $\partial_{+}\partial_{-}(\rho^2 \eta)\,|_{\partial M}=0\,$, and
\item $\left\{2\dpp\dpm(\rho\,\eta) - \frac{1}{2}\mathcal{L}_{\vec{n}}\big[\partial_{+}\partial_{-}(\rho^2 \eta)\big] \right\}|_{\partial M}=0\,;\medskip$
\end{enumerate}
\item the $N_{--}$ boundary condition if 
\begin{enumerate}
\item  $\eta\in N_{+-}\,$, that is, $\partial_{-}^{\ast}\partial_{+}^{\ast}(\rho^2 \eta)\,|_{\partial M}=0\,$, and
\item $\left\{2\partial_{-}^{\ast}\partial_{+}^{\ast}(\rho\,\eta) - \frac{1}{2}\mathcal{L}_{\vec{n}}\big[\partial_{-}^{\ast}\partial_{+}^{\ast}(\rho^2 \eta)\big] \right\}|_{\partial M}=0\,.$\\
\end{enumerate}
\end{itemize}
\end{definition}

\begin{remark}\label{sbc2rmk}
The $D_{++}$ and $N_{--}$ boundary conditions can be alternatively defined using the principal symbol.  With the convention that the principal symbol of the second-order operator $\mathcal{P}$ in $\eqref{2ndo}$ is $ \sigma_{\mathcal{P}}(d\rho)\,\phi ~|_{\partial M} = a_{mm}\phi$, we have that
\begin{align*}
\left\{\mathcal{P}(\rho\, \phi) - \mathcal{L}_{\vec{n}}\left[\sigma_{\mathcal{P}}(d\rho)\,\phi\right] \right\}|_{\partial M}  =\left\{2\mathcal{P}(\rho \,\phi)- \frac{1}{2}\mathcal{L}_{\vec{n}}\left[\mathcal{P}(\rho^2\phi)\right]\right\}_{\partial M} \,.
\end{align*}
Hence, we can express the boundary conditions in the form of
\begin{align*}
\sigma_{\mathcal{P}}(d\rho)\,\eta ~|_{\partial M}&=0\,,\\
\left\{\mathcal{P}(\rho\, \eta)-\mathcal{L}_{\vec{n}}\left[\sigma_{\mathcal{P}}(d\rho)\,\eta\right]\right\} |_{\partial M}&=0\,.
\end{align*}
setting $\mathcal{P} = \dpp\dpm$ for the $D_{++}$ boundary condition and $\mathcal{P}=(\dpp\dpm)^\ast$ for the $N_{--}$ condition.
\end{remark}

Lemma \ref{green2} immediately implies the following results.

\begin{cor}\label{D++equ}
For a differential $k$-form $\eta$,  $\eta \in D_{++}$ is equivalent to the condition $$(\partial_{+}\partial_{-}\eta, \xi)=(\eta, (\partial_{+}\partial_{-})^{\ast}\xi)$$ for any $\xi \in \Omega^k$.
Similarly, $\eta \in N_{--}$ is equivalent to the condition 
$$((\partial_{+}\partial_{-})^{\ast}\eta, \xi)=(\eta, \partial_{+}\partial_{-}\xi)$$ 
again for any $\xi\in \Omega^k$.

\end{cor}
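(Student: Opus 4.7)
The plan is to derive both equivalences as direct consequences of Green's formula for second-order operators (Lemma \ref{green2}), applied with $P = \partial_+\partial_-$ for the $D_{++}$ case and $P = (\partial_+\partial_-)^{\ast}$ for the $N_{--}$ case, and then to analyze the vanishing of the resulting boundary integrals via a test-form argument.

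For the forward direction of the $D_{++}$ equivalence, I would set $P = \partial_+\partial_-$, $\phi = \eta$, $\psi = \xi$ in Lemma \ref{green2} to obtain
\begin{align*}
(\partial_+\partial_- \eta, \xi) - (\eta, (\partial_+\partial_-)^{\ast} \xi)
&= -\int_{\partial M}\bigl\langle 2\partial_+\partial_-(\rho\,\eta) - \tfrac{1}{2}\mathcal{L}_{\vec{n}}[\partial_+\partial_-(\rho^2\eta)],\, \xi\bigr\rangle\,dS\\
&\quad +\int_{\partial M}\bigl\langle \tfrac{1}{2}\partial_+\partial_-(\rho^2\eta),\, \mathcal{L}^{\ast}_{\vec{n}}\xi\bigr\rangle\,dS\,.
\end{align*}
If $\eta\in D_{++}$, clauses (1) and (2) of Definition \ref{sbc2def} make both integrands vanish along $\partial M$, so the right-hand side is zero for every $\xi\in\Omega^k$ and the forward implication follows immediately.

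The converse is the step where I expect the main subtlety. Assuming the pairing identity $(\partial_+\partial_-\eta,\xi) = (\eta,(\partial_+\partial_-)^{\ast}\xi)$ holds for every $\xi\in\Omega^k$, I need to extract two separate pointwise boundary conditions from the single scalar identity
$$\int_{\partial M}\langle A,\xi\rangle\,dS \;=\; \int_{\partial M}\langle B,\mathcal{L}^{\ast}_{\vec{n}}\xi\rangle\,dS\,,$$
where $A := \bigl\{2\partial_+\partial_-(\rho\,\eta) - \tfrac{1}{2}\mathcal{L}_{\vec{n}}[\partial_+\partial_-(\rho^2\eta)]\bigr\}|_{\partial M}$ and $B := \tfrac{1}{2}\partial_+\partial_-(\rho^2\eta)|_{\partial M}$. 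My plan is to invoke the standard trace extension fact that the two normal jets $\xi|_{\partial M}$ and $\mathcal{L}^{\ast}_{\vec{n}}\xi|_{\partial M}$ of a smooth $k$-form can be independently prescribed as arbitrary smooth sections along $\partial M$, by constructing $\xi$ supported in a collar neighborhood with the desired zeroth and first normal data. Testing against $\xi$ with $\mathcal{L}^{\ast}_{\vec{n}}\xi|_{\partial M}=0$ and $\xi|_{\partial M}$ arbitrary forces $A=0$, which is clause (2) of $D_{++}$; testing against $\xi$ with arbitrary $\mathcal{L}^{\ast}_{\vec{n}}\xi|_{\partial M}$ then forces $B=0$, which is clause (1). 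Together these give $\eta\in D_{++}$.

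The $N_{--}$ equivalence proceeds by the same scheme with the roles of $P$ and $P^{\ast}$ exchanged: applying Lemma \ref{green2} with $P = (\partial_+\partial_-)^{\ast} = \partial_-^{\ast}\partial_+^{\ast}$, $\phi = \eta$, $\psi = \xi$, the left-hand side becomes $((\partial_+\partial_-)^{\ast}\eta,\xi) - (\eta,\partial_+\partial_-\xi)$, while the two boundary integrands involve $\partial_-^{\ast}\partial_+^{\ast}(\rho\,\eta)$ and $\partial_-^{\ast}\partial_+^{\ast}(\rho^2\eta)$, matching precisely the two clauses of the $N_{--}$ condition in Definition \ref{sbc2def}. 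The same independent-trace argument on $\xi$ then yields the equivalence in this case.
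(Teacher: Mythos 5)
Your proposal is correct and follows essentially the same route as the paper, which simply observes that Corollary \ref{D++equ} is an immediate consequence of Lemma \ref{green2} applied with $P=\partial_{+}\partial_{-}$ (resp.\ $P=(\partial_{+}\partial_{-})^{\ast}$). Your explicit independent-trace argument for the converse direction just fills in the standard detail the paper leaves implicit, and it is valid.
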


Clearly, all six of the above boundary conditions -- $\{D_+, D_-, N_+, N_-\}$ in Definition \ref{sbc1def} and $\{D_{++}, N_{--}\}$ in Definition \ref{sbc2def} -- depend on the symplectic structure.  Being so, we will refer to them as {\it symplectic boundary conditions}.  Certainly, these boundary conditions are defined for general differential forms.  To get a better sense of these symplectic boundary conditions, we will focus our discussion in the following to primitive forms and explore the properties of these boundary conditions on them.

\subsubsection{Local description of boundary conditions on primitive forms}\label{locD}

To make clear the differences and infer the properties of the various new boundary conditions presented above, we provide here a local description of the boundary conditions on primitive forms.  For simplicity, we shall describe them in terms of a local Darboux basis $\{w_j=dx_j\}$ of $\Omega^1$ where $w_1=d\rho$ and $\omega=\underset{i}{\sum}w_{2i-1}\wedge w_{2i}$.  As before, we denote the dual basis of tangent vectors by $\{e_j\}$ and choose as the almost complex structure $J$ the standard one where $\mathcal{J}w_{2i-1} = -w_{2i}$ and  $\mathcal{J}w_{2i} = w_{2i-1}$ for $i=1, 2, \ldots, n$.  
In such a basis, any primitive differential $k$-form, $\beta\in P^k$, can be decomposed into four distinct terms \cite{TY2}:
\begin{align}\label{ldecomp}
\beta= w_1 \w \beta^1 + w_2 \w \beta^2 + \Theta_{12}\w\beta^3 + \beta^4
\end{align}
where $\beta^1\!, \,\beta^2\!\in P^{k-1},$ $\,\beta^3\!\in P^{k-2}$, and $\beta^4 \!\in P^k$ are primitive forms that do not contain any components of $w_1$ or $w_2$, and 
$$\Theta_{12}=  w_1 \w w_2 - \dfrac{1}{H+1} \sum_{i=2}^{n} w_{2i-1} \w w_{2i}~,$$
where $H$ is the degree counting operator defined in \eqref{defH}.

Using this decomposition, we can see see explicitly how the different boundary conditions constrain a primitive form $\beta$ along $\partial M$.   To start, consider first the $D$ condition which corresponds to $d\rho \w \beta~|_{\partial M}=w_1 \w \beta~|_{\partial M} =0$.  With $\beta$ expressed in the decomposed form of  \eqref{ldecomp}, the $D$ condition implies that $\,\bb=\bc=\bd=0$ on the boundary, and hence, locally $\beta\,|_{\partial M} = w_1 \w \ba$.  Now, let us consider the symplectic $D_+$ condition.   Recall from \eqref{dpproj} that  $\dpp = \Pi \, d $ when acting on a primitive form.  Thus, the $D_+$ condition corresponds to $\Pi(d\rho \w \beta)~|_{\partial M} =0\,$, which is just the projected form of the $D$ condition.  Applying the decomposition \eqref{ldecomp}, the $D_+$ condition implies only that $\bb=\bd=0$ on the boundary since $\Pi(w_1 \w (\Theta_{12} \w \bc)) = -\Pi[w_1 (1/(H+1)) \w \omega\w \bc] = 0$.  Hence, a primitive form that satisfies the $D_+$ condition takes the form  $\beta~|_{\partial M}= w_1 \w \ba + \Theta_{12} \w \bd$ along the boundary.  Compared to the $D$ condition, we see clearly that $D_+$ is a weaker condition than the $D$ condition.  In Table \ref{Tab5} and Table \ref{Tab6}, we write down the required local form for a general primitive form $\beta$ along $\partial M$ for all the boundary conditions that were discussed above.  Let us point out that for $\beta\in P^n$, the boundary conditions $D_{+}$ and $N_{-}$ are trivial, i.e. they do not impose any conditions on $\beta$.

\begin{table}[t!]
\caption{First-order boundary conditions and their constraints on a primitive form $\beta$ as expressed in the local basis of \eqref{ldecomp} with $w_1 = d\rho\,$.}
\renewcommand{\arraystretch}{1.2}
\begin{tabular}{r | c | l }
  & Condition on $\partial M$ &  Local Form on $\partial M$\\
\hline
$D$ & $w_1 \w \beta =0 $ & $\beta = w_1 \w \beta^1 $\\\hline
$N$ & $\iota_{e_1} \beta = 0$ & $\beta = w_2 \w \beta^2  + \beta^4 $\\\hline
$JD$ & $w_2 \w \beta =0$ & $\beta = w_2 \w \beta^2 $\\\hline
$JN$& $\iota_{e_2}\beta=0$ &$ \beta = w_1 \w \beta^1 + \beta^4$ \\\hline
$D_+$ & $\Pi(w_1 \w \beta) =0 $ & $\beta = w_1 \w \beta^1 + \Theta_{12} \w \beta^3$\\\hline
$N_+$ & $\iota_{e_1} \beta = 0$ & $\beta = w_2 \w \beta^2  + \beta^4$\\\hline
$D_-$ & $\iota_{e_2}\beta=0$   & $\beta = w_1 \w \beta^1 + \beta^4$\\\hline
$N_-$& $\Pi(w_2 \w \beta) =0$&$ \beta = w_2 \w \beta^2 + \Theta_{12} \w \beta^3$ \\\hline
\end{tabular}
\label{Tab5}
\end{table}

\begin{table}[t!]
\caption{Second-order symplectic boundary conditions and their constraints on a primitive form $\beta$ as expressed in the local basis of \eqref{ldecomp} with $w_1=d\rho\,$.  The primed operators $(\partial_{+}', \partial_{-}')$ are defined on the $(2n-2)$-dimension symplectic subspace spanned by $\{e_3, e_4, \ldots, e_{2n}\}$.}
\renewcommand{\arraystretch}{1.3}
\begin{tabular}{r | c }
   &  Conditions on Local Form on $\partial M$\\
\hline
$D_{++}$ & $\beta^2=0  \qquad(D_{+-} ~condition)$\\
&  $\partial_1 \beta^2 - \partial_2 \beta^1 + \dfrac{H+1}{H}\partial_{+}'\beta^3 + (H-1) \partial_{-}'\beta^4 = 0$\\\hline
$N_{--}$ &  $\beta^1=0\qquad(N_{+-}~ condition)$\\
&  $\partial_1 \beta^1 + \partial_2\beta^2 +(H+1) \partial_{-}'^\ast \beta^3 - \partial_{+}'^\ast\beta^4=0$\\\hline
\end{tabular}\label{Tab6}
\end{table}

The derivation for the case of $D_{++}$ and $N_{--}$ boundary conditions requires a bit more calculations.  For instance, for the $D_{++}$ condition, which consists of two conditions as in Definition \ref{sbc2def}, the first condition $D_{+-}$ imposes on $\partial M$  
\begin{align}\label{lobc1}
\sigma_{\dpp\dpm}(d\rho)\,\beta =w_1(\Lambda(w_1 \w \beta))=0\quad  \Longrightarrow \quad \beta^2=0\,.
\end{align}
In the form expressed in Remark \ref{sbc2rmk}, the second condition imposes on $\partial M$
\begin{align}\label{lobc2}
0&=\dpp\dpm(\rho\, \beta)-\mathcal{L}_{\vec{n}}\left[\sigma_{\dpp\dpm}(d\rho)\,\eta\right]\\
&=\dfrac{1}{H+1} w_1 \w \left[\partial_1 \beta^2 - \partial_2 \beta^1 + \dfrac{H+1}{H}\partial_{+}'\beta^3 + (H-1) \partial_{-}'\beta^4\right]  \nonumber\\
&\quad + \dfrac{1}{H+1} w_2 \w \partial_2 \beta^2 - \Theta_{12}\partial_{-}'\beta^2 + \dfrac{1}{H+1} \partial_{+}'\beta^2 \nonumber
\end{align}
where $(\partial_{+}', \partial_{-}')$ refers to the $(\dpp, \dpm)$ operators on the symplectic subspace spanned by $\{e_j\}$ for $j=3, 4, \ldots, 2n\,$.  Since this subspace is within $\partial M$ it is clear that the third line of \eqref{lobc2} vanishes if \eqref{lobc1} is imposed.  This results in the second condition for $D_{++}$ in Table \ref{Tab6}.  For $N_{--}$, one finds  
\begin{align}\label{lobc3}
\sigma_{(\dpp\dpm)^\ast}(d\rho)\,\beta = \iota_{e_1}(\omega\w(\iota_{e_1} \beta))=0\quad  \Longrightarrow \quad \beta^1=0\,.
\end{align}
and for the second differential condition on $\partial M$
\begin{align}\label{lobc4}
0&=(\dpp\dpm)^\ast(\rho\, \beta)-\mathcal{L}_{\vec{n}}\left[\sigma_{(\dpp\dpm)^\ast}(d\rho)\,\beta\right]\\
&=\dfrac{1}{H+1} w_2 \w \left[\partial_1 \beta^1 + \partial_2 \beta^2 + (H+1) \partial_{-}'^\ast \beta^3 - \partial_{+}'^\ast\beta^4\right]  \nonumber\\
&\quad - \dfrac{1}{H+1} w_1 \w \partial_2 \beta^1 - \dfrac{1}{H+1} \Theta_{12}\partial_{+}'^\ast\beta^1 - \dfrac{H}{H+1} \partial_{-}'^\ast\beta^1 \nonumber
\end{align}
where again the adjoint primed operators are defined on the co-dimension two symplectic subspace orthogonal to $\{e_1, e_2\}$.  Since $\beta^1=0$ on the boundary, the last line of \eqref{lobc4} vanishes and this gives the second condition for $N_{--}$ in Table \ref{Tab6}.

From these local characterizations and definitions, we can quickly find a number of relations relating the different boundary conditions.  For instance a primitive form $\beta \in P^k$ that satisfies $D$ automatically satisfies both $D_{+}$ and $D_{-}$, i.e. 
$$\beta \in D ~\Longrightarrow ~\bigg\{~\begin{matrix}\beta \in D_{+} \\ \beta \in D_{-} \end{matrix} ~.$$ 
From Tables \ref{Tab5} and \ref{Tab6},  we also obtain the following relations between the boundary conditions for primitive forms.
\begin{lemma} \label{rel}
With respect to a compatible triple $(\om, J, g)$ on a symplectic manifold $M^{2n}$, there are the following equivalent conditions for a primitive form $\beta \in P^k$,:
\begin{align*}
\beta  \in D_{+} &\Longleftrightarrow ~\mathcal{J}\beta \in N_{-}\,,\qquad\quad\beta  \in N_{+} \Longleftrightarrow ~ \beta \in N\,,\\
\beta  \in D_{-} &\Longleftrightarrow ~\mathcal{J}\beta\in N_{+}\,,\qquad\quad\beta  \in D_{-} \Longleftrightarrow  ~\beta \in JN\,,\\
\beta \in D_{++} &\Longleftrightarrow ~\mathcal{J}\beta \in N_{--}\,.\end{align*}  
\end{lemma}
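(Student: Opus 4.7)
The plan is to verify each equivalence by comparing the local Darboux-basis characterizations of the symplectic boundary conditions tabulated in Tables \ref{Tab5} and \ref{Tab6}, using the explicit action of $\mathcal{J}$ on this basis. In the chosen frame one has $\mathcal{J}w_1=-w_2$, $\mathcal{J}w_2=w_1$, and $\mathcal{J}$ fixes $\Theta_{12}$ because it is a real $(1,1)$-form; moreover $\mathcal{J}$ commutes with $\Lambda$ and therefore preserves primitives on the $(2n-2)$-dimensional symplectic subspace orthogonal to $\{e_1,e_2\}$. Applied termwise to the decomposition \eqref{ldecomp}, this gives
\[
\mathcal{J}\beta \;=\; w_1\w \mathcal{J}\beta^2 \;-\; w_2\w \mathcal{J}\beta^1 \;+\; \Theta_{12}\w \mathcal{J}\beta^3 \;+\; \mathcal{J}\beta^4\, ,
\]
so the four primitive components of $\mathcal{J}\beta$ are $(\mathcal{J}\beta)^1=\mathcal{J}\beta^2$, $(\mathcal{J}\beta)^2=-\mathcal{J}\beta^1$, $(\mathcal{J}\beta)^3=\mathcal{J}\beta^3$, and $(\mathcal{J}\beta)^4=\mathcal{J}\beta^4$.

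The equivalences $\beta\in N_+\Leftrightarrow\beta\in N$ and $\beta\in D_-\Leftrightarrow\beta\in JN$ are then immediate from Table \ref{Tab5}, since each pair admits the same local form on $\partial M$. For $\beta\in D_+\Leftrightarrow\mathcal{J}\beta\in N_-$, Table \ref{Tab5} reads the first condition as $\beta^2|_{\partial M}=\beta^4|_{\partial M}=0$ and the second as $(\mathcal{J}\beta)^1|_{\partial M}=(\mathcal{J}\beta)^4|_{\partial M}=0$, i.e.\ $\mathcal{J}\beta^2=\mathcal{J}\beta^4=0$ on $\partial M$; these coincide because $\mathcal{J}$ is invertible. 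The case $\beta\in D_-\Leftrightarrow\mathcal{J}\beta\in N_+$ is the same argument applied to the vanishing of $\beta^2$ and $\beta^3$.

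The second-order equivalence $\beta\in D_{++}\Leftrightarrow\mathcal{J}\beta\in N_{--}$ is more delicate. Its zeroth-order halves $(D_{+-})$ and $(N_{+-})$ match by the argument above, namely $\beta^2=0\Leftrightarrow\mathcal{J}\beta^2=0$ on $\partial M$. For the first-order halves in Table \ref{Tab6}, I substitute the identifications $(\mathcal{J}\beta)^i$ above into the $N_{--}$ first-order identity, use that the coordinate derivatives $\partial_1,\partial_2$ commute with $\mathcal{J}$ because $\mathcal{J}$ is constant-coefficient in the chosen Darboux frame, and apply the conjugate relations of Lemma \ref{conjugate} to the primed operators on the $(2n-2)$-dimensional sub-symplectic space to rewrite $\mathcal{J}^{-1}(\partial_+')^{\ast}\mathcal{J}$ and $\mathcal{J}^{-1}(\partial_-')^{\ast}\mathcal{J}$ in terms of $\partial_-'$ and $\partial_+'$ modulo scalar factors coming from $H+R$. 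Evaluating these scalars on the components $\mathcal{J}\beta^3$ and $\mathcal{J}\beta^4$ of definite degree should reproduce the coefficients $(H+1)/H$ and $(H-1)$ in the $D_{++}$ condition, so that multiplying the transformed $N_{--}$ identity by $\mathcal{J}^{-1}$ recovers the $D_{++}$ identity for $\beta$.

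The main obstacle I anticipate is the bookkeeping in this final step: matching the scalar factors produced by the sub-symplectic conjugate relations against the precise coefficients $\tfrac{H+1}{H},\,H-1,\,H+1,\,-1$ appearing in Table \ref{Tab6}, while distinguishing the ambient $H$ from its restriction $H'$ to the subspace. A short degree-by-degree check on each of the four primitive summands in \eqref{ldecomp} should account for the shifts by $\pm 1$ and the factor $1/H$, after which the invertibility of $\mathcal{J}$ completes the equivalence.
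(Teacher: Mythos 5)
Your proposal is correct and follows essentially the same route as the paper, which derives the lemma simply by reading off the local characterizations in Tables \ref{Tab5} and \ref{Tab6}; your explicit identification of the components $(\mathcal{J}\beta)^1=\mathcal{J}\beta^2$, $(\mathcal{J}\beta)^2=-\mathcal{J}\beta^1$, $(\mathcal{J}\beta)^3=\mathcal{J}\beta^3$, $(\mathcal{J}\beta)^4=\mathcal{J}\beta^4$ is exactly the verification the paper leaves implicit. The one step you leave as a sketch --- matching the scalar coefficients in the second-order $D_{++}/N_{--}$ equivalence via the conjugate relations on the codimension-two symplectic subspace --- is likewise not written out in the paper, and your proposed degree-by-degree check (keeping track of the shift between the ambient $H$ and the subspace degree operator) is the right way to complete it.
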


An important feature of these symplectic boundary conditions is that they can be preserved when acted upon by one of symplectic differential operators: $(\partial_{+}, \partial_{-}, \partial_{+}\partial_{-})$ and their adjoints.  The standard Dirichlet and Neumann boundary conditions do not have these properties.

\begin{lemma}\label{pre}
For $\beta \in P^k$,
\begin{align*}
\beta \in D_{+} &\Longrightarrow \partial_{+}\beta \in D_{+}~,~ \qquad\quad \beta \in N_{+} \Longrightarrow \partial_{+}^{\ast} \beta \in N_{+}\,,\\
\beta \in D_{-} &\Longrightarrow \partial_{-}\beta \in D_{-}~, ~\qquad\quad \beta \in N_{-} \Longrightarrow \partial_{-}^{\ast} \beta \in N_{-}\,.
\end{align*}
\end{lemma}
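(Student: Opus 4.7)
The plan is to derive all four implications uniformly by combining Green's formula (Lemma \ref{green}) with the nilpotency identities $\partial_{+}^{2}=\partial_{-}^{2}=0$ and their adjoints $(\partial_{+}^{\ast})^{2}=(\partial_{-}^{\ast})^{2}=0$.

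First I would recast each boundary condition as a weak integration-by-parts identity. Applying Green's formula to the pair $(\partial_{+},\partial_{+}^{\ast})$ gives, for every smooth test form $\xi$,
\begin{equation*}
(\partial_{+}\beta,\xi)-(\beta,\partial_{+}^{\ast}\xi)=\int_{\partial M}\langle\sigma_{\partial_{+}}(d\rho)\,\beta,\xi\rangle\,dS,
\end{equation*}
so $\beta\in D_{+}$ is equivalent to the identity $(\partial_{+}\beta,\xi)=(\beta,\partial_{+}^{\ast}\xi)$ for all $\xi$, while $\beta\in N_{+}$ is equivalent to $(\partial_{+}\xi,\beta)=(\xi,\partial_{+}^{\ast}\beta)$ for all $\xi$. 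Analogous restatements hold for $D_{-}$ and $N_{-}$ using the $(\partial_{-},\partial_{-}^{\ast})$ Green's formula.

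The second step uses nilpotency. To prove $\beta\in D_{+}\Rightarrow\partial_{+}\beta\in D_{+}$, I would apply Green's formula to $\partial_{+}\beta$ and reduce to showing the identity $(\partial_{+}^{2}\beta,\xi)=(\partial_{+}\beta,\partial_{+}^{\ast}\xi)$ for every $\xi$, so that the resulting boundary integral vanishes and hence $\sigma_{\partial_{+}}(d\rho)\,\partial_{+}\beta|_{\partial M}=0$. The left-hand side is zero because $\partial_{+}^{2}=0$; for the right-hand side, I invoke the weak form of the $D_{+}$ characterization of $\beta$ with the test form $\partial_{+}^{\ast}\xi$, obtaining $(\partial_{+}\beta,\partial_{+}^{\ast}\xi)=(\beta,(\partial_{+}^{\ast})^{2}\xi)=0$ by nilpotency of the adjoint. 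The implication $\beta\in N_{+}\Rightarrow\partial_{+}^{\ast}\beta\in N_{+}$ is symmetric: it suffices to check $(\partial_{+}\xi,\partial_{+}^{\ast}\beta)=(\xi,(\partial_{+}^{\ast})^{2}\beta)$ for all $\xi$; invoking the $N_{+}$ characterization of $\beta$ with test form $\partial_{+}\xi$ gives $(\partial_{+}\xi,\partial_{+}^{\ast}\beta)=(\partial_{+}^{2}\xi,\beta)=0$, while the right-hand side vanishes directly by $(\partial_{+}^{\ast})^{2}=0$. The $D_{-}$ and $N_{-}$ statements follow by the identical two-line argument with $\partial_{-}$ in place of $\partial_{+}$.

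I do not anticipate any serious obstacle; the argument is essentially a short consequence of Green's formula together with nilpotency, once the boundary conditions are restated in weak form. The only bookkeeping is to test with the correctly shifted form ($\partial_{+}^{\ast}\xi$ in the Dirichlet case and $\partial_{+}\xi$ in the Neumann case) when invoking the hypothesis on $\beta$, and this pattern is uniform. A purely pointwise attempt using Remark \ref{diffequiv} (working directly with $\partial_{+}(\rho\,\partial_{+}\beta)=-\partial_{+}(\Pi(d\rho\wedge\beta))$ via $\partial_{+}^{2}=0$) would be the main trap: writing $\Pi(d\rho\wedge\beta)=\rho\gamma$ locally does not immediately force $\partial_{+}(\rho\gamma)|_{\partial M}=0$. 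The Green's formula route circumvents this by capturing the needed constraint through $(\partial_{+}^{\ast})^{2}=0$.
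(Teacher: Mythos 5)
Your proposal is correct and follows essentially the same route as the paper: the authors also prove each implication by applying Green's formula to $\partial_{+}\beta$ (resp.\ $\partial_{+}^{\ast}\beta$), killing one side via $\partial_{+}^{2}=0$ and the other via $(\beta,\partial_{+}^{\ast}\partial_{+}^{\ast}\alpha)=0$, and concluding that the boundary integral vanishes against all test forms. The only cosmetic difference is that you state the weak reformulation of the boundary conditions explicitly, whereas the paper uses it implicitly.
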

\begin{proof}
The lemma can be proven by direct computation.  We here instead give a simple, quick proof which makes use of the inner product that comes with a compatible metric on $(M^{2n}, \om)$. 

In order to prove that $\partial_{+}\beta \in D_{+}$ for $\beta \in D_{+}$, it is enough to show that 
\begin{align}\label{316eq}
\int_{\partial M}\langle\partial_{+}(\rho\,\partial_{+}\beta), \alpha\rangle\,dS=0\,,
\end{align}
for any $\alpha \in P^{k+2}$. Now, since $\beta \in D_{+}$, we have $(\partial_{+}\beta, \partial_{+}^{\ast}\alpha)=(\beta, \partial^{\ast}_{+}\partial^{\ast}_{+}\alpha)=0$. On the other hand, 
$$(\partial_{+}\beta, \partial_{+}^{\ast}\alpha)=(\partial_{+}\partial_{+}\beta, \alpha)-\int_{\partial M}\langle\partial_{+}(\rho \partial_{+}\beta), \alpha\rangle\,dS=-\int_{\partial M}\langle\partial_{+}(\rho \partial_{+}\beta), \alpha\rangle\,dS~$$ 
which immediately implies \eqref{316eq} for any $\alpha \in P^{k+2}$.  The other three statements can be proved similarly.
\end{proof}
\begin{lemma}\label{pree}
For $k\leq n\,$, 
\begin{itemize}
\item if $\beta \in P^k_{D_{++}}\,$, then $\dpp\dpm \beta \in P^k_{D_-}\,$;
\item if $\beta \in P^{k-1}_{D_+}$, then $\partial_{+}\beta \in P^k_{D_{++}}\,$.  
\end{itemize} 
\end{lemma}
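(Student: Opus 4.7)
The plan is to follow the template of Lemma \ref{pre}: reduce each desired boundary condition to a pair of algebraic cancellations in the bulk, using the Green's formula of Lemma \ref{green} together with the adjoint-exchange characterization of $D_{++}$ furnished by Corollary \ref{D++equ}. Everything rests on two preliminary vanishing identities on primitive forms of degree at most $n$,
\[
\dpm\dpp\dpm=0\ \text{on}\ P^k,\qquad \dpp\dpm\dpp=0\ \text{on}\ P^{k-1},\qquad k\le n,
\]
together with their formal $L^2$-adjoints $\dpm^{\ast}\dpp^{\ast}\dpm^{\ast}=0$ and $\dpp^{\ast}\dpm^{\ast}\dpp^{\ast}=0$.

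To establish these I would combine the bulk identity $L\dpp\dpm=-L\dpm\dpp$ with the injectivity of $L$ on $P^{k-1}$ for $k\le n$, which follows from $[\Lambda,L]=H$ and primitivity since $\Lambda L\beta=H\beta=(n-k+1)\beta$ is a nonzero multiple of $\beta$. Applying the bulk identity to $\dpm\beta$ for $\beta\in P^k$ and using $\dpm^2=0$ gives $L(\dpm\dpp\dpm\beta)=0$, so $L$-injectivity forces $\dpm\dpp\dpm\beta=0$. For the second identity I would first extract $(\dpp\dpm+\dpm\dpp)=0$ on $P^{k-1}$ directly from $L(\dpp\dpm+\dpm\dpp)=0$ and the same injectivity, and then pre-compose with $\dpp$ (using $\dpp^2=0$) to obtain $\dpp\dpm\dpp=0$. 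The two adjoint vanishings follow automatically since these are identities of local differential operators on $\Omega^*$.

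For the first assertion, I would test $\dpm(\rho\,\dpp\dpm\beta)|_{\partial M}$ against an arbitrary $\alpha\in P^{k-1}$. Lemma \ref{green} yields
\[
\int_{\partial M}\langle\dpm(\rho\,\dpp\dpm\beta),\alpha\rangle\,dS=(\dpm\dpp\dpm\beta,\alpha)-(\dpp\dpm\beta,\dpm^{\ast}\alpha),
\]
and the first term vanishes by $\dpm\dpp\dpm=0$. For the second, Corollary \ref{D++equ} applied to the hypothesis $\beta\in D_{++}$ lets me rewrite $(\dpp\dpm\beta,\dpm^{\ast}\alpha)=(\beta,(\dpp\dpm)^{\ast}\dpm^{\ast}\alpha)=(\beta,\dpm^{\ast}\dpp^{\ast}\dpm^{\ast}\alpha)$, which is zero by the adjoint identity. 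Since $\alpha$ was arbitrary, $\dpm(\rho\,\dpp\dpm\beta)|_{\partial M}=0$ and so $\dpp\dpm\beta\in D_-$.

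For the second assertion, Corollary \ref{D++equ} reduces $\dpp\beta\in D_{++}$ to verifying that $(\dpp\dpm(\dpp\beta),\xi)=(\dpp\beta,(\dpp\dpm)^{\ast}\xi)$ for every $\xi\in\Omega^k$. The left-hand side vanishes by $\dpp\dpm\dpp=0$. The hypothesis $\beta\in D_+$, equivalent via Lemma \ref{green} to $(\dpp\beta,\gamma)=(\beta,\dpp^{\ast}\gamma)$ for all $\gamma$, converts the right-hand side into $(\beta,\dpp^{\ast}\dpm^{\ast}\dpp^{\ast}\xi)$, which is zero by the second adjoint identity. The only real subtlety in the whole argument is the $k=n$ instance of $\dpp\dpm\dpp=0$: because $L$ is not injective on $P^n$, one cannot apply $L$-injectivity directly to the output of $\dpp\dpm\dpp$; the two-step route through $(\dpp\dpm+\dpm\dpp)=0$ on $P^{n-1}$, followed by precomposition with $\dpp$, bypasses this issue cleanly and handles all $k\le n$ uniformly.
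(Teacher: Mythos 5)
Your argument is correct and essentially identical to the paper's: both bullets are reduced, via Green's formula and Corollary \ref{D++equ}, to the operator identities $\dpm\dpp\dpm=0$ and $\dpp\dpm\dpp=0$ together with their adjoints, which you justify explicitly where the paper leaves them implicit. The only quibble is terminological: to obtain $\dpp\dpm\dpp=0$ on $P^{k-1}$ (in particular for $k=n$) you should compose $\dpp$ on the \emph{left} of the anticommutator identity $(\dpp\dpm+\dpm\dpp)=0$ on $P^{k-1}$, writing $\dpp\dpm\dpp=\dpp(\dpm\dpp)=-\dpp^{2}\dpm=0$; pre-composing on the right would only yield the identity one degree lower.
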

\begin{proof}
Again, the quickest method of proof is similar to that given for Lemma \ref{pre}.  Let $\beta \in P^k_{D_{++}}$.   To show that $\dpp\dpm \beta \in P^k_{D_-}$, it suffices to prove that 
$$\int_{\partial M}\langle \dpm (\rho\,\dpp\dpm \beta) , \alpha\rangle \,dS=0\,,$$
for any $\alpha \in P^{k-1}$.  Since $\beta \in P^k_{D_{++}}$, it follows from Corollary \ref{D++equ} above that $(\dpp\dpm \beta, \partial_{-}^{\ast}\alpha)=(\beta, \partial_{-}^{\ast}\partial_{+}^{\ast}\partial_{-}^{\ast}\alpha)=0$. On the other hand, we have
\begin{align*}
0=(\dpp\dpm \beta, \partial_{-}^{\ast}\alpha)&=(\partial_{-}\dpp\dpm \beta, \alpha)-\int_{\partial {M}}\langle\partial_{-}(\rho\, \dpp\dpm \beta),\alpha\rangle\,dS\\
&=-\int_{\partial {M}}\langle\partial_{-}(\rho\, \dpp\dpm \beta),\alpha\rangle\,dS\,,
\end{align*} 
for any $\alpha \in P^{k-1}$ as desired.   

As for the second statement, let $\beta \in P^{k-1}_{D_+}$.  By Corollary \ref{D++equ}, it is enough to show that $((\partial_{+}\partial_{-})\partial_{+}\beta, \alpha)=(\partial_{+}\beta, (\partial_{+}\partial_{-})^{\ast}\alpha)$ for any $\alpha \in P^k$. Clearly, $((\partial_{+}\partial_{-})\partial_{+}\beta, \alpha)=0$. Furthermore, $(\partial_{+}\beta, (\partial_{+}\partial_{-})^{\ast}\alpha)=(\beta, \partial^{\ast}_{+}\partial_{-}^{\ast}\partial_{+}^{\ast}\alpha)=0$ since $\beta \in P^{k-1}_{D_{+}}$. Hence, the statement follows.
\end{proof}

Similar arguments give the following:
\begin{lemma}
For $k\leq n$, 
\begin{itemize}
\item if $\beta \in P^k_{N_{--}}\,$, then $(\dpp\dpm)^{\ast} \beta \in P^k_{N_{+}}\,$;
\item if $\beta \in P^{k-1}_{N_{-}}$, then $\partial_{-}^{\ast}\beta \in P^k_{N_{--}}\,$.  
\end{itemize} 
\end{lemma}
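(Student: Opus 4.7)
The plan is to adapt the template of Lemma \ref{pree}: translate each boundary condition into an integration-by-parts identity via Green's formula (Lemma \ref{green}) and Corollary \ref{D++equ}, and then verify that identity by reducing everything to $\dpp^{2}=\dpm^{2}=0$ together with the anticommutation $\dpp\dpm=-\dpm\dpp$, which holds on $P^{s}$ for every $s<n$.  The latter is immediate from the stated $L\,\dpp\dpm=-L\,\dpm\dpp$ plus the injectivity of $L$ on primitive forms of degree $<n$; passing to formal adjoints one likewise obtains $\partial_-^{\ast}\partial_+^{\ast}+\partial_+^{\ast}\partial_-^{\ast}=0$ on the same spaces.  The hypothesis $k\leq n$ will guarantee that every intermediate primitive form lies in a degree range where these anticommutations may be applied.

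For the first statement, I would recall that $\eta\in N_+$ is equivalent to $(\dpp\alpha,\eta)=(\alpha,\partial_+^{\ast}\eta)$ holding with no boundary contribution for every $\alpha\in P^{k-1}$, and then verify this identity with $\eta=(\dpp\dpm)^{\ast}\beta$ by showing both sides vanish.  Using $\beta\in N_{--}$ and Corollary \ref{D++equ}, the left-hand side rewrites as $(\beta,\dpp\dpm\,\dpp\alpha)$; the anticommutation applied at $\alpha\in P^{k-1}$ collapses this to $-(\beta,\dpm\,\dpp^{2}\alpha)=0$.  The right-hand side equals $(\alpha,\partial_+^{\ast}\partial_-^{\ast}\partial_+^{\ast}\beta)$, and the dual anticommutation applied at $\partial_+^{\ast}\beta\in P^{k-1}$ gives $\partial_+^{\ast}\partial_-^{\ast}\partial_+^{\ast}\beta=-\partial_-^{\ast}(\partial_+^{\ast})^{2}\beta=0$.

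For the second statement, I would rephrase $\partial_-^{\ast}\beta\in N_{--}$ via Corollary \ref{D++equ} as the identity $\bigl((\dpp\dpm)^{\ast}\partial_-^{\ast}\beta,\alpha\bigr)=(\partial_-^{\ast}\beta,\dpp\dpm\,\alpha)$ for every $\alpha\in P^{k}$, and again exhibit both sides as zero.  The right-hand side becomes $(\beta,\dpm\,\dpp\dpm\,\alpha)$ thanks to $\beta\in N_-$ and Green's formula for $\dpm$, and the anticommutation at $\dpm\alpha\in P^{k-1}$ combined with $\dpm^{2}=0$ annihilates $\dpm\,\dpp\dpm\,\alpha$.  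The left-hand side factors as $\partial_-^{\ast}\partial_+^{\ast}\partial_-^{\ast}\beta=(\dpm\,\dpp\dpm)^{\ast}\beta$, the formal adjoint of an operator just shown to be identically zero on $P^{k}$, and hence vanishes.  The only delicate bookkeeping is to make sure every invocation of the anticommutation stays within $P^{s}$ for $s<n$, which the bound $k\leq n$ guarantees; beyond this degree check no further analytic input is required.
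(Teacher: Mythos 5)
Your proof is correct and takes essentially the same route the paper intends: for this lemma the paper gives no separate argument, deferring to ``similar arguments'' as in Lemma \ref{pree}, and your reduction via Corollary \ref{D++equ} and Green's formula (Lemma \ref{green}) to the pointwise identities $\dpm\dpp\dpm=0$ and $\dpm^{\ast}\dpp^{\ast}\dpm^{\ast}=0$ is exactly that argument transposed to the Neumann-type conditions. Your care in applying the anticommutation only at primitive degrees below $n$ (which $k\le n$ guarantees) is the right degree bookkeeping, and the justification of $\dpp\dpm=-\dpm\dpp$ on $P^{s}$, $s<n$, from $L\,\dpp\dpm=-L\,\dpm\dpp$ and the injectivity of $L$ there is sound.
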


Lemmas \ref{pre} and \ref{pree} will turn out to be essential later in Section \ref{RelSec} to define the relative primitive cohomologies.

\subsubsection{Boundary conditions under maps}\label{bmap}

The two maps $\{\Pi, \ast_r\}$ on symplectic manifolds defined in \eqref{projmap}-\eqref{starmap},
\begin{align*}
\Pi&: \Omega^k \rightarrow P^k~,\\
\ast_ r&: P^k \rightarrow \om^{n-k}\w P^k \in \Omega^{2n-k}~,
\end{align*}
have particularly interesting properties when the forms that are mapped have specified boundary conditions.  It turns out that these two maps can relate forms with symplectic boundary conditions $D_{+}, D_{-}$ and $D_{+-}$  to those with the usual $D$ boundary condition. In the following, we will denote forms with a specified boundary conditions by a subscript.  For example, the notation $\Omega^k_D$ will denote the space of differential $k$-forms that satisfy the standard Dirichlet boundary condition $D$. 
\begin{prop}\label{prop4}
Under the $\Pi$ and $\ast_r$ maps, we have the following relations between forms with specified boundary conditions:
 \begin{align*}
 \Pi&: \Omega^k_D \longrightarrow \Bigg\{~~ \begin{matrix}P^k_{D_{+}} \qquad {\rm for~} \quad k<n\,, \\P^n_{D_{+-}}\qquad \!\!{\rm for~}\quad k=n\,, \end{matrix} \\
\ast_r&: P^k_{D_{-}}\! \longrightarrow ~~~~\Omega^{2n-k}_{D}~,\quad\quad  k \leq n\,.
\end{align*} Moreover, the first map is surjective and the second is injective.
\end{prop}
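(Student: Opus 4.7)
The plan is to establish four items: (a) $\Pi$ maps $\Omega^k_D$ into $P^k_{D_+}$ for $k<n$; (b) $\Pi$ maps $\Omega^n_D$ into $P^n_{D_{+-}}$; (c) $\Pi$ is surjective; and (d) $\ast_r$ sends $P^k_{D_-}$ into $\Omega^{2n-k}_D$ and is injective.  The common thread is that $\Pi$ annihilates the image of $L$, so each boundary condition can be tracked through the Lefschetz decomposition.

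For (a), I would use that $\partial_+=\Pi\,d$ on $P^*$ by \eqref{dpproj}, so the $D_+$-condition on a primitive $\beta$ reduces to $\Pi(d\rho\wedge\beta)|_{\partial M}=0$.  Writing $\eta=\Pi\eta+L\mu$ and invoking $\Pi\circ L=0$ yields the key identity $\Pi(d\rho\wedge\eta)=\Pi(d\rho\wedge\Pi\eta)$.  Since $\eta\in D$ gives $d\rho\wedge\eta|_{\partial M}=0$, this forces $\Pi(d\rho\wedge\Pi\eta)|_{\partial M}=0$, which is exactly $\Pi\eta\in P^k_{D_+}$.

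For (b), the main obstacle, the second-order nature of $D_{+-}$ requires more work.  Setting $\gamma=\Pi\eta\in P^n$ and noting that $\rho^2\gamma\in P^n$, I would compare $d(\rho^2\gamma)=\omega\wedge\partial_-(\rho^2\gamma)$ (forced by $P^{n+1}=0$) with its Leibniz expansion $d(\rho^2\gamma)=2\rho\,d\rho\wedge\gamma+\rho^2\omega\wedge\partial_-\gamma$, and use the injectivity of $L$ on $\Omega^{n-1}$ to extract $\partial_-(\rho^2\gamma)=2\rho\,\alpha+\rho^2\partial_-\gamma$, where $\alpha\in P^{n-1}$ is the unique primitive form with $L\alpha=d\rho\wedge\gamma$.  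Applying $\partial_+=\Pi\,d$ and restricting to $\partial M$ kills every term containing a factor of $\rho$, yielding $\partial_+\partial_-(\rho^2\gamma)|_{\partial M}=2\Pi(d\rho\wedge\alpha)|_{\partial M}$.  Since $L(d\rho\wedge\alpha)=d\rho\wedge L\alpha=d\rho\wedge d\rho\wedge\gamma=0$ and the kernel of $L$ on $\Omega^n$ is exactly $P^n$, the form $d\rho\wedge\alpha$ is already primitive, so $\Pi(d\rho\wedge\alpha)=d\rho\wedge\alpha$.  Finally, expanding the Lefschetz decomposition $\eta=\gamma+L\beta_{n-2}+L^2\beta_{n-4}+\cdots$ and using $d\rho\wedge\eta|_{\partial M}=0$ gives $L\alpha|_{\partial M}=-L\bigl(d\rho\wedge(\beta_{n-2}+L\beta_{n-4}+\cdots)\bigr)|_{\partial M}$; injectivity of $L$ on $\Omega^{n-1}$ then yields $\alpha|_{\partial M}=-d\rho\wedge(\beta_{n-2}+L\beta_{n-4}+\cdots)|_{\partial M}$, whence $d\rho\wedge\alpha|_{\partial M}=0$, completing (b).

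For surjectivity (c), the ansatz $\eta=\beta+L\xi$ automatically satisfies $\Pi\eta=\beta$; the condition $\eta\in D$ becomes $L(d\rho\wedge\xi)|_{\partial M}=-d\rho\wedge\beta|_{\partial M}$, and the $D_+$ (resp.\ $D_{+-}$) hypothesis forces the right-hand side to lie in the image of $L$.  Using the injectivity of $L$ in the relevant degree together with the Koszul identity $d\rho\wedge d\rho=0$, the problem reduces to $d\rho\wedge\xi|_{\partial M}=-\zeta|_{\partial M}$ for some $\zeta$ with $d\rho\wedge\zeta|_{\partial M}=0$, so $\zeta|_{\partial M}$ lies in the image of $d\rho\wedge$ and $\xi$ can be built locally and extended globally by a collar-neighborhood cutoff.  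Part (d) is the most direct: Lefschetz-decompose $d\rho\wedge\beta=\sum_{r\ge 0}L^r\gamma^{(r)}$ with $\gamma^{(r)}\in P^{k+1-2r}$; the $sl_2$ formula $\Lambda L^r\gamma^{(r)}=r(n-k+r)L^{r-1}\gamma^{(r)}$ translates the $D_-$ condition $\Lambda(d\rho\wedge\beta)|_{\partial M}=0$ into $\gamma^{(r)}|_{\partial M}=0$ for every $r\ge 1$.  The identity $\omega^{n-k}\wedge d\rho\wedge\beta=\sum_{r\ge 0}L^{n-k+r}\gamma^{(r)}$ then vanishes on $\partial M$ (the $r=0$ term by the primitivity of $\gamma^{(0)}\in P^{k+1}$, the $r\ge 1$ terms by the boundary vanishing of the $\gamma^{(r)}$), giving $\ast_r\beta\in\Omega^{2n-k}_D$; injectivity is immediate from the Lefschetz isomorphism $L^{n-k}\colon P^k\to L^{n-k}P^k$.
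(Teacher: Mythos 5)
Your argument is correct in substance but follows a genuinely different route from the paper's. The paper works throughout in a local Darboux frame: for surjectivity it invokes the four-term decomposition \eqref{ldecomp}, reads off which components $D_+$ (resp.\ $D_{+-}$) kills on $\partial M$, and writes down an explicit preimage $\eta = w_1\wedge\beta^1 + w_2\wedge\beta^2 + \tfrac{n-k+2}{n-k+1}w_1\wedge w_2\wedge\beta^3 + \beta^4$ glued by a partition of unity; for the $k=n$ case of the first map it uses the one-line observation that $\dpp\dpm$ preserves the primitive/non-primitive splitting, so $0=\Pi(w_1\Lambda(w_1\wedge\eta))|_{\partial M}=w_1\Lambda(w_1\wedge\beta)|_{\partial M}$. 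You instead stay frame-free: the identity $\Pi\circ L=0$, the two-term Lefschetz decomposition of $v\wedge\beta$ for $\beta$ primitive, the injectivity of $L$ below middle degree, and Koszul exactness of $d\rho\wedge$ do all the work. Your parts (a) and (d) essentially coincide with the paper's; your part (b), via $\partial_-(\rho^2\gamma)=2\rho\alpha+\rho^2\partial_-\gamma$, is longer than the paper's but valid and arguably more illuminating about where the second-order condition comes from; your surjectivity argument replaces the paper's explicit local formula with an abstract solvability statement, which is cleaner but buys less concreteness.

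One step needs repair: in the surjectivity argument for $k=n$, you claim that ``the injectivity of $L$ in the relevant degree together with the Koszul identity'' yields $d\rho\wedge\zeta|_{\partial M}=0$ for $\zeta=\alpha_{n-1}$, the primitive form with $L\alpha_{n-1}=d\rho\wedge\beta$. But $d\rho\wedge\alpha_{n-1}$ lives in $\Omega^n$, where $L$ is \emph{not} injective ($\ker L|_{\Omega^n}=P^n$), so Koszul only gives $d\rho\wedge\alpha_{n-1}|_{\partial M}\in P^n$. What closes the gap is precisely the $D_{+-}$ hypothesis: since $\Lambda L\alpha_{n-1}=H\alpha_{n-1}=\alpha_{n-1}$, the condition $w_1\Lambda(w_1\wedge\beta)|_{\partial M}=0$ says exactly $d\rho\wedge\alpha_{n-1}|_{\partial M}=0$. (Relatedly, your remark that the $D_{+-}$ hypothesis ``forces the right-hand side to lie in the image of $L$'' is vacuous in degree $n+1$, where this is automatic from $P^{n+1}=0$; its real job is the one just described.) With that correction the proof goes through.
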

\begin{proof} 
 Let $\eta \in \Omega_D^k$  for $k\leq n\,$.  We can express $\eta$ in terms of the following:
 \[
 \eta=\beta +\omega \wedge \xi
 \] with $\beta  \in P^k$ and $\xi \in \Omega^{k-2}$, and hence, $\Pi (\eta)=\beta $.  Around $\partial M$, we choose to work in the local Darboux basis $\{w_j\}$ as above. Since $\eta \in D$, this implies that
\begin{align}
 0=w_1 \wedge \eta~|_{\partial M} =[w_1\wedge \beta +\omega \wedge (w_1 \wedge \xi)]~{\big |}_{\partial M} ~.
\end{align} 
Therefore, $\Pi(w_1 \wedge \eta)~|_{\partial M}=\Pi (w_1 \wedge \beta )~|_{\partial M}=0$, and so we find for $k<n$, $\beta  \in D_{+}$ which gives the first map.  

Note that when $k=n$, $\Pi (w_1 \wedge \beta )=0$ is a trivially condition.  (Recall that the $D_{+}$ condition is an empty condition on primitive $n$-form.)  We want to show instead that $\beta \in D_{+-}$ when $k=n$.  This is the condition that $\sigma_{\partial_{+}\partial_{-}}(\drho)\,\beta ~|_{\partial M}=0\,$, or equivalently, $w_1\Lambda (w_1\wedge \beta )~|_{\partial M}=0\,$.  But since $\dpp\dpm$ maps primitive forms to primitive forms and non-primitive forms to non-primitive forms, it follows that
\begin{align}
0=\Pi(w_1\Lambda (w_1\wedge \eta))~|_{\partial M}=w_1\Lambda (w_1\wedge \beta )~|_{\partial M} ~,
\end{align}
where we have also noted $(w_1\wedge \eta)~|_{\partial M}=0\,$.  This thus proves that $\beta \in D_{+-}$ when $k=n$.

To see that the map $\Pi$ is surjective, consider first the case $k<n$ and $\beta  \in P_{D_{+}}^{k}\,$. Locally around $\partial M$, we again express $\beta$ in terms of the decomposition of \eqref{ldecomp}:
 \[
 \beta =w_1 \wedge \beta^1+ w_2\wedge \beta^2+ \Theta_{12} \wedge \beta^3+\beta^4.
 \]
We note that $\beta  \in D_{+}$ implies that at the boundary, $\beta^2~|_{\partial M}=\beta^4~|_{\partial M}=0\,$. Let us therefore define $\eta= w_1\wedge \beta^1+w_2 \wedge \beta^2+\frac{n-k+2}{n-k+1}w_1\wedge w_2 \wedge \beta^3+\beta^4$.  It can be straightforwardly checked that $\eta \in D$ since $w_1\wedge \eta~|_{\partial M}=0\,$, and moreover, $\Pi (\eta)=\beta $. Using the partition of unity, this leads to a well-defined global form with the desired properties. 
 
For the case of $k=n$, let $\beta  \in P_{D_{+-}}^{n}\,$. The local decomposition of \eqref{ldecomp} near the boundary becomes the following:
  \[
 \beta =w_1 \wedge \beta^1+ w_2\wedge \beta^2+ \Theta_{12}\wedge \beta^3~,
 \]
with $\beta^4=0$ since there are no primitive $n$-form without a component in either $w_1$ or $w_2$.  The condition $\beta \in D_{+-}$ further implies that  $\beta^2~|_{\partial M}=0\,$.   This leads us to define $\eta= w_1\wedge\beta^1+w_2\wedge \beta^2+2 \,w_1\wedge w_2 \wedge \beta^3$ which satisfies both $\eta \in D$ and $\Pi(\eta)=\beta$.

Finally, we consider the $\ast_r$ map. Let $\beta \in P_{D_{-}}^k$ for $k\leq n\,$.  We want to show that $\ast_r \,\beta = \om^{n-k} \w \beta$ satisfies the Dirichlet condition.  In local Darboux coordinates $\{w_j\}$ near the boundary, we find
\begin{align*}
w_1 \w (\ast_r \,\beta)~|_{\partial M}= &=\omega^{n-k}\wedge( w_1\wedge \beta)~|_{\partial M}\\
&=\omega^{n-k}\w \left(\Pi(w_1\wedge\beta)+ \om\w  [H^{-1}\Lambda(w_1\w \beta)]\right)|_{\partial M}\\
&=\omega^{n-k+1}H^{-1}\Lambda(w_1\wedge\beta)~|_{\partial M}~=0~.
\end{align*} 
Above, in the second line, we have Lefschetz decomposed $w_1\w \beta$ into two terms, $\beta_{k+1} + \om \w \beta_{k-1}$.  In the third line, we have noted that $\om^{n-k} \w \beta_{k+1} =0 $ by primitivity and also that $\beta \in D_{-}$ implies $\Lambda (w_1 \wedge \beta)|_{\partial M}=0\,$, which allow us to conclude that $\ast_r \, \beta \in D\,$.  Lastly, the injectiveness of this $\ast_r$  map follows from the injectiveness of the map $\ast_r: P^k \rightarrow \Omega^{2n-k}$ without any boundary conditions as mentioned right below \eqref{starmap}.  
  \end{proof}
 
Composing Proposition \ref{prop4} with the $\mathcal{J}$ map, we immediately obtain the following corollary relating $JD$ boundary condition with the $N_{-}, N_{+-}\,$, and $N_{+}$ boundary conditions.
 \begin{cor}
 Under the $\Pi$ and $\ast_r$ maps, we have the following relations between forms with specified boundary conditions:
 \begin{align*}
 \Pi&:  \Omega^k_{JD} \longrightarrow \Bigg\{~~ \begin{matrix}P^k_{N_{-}} \qquad {\rm for~} \quad k<n\,, \\P^n_{N_{+-}}\qquad \!\!{\rm for~}\quad k=n\,, \end{matrix} \\
\ast_r&: P^k_{N_{+}}\! \longrightarrow ~~~~\Omega^{2n-k}_{JD}~,\quad\quad  k \leq n\,.
  \end{align*} Moreover, the first map is surjective and the second is injective.
 \end{cor}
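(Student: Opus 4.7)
The plan is to obtain this corollary directly from Proposition \ref{prop4} by conjugating everything with the operator $\mathcal{J}$. Three properties of $\mathcal{J}$ do all the work: it is invertible on $\Omega^{\ast}$; because $\omega$ is of type $(1,1)$ with respect to $J$, $\mathcal{J}$ commutes with both $L$ and $\Lambda$, hence preserves primitive forms and commutes with $\Pi$ and $\ast_r$; and the boundary-condition translations established earlier, namely $\eta\in JD\Leftrightarrow \mathcal{J}\eta\in D$ together with $\beta\in D_{+}\Leftrightarrow \mathcal{J}\beta\in N_{-}$ and $\beta\in D_{-}\Leftrightarrow \mathcal{J}\beta\in N_{+}$ from Lemma \ref{rel}, let me trade every $J$-flavored boundary condition for its non-$J$ counterpart on the other side of $\mathcal{J}$.

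For the $\Pi$ assertion, I would start with $\eta\in\Omega^k_{JD}$. Then $\mathcal{J}\eta\in\Omega^k_{D}$, so Proposition \ref{prop4} gives $\Pi(\mathcal{J}\eta)\in P^k_{D_{+}}$ for $k<n$ and $\Pi(\mathcal{J}\eta)\in P^n_{D_{+-}}$ for $k=n$. Moving $\mathcal{J}$ outside via $\Pi\mathcal{J}=\mathcal{J}\Pi$ and peeling it off using the equivalences above yields $\Pi(\eta)\in P^k_{N_{-}}$ or $\Pi(\eta)\in P^n_{N_{+-}}$, respectively. Surjectivity is obtained by running the argument in reverse: given $\gamma\in P^k_{N_{-}}$ (or $P^n_{N_{+-}}$), set $\beta=\mathcal{J}^{-1}\gamma$, use surjectivity in Proposition \ref{prop4} to pick $\alpha\in\Omega^k_D$ with $\Pi\alpha=\beta$, and note $\mathcal{J}\alpha\in\Omega^k_{JD}$ with $\Pi(\mathcal{J}\alpha)=\mathcal{J}\beta=\gamma$.

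For the $\ast_r$ assertion, I would take $\beta\in P^k_{N_{+}}$, translate to $\mathcal{J}^{-1}\beta\in P^k_{D_{-}}$, and apply Proposition \ref{prop4} to get $\ast_r(\mathcal{J}^{-1}\beta)\in\Omega^{2n-k}_{D}$. Since $\mathcal{J}$ commutes with $\ast_r$, this is $\mathcal{J}^{-1}(\ast_r\beta)\in\Omega^{2n-k}_{D}$, i.e.\ $\ast_r\beta\in\Omega^{2n-k}_{JD}$. Injectivity is inherited from the injectivity of $\ast_r:P^k\to\Omega^{2n-k}$ already noted below \eqref{starmap}.

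The only ingredient not already in Lemma \ref{rel} is the second-order translation $\beta\in D_{+-}\Leftrightarrow\mathcal{J}\beta\in N_{+-}$, and I expect this to be the sole real verification step. Combining $\mathcal{J}\partial_{+}\mathcal{J}^{-1}=\partial_{-}^{\ast}(H+R)$ from Lemma \ref{conjugate} with its consequence $\mathcal{J}\partial_{-}\mathcal{J}^{-1}=(H+R)^{-1}\partial_{+}^{\ast}$ (using that $\mathcal{J}$ commutes with $H$ and $R$), the two $(H+R)$ factors cancel in the middle and give $\mathcal{J}(\partial_{+}\partial_{-})\mathcal{J}^{-1}=(\partial_{+}\partial_{-})^{\ast}$ exactly. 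Passing to principal symbols then yields $\sigma_{\partial_{+}\partial_{-}}(d\rho)\beta=0\Leftrightarrow\sigma_{(\partial_{+}\partial_{-})^{\ast}}(d\rho)\mathcal{J}\beta=0$, which is the required $D_{+-}\!\leftrightarrow\!N_{+-}$ equivalence needed to close the $k=n$ case of the $\Pi$ statement.
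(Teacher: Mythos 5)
Your proposal matches the paper's proof: the paper likewise deduces the corollary from Proposition \ref{prop4} by conjugating with $\mathcal{J}$, using $\Pi\,\mathcal{J}=\mathcal{J}\,\Pi$ (and the analogous commutation for $\ast_r$) together with the boundary-condition translations of Lemma \ref{rel}. The only quibble is in your final verification: $\mathcal{J}(\partial_{+}\partial_{-})\mathcal{J}^{-1}$ actually equals $-(\partial_{+}\partial_{-})^{\ast}$ rather than $(\partial_{+}\partial_{-})^{\ast}$ (a factor $\mathcal{J}^{2}=(-1)^{k}$ survives the cancellation), but since only the vanishing of the principal symbol matters, the $D_{+-}\!\leftrightarrow\!N_{+-}$ equivalence and the whole argument still go through.
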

 \begin{proof}
 Let $\eta \in \Omega^k_{JD}$. Then $\mathcal{J}\eta \in \Omega^k_{D}$.  By the lemma above, it follows that $\Pi(\mathcal{J}\eta)$ is either an element of $P^k_{D_{+}}$ when $k<n\,$, or $P^n_{D_{+-}}$ when $k=n$.  Since $\Pi(\mathcal{J}\eta)=\mathcal{J}(\Pi(\eta))$ and applying Lemma \ref{rel}, we obtain $\Pi(\eta) \in P^k_{N_{-}}$ for $k<n\,$ and $\Pi(\eta) \in P^n_{N_{+-}}$ for $k=n$. A similar argument applies for the $\ast_r$ map.
 \end{proof}

 \section{Hodge theory for symplectic Laplacians }
In this section, we will work out the Hodge theory for the symplectic Laplacians \eqref{sympLp}-\eqref{sympLmm} in Section \ref{symplap}.  To do so, we will introduce certain boundary value problems (BVPs) that will be shown to be elliptic.  We begin by first recalling the definition and some basic results of elliptic BVP. 

{\it Notation}: In the following Section 4.1 and Section 4.2 only, we will use the standard notation $H$ and $L$ to denote Hilbert and Lebesgue spaces.  They should not be confused with the $sl(2)$ representation operators defined in Sec. \ref{defsection} as their meaning should be clear from the context.

\subsection{Elliptic boundary value problems} 
We give the definition of an elliptic boundary value problem (BVP) following H\"ormander \cite[Sec.~20.1]{H3}  (see also Schwarz \cite[Sec.~1.6]{S} or Agranovich \cite[Sec. 7.1]{A}).
\begin{definition}[Elliptic BVP]\label{BVPdef}Let $M$ be a compact manifold with a smooth boundary $\partial M$.  Let $E$ and $F$ be vector bundles over $M$, and let $G_j$ for $j=1, \ldots, J$, be vector bundles over $\partial M$. For the differential operators, 
\begin{align*}
\left\{\begin{aligned}
&\mathcal{P}: C^{\infty}(M,E) \rightarrow C^{\infty}(M,F)\\
&\mathcal{B}_j: C^{\infty}(M,E) \rightarrow C^{\infty}(\partial M, G_j)\,, \quad j=1, \ldots, J\,,
\end{aligned}\right.
\end{align*} 
where $\mathcal{P}$ is an operator of order $2m$ and each $\mathcal{B}_j$ is a boundary differential operator of order $m_j$, we consider the boundary value problem for $u\in C^{\infty}(M, E)$ solving
\begin{align}\label{BVPeq}
\mathcal{P}u&=f  \qquad \;  on ~ M\\
\mathcal{B}_j u&=g_j \qquad on~\partial M \quad j=1, \ldots, J\nonumber
\end{align}
for some given $f\in C^{\infty}(M,F)$ and $g_j\in C^{\infty}(\partial M, G_j)$.    Denote by $p$ and $b_j$ the corresponding principal symbol of $\mathcal{P}$ and $\mathcal{B}_j$, respectively.  We say \eqref{BVPeq} is an elliptic BVP or simply that $\{\mathcal{P}, \mathcal{B}_j\}$ is elliptic if the following conditions are satisfied:
\begin{enumerate}
\item $\mathcal{P}$ is elliptic for all $x\in M$;
\item For every $x\in \partial M$ and $\xi \in T^{\ast}_{x}M\!\setminus\!\{0\}$ not proportional to the interior conormal $d\rho$, the map
\begin{align*}
\Psi_{x,\, \xi}: ~\mathcal{M}^{+}_{x,\, \xi}  &\rightarrow  \quad \bigoplus_{1\leq j \leq J} G_{j, x}\\
\phi\quad &\mapsto \quad \left(b_1(\xi+ d\rho \,D_t)\phi\big|_{t=0}\,,\, 
%b_2(x,\xi+d\rho\, D_t)\phi\big|_{t=0}\,, 
\ldots\,, \,b_J(\xi+d\rho\, D_t)\phi\big|_{t=0}\right)
\end{align*} 
is bijective, where 
\begin{align*}\mathcal{M}^{+}_{x,\, \xi}=\left\{\phi(t) \in C^{\infty}(\mathbb{R},E_x)\, \big|\, p(\xi+ d\rho\, D_t)\phi(t)=0, \text{bounded on $\mathbb{R}_{+}$}\right\}
\end{align*}
is the space of $\mathbb{R}_+$-bounded functions taking values on $E_x$,  
and $D_t=-i\frac{d}{d t}$.
\end{enumerate}
\end{definition}
\begin{remark}
As noted in \cite{H3}, it is sufficient to verify condition (2) above for $\xi \in T^{\ast}_{x}M\!\setminus\!\{0\}$ modulo $\mathbb{R} \,d\rho$.  Modulo this equivalence, $\xi$ can be taken to be orthogonal to $d\rho$ as we shall assume below.  The argument $(\xi + d\rho\, D_t  )$ in condition (2) above then can be thought of as applying the Fourier transform only in the directions tangential to the boundary $\partial M$ and the $t$ coordinate parametrizes the interior normal direction.  Condition (2) is often referred to as the Lopatinski or Shapiro-Lopatinski condition.  
\end{remark}

When a BVP satisfies the above elliptic conditions, the combined operator $\hat{\mathcal{P}}=\{\mathcal{P}, \mathcal{B}_j\}$, i.e. 
\begin{equation*}
\hat{\mathcal{P}}: H^s(M,E)\rightarrow H^{s-2m}(M,F)\oplus H^{s-m_1-\frac{1}{2}}(\partial M, G_1) \oplus \cdots \oplus H^{s-m_J-\frac{1}{2}}(\partial M, G_J)\,.
\end{equation*} 
is well-known to be Fredholm. The boundary value problems that we consider below will also be self-adjoint. 
\begin{definition}[Self-Adjoint BVP]When the bundle $F=E$, we say $\{\mathcal{P}, \mathcal{B}_j\}$ is self-adjoint if $\mathcal{P}$ is self-adjoint and the following holds: for any $u,v \in C^{\infty}(M,E)$,
\begin{itemize}
\item if $\mathcal{B}_j(u)=\mathcal{B}_j(v)=0$ for every $j$, then $(\mathcal{P}u, v)=(u, Pv)$;
\item if $\mathcal{B}_j(u)=0$ for every $j$, and $(\mathcal{P}u, v)=(u, \mathcal{P}v)$, then $\mathcal{B}_j(v)=0$ for every $j$.
\end{itemize}
\end{definition}
In the next lemma we give some general properties of self-adjoint elliptic BVPs. (For reference, see \cite{H3, SM}).
\begin{lemma}\label{ell}
For an elliptic BVP, $\hat{\mathcal{P}}=\{\mathcal{P}, \mathcal{B}_j\}$, that is self-adjoint,  the following holds: 
\begin{itemize}
\item the kernel of $\hat{\mathcal{P}}$, denoted by $\ker \hat{\mathcal{P}}$,  is finite and smooth;
\item for any $\chi \in H^s(M,F)$ which is orthogonal to $\ker \hat{\mathcal{P}}$, there exists a unique $\phi \in H^{s+2m}(M,E)$ and $\phi \bot \ker\hat{\mathcal{P}}$ such that $\mathcal{P}\phi=\chi$ and $B_j(\phi)=0\,$ for all $j\,$;
\item if $\chi \in H^s(M,F)$ and $\mathcal{P}\phi=\chi\,$ and $\mathcal{B}_j(\phi)=0\,$ for all $j\,$, then $\phi \in H^{s+2m}(M, E)\,$.
\end{itemize}
\end{lemma}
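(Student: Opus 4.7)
The plan is to deduce all three items from a single a priori estimate -- the classical Agmon--Douglis--Nirenberg estimate for elliptic boundary value problems -- which asserts that for every $\phi \in H^{s+2m}(M,E)$,
$$
\|\phi\|_{H^{s+2m}} \le C\left( \|P\phi\|_{H^{s}} + \sum_{j} \|B_j \phi\|_{H^{s-m_j-1/2}(\partial M)} + \|\phi\|_{L^2}\right).
$$
This estimate follows from the interior ellipticity of $P$ together with the Lopatinskii--Shapiro covering condition for $\{B_j\}$ via the construction of a boundary parametrix with pseudodifferential calculus, as in \cite{H3}, Ch.~20. Once it is in hand, each of the three conclusions is standard.

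For item~(1), applying the estimate at $s=0$ to $\phi\in \ker\mathcal{P}$ gives $\|\phi\|_{H^{2m}} \le C \|\phi\|_{L^2}$, so the $L^2$-unit ball of $\ker\mathcal{P}$ is bounded in $H^{2m}$, hence precompact in $L^2$ by Rellich's theorem, forcing $\dim \ker\mathcal{P} < \infty$. Smoothness of kernel elements is obtained by iterating the estimate in $s$ and invoking Sobolev embedding. For item~(3), the same bootstrap -- cut off by a partition of unity to interior patches where interior elliptic regularity applies, and to boundary patches where one straightens the boundary, freezes coefficients and works with a half-space model -- promotes a weak solution of $P\phi = \chi \in H^s$, $B_j\phi=0$ to an element of $H^{s+2m}(M,E)$. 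For item~(2), the a priori estimate implies that $\mathcal{P}$ is semi-Fredholm with closed range and finite-dimensional kernel; running the analogous estimate for the formal adjoint boundary problem $\mathcal{P}^{\ast}$ shows that the range has finite codimension, so $\mathcal{P}$ is Fredholm. By $L^2$ duality the orthogonal complement of the range is identified with $\ker \mathcal{P}^{\ast}$, and when $\mathcal{P}$ is of self-adjoint type -- the situation for each symplectic Laplacian considered in this section -- this coincides with $\ker \mathcal{P}$, so any $\chi \perp \ker\mathcal{P}$ lies in the range and is solved by a unique $\phi \perp \ker\mathcal{P}$.

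The only real obstacle is the boundary a priori estimate itself, since it hinges on verifying the Lopatinskii--Shapiro condition for the particular system $\{P, B_j\}$. Once this is accepted as a black-box input from \cite{H3, SM}, the lemma above is a packaging exercise; the genuine work in the remainder of Section~4 will therefore be the concrete verification that each pairing of a symplectic Laplacian with the appropriate symplectic boundary conditions from Section~3 does satisfy this covering condition.
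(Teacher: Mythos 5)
The paper does not actually prove this lemma: it is stated as a known package of results with the pointer ``(For reference, see \cite{H3} and \cite{SM})'', and the genuine work of Section~4 is, exactly as you say at the end, the verification of the Shapiro--Lopatinskii condition for each pairing of a symplectic Laplacian with its boundary conditions. Your sketch is a correct outline of what those references contain: the Agmon--Douglis--Nirenberg a priori estimate plus Rellich gives finiteness of the kernel, bootstrapping gives smoothness and the regularity statement in the third item, and the semi-Fredholm/adjoint argument gives solvability. So you are reconstructing the black box rather than taking a different route.

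One point in your write-up deserves emphasis, because it is actually a place where you are more careful than the paper. The second item as stated --- solvability of $P\phi=\chi$, $B_j(\phi)=0$ for every $\chi\perp\ker\mathcal{P}$ --- is \emph{false} for a general elliptic BVP: the correct solvability condition is orthogonality to the cokernel, i.e.\ to the kernel of the adjoint boundary value problem, which need not coincide with $\ker\mathcal{P}$. You correctly note that the identification $\ker\mathcal{P}^{\ast}=\ker\mathcal{P}$ requires the BVP to be of self-adjoint type, and that this holds for the problems \eqref{BVP1}--\eqref{BVP4} to which the lemma is applied (this is in effect what the Green's-formula computations in the proof of Theorem \ref{Hdp} establish). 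If one wanted the lemma to be literally correct as stated, either the hypothesis of self-adjointness should be added or the orthogonality in item two should be taken with respect to the adjoint kernel; your proposal handles this correctly, but be aware that as a proof of the lemma \emph{verbatim} for arbitrary $\{P,B_j\}$ it would not go through without that extra hypothesis.
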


With this lemma, we can show that the weak solutions of self-adjoint, elliptic BVPs are actually strong solutions.
\begin{lemma}\label{ws}
Given $\chi \in L^2(M,E)$.  Let $\phi \in L^2(M,E)$ satisfy the following:
\[
(\phi, \mathcal{P}\psi)=(\chi, \psi)
\]
for any $\psi \in C^{\infty}(M,E)$ satisfying $\mathcal{B}_j(\psi)=0\,$, for $j=1, \ldots, J$. Then $\phi \in H^{2m}(M,E)$ and 
\[ 
\mathcal{P}\phi=\chi\,,\quad \mathcal{B}_j(\phi)=0\,, ~~ {\rm for}~ j=1, \ldots, J\,.
\]
\end{lemma}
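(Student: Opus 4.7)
The plan is to upgrade the weak hypothesis to a strong solution by applying the Fredholm machinery of Lemma \ref{ell} and then identifying the residual difference with a smooth element of the kernel. First, let $\mathcal{N} = \ker \mathcal{P}$, which is finite-dimensional and consists of smooth sections by Lemma \ref{ell}. Orthogonally decompose $\chi = \chi_0 + \chi_1$ in $L^2(M,E)$ with $\chi_0 \in \mathcal{N}$ and $\chi_1 \perp \mathcal{N}$. Then Lemma \ref{ell} produces a unique $\tilde\phi \in H^{2m}(M,E) \cap \mathcal{N}^\perp$ with $P\tilde\phi = \chi_1$ and $B_j\tilde\phi = 0$ for each $j$; this is the candidate strong solution.

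Next, the BVPs considered here (the symplectic Laplacians of Section \ref{symplap} paired with the symplectic boundary conditions of Section 3) are formally self-adjoint, so the Green's formulas in Lemmas \ref{green} and \ref{green2} give $(P\tilde\phi, \psi) = (\tilde\phi, P\psi)$ whenever both $\tilde\phi$ and $\psi$ satisfy the boundary conditions. Setting $V = \{\psi \in C^\infty(M,E) : B_j\psi = 0 \text{ for all } j\}$, subtracting this identity from the weak hypothesis yields
\[
(\phi - \tilde\phi, P\psi) = (\chi_0, \psi) \qquad \text{for all } \psi \in V.
\]
Since $\mathcal{N} \subset V$ and $P$ annihilates $\mathcal{N}$, choosing $\psi = \chi_0$ forces $\|\chi_0\|_{L^2}^2 = 0$, so $\chi_0 = 0$ and $\chi = \chi_1$. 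Writing $\eta = \phi - \tilde\phi$, we then have $(\eta, P\psi) = 0$ for every $\psi \in V$.

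To finish, I would show $\eta \in \mathcal{N}$. Given any $\xi \in L^2(M,E) \cap \mathcal{N}^\perp$, Lemma \ref{ell} produces $\psi_\xi \in H^{2m}(M,E)$ with $P\psi_\xi = \xi$ and $B_j\psi_\xi = 0$. A density argument approximating $\psi_\xi$ in $H^{2m}$-norm by smooth elements of $V$ lets us pass the identity $(\eta, P\psi) = 0$ to the limit, obtaining $(\eta, \xi) = 0$. Since this holds for every such $\xi$, we conclude $\eta \in \mathcal{N} \subset C^\infty(M,E)$. Therefore $\phi = \tilde\phi + \eta \in H^{2m}(M,E)$, $P\phi = \chi_1 = \chi$, and $B_j\phi = 0$ for each $j$.

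The main obstacle is the density step in the last paragraph: one must verify that any $\psi \in H^{2m}(M,E)$ satisfying $B_j\psi = 0$ can be approximated in $H^{2m}$-norm by smooth elements of $V$, so that both $\psi$ and $P\psi$ converge in $L^2$. This is standard in elliptic BVP theory but becomes nontrivial in the presence of the higher-order symplectic boundary conditions $D_{++}$ and $N_{--}$, where the conditions involve both a symbol equation and a derivative equation along $\partial M$. A secondary point is the formal self-adjointness invoked in the second step, which must be checked case by case for each symplectic Laplacian against its chosen boundary operators, using the preservation properties of the symplectic boundary conditions under $\partial_+$, $\partial_-$, and $\partial_+\partial_-$.
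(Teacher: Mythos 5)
Your overall strategy---decompose $\chi$ against $\mathcal{N}=\ker\mathcal{P}$, produce a strong solution $\tilde\phi$ of the orthogonal part via Lemma \ref{ell}, and then show the residual $\eta=\phi-\tilde\phi$ lies in $\mathcal{N}$---is the same as the paper's, and your explicit verification that $\chi_0=0$ (by testing against $\psi=\chi_0\in\mathcal{N}$) is a point the paper's proof leaves implicit. The one genuine gap is exactly the step you flag: your final duality argument tests $\eta$ against $P\psi_\xi$, where $\psi_\xi\in H^{2m}(M,E)$ solves $P\psi_\xi=\xi$ for an \emph{arbitrary} $\xi\in L^2(M,E)\cap\mathcal{N}^{\perp}$, and this forces you to approximate $\psi_\xi$ in $H^{2m}$-norm by smooth sections satisfying $B_j=0$. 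That density statement is not among the tools supplied by Lemma \ref{ell}, and establishing it for the boundary operators actually used here (in particular $D_{++}$ and $N_{--}$, which couple a symbol condition with a normal-derivative condition along $\partial M$) is a nontrivial piece of trace theory that you do not carry out; as written the proof is incomplete at this point.

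The gap is easily closed, and closing it essentially recovers the paper's argument. Take $\xi$ to be \emph{smooth} and orthogonal to $\mathcal{N}$; such $\xi$ are dense in $L^2(M,E)\cap\mathcal{N}^{\perp}$, since projecting a smooth section off the finite-dimensional space $\mathcal{N}$ of smooth sections preserves smoothness. Then $\xi\in H^s$ for every $s$, so the regularity statements of Lemma \ref{ell} force $\psi_\xi\in H^{s+2m}$ for every $s$; hence $\psi_\xi$ is itself smooth and an admissible test section, the identity $(\eta,\xi)=(\eta,P\psi_\xi)=0$ holds with no approximation, and $\eta\in\mathcal{N}$ follows. This is in substance what the paper does: it approximates $\eta$ by smooth $\varphi_i$ in $L^2$, projects to $\varphi_i^2\perp\mathcal{N}$, solves $P\upsilon_i=\varphi_i^2$ with $\upsilon_i$ smooth by elliptic regularity, and uses $\upsilon_i$ as the test section---thereby only ever needing density of $C^{\infty}$ in $L^2$. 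Your secondary concern about formal self-adjointness is legitimate but is equally present (and equally tacit) in the paper's own proof, which also uses $(\tilde\phi,P\psi)=(P\tilde\phi,\psi)$ for admissible pairs; it is guaranteed by the Green's formulas of Lemmas \ref{green} and \ref{green2} together with the way the symplectic boundary conditions were designed to kill the boundary integrals.
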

When $\chi=0$, Lemma \ref{ws} implies immediately the following:
\begin{cor}\label{ker}
If $\phi \in L^2(M,E)$ satisfies $(\phi, \mathcal{P}\psi)=0$ for any $\psi\in C^{\infty}(M,E)$ with $\mathcal{B}_j(\psi)=0\,$, for $j=1, \cdots, J$, then $\phi \in \ker \hat{\mathcal{P}}$. In particular, $\phi$ is smooth and $\mathcal{B}_j(\phi)=0\,$, for $j=1, \cdots, J$.
\end{cor}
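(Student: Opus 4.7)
The plan is to read Corollary \ref{ker} as the zero-source specialization of Lemma \ref{ws}, followed by an invocation of elliptic regularity for kernel elements. No new analytical input is required.

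First I would observe that the hypothesis $(\phi, P\psi) = 0$ for every $\psi \in C^{\infty}(M,E)$ with $B_j(\psi) = 0$ is exactly the hypothesis of Lemma \ref{ws} with $\chi = 0$. Applying that lemma immediately yields $\phi \in H^{2m}(M,E)$, together with $P\phi = 0$ as an element of $L^2(M,E)$ and $B_j(\phi) = 0$ on $\partial M$ for every $j = 1, \ldots, J$. These three conclusions say precisely that $\mathcal{P}\phi = 0$, where $\mathcal{P} = \{P, B_j\}$ is the combined operator of the elliptic BVP. Hence $\phi \in \ker \mathcal{P}$.

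To upgrade the $H^{2m}$ regularity to $C^{\infty}$ regularity, I would appeal to the first bullet of Lemma \ref{ell}, which asserts that $\ker \mathcal{P}$ is finite-dimensional and consists of smooth sections. Since $\phi \in \ker \mathcal{P}$, this gives $\phi \in C^{\infty}(M,E)$, and in particular each boundary identity $B_j(\phi) = 0$ holds in the classical pointwise sense on $\partial M$.

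There is no genuine obstacle in this argument: all of the substantive analytic work — the passage from a weak pairing identity to a strong $H^{2m}$ solution satisfying the boundary operators, as well as the smoothness of kernel elements — has already been absorbed into Lemmas \ref{ws} and \ref{ell}. The corollary simply records the homogeneous case in a form that is convenient for later Hodge-theoretic applications, where $L^2$-orthogonality against the range of $P$ on test forms subject to the boundary operators $B_j$ needs to be identified with honest membership in $\ker \mathcal{P}$.
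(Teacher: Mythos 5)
Your proposal is correct and matches the paper exactly: the paper states this corollary as the immediate $\chi=0$ specialization of Lemma \ref{ws}, which gives $\phi\in H^{2m}(M,E)$, $P\phi=0$, and $B_j(\phi)=0$, i.e. $\phi\in\ker\mathcal{P}$. Your additional appeal to the first bullet of Lemma \ref{ell} for smoothness of kernel elements is the intended (and only) remaining step.
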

We will give a proof of Lemma \ref{ws} based on the arguments of Schechter \cite{SM}, where the case for functions is proved.
\begin{proof}[Proof of Lemma \ref{ws}]
Since the space $\ker \hat{\mathcal{P}}$ is finite-dimensional, we can write $\chi= \chi^1+\chi^2$ with $\chi^1\!\in \ker \hat{\mathcal{P}}$ and $\chi^2 \bot \ker\hat{\mathcal{P}}$. By Lemma \ref{ell}, there exists a $\varphi \in H^{2m}(M, E)$ such that $\mathcal{P} \varphi=\chi^2$ and $\mathcal{B}_j(\varphi)=0\,$, for $j=1, \cdots, J$. Then 
\[
(\phi-\varphi, \mathcal{P}\psi)=(\chi^1, \psi)
\] for any $\psi \in C^{\infty}(M,E)$ satisfying the boundary conditions $\mathcal{B}_j(\psi)=0\,$, for $j=1, \cdots, J\,$. There exists a sequence $\varphi_i \in C^{\infty}(M,E)$ such that $\varphi_i \rightarrow \phi-\varphi$ in $L^2$ norm, as $i \rightarrow \infty$. Let $\varphi_i=\varphi_i^1+\varphi_i^2$ with $\varphi_i^1 \in \ker \hat{\mathcal{P}}$ as the projection and $\varphi_i^2 \bot \ker\hat{\mathcal{P}}$. Then there exist $\upsilon_i \in H^{2m}(M,E)$ with $\upsilon_i \bot \ker \hat{\mathcal{P}}$ such that $\mathcal{P}\upsilon_i=\varphi_i^2$ and $\mathcal{B}_j(\upsilon_i)=0$ for every $i$ and $j$. Therefore, 
\begin{align*}
(\phi-\varphi, \varphi_i)&=(\phi-\varphi, \varphi_i^1)+(\phi-\varphi, \varphi_i^2)=(\phi-\phi, \phi_i^1)+(\phi-\varphi, P\upsilon_i)\\
&=(\phi-\varphi, \varphi_i^1)+(\chi^1, \upsilon_i)=(\phi-\varphi, \varphi_i^1)\,.
\end{align*} 
As $i\rightarrow \infty$, we get $\varphi_i^1 \rightarrow \phi-\varphi$. Since $\ker \hat{\mathcal{P}}$ is closed and $\phi-\varphi \in \ker \hat{\mathcal{P}}$, they imply that $ \phi \in H^{2m}(M,E)$ and $\mathcal{B}_j(\phi)=0\,$, for $j=1, \cdots, J\,$.
\end{proof}

\subsection{Hodge decompositions}\label{hodgede}
\begin{definition}
We call the following spaces 
\begin{align*}
P\mathcal{H}_{+}^k&=\{\beta \in H^1\!P^k~ {\big |}\, \partial_{+} \beta=\partial_{+}^{\ast} \beta=0\}\,,\quad  P\mathcal{H}_{-}^k=\{\beta \in H^1\!P^k~ {\big |}\,\partial_{-} \beta=\partial_{-}^{\ast} \beta=0\}\,, 
\end{align*} 
where $k=0,1, \ldots, n-1\,$, and
\begin{align*}
P\mathcal{H}_{+}^n=\{\beta \in H^2P^n|\,\partial_{+} \partial_{-}\beta=\partial_{+}^{\ast} \beta=0\}, \quad P\mathcal{H}_{-}^n=\{\beta \in H^2P^n|\, \partial_{-} \beta=\partial_{-}^{\ast} \partial_{+}^{\ast}\beta=0\}\,,
\end{align*} 
the space of {\it harmonic fields} for $\Delta_+$, $\Delta_-$, $\Delta_{++}$, and $\Delta_{--}$ Laplacians, respectively.  
\end{definition} 
\begin{remark} For a manifold with boundary, the notion of a harmonic field is different from that of a harmonic form.  For instance, a primitive $k$-form $\beta \in P^k$ is a harmonic form of $\Delta_{+}$ if $\Delta_{+}\, \beta=0$ on $M$. However, this does not imply that $\beta$ is also a harmonic field (i.e. $\partial_{+}\, \beta=\partial_{+}^{\ast}\, \beta=0$) when $\partial M$ is non-empty. 
\end{remark} 
Below, we shall use the theory of elliptic BVPs to obtain Hodge decompositions of primitive forms on symplectic manifolds with boundary.  We begin first with the decompositions associated with the second-order Laplacians, $(\Delta_+, \Delta_-)$, and then proceed to describe the case of the fourth-order Laplacians, $(\Delta_{++}, \Delta_{--})$.   

\subsubsection{Second-order symplectic Laplacians}
\begin{thm}[Hodge decomposition for $\Delta_{+}$]\label{Hdp}
For $k<n\,$, 
\begin{itemize}
\item[1.] $P\mathcal{H}^k_{+, D_{+}}$ and $P\mathcal{H}^k_{+, N_{+}}$ are finite-dimensional and smooth;
\item[2.]  The following decompositions hold:
\begin{align*} 
 {\rm (i)}~~& L^2P^k =P\mathcal{H}_{+,D_{+}}^k\oplus \partial_{+}\,H^1\!P_{D_{+}}^{k-1}\oplus \partial_{+}^{\ast}\,H^1\!P^{k+1};\\ 
{\rm (ii)}~~& L^2P^k =P\mathcal{H}_{+,N_{+}}^k\oplus \partial_{+}\,H^1\!P^{k-1}\oplus \partial_{+}^{\ast}\,H^1\!P_{N_{+}}^{k+1};\\
{\rm (iii)}~~& L^2 P^k =L^2P\mathcal{H}_{+}^k \oplus \partial_{+}\,H^1\!P_{D_{+}}^{k-1}\oplus \partial_{+}^{\ast}\, H^1 P^{k+1}_{N_{+}}.
\end{align*} 
\end{itemize}
\end{thm}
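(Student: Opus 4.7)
The strategy is to pair the second-order operator $\Delta_+$ with two boundary operators each to produce two elliptic boundary value problems, and then apply Lemmas \ref{ell}--\ref{ws}. Concretely, I would consider the systems
$$\mathcal{P}_{D_+}:\quad \Delta_+\beta,\quad \sigma_{\partial_+}(d\rho)\,\beta\big|_{\partial M},\quad \sigma_{\partial_+}(d\rho)(\partial_+^*\beta)\big|_{\partial M},$$
$$\mathcal{P}_{N_+}:\quad \Delta_+\beta,\quad \sigma_{\partial_+^*}(d\rho)\,\beta\big|_{\partial M},\quad \sigma_{\partial_+^*}(d\rho)(\partial_+\beta)\big|_{\partial M},$$
in which $\Delta_+$ has order $2$ and the paired boundary operators have orders $0$ and $1$ (the correct count). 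The main obstacle I anticipate is verifying the Lopatinski-Shapiro condition, which should follow from the ellipticity of the symplectic complex of Section \ref{symplap} via a frozen-coefficient half-space model in a local Darboux frame as in Section \ref{locD}. The care needed is that $\partial_+=\Pi\,d$ on $P^*$ involves the primitive projection $\Pi$, so the principal symbol must be extracted from Lemma \ref{c1} rather than from $d$ directly.

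Granted ellipticity, Lemma \ref{ell} gives that $\ker\mathcal{P}_{D_+}$ and $\ker\mathcal{P}_{N_+}$ are finite-dimensional and consist of smooth forms. I then identify these kernels with $P\mathcal{H}^k_{+,D_+}$ and $P\mathcal{H}^k_{+,N_+}$ via Green's formula (Lemma \ref{green}): if $\mathcal{P}_{D_+}\beta=0$, then
$$0=(\Delta_+\beta,\beta)=\|\partial_+\beta\|^2+\|\partial_+^*\beta\|^2,$$
because the two boundary integrals arising from integration by parts of $\partial_+\partial_+^*$ and $\partial_+^*\partial_+$ are killed respectively by the second and first boundary conditions of $\mathcal{P}_{D_+}$; hence $\partial_+\beta=\partial_+^*\beta=0$, and the condition $\partial_+^*\beta\in D_+$ becomes automatic. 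The $N_+$ case is parallel. This proves part 1.

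For decompositions (i) and (ii), pairwise orthogonality of the summands is a direct Green's-formula computation exploiting $(\partial_+)^2=(\partial_+^*)^2=0$ together with the relevant $D_+$ or $N_+$ condition on one factor (for example $(\partial_+\alpha,\partial_+^*\gamma)=(\alpha,(\partial_+^*)^2\gamma)+\int_{\partial M}\langle\sigma_{\partial_+}(d\rho)\alpha,\partial_+^*\gamma\rangle\,dS=0$ when $\alpha\in D_+$). For existence in (i), given $\phi\in L^2P^k$, set $\chi=\phi-\phi_H$ with $\phi_H$ the orthogonal projection onto the finite-dimensional $P\mathcal{H}^k_{+,D_+}$, so $\chi\perp\ker\mathcal{P}_{D_+}$. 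Lemma \ref{ell} produces a unique $\psi\in H^2P^k$ solving $\Delta_+\psi=\chi$ with both boundary conditions of $\mathcal{P}_{D_+}$ satisfied; then $\chi=\partial_+(\partial_+^*\psi)+\partial_+^*(\partial_+\psi)$, with $\partial_+^*\psi\in H^1P^{k-1}_{D_+}$ enforced by the second boundary condition of $\mathcal{P}_{D_+}$ and no boundary constraint required on $\partial_+\psi\in H^1P^{k+1}$. The argument for (ii) is symmetric with $\mathcal{P}_{N_+}$.

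Decomposition (iii) requires a separate argument since $L^2P\mathcal{H}^k_+$ (harmonic fields with no boundary constraint) is typically infinite-dimensional, so elliptic BVP theory does not directly produce it. Setting $V:=\partial_+H^1P^{k-1}_{D_+}\oplus\partial_+^*H^1P^{k+1}_{N_+}$, orthogonality of the three summands is again Green's-formula; then I would show $V^\perp=L^2P\mathcal{H}^k_+$ inside $L^2P^k$. If $\psi\perp V$, testing against $\alpha\in C_c^\infty P^{k-1}\subset H^1P^{k-1}_{D_+}$ and $\gamma\in C_c^\infty P^{k+1}\subset H^1P^{k+1}_{N_+}$ (which trivially satisfy $D_+$ and $N_+$ as they vanish near $\partial M$) forces $\partial_+^*\psi=0$ and $\partial_+\psi=0$ as distributions in the interior, and interior elliptic regularity for $\Delta_+$ makes $\psi$ smooth in the interior, so $\psi\in L^2P\mathcal{H}^k_+$, completing (iii).
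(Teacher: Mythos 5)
Your proposal follows the paper's proof almost exactly for part 1 and for decompositions (i) and (ii): the same two boundary value problems (the paper's \eqref{BVP1} and \eqref{BVP2}, written there in the equivalent differential form $\partial_{+}(\rho\,\beta)=\partial_{+}(\rho\,\partial_{+}^{\ast}\beta)=0$, etc.), the same identification of their kernels with $P\mathcal{H}^k_{+,D_+}$ and $P\mathcal{H}^k_{+,N_+}$ via Green's formula, and the same use of Lemma \ref{ell} to produce $\psi$ with $\Delta_+\psi=\phi-\phi_H$. The ellipticity step you only sketch as an anticipated obstacle is carried out in the paper by counting boundary conditions against the four-component local decomposition \eqref{ldecomp} of a primitive form (two $D_+$ conditions supply exactly the $\tbeta^1,\tbeta^2,\tbeta^3,\tbeta^4$ worth of data needed), which is the concrete realization of the half-space model you describe. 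The one place you genuinely diverge is (iii). The paper derives it by superposing the two decompositions already proved: writing $\beta=\gamma_1+\partial_+\varphi_1+\partial_+^*\sigma_1=\gamma_2+\partial_+\varphi_2+\partial_+^*\sigma_2$, setting $\varphi=\beta-\partial_+\varphi_1-\partial_+^*\sigma_2$, and checking $\varphi\perp\partial_+ H^1P^{k-1}_{D_+}$ and $\varphi\perp\partial_+^* H^1P^{k+1}_{N_+}$ using the density of $H^1P^k_{D_+}$ and $H^1P^k_{N_+}$ in $H^1P^k$; this yields the decomposition first at the $H^1$ level and then passes to $L^2$ by completion. You instead characterize $V^\perp$ directly by testing against compactly supported forms and invoking interior elliptic regularity. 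That route works, but note two points it silently uses: (a) $V=\partial_+H^1P^{k-1}_{D_+}\oplus\partial_+^*H^1P^{k+1}_{N_+}$ must be $L^2$-closed for $L^2P^k=V\oplus V^\perp$ to be the asserted decomposition --- this does follow from (i) and (ii), as the paper remarks, but you should say so; and (b) your $V^\perp$ is the space of $L^2$ forms that are $\partial_+$- and $\partial_+^*$-closed distributionally, which must be identified with the paper's $L^2P\mathcal{H}^k_+$ (the $L^2$-completion of the $H^1$ harmonic fields); interior regularity gives smoothness in the interior but not up to $\partial M$, so this identification is a genuine, if standard, extra step that the paper's density argument avoids.
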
 
Note the presence of an additional subscript when we would like to restrict consideration to differential forms that satisfy a particular boundary condition.  For instance, $P_{D_{+}}^k$ denotes the space of primitive $k$-forms that satisfy the $D_+$ boundary condition.  Applying the above results to $\mathcal{J}\beta$, we obtain analogous Hodge decompositions for $\Delta_{-}$.
\begin{thm}[Hodge decomposition for $\Delta_{-}$]\label{Hdm}
 For $k<n\,$,
 \begin{itemize}
  \item [1.] $P\mathcal{H}^k_{-, D_{-}}$ and $P\mathcal{H}^k_{-, N_{-}}$ are finite-dimensional and smooth.
\item [2.] The following decompositions hold:
\begin{align*}
{\rm (i)}~~&L^2P^k=P\mathcal{H}_{-, D_{-}}^k \oplus \partial_{-}\, H^1\!P_{D_{-}}^{k+1}\oplus  \partial_{-}^{\ast}\,H^1\!P^{k-1};\\
{\rm (ii)}~~&L^2P^k=P\mathcal{H}_{-, N_{-}}^k \oplus  \partial_{-}\,H^1\!P^{k+1}\oplus  \partial_{-}^{\ast}\,H^1\!P_{N_{-}}^{k-1};\\
{\rm (iii)}~~&L^2 P^k=L^2P\mathcal{H}_{-}^k \oplus  \partial_{-}\,H^1\!P^{k+1}_{D_{-}}\oplus  \partial_{-}^{\ast}\,H^1\!P_{N_{-}}^{k-1}.
\end{align*}
\end{itemize}  
\end{thm}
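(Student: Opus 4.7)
The plan is to deduce Theorem \ref{Hdm} directly from Theorem \ref{Hdp} via the conjugation operator $\mathcal{J}$ defined in \eqref{conjop}. Since $\mathcal{J}$ is a pointwise fiberwise isomorphism of the exterior form bundle satisfying $\mathcal{J}^2 = (-1)^k$ on $\Omega^k$, it preserves every Sobolev space $H^s P^k$ and is an $L^2$-isometry up to sign. Restricting the conjugate relations of Lemma \ref{conjugate} to primitive $k$-forms (so $R=0$ and $H$ acts as the scalar $n-k$ on $P^k$), one obtains for $\beta \in P^k$
\[
\mathcal{J}\,\partial_{+}\beta = (n-k)\,\partial_{-}^{\ast}\,\mathcal{J}\beta, \qquad \mathcal{J}\,\partial_{+}^{\ast}\beta = (n-k+1)\,\partial_{-}\,\mathcal{J}\beta.
\]
Both scalar factors are nonzero throughout $k < n$, so $\mathcal{J}$ intertwines $(\partial_{+}, \partial_{+}^{\ast})$ with $(\partial_{-}^{\ast}, \partial_{-})$ up to invertible constants.

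I would then combine these intertwinings with Lemma \ref{rel}, which gives the boundary-condition correspondences $D_{+} \leftrightarrow N_{-}$ and $N_{+} \leftrightarrow D_{-}$ under $\mathcal{J}$. Applying $\mathcal{J}$ to the spaces of harmonic fields produces isomorphisms
\[
\mathcal{J}:\ P\mathcal{H}^k_{+, D_{+}} \stackrel{\sim}{\longrightarrow} P\mathcal{H}^k_{-, N_{-}}, \qquad \mathcal{J}:\ P\mathcal{H}^k_{+, N_{+}} \stackrel{\sim}{\longrightarrow} P\mathcal{H}^k_{-, D_{-}},
\]
from which part (1) of Theorem \ref{Hdm} transfers immediately from part (1) of Theorem \ref{Hdp}. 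Likewise, $\mathcal{J}$ sends the summands appearing in the three decompositions of Theorem \ref{Hdp} via
\[
\partial_{+}H^1\!P^{k-1}_{D_{+}} \longleftrightarrow \partial_{-}^{\ast}H^1\!P^{k-1}_{N_{-}}, \qquad \partial_{+}^{\ast}H^1\!P^{k+1}_{N_{+}} \longleftrightarrow \partial_{-}H^1\!P^{k+1}_{D_{-}},
\]
and analogously for the versions without boundary subscript. Applying $\mathcal{J}$ term by term to each of the three decompositions of $L^2 P^k$ in Theorem \ref{Hdp} therefore produces exactly the three decompositions (i)--(iii) of Theorem \ref{Hdm}.

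The main issue is bookkeeping: one must match each summand in Theorem \ref{Hdp}(i)--(iii) with its image under $\mathcal{J}$ and verify the boundary subscript swap carefully using Lemma \ref{rel}, while confirming that $L^2$-orthogonality of the summands is preserved. This last point follows because $\mathcal{J}$ is a pointwise isometry up to sign and because the $L^2$ adjoint of $\partial_{+}$ is $\partial_{+}^{\ast}$ and of $\partial_{-}$ is $\partial_{-}^{\ast}$, so the harmonic field $\oplus$ $\partial$-image $\oplus$ $\partial^{\ast}$-image structure is transported faithfully. The scalar factors $(n-k)$ and $(n-k+1)$ never vanish for $k<n$, so the intertwining is an honest bijection on every degree, completing the reduction.
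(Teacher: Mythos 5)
Your proposal is correct and is essentially the paper's own argument: the paper derives Theorem \ref{Hdm} from Theorem \ref{Hdp} precisely by ``applying the above results to $\mathcal{J}\beta$,'' using the conjugate relations of Lemma \ref{conjugate} and the boundary-condition correspondences of Lemma \ref{rel}. You have merely made explicit the nonvanishing scalar factors $(n-k)$, $(n-k+1)$ and the matching of summands, which the paper leaves implicit.
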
 
To prove Theorem \ref{Hdp}, we will introduce two natural, elliptic BVPs, which are self-adjoint. Consider first the following symplectic BVP.
\begin{prop}\label{2ndE} For $k<n$, the following boundary value problem is self-adjoint and elliptic for any $\beta, \lambda \in P^k$:
\begin{align}\label{BVP1}
&\quad\Delta_{+}\, \beta=\lambda\,,  \quad\qquad\text{on $M$}\\
&\left\{  \begin{aligned}
& \partial_{+}(\rho\,\beta)=0\,, \\
&\partial_{+}(\rho\, \partial_{+}^{\ast} \beta)=0\,,
\end{aligned}~~\quad \text {on $\partial M$}\,. \right. \nonumber
\end{align}
\end{prop}
\begin{proof}
That this BVP is self-adjoint can be easily checked using the Green's formula of Lemma \ref{green}.  For ellipticity of the BVP, we note first that $\Delta_{+}$ is elliptic acting on $P^k(M)$ for $k<n$ as mentioned in Section \ref{symplap}.  This satisfies the first condition of Definition \ref{BVPdef}.  To check the second condition of Definition \ref{BVPdef}, we need to first solve for the space $\mathcal{M}^{+}_{x,\xi}$ of $\mathbb{R_+}$-bounded solutions of the system of ordinary differential equations:
\begin{align}\label{ode2}
\sigma_{\Delta_{+}}(\xi - i \, d\rho \tfrac{d}{dt})\,\phi(t)=0\,,
\end{align}
for every $x \in \partial M$ and $\xi \in T^{\ast}_{x}M\!\setminus\!\{0\}$ orthogonal to $d\rho$.  Here, $\phi(t)\in C^{\infty}(\mathbb{R},P^k(T_x^*M))$ where $P^k(T_x^*M)$ denotes the primitive exterior cotangent space at $x$ of degree $k$.  After finding the solutions, we then need to show the bijectivity of the linear map
\begin{align}\label{ode2s}
\Psi_{x, \xi}: \mathcal{M}^{+}_{x,\xi}&\rightarrow G_{1, x}\oplus G_{2, x}\nonumber \\
\phi(t) &\rightarrow \left(\sigma_{\dpp}(d\rho)\,\phi(t)\big|_{t=0}\,,\, \sigma_{\dpp}(d\rho)\left[\sigma_{\dpps}(\xi - i \, d\rho \tfrac{d}{dt})\,\phi(t)\right]\big|_{t=0}\right)
\end{align}  
where in the second line, we have used \eqref{sdiff} for the boundary principal symbols.  Note that the dimension of the codomain space $G_{1, x}\oplus G_{2, x}$ represents the number of boundary conditions.  As will be clear from the calculations below, $\dim \mathcal{M}^{+}_{x,\xi} = \dim (G_{1, x}\oplus G_{2, x}) = \dim P^k(T_x^*M)$.   So for bijectivity, we only need to show the injectivity of $\Psi_{x,\, \xi}$ which will be the main task in the following.  

For a fixed $x \in \partial M$,  the covector $\xi \in T^{\ast}_xM$ can be of two types: (A) $\La(d\rho\w\xi)\neq 0$; or (B) $\La(d\rho\w\xi)=0$. 
To simplify our calculations, we note that $T^{\ast}_xM$ as a symplectic vector space can be modeled by $\mathbb{R}^{2n}$, and without lost of generality, we can choose to work in a Darboux basis $\{w_i\}$ on $T^*_xM$ with $\om= w_1 \w w_2 + w_3 \w w_4 + \ldots,$ such that $d\rho=w_1$  and the compatible metric $g_{ij}=\delta_{ij}$, for $i, j = 1, \ldots, 2n$.  For case (A), it is sufficient by the residual local symplectomorphism to consider only $\xi = p\,w_2+q\,w_3$ where $p\neq 0$.  We will give the calculations for the $q=0$ case (A1) here and give those for the $q\neq0$ case (A2) in Appendix \ref{AppC}.  For case (B), it is sufficient by symplectomorphism to only consider $\xi= q\,w_3$ with $q>0$.  The solutions of \eqref{ode2} take different forms for each of these three cases and so we need to consider them separately.

{\bf Case (A1):}  Let $\xi = \ep\, p\, w_2$, where $p>0$ and $\ep = \pm 1$.  Since we are singling out the $\{w_1, w_2\}$ components here, it is advantageous to express $\phi(t)\in \mathcal{M}^{+}_{x,\xi=\ep p w_2}$ in terms of the decomposition of  \eqref{ldecomp}:
\begin{align}\label{ldecomp12} 
\phi_k(t) =w_1\wedge \phi_{k-1}^1(t)+w_2\wedge \phi_{k-1}^2(t)+\Theta_{12}\wedge\phi_{k-2}^3(t) +\phi_{k}^4(t)\,,
\end{align} 
where $\{\phi_{k-1}^1, \phi_{k-1}^2, \phi_{k-2}^3, \phi_k^4\}$ above are primitive forms without $w_1$ and $w_2$ components.  As usual, the subscript denotes the degree of the form.

To write out the ordinary differential system \eqref{ode2}, we use the calculation of the principal symbol $\sigma_{\Delta_+}(\zeta_1 w_1 + \zeta_2 w_2)$ in   Appendix \ref{AppB}.  Explicitly, we substitute $\zeta_1 = -i\,\partial_t$ and $\zeta_2=\ep \,p$ in \eqref{Dpsyma}.  With the differential system in hand, the general $\mathbb{R}_+$-bounded solution 
of  $\sigma_{\Delta_+} ( -i \, w_1 \partial_t + \ep\, p \,w_2)\, \phi_k(t)=0$ can be solved straightforwardly and shown to take the following decomposed form
\begin{align}\label{bs2p}
\phi_k(t)&=  w_1 \w \left[\left(1+\tfrac{p\,t}{2h+1}\right) c^1_{k-1} +\left(\tfrac{-i\,\ep\,p\,t}{2h+1}\right) c^2_{k-1} \right] e^{-pt} \nonumber \\
& \qquad  + w_2 \w \left[\left(\tfrac{-i\,\ep\,p\,t}{2h+1}\right) c^1_{k-1} + \left(1-\tfrac{p\,t}{2h+1}\right) c^2_{k-1} \right]e^{-pt}  +\Th \w c^3_{k-2} e^{-pt} + c^4_k e^{-pt}
\end{align} 
where $h=n-k$ and $\{c^1_{k-1}, c^2_{k-1}, c^3_{k-2}, c^4_k\}$ are constant primitive forms of $P^*(\mathbb{R}^{2n})$ without components in $\{w_1, w_2\}$.      

For injectivity of $\Psi_{x, \xi=\ep p w_2}$, we consider its kernel.  With \eqref{ode2s}, 
this imposes the following conditions on the solutions in \eqref{bs2p}:
\begin{align*}
b_1(-i\, w_1 \partial_t + \ep p w_2)\phi_k(t)\,|_{t=0}=0
&\Longrightarrow ~ \sigma_{\dpp}(d\rho)\,\phi_k(0)= 0\,, \\
&\Longrightarrow~  \phi_{k-1}^2(0)=0\,, \quad  \phi_k^4(0) = 0\, , \\
&\Longrightarrow ~ c^2_{k-1} =0\,, \quad c^4_k =0\,,\\
b_2(-i\,w_1 \partial_t + \ep p w_2)\phi_k(t)\,|_{t=0}=0
&\Longrightarrow~ \sigma_{\dpp}(d\rho)\left[\sigma_{\dpps}(-i\, w_1 \partial_t + \ep p w_2)\,\phi(t)\right]\big|_{t=0} =0\,,\\
&\Longrightarrow ~ \partial_t\phi^3_{k-2}(0) =0 \,, ~~  i\,\partial_t\phi^1_{k-1}(0) - \ep p\, \phi^2_{k-1}(0) =0\,, \\
&\Longrightarrow ~ c^3_{k-2} = 0\,,\quad i\,c^1_{k-1}+\ep\, c^2_{k-1}=0 \,.
\end{align*}
Above give four boundary condition equations on $\{\phi_{k-1}^1, \phi_{k-1}^2, \phi_{k-2}^3, \phi_k^4\}$.  Together, they imply that the four constant forms $\{c^1_{k-1}, c^2_{k-1}, c^3_{k-2}, c^4_k\}$ in \eqref{bs2p} are identically zero, and therefore, proving injectivity when $\xi= \ep\, p\, w_2$.

{\bf {Case (A2):}} For this case where $\xi= p\, w_2 + q\, w_3$ with both $p\neq0$ and $q\neq0$, the proof of injectivity is given in Appendix \ref{AppC1}.

{\bf {Case (B):}} Let $\xi = q\, w_3$ (for some $q>0$).  Because the components $\{w_1, w_3\}$ are now picked out, it is useful to decompose further each of the forms $\{\phi^1, \phi^2, \phi^3, \phi^3\}$ in the decomposition of \eqref{ldecomp12} to extract out the dependence on $\{w_3, w_4\}$ as well.  This result in the following decomposition of $\phi_k(t)$ into 16 terms:
\begin{align}\label{bexpand2}
\phi_k(t) &=w_1\wedge \phi_{k-1}^1(t)+w_2\wedge \phi_{k-1}^2(t)+\Theta_{12}\wedge\phi_{k-2}^3(t) +\phi_{k}^4(t)\nonumber\\
&= w_1 \w \left[w_3 \w \ga^{13}_{k-2}(t) + w_4\w \ga^{14}_{k-2}(t) + \Thp \w\ga^{134}_{k-3}(t) + \ga^1_{k-1}(t)\right] \nonumber\\
& ~~ +w_2\w \left[w_3  \w \ga^{23}_{k-2}(t) + w_4\w \ga^{24}_{k-2}(t) + \Thp \w\ga^{234}_{k-3}(t) + \ga^2_{k-1}(t)\right]\nonumber \\
& \quad +\Th \w \left[w_3 \w\ga^{123}_{k-3}(t) + w_4 \w\ga^{124}_{k-3}(t) + \Thp \w\ga^{1234}_{k-4}(t) + \ga^{12}_{k-2}(t)\right] \\
&\qquad \ + \left[w_3 \w\ga^3_{k-1}(t) + w_4 \w\ga^4_{k-1}(t) + \Thp \w\ga^{34}_{k-2}(t) + \ga^0_k(t)\right] \nonumber
\end{align}
where the 16 $\gamma(t)$'s are forms without components in $\{w_1, w_2, w_3, w_4\}$ (with the degree labeled by the subscript) and   
\begin{align}\label{Thpdef4}
\Theta'_{34}=  w_3 \w w_4 - \dfrac{1}{H} \sum_{i=3}^{n} w_{2i-1} \w w_{2i}\,.
\end{align} 
To find the the general  $\mathbb{R}_+$-bounded solution $\phi_k(t) \in \mathcal{M}^{+}_{x, \xi = q w_3}$ for  $\sigma_{\Delta_{+}} ( -i \, w_1 \partial_t + q \,w_3)\, \phi_k(t)=0$, we use the calculation of the principal symbol $\sigma_{\Delta_{+}}(\zeta_1 w_1 + \zeta_2 w_2 + \zeta_3 w_3)$ in Appendix \ref{AppB} and  substitute in $(\zeta_1, \zeta_2, \zeta_3)=( -i\,\partial_t, 0, q)$.  This gives us a system of ordinary differential equation involving the 16 $\ga$'s.  The $\mathbb{R}_+$-bounded solutions for the $\ga(t)$'s in \eqref{bexpand2} can be solved and expressed in terms of 16 constant primitive forms, which we will label by $c^l$ for $l=1, \ldots, 16$, that also have no components in $\{\wa, \wb, \wc, \wdd\}$:
\begin{align*}
&\ga_k^0(t)= c_k^1 e^{-qt} \,,\quad  \ga_{k-1}^1(t)=  c_{k-1}^2 e^{-qt}\,,\quad  \ga^3_{k-1}(t)=  c_{k-1}^3 e^{-qt}, \\ 
&\ga_{k-2}^{13}(t)= c_{k-2}^4 e^{-qt}\,,\quad \ga_{k-2}^{24}(t)= c_{k-2}^5 e^{-qt}\,,\quad 
\ga_{k-4}^{1234}(t)= c_{k-4}^6 e^{-qt}\,,
\end{align*}
\begingroup
\renewcommand*{\arraystretch}{1.19}
\begin{align*}
\begin{pmatrix} 
\ga_{k-1}^{2} \\
 \ga_{k-1}^{4} 
\end{pmatrix}&=
c_{k-1}^7\begin{pmatrix}1-\frac{q\,t}{2h+1}\\ \frac{i\,q\,t}{2h+1}\end{pmatrix} e^{-qt} + c_{k-1}^8 \begin{pmatrix} \frac{i\,q\,t}{2h+1}\\ 1+\frac{q\,t}{2h+1}\end{pmatrix}e^{-qt},\\
%\end{align*}
%The first boundary condition implies $(c^1, c^2) =  (0, 0)$.  
%
%\begin{align*}
\begin{pmatrix} 
 \ga_{k-3}^{134} \\
 \ga_{k-3}^{123}
\end{pmatrix}&=
c_{k-3}^9\begin{pmatrix}1+\frac{q\,t}{2h+3}\\ \frac{-i\,q\,t}{2h+3}\end{pmatrix} e^{-qt} 
+ c_{k-3}^{10} \begin{pmatrix} \frac{-i\,q\,t}{2h+3}\\ 1-\frac{q\,t}{2h+3}\end{pmatrix}e^{-qt},
\\
%\end{align*}
%
%\begin{align*}
\begin{pmatrix} 
 \ga_{k-3}^{234} \\
 \ga_{k-3}^{124}
\end{pmatrix}
&=
c_{k-3}^{11}\begin{pmatrix}1-\frac{q\,t}{2h^2+4h+1}\\ \frac{i\,q\,t}{2h^2+4h+1}\end{pmatrix} e^{-qt} 
+ c_{k-3}^{12} \begin{pmatrix} \frac{i\,q\,t}{2h^2+4h+1}\\ 1+\frac{q\,t}{2h^2+4h+1}\end{pmatrix}e^{-qt},
\\
%\end{align*}
%
%\begin{align*}
\begin{pmatrix} 
\ga_{k-2}^{14}\\ \ga_{k-2}^{23}\\  \ga_{k-2}^{34}\\ \ga_{k-2}^{12}
\end{pmatrix}&=c_{k-2}^{13}\begin{pmatrix}0 \\ 1\\ i \\ 0 \end{pmatrix}e^{-qt}
+c_{k-2}^{14}\begin{pmatrix}1 \\ \frac{-1}{h+1}\\ 0 \\ i \end{pmatrix}e^{-qt}
+c_{k-2}^{15}\begin{pmatrix}\frac{2h+1}{q} + t \\ \frac{2h+1}{q}- t \\ \frac{-i\, h}{h+1}  t\\ i\,t \end{pmatrix}e^{-qt}\\
&\qquad+c_{k-2}^{16}\begin{pmatrix}\frac{2}{q^2}(4h+1)(h+1) + \frac{2}{q}(2h+1)t +t^2 \\ -\frac{2h}{q^2}(4h^2+6h+3) +  \frac{2}{q}(2h^2+2h+1)t -t^2\\ \frac{i\, 4h^2}{q}t -\frac{i\,h}{h+1} t^2\\ i\, t^2 \end{pmatrix}e^{-qt}.
\end{align*}  
For injectivity, we look at the kernel of $\Psi_{x, \xi=qw_3}$ which imposes the following conditions:
\begin{align*}
b_1(x, -i\, w_1 \partial_t +qw_3)\phi_k(t)\big|_{t=0}\,=0
&\Longrightarrow ~ \sigma_{\dpp}(d\rho)\,\phi_k(t)\,|_{t=0} = 0\,, \\
&\Longrightarrow ~ \phi_{k-1}^2(0)=0\,, \quad  \phi_k^4(0) = 0\, , \\
&\Longrightarrow  \left\{\begin{aligned}
&\ga^{23}(0) =0, \,\ga^{24}(0) =0, \,\ga^{234}(0) =0, \, \ga^{2}(0) =0\,, \\
&\ga^{3}(0) =0, \,\ga^{4}(0) =0, \,\ga^{34}(0) =0, \, \ga^{0}(0) =0\,, 
\end{aligned}
\right.\\
b_2(x, -i\, w_1\partial_t + q w_3)\phi_k(t)\big|_{t=0}\,=0
&\Longrightarrow~ \sigma_{\dpp}(d\rho)\left[\sigma_{\dpps}(-i\, w_1 \partial_t + q\,w_3)\,\phi(t)\right]\big|_{t=0} =0\,,\\
&\Longrightarrow\left\{
\begin{aligned}
&i\pa_t\phi^3_{k-2}(0) + q\, i_{e_3} \phi^2_{k-1}(0) = 0,\\
& i\pa_t\phi^1_{k-1}(0) - q\, i_{e_3}\phi^4_k(0) - \tfrac{q}{h+2} \Thp \w \ga_{k-3}^{123}(0) +\tfrac{q}{h+1}w_4 \w\ga_{k-2}^{12}(0) =0, 
\end{aligned}
\right. \\
&\Longrightarrow\left\{
\begin{aligned}
&\pa_t \ga^{13}(0)=0, ~ \pa_t \ga^{123}(0) = 0, ~ \pa_t\ga^{1234}(0)=0,\\
&i\pa_t \ga^{1}(0) - q \ga^3(0) =0,\\
&i\pa_t \ga^{12}(0) + q \ga^{23}(0) =0,\\
&i \pa_t \ga^{124}(0) + q \ga^{234}(0) =0,\\
&i \pa_t \ga^{134}(0) - \tfrac{q}{h+2} \ga^{123}(0) =0, \\
&i \pa_t \ga^{14}(0) - q \ga^{34}(0) + \tfrac{q}{h+1}\ga^{12}(0) =0,
\end{aligned}
\right. 
\end{align*}
\endgroup
where $i_{e_3}$ is the interior product with respect to the tangent vector dual to $w_3$.  
It can then be straightforwardly checked that imposing the above 16 boundary condition equations on the $\ga(t)$'s for the general $\mathbb{R_+}$-bounded solutions requires that all 16 constant forms $c^{l}$ for $l=1,\ldots, 16$ are identically zero. Therefore we conclude that the map $\Psi_{x,\, \xi=q w_3}$ is injective. 
\end{proof}

Likewise, as can be checked by similar arguments, the following is also true.
\begin{prop}
For $k<n$, the following boundary value problem is self-adjoint and elliptic for any $\beta, \lambda \in P^k$:
\begin{align}\label{BVP2}
&\quad\Delta_{+}\, \beta=\lambda\,, \, \qquad\quad\text{on $M$}\\
&\left\{  \begin{aligned}
& \partial_{+}^{\ast}(\rho\,\beta)=0\,, \\
&\partial_{+}^{\ast}(\rho\, \partial_{+} \beta)=0\,,
\end{aligned}~~\quad \text {on $\partial M$}\,.\right.\nonumber
\end{align}
\end{prop}
The elliptic property of the above two BVPs forms the basis of the proof of Theorem \ref{Hdp}.
\begin{proof}[Proof of Theorem \ref{Hdp}]
We first show that $P\mathcal{H}^k_{+, D_{+}}$ is the kernel of the BVP \eqref{BVP1}. First, it is clear that the kernel of BVP \eqref{BVP1} lies within a subset of $P\mathcal{H}^k_{+, D_{+}}$.  Let $\gamma \in P\mathcal{H}^k_{+, D_{+}}$ and also let $\beta \in P^k$ satisfies the boundary conditions of \eqref{BVP1}, i.e. both $\beta$ and $\partial_{+}^{\ast}\beta$ satisfy the $D_{+}$ condition. By Green's formula, we have
\[
0=(\partial_{+}\gamma, \partial_{+}\beta)+(\partial_{+}^{\ast}\gamma, \partial_{+}^{\ast}\beta)=(\gamma, \Delta_{+}\beta)
\]
By Corollary \ref{ker}, this implies that $\gamma$ must belong to the kernel of the BVP \eqref{BVP1}. Thus, we conclude that $P\mathcal{H}^k_{+, D_{+}}$ is the kernel of BVP \eqref{BVP1}. By Lemma \ref{ell}, we can conclude that $P\mathcal{H}^k_{+, D_{+}}$  is finite-dimensional and smooth.  Similar arguments using the BVP in \eqref{BVP2} will give the analogous result that $P\mathcal{H}^k_{+, N_{+}}$ is finite-dimensional and smooth.

To prove the Hodge decomposition 2.(i) in Theorem \ref{Hdp}, we first write
\[
L^2P^k=P\mathcal{H}^k_{+, D_{+}}\oplus P\mathcal{H}_{+, D_{+}}^{k,\bot}
\] where $P\mathcal{H}_{+, D_{+}}^{k,\bot}$ denotes the orthogonal complement. For any $\beta \in L^2P^k$, let $\gamma$ be its projection to $P\mathcal{H}^k_{+, D_{+}}$. By Lemma \ref{ell}, there exists a unique $\varphi \in H^2\!P^k \cap P\mathcal{H}_{+, D_{+}}^{k,\bot}$ that solves the BVP of \eqref{BVP1}, i.e. $\Delta_{+} \varphi = \lambda$ with $\lambda=\beta - \gamma$.
Therefore, we can write 
\[
\beta=\gamma+\partial_{+}(\partial_{+}^{\ast}\varphi)+\partial_{+}^{\ast}(\partial_{+}\varphi)
\] with $\gamma \in P\mathcal{H}^k_{+, D_{+}}$ and $\partial_{+}^{\ast}\varphi\in H^1\!P^{k-1}_{D_{+}}$. This proves the decomposition. The $L^2$-closedness of $\partial_{+}H^1\!P^k_{D_{+}}$ is implied by this decomposition using standard functional analysis arguments.

The proof for the Hodge decomposition 2.(ii) is analogous to that for 2.(i) but makes use of the BVP \eqref{BVP2} instead.  It remains to prove the decomposition 2.(iii).  Our arguments will be similar to those in \cite{S} to prove a similar-type decomposition with respect to the Laplace-de Rham Laplacian $\Delta_d$. 

By the decompositions of 2.(i) and 2.(ii), we can express any $\beta \in L^2P^k$ as follows:  
\begin{align*}
\beta&=\gamma_1+\partial_{+}\varphi_1+\partial_{+}^{\ast}\sigma_1\\
\beta&=\gamma_2+\partial_{+}\varphi_2+\partial_{+}^{\ast}\sigma_2
\end{align*}
where $\gamma_1 \in P\mathcal{H}^k_{+, D_{+}}, \varphi_1 \in H^1\!P^{k-1}_{D_{+}}, \sigma_1 \in H^1\!P^{k+1}, \gamma_2\in P\mathcal{H}^{k}_{+, N_{+}}, \varphi_2 \in H^1\!P^{k-1}$ and $\sigma_2 \in H^1\!P^{k+1}_{N_{+}}$. Now, we define $\varphi=\beta-\partial_{+}\varphi_1-\partial_{+}^{\ast}\sigma_2$. We will show that $\varphi \in P\mathcal{H}^k_{+}$ when $\beta \in H^1\!P^k$. This is because
\begin{align*}
(\varphi, \partial_{+}\nu)&=(\beta-\partial_{+}\varphi_1, \partial_{+}\nu) = (\beta-\gamma_1-\partial_{+}\varphi_1, \partial_{+}\nu)=0\,, \quad  \text{for}\,\nu\in H^1\!P^{k-1}_{D_{+}},\\
(\varphi, \partial_{+}^{\ast}\nu)&=(\beta-\partial_{+}^{\ast}\sigma_2, \partial_{+}^{\ast}\nu)=(\beta-\gamma_2-\partial_{+}^{\ast}\sigma_2, \partial_{+}^{\ast}\nu)=0\,,\!\quad \text{for}\, \nu\in H^1\!P^{k+1}_{N_{+}}, 
\end{align*} and $H^1\!P^{k}_{D_{+}}$ and $H^1\!P^{k}_{N_{+}}$ are dense in $H^1\!P^k$. Therefore, we obtain 
\[
H^1\!P^k=P\mathcal{H}^k_{+}\oplus \partial_{+}H^1\!P^{k-1}_{D_{+}}\oplus \partial_{+}^{\ast}H^1\!P_{N_{+}}^{k+1}.
\] Since $ \partial_{+}H^1\!P^{k-1}_{D_{+}}$ and $\partial_{+}^{\ast}H^1\!P_{N_{+}}^{k+1}$ are closed in the $L^2$-topology, the $L^2$-decomposition then follows by means of a completion argument.
\end{proof}

\subsubsection{Fourth-order symplectic Laplacians}

The fourth-order Laplacian $\Delta_{++}$ has the following Hodge decomposition. 
\begin{thm}[Hodge decomposition for $\Delta_{++}$]\label{Hdpp} 
\
\begin{itemize}
\item[1.] $P\mathcal{H}^n_{+, D_{++}}$ and $P\mathcal{H}^n_{+, N_{+}}$ are finite-dimensional and smooth;
\item[2.]  The following decompositions hold:
\begin{align*} 
 {\rm (i)}~~& L^2P^n =P\mathcal{H}_{+,D_{++}}^n\oplus \partial_{+}\,H^1\!P_{D_{+}}^{n-1}\oplus (\partial_{+}\partial_{-})^{\ast}H^2\!P^{n};\\ 
{\rm (ii)}~~& L^2P^n =P\mathcal{H}_{+,N_{+}}^n\oplus \partial_{+}\,H^1\!P^{n-1}\oplus (\partial_{+}\partial_{-})^{\ast}H^2\!P_{N_{--}}^{n};\\
{\rm (iii)}~~& L^2 P^n =L^2P\mathcal{H}_{+}^n \oplus \partial_{+}\,H^1\!P_{D_{+}}^{n-1}\oplus (\partial_{+}\partial_{-})^{\ast} H^2\!P^{n}_{N_{--}}.
\end{align*} 
\end{itemize}
\end{thm}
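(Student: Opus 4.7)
The proof follows the blueprint of Theorem \ref{Hdp}, but with a fourth-order elliptic BVP and correspondingly two layers of boundary conditions; Lemma \ref{green2} now joins Lemma \ref{green} in the Green's-formula manipulations. For decomposition (i), I would introduce a fourth-order BVP of the form $\Delta_{++}\beta = \lambda$ on $M$, with $\beta$ satisfying both the $D_{++}$ condition and a compatible auxiliary first-order $D_+$-type condition on $\partial M$ (for instance $\partial_+^*\beta \in D_+$). This must be chosen so that (a) the combined system is Shapiro--Lopatinskii elliptic, the count being verified using the local primitive decomposition \eqref{ldecomp} and Tables \ref{Tab5}--\ref{Tab6}; (b) the kernel coincides with $P\mathcal{H}^n_{+,D_{++}}$; and (c) the solution $\varphi$ yields $\alpha = \partial_+^*\partial_+\partial_+^*\varphi$ automatically lying in $D_+$. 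The analogous dual BVP with $N_+$ on $\beta$ and an $N_{--}$-type condition on a derivative of $\beta$ governs decomposition (ii).

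Once the BVP is in place, I would identify the kernel by means of the Green's-formula identity
$$(\gamma,\Delta_{++}\beta) = (\partial_+\partial_-\gamma,\partial_+\partial_-\beta) + (\partial_+^*\partial_+\partial_+^*\gamma,\partial_+^*\beta) + (\text{boundary terms}),$$
where Corollary \ref{D++equ} applied to $\gamma \in D_{++}$ kills the boundary term from the $(\partial_+\partial_-)$ piece, while the auxiliary $D_+$-type condition kills the boundary term from the $(\partial_+\partial_+^*)^2$ piece; so $(\gamma, \Delta_{++}\beta) = 0$ and Corollary \ref{ker} places $\gamma$ in the BVP kernel. Lemma \ref{ell} then gives finite-dimensionality and smoothness of $P\mathcal{H}^n_{+,D_{++}}$, and likewise of $P\mathcal{H}^n_{+,N_+}$. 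The decomposition itself proceeds by the standard Fredholm recipe: for $\beta \in L^2P^n$ with orthogonal projection $\gamma$ onto $P\mathcal{H}^n_{+,D_{++}}$, solve $\Delta_{++}\varphi = \beta - \gamma$ to produce $\varphi \in H^4P^n$, and set $\mu = \partial_+\partial_-\varphi \in H^2P^n$ and $\alpha = \partial_+^*\partial_+\partial_+^*\varphi \in H^1P^{n-1}$ so that $\beta = \gamma + \partial_+\alpha + (\partial_+\partial_-)^*\mu$. Three-way orthogonality uses $\partial_+^*\gamma = \partial_+\partial_-\gamma = 0$, Lemma \ref{pree}'s statement that $\partial_+$ sends $D_+$ into $D_{++}$, Corollary \ref{D++equ}, and the identity $\partial_+\partial_-\partial_+\alpha = 0$ for $\alpha \in P^{n-1}$, which is a consequence of $d^2 = 0$ and the $sl(2)$ injectivity of $L$ on $P^{n-1}$. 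Decomposition (ii) is proved by the parallel dual argument, and decomposition (iii) is assembled as at the end of the proof of Theorem \ref{Hdp}(iii): from the two representations of a given $\beta \in H^2P^n$ one forms $\varphi = \beta - \partial_+\alpha_1 - (\partial_+\partial_-)^*\mu_2$, checks $\varphi \in P\mathcal{H}^n_+$ by testing against the dense subspaces $\partial_+ H^2P^{n-1}_{D_+}$ and $(\partial_+\partial_-)^* H^2P^n_{N_{--}}$, and then passes to $L^2$ by completion together with the $L^2$-closedness of the summands.

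The principal obstacle is the correct identification of the fourth-order BVP and the verification of its Shapiro--Lopatinskii ellipticity, together with the auxiliary boundary condition that makes the membership $\alpha \in D_+$ in the decomposition step automatic. Because each of $D_{++}$ and $N_{--}$ already packages an algebraic plus a first-order differential condition (Definition \ref{sbc2def}), and because the second-order Green's formula (Lemma \ref{green2}) contributes boundary terms involving the Lie derivative $\mathcal{L}_{\vec{n}}$, one must carefully match the boundary-normal expansion of primitive $n$-forms---the $\beta^1, \beta^2, \beta^3, \beta^4$ decomposition of \eqref{ldecomp}---against the rank of $P^n$ and the fourth-order ellipticity count. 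This is also what forces the asymmetric appearance of $D_{++}$ (for the harmonic fields, where both components are needed to kill the $\mathcal{L}_{\vec{n}}$-boundary terms produced by $(\partial_+\partial_-)^*(\partial_+\partial_-)$) and the lighter $D_+$ (for the factors of $\partial_+\partial_+^*$) in decomposition (i).
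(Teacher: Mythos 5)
Your overall architecture is exactly the paper's: two fourth-order elliptic BVPs for $\Delta_{++}$ (one Dirichlet-flavored, one Neumann-flavored), kernel identification via Green's formulas and Corollary \ref{ker}, the Fredholm alternative for decompositions (i) and (ii), and a combination argument for (iii). But the one point you flag as the ``principal obstacle'' is precisely where your proposal, as written, fails: the BVP consisting of $\Delta_{++}\beta=\lambda$ with only $\beta\in D_{++}$ and $\partial_+^*\beta\in D_+$ is \emph{not} Shapiro--Lopatinskii elliptic. Locally $\beta_n\in P^n$ decomposes as $w_1\wedge\tbeta^1_{n-1}+w_2\wedge\tbeta^2_{n-1}+\Theta_{12}\wedge\tbeta^3_{n-2}$ (no $\tbeta^4$ in top primitive degree), so a fourth-order system requires twice this rank in boundary conditions: four sets of $\tbeta_{n-1}$-type and two sets of $\tbeta_{n-2}$-type. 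The two-part $D_{++}$ condition supplies two sets of $\tbeta_{n-1}$-type, and each $D_+$ condition on a primitive $(n-1)$-form supplies one set of each type; with a single auxiliary $D_+$ condition you reach only $3+1$, one short in each slot. The paper's BVP \eqref{BVP3} therefore imposes a \emph{second} auxiliary condition, $\partial_+(\rho\,\partial_+^*\partial_+\partial_+^*\beta)\,|_{\partial M}=0$, and dually BVP \eqref{BVP4} imposes $N_+$ on both $\beta$ and $\partial_+\partial_+^*\beta$ together with $\dpp\dpm\beta\in N_{--}$. Your item (c) --- hoping that $\partial_+^*\partial_+\partial_+^*\varphi\in D_+$ comes out ``automatically'' from the solution --- has the logic reversed: this membership is part of the prescribed boundary data of the BVP, put there exactly to complete the ellipticity count, and it is then inherited by the solution by construction.

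With that correction the rest of your outline goes through as in the paper: the kernel of BVP \eqref{BVP3} is $P\mathcal{H}^n_{+,D_{++}}$ (your Green's-formula identity, with $\gamma$ a harmonic field so that both interior pairings vanish and the boundary terms are killed by Corollary \ref{D++equ} together with the two $D_+$ conditions), Lemma \ref{ell} gives finiteness and smoothness, and the decomposition and orthogonality arguments --- including the use of Lemma \ref{pree} and the complex identity $\dpp\dpm\dpp=0$ on $P^{n-1}$ --- are the ones the paper invokes when it says the proof of Theorem \ref{Hdp} carries over.
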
 

Applying this to $\mathcal{J} \beta$ gives the Hodge decomposition for $\Delta_{--}$. 

\begin{thm}[Hodge decomposition for $\Delta_{--}$]\label{Hdmm} 
\
\begin{itemize}
\item[1.] $P\mathcal{H}^n_{-, D_{-}}$ and $P\mathcal{H}^n_{-, N_{--}}$ are finite-dimensional and smooth;
\item[2.]  The following decompositions hold:
\begin{align*} 
 {\rm (i)}~~& L^2P^n =P\mathcal{H}_{-,D_{-}}^n\oplus (\partial_{+}\partial_{-})H^2\!P_{D_{++}}^{n}\oplus \partial_{-}^{\ast}\,H^1\!P^{n-1};\\ 
{\rm (ii)}~~& L^2P^n =P\mathcal{H}_{-,N_{--}}^n\oplus (\partial_{+}\partial_{-})H^2\!P^{n}\oplus \partial_{-}^{\ast}\,H^1\!P_{N_{-}}^{n-1};\\
{\rm (iii)}~~& L^2 P^n =L^2P\mathcal{H}_{-}^n \oplus (\partial_{+}\partial_{-})H^2\!P_{D_{++}}^{n}\oplus \partial_{-}^{\ast}\, H^1\!P^{n-1}_{N_{-}}.
\end{align*} 
\end{itemize}
\end{thm}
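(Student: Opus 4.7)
The plan is to deduce Theorem~\ref{Hdmm} from Theorem~\ref{Hdpp} via the conjugation operator $\mathcal{J}$ defined in~\eqref{conjop}. Three ingredients make this reduction work. First, $\mathcal{J}$ is a pointwise unitary involution (with $\mathcal{J}^2=(-1)^k$ on $k$-forms), so it is a bounded isomorphism on every Sobolev space $H^sP^k$ and an $L^2$-isometry; moreover, since $\mathcal{J}$ commutes with $L$ and $\Lambda$, it preserves primitivity. Second, by Lemma~\ref{rel}, $\mathcal{J}$ sets up bijections between the relevant symplectic boundary conditions: $D_{++}\leftrightarrow N_{--}$, $D_{+}\leftrightarrow N_{-}$, and $N_{+}\leftrightarrow D_{-}$. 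Third, by Lemma~\ref{conjugate}, $\mathcal{J}$ intertwines the symplectic first-order operators up to the scalar $(H+R)$, which is precisely $1$ on $P^{n-1}$ where these relations will be invoked.

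Step~1 is to verify that $\mathcal{J}$ carries the $\Delta_{++}$-harmonic field spaces onto the $\Delta_{--}$-harmonic field spaces. Using $\mathcal{J}\partial_{+}^{\ast}\mathcal{J}^{-1}=(H+R)\partial_{-}$ on $P^n$, a short computation (together with $\mathcal{J}^{2}=(-1)^{n}$) yields $\mathcal{J}\partial_{-}\beta=\partial_{+}^{\ast}\mathcal{J}\beta$ on $P^n$, and composing one obtains $\mathcal{J}\partial_{+}\partial_{-}\beta=\partial_{-}^{\ast}\partial_{+}^{\ast}\mathcal{J}\beta$. Hence for $\beta\in P^n$,
\[
\partial_{+}^{\ast}\beta=0 \iff \partial_{-}(\mathcal{J}\beta)=0, \qquad \partial_{+}\partial_{-}\beta=0 \iff \partial_{-}^{\ast}\partial_{+}^{\ast}(\mathcal{J}\beta)=0,
\]
so $\mathcal{J}\!:P\mathcal{H}_{+}^n\to P\mathcal{H}_{-}^n$ is a linear isomorphism. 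Combined with the boundary correspondences of Lemma~\ref{rel}, this gives bijections $P\mathcal{H}_{+,D_{++}}^{n}\cong P\mathcal{H}_{-,N_{--}}^{n}$ and $P\mathcal{H}_{+,N_{+}}^{n}\cong P\mathcal{H}_{-,D_{-}}^{n}$, from which item~1 of Theorem~\ref{Hdmm} transfers from item~1 of Theorem~\ref{Hdpp}.

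Step~2 is to apply $\mathcal{J}$ summand-by-summand to the three decompositions of Theorem~\ref{Hdpp} and re-identify each image. Under $\mathcal{J}$, the piece $\partial_{+}H^{1}P_{D_{+}}^{n-1}$ becomes $\partial_{-}^{\ast}H^{1}P_{N_{-}}^{n-1}$ (using $\mathcal{J}\partial_{+}\mathcal{J}^{-1}=\partial_{-}^{\ast}(H+R)=\partial_{-}^{\ast}$ on $P^{n-1}$ together with $D_{+}\leftrightarrow N_{-}$), the piece $(\partial_{+}\partial_{-})^{\ast}H^{2}P_{N_{--}}^{n}$ becomes $(\partial_{+}\partial_{-})H^{2}P_{D_{++}}^{n}$, and the versions without boundary conditions transform analogously. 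Since $\mathcal{J}$ is an $L^2$-isometry and a Sobolev isomorphism, orthogonality and $L^2$-closures are preserved. Applying $\mathcal{J}$ to (ii), (i), (iii) of Theorem~\ref{Hdpp} produces, in that order, (i), (ii), (iii) of Theorem~\ref{Hdmm}.

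The main obstacle will be the careful bookkeeping of the factors $(H+R)$ supplied by Lemma~\ref{conjugate} as operators shuttle between $P^n$ and $P^{n-1}$: one must confirm that in every application these factors act as nonzero multiples, so that the $\mathcal{J}$-image of each closed subspace is the full claimed target rather than a proper subset. The second subtlety is that $\mathcal{J}$ must respect the strong differential form of the $D_{++}$ and $N_{--}$ boundary conditions in Definition~\ref{sbc2def}, not merely their principal symbols; this is precisely the content of the $D_{++}\leftrightarrow N_{--}$ equivalence in Lemma~\ref{rel}, so no additional work beyond invoking it is required.
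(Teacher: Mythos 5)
Your proposal is correct and follows exactly the route the paper takes: the text introduces Theorem \ref{Hdmm} with the single line ``Applying this to $\mathcal{J}\beta$ gives the Hodge decomposition for $\Delta_{--}$,'' which is precisely your conjugation argument via Lemmas \ref{conjugate} and \ref{rel}. Your write-up simply supplies the bookkeeping (the $(H+R)=1$ factor on $P^{n-1}$, the isometry property, and the matching of which decomposition of Theorem \ref{Hdpp} yields which of Theorem \ref{Hdmm}) that the paper leaves implicit.
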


Similar to the proof of the second-order case, we will introduce two BVPs to prove Theorem \ref{Hdpp}.

\begin{prop}\label{4thE}
The following boundary value problem is self-adjoint and elliptic for any $\beta, \lambda \in P^n$:
\begin{align}\label{BVP3}
&\quad\Delta_{++}\, \beta=\left[(\partial_{+}\partial_{-})^{\ast}(\partial_{+}\partial_{-})+(\partial_{+}\partial_{+}^{\ast})^2\right] \beta  = \lambda\,, \, \qquad\quad\text{on $M$}\\
&\left\{  \begin{aligned}
& \beta \in D_{++}\,,
\\
& \partial_{+}(\rho\;\partial_{+}^{\ast}\beta)=0\,, \\
&\partial_{+}(\rho\; \partial_{+}^{\ast}\partial_{+}\partial_{+}^{\ast} \beta)=0\,,
\end{aligned}~\qquad \text {on $\partial M$}\,.\right.\nonumber
\end{align}
\end{prop}
\begin{proof}
The self-adjoint property is implied by the Green's formulas -- Lemma \ref{green} and Lemma \ref{green2} -- and the property of the boundary condition $D_{++}$, Corollary \ref{D++equ}.  Further, the Laplacian $\Delta_{++}$, is also elliptic (see Sec. \ref{symplap}).  We thus need to proof  that the linear map of condition (2) of Definition \ref{BVPdef}
\begin{align}\label{ode4s}
\Psi_{x, \xi}: \mathcal{M}^{+}_{x,\xi}&\rightarrow G_{1, x}\oplus G_{2, x}\oplus G_{3, x} \oplus G_{4, x}
\end{align}  
is bijective for every $x \in \partial M$ and $\xi \in T^{\ast}_{x}M\!\setminus\!\{0\}$ orthogonal to $d\rho$. 
Here, $\mathcal{M}^{+}_{x,\xi}$ is the space of $\mathbb{R}_+$-bounded solutions of 
\begin{align}\label{ode4}
\sigma_{\Delta_{++}} (\xi - i \, d\rho \tfrac{d}{dt})\,\phi(t)=0\,
\end{align}
and $G_{1, x}\oplus G_{2, x}$ represents the codomain associated with the $D_{++}$ boundary condition which actually consists of two sets of boundary conditions as described in Table \ref{Tab6}.  As will be clear from the calculations below, $\dim  \mathcal{M}^{+}_{x,\xi} = \dim (G_{1, x}\oplus G_{2, x}\oplus G_{3, x} \oplus G_{4, x}) = 2 \dim P^n(T_x^*M)$.  Hence for bijectivity, we just need to show that $\Psi_{x, \xi}$ is injective which we will do so in the following.  

Note that the method of proof here is identical to that for Proposition \ref{2ndE}.  So again, without lost of generality, at the point $x\in \partial M$, we can work in a local Darboux basis $\{w_i\}$ with metric $g_{ij}=\delta_{ij}$, and take $d\rho=w_1$.  And likewise, by local symplectomorphism, it is sufficient for the proof of injectivity to just consider the following three cases: (A1) $\xi = p\, w_2$ for $p\neq 0$; (A2) $\xi = p\, w_2+ q \, w_3$ for both $p\neq 0$ and $q\neq 0$; (B) $\xi= q\, w_3$ for $q>0$.  

{\bf Case (A1):}   Let $\xi = \ep\, p\, w_2$, with $p>0$ and $\ep = \pm 1$.   We decompose the primitive $n$-form $\phi_n(t)$ following \eqref{ldecomp}:
 \begin{align}\label{ldecomp22}
\phi_n(t) =w_1\wedge \phi_{n-1}^1(t)+w_2\wedge \phi_{n-1}^2(t)+\Theta_{12}\wedge\phi_{n-2}^3(t) 
\end{align}
where $\{\phi_{n-1}^1, \phi_{n-1}^2, \phi_{n-2}^3\}$ are primitive forms without $w_1$ and $w_2$ components.

Using the calculation of the principal symbol of $\Delta_{++}$ in \eqref{Dppsyma} of Appendix \ref{AppB}, the general $\mathbb{R}_+$-bounded solution $\phi_n(t) \in \mathcal{M}^{+}_{x, \xi = \ep p w_2}$ for  $\sigma_{\Delta_{++}} ( -i \, w_1 \partial_t + \ep\, p \,w_2)\, \phi_n(t)=0$ can be solved straightforwardly and expressed as 
\begin{align}\label{bs2pp}
\phi_n(t) & = w_1 \w \left(\baa + \bab t \right) e^{-pt} + w_2 \w \left(\bba + \bbb t \right) e^{-pt} \\
 & \quad\quad + \Theta_{12} \w \left(\bca + \bcb t \right) e^{-pt}\,\nonumber 
\end{align} 
where all six $c$'s are constant primitive forms without components in $\{w_1, w_2\}$.

To check injectivity, we look at the kernel of the $\Psi_{x,\xi = \ep p w_2}$ map.  First,
the $D_{++}$ boundary condition, expressed explicitly in local coordinates in Table \ref{Tab6}, give us two sets of constraints:
 \begin{align*}
b_1(-i \,w_1\pa_t+ \ep p w_2)\phi_n(0)=0
&\Longrightarrow \phi^2_{n-1}(0)=0\\
&\Longrightarrow \bba = 0\, \\
b_2(-i \,w_1\pa_t+ \ep p w_2)\phi_n(0)=0&\Longrightarrow -i\,\pa_t\phi^2_{n-1}(0) - \, \ep p \,\phi^1_{n-1}(0) =0\\
&\Longrightarrow  p\bba- \bbb = -i\,\ep p \,\baa\,\\
\end{align*}
The other two boundary conditions in \eqref{BVP3} impose the following:
\begin{align*}
b_3(-i \,w_1\pa_t+ \ep p w_2)\phi_n(0)=0 & \Longrightarrow \sigma_{\partial_{+}}(d\rho)\left[\sigma_{\partial_{+}^{\ast}}(-i \,w_1\pa_t+ \ep p w_2)\,\phi_n(0)\right]\big|_{t=0}=0\\
&\Longrightarrow 
\left\{
\begin{aligned}
&\pa_t\phi^3_{n-2}(0) =0\\
& i\, \pa_t\phi^1_{n-1}(0) -\, \ep p\, \phi^2_{n-1}(0)  =0\\
\end{aligned}
\right.\\
&\Longrightarrow 
\left\{
\begin{aligned}
&p\, \bca - \bcb =0\, \\
& - p\, \baa  + \bab + i\,  \ep p\, \bba =0\,\\
\end{aligned}
\right.\\
b_4(-i \,w_1\pa_t+ \ep p w_2)\phi_n(0)=0&\Longrightarrow \sigma_{\partial_{+}}(d\rho)\left[\sigma_{\partial_{+}^{\ast}\partial_{+}\partial_{+}^{\ast}}(-i \,w_1\pa_t+ \ep p w_2)\,\phi_n(t)\right]=0\\
&\Longrightarrow 
\left\{
\begin{aligned}
&\pa_t\left(-\pa^2_t +p^2\right)\phi^3_{n-2}\big|_{t=0} =
0 \\
&\left[-i\pa_t\left(-\pa_t^2 + p^2\right)\phi^1_{n-1} + \ep p \left(p^2- \pa^2_t\right) \phi^2_{n-1}\right]\big|_{t=0}=0 \\
\end{aligned}
\right.\\
&\Longrightarrow 
\left\{
\begin{aligned}
&\bcb =0\\\
& i\, \bab + \ep  \bbb =0 \\
\end{aligned}
\right.
\end{align*} 
Altogether, the six boundary condition equations for $\{\phi_{n-1}^1, \phi_{n-1}^2, \phi_{n-2}^3\}$ above requires the vanishing of all six constant $c$'s, and therefore, $\Psi_{x,\xi=\ep p w_2}$ is injective.

{\bf {Case (A2):}} For this case where $\xi= p\, w_2 + q\, w_3$ with both $p\neq0$ and $q\neq0$, the proof of injectivity is given in Appendix \ref{AppC2}.

{\bf {Case (B):}}  Let $\xi = q\, w_3$ (for some $q>0$).  The relevant decomposition which extracts out the dependence on $\{w_1, w_2, w_3, w_4\}$ has 10 terms:
\begin{align}\label{bexpand4}
\phi_n(t) &=w_1\wedge \phi_{n-1}^1(t)+w_2\wedge \phi_{n-1}^2(t)+\Theta_{12}\wedge\phi_{n-2}^3(t) \nonumber\\
&= w_1 \w \left[w_3 \w \ga^{13}_{n-2}(t) + w_4\w \ga^{14}_{n-2}(t) + \Thp \w\ga^{134}_{n-3}(t) \right] \nonumber\\
& ~~ +w_2\w \left[w_3  \w \ga^{23}_{n-2}(t) + w_4\w \ga^{24}_{n-2}(t) + \Thp \w\ga^{234}_{n-3}(t) \right] \\
& \quad +\Th \w \left[w_3 \w\ga^{123}_{n-3}(t) + w_4 \w\ga^{124}_{n-3}(t) + \Thp \w\ga^{1234}_{n-4}(t) + \ga^{12}_{n-2}(t)\right] \nonumber
\end{align}
where the 10 $\gamma(t)$'s are forms without components in $\{w_1, w_2, w_3, w_4\}$  and   $\Thp$ is as before the two-form in \eqref{Thpdef4}.
To find the the general  $\mathbb{R}_+$-bounded solution $\phi_n(t) \in \mathcal{M}^{+}_{x, \xi = q w_3}$, we write down the system of ordinary differential equations $\sigma_{\Delta_{++}} ( -i \, w_1 \partial_t + q \,w_3)\, \phi_n(t)=0$  for the 10 $\ga$'s using the calculation of the principal symbol $\sigma_{\Delta_{++}}(\zeta_1 w_1 + \zeta_2 w_2 + \zeta_3 w_3)$ in Appendix \ref{AppB}, replacing $(\zeta_1, \zeta_3, \zeta_3)=(-i \pa_t, 0, q)$.  We express the general solution of  $\mathcal{M}^{+}_{x, \xi = q w_3}$ below in terms of 20 constant primitive forms, labelled by $c^l$ for $l=1, \ldots, 20$, that have no components in $\{\wa, \wb, \wc, \wdd\}$:
\begingroup
\renewcommand*{\arraystretch}{1.25}
\begin{align*}
\ga_{n-2}^{13}&= (c_{n-2}^{1}+c_{n-2}^{2}t) e^{-qt},\,
\ga_{n-2}^{24}=(c_{n-2}^{3}+c_{n-2}^{4}t)e^{-qt} ,\,
\ga_{n-4}^{1234}= (c_{n-4}^{5}+c_{n-4}^{6}t) e^{-qt}\\
& \\
%\end{align*}
%
%\begin{align*}
\begin{pmatrix} 
 \ga_{n-3}^{134} \\
\ga_{n-3}^{123}
\end{pmatrix}&=c_{n-3}^7\begin{pmatrix}1 \\ 0 \end{pmatrix}e^{-qt}
+c_{n-3}^8\begin{pmatrix}0 \\ 1 \end{pmatrix}e^{-qt}
+c_{n-3}^9\begin{pmatrix} \frac{13}{20}\frac{t}{q} +\frac{3}{20} t^2\\  -\frac{3i}{20}\frac{t}{q} -\frac{3i}{20} t^2\end{pmatrix}e^{-qt}+
c_{n-3}^{10}\begin{pmatrix}-\frac{3i}{20} t-\frac{3i}{20} t^2\\  \frac{7}{20}\frac{t}{q}-\frac{3}{20} t^2\end{pmatrix}e^{-qt}\\
%\end{align*}
%
%\begin{align*}
\begin{pmatrix} 
 \ga_{n-3}^{234}\\
 \ga_{n-3}^{124}
\end{pmatrix}&=c_{n-3}^{11}\begin{pmatrix}1 \\ 0 \end{pmatrix}e^{-qt}
+c_{n-3}^{12}\begin{pmatrix}0 \\ 1 \end{pmatrix}e^{-qt}
+c_{n-3}^{13}\begin{pmatrix} \frac{13}{20}\frac{t}{q} +\frac{3}{20} t^2\\  -\frac{3i}{20}\frac{t}{q} -\frac{3i}{20} t^2\end{pmatrix}e^{-qt}
+c_{n-3}^{14}\begin{pmatrix}-\frac{3i}{20}\frac{t}{q} -\frac{3i}{20} t^2\\  \frac{7}{20}\frac{t}{q} -\frac{3}{20} t^2\end{pmatrix}e^{-qt}\\
%\end{align*}
%
%\begin{align*}
\begin{pmatrix} 
 \ga_{n-2}^{14}\\ \ga_{n-2}^{23}\\ \ga_{n-2}^{12}
\end{pmatrix}&=(c_{n-2}^{15}+c_{n-2}^{16}t)\begin{pmatrix}1 \\ 1\\ 0 \end{pmatrix}e^{-qt} 
+(c_{n-2}^{17}+c_{n-2}^{18}t)\begin{pmatrix}-1 \\ 1\\ -i \end{pmatrix}e^{-qt}\\
&\qquad +c_{n-2}^{19}\begin{pmatrix}\frac{5}{3q^2} - t^2 \\ -\frac{5}{3q^2} + t^2\\ -it^2 \end{pmatrix}e^{-qt}
+c_{n-2}^{20}\begin{pmatrix}\frac{5}{q^3}+\frac{5t}{q^2}- t^3 \\ -\frac{5}{q^3} - \frac{5t}{q^2}+ t^3\\ -it^3 \end{pmatrix}e^{-qt}.
\end{align*}
\endgroup

For injectivity, the image of the $\Psi_{x, \xi=qw_3}$ map vanishes imposes the following conditions: 
\begin{align*}
b_1(-i\, w_1\partial_t + q w_3)\phi_n(t)\big|_{t=0}\,=0&\Longrightarrow \phi^2_n(0)=0,\\
&\Longrightarrow \ga^{23}(0)=0,\  \ga^{24}(0)=0,\ \ga^{234}(0)=0, \\
b_2(-i\, w_1\partial_t + q w_3)\phi_n(t)\big|_{t=0}\,=0
&\Longrightarrow  \pa_t \phi_{n-1}^2(0) + 2\,i\, \Pi'\left(qw_3 \w \phi_{n-2}^3(0)\right) =0,\\
&\Longrightarrow \left\{\begin{aligned}
&\pa_t\ga^{23}(0) + 2\,i\,q\, \ga^{12}(0) =0,\\
&\pa_t \ga^{24}(0)=0,\\
&\pa_t \ga^{234}(0) + i\,q\, \ga^{124}(0) =0,
\end{aligned}
\right.
%\\
\end{align*}
\begin{align*}
b_3(-i\, w_1\partial_t + q w_3)\phi_n(t)\big|_{t=0}\,=0
&\Longrightarrow \left\{
\begin{aligned}
&\pa_t\ga^{13}(0)=0,\ \pa_t\ga^{123}(0)=0,\ \pa_t\ga^{1234}(0)=0,\\
&i\pa_t\ga^{124}(0)+ q \ga^{234}(0) =0,\\
&i\pa_t\ga^{12}(0)+q\ga^{23}(0)=0,\\
&i\pa_t\ga^{14}(0)+q\ga^{12}(0)=0,\\
&i\pa_t\ga^{134}(0)-\tfrac{q}{2}\ga^{123}(0)=0
\end{aligned}
\right.\\
b_4(-i\, w_1\partial_t + q w_3)\phi_n(t)\big|_{t=0}\,=0
&\Longrightarrow \left\{
\begin{aligned}
&\pa_t(\pa_t^2 - q^2)\ga^{1234}\big|_{t=0}=0,\\
&\pa_t(\pa_t^2-q^2)\ga^{13}\big|_{t=0}=0,\\
&\left\{\tfrac{1}{2}q(\pa_t^2-q^2)\ga^{234} + \tfrac{i}{2}\pa_t(\pa_t^2 -q^2)\ga^{124}\right\}\big|_{t=0}=0,\\
&\left\{\tfrac{1}{2}q\pa_t^2 \ga^{134} - i(\tfrac{1}{2}\pa_t^3 -q^2 \pa_t)\ga^{123}
\right\}\big|_{t=0}=0,\\
&\left\{(i\pa_t^3 - \tfrac{3}{4} q^2\pa_t)\ga^{134} - (\tfrac{3}{4}q\pa_t^2 - \tfrac{1}{2}q^3)\ga^{123}
\right\}\big|_{t=0}=0,\\
&\left\{(i\pa_t^3 -\tfrac{q^2}{2}\pa_t)\ga^{14} -\tfrac{i}{2}q^2\pa_t\ga^{23} + (\tfrac{3}{2}q \pa_t^2 -\tfrac{1}{2} q^3)\ga^{12}
\right\}\big|_{t=0}=0,\\
&\left\{\tfrac{1}{2}q\pa_t^2 \ga^{14} +(\tfrac{q}{2}\pa_t^2 - q^3)\ga^{23} +\tfrac{i}{2}(\pa_t^3 - 3q^2 \pa_t)\ga^{12}
\right\}\big|_{t=0}=0.\\
\end{aligned}
\right.
\end{align*}
It can then be straightforwardly checked that the above 20 boundary condition equations on the 10 $\ga$'s are only satisfied if the twenty primitive constants, $c^l$ for $l=1, \dots, 20$, of the general $\mathbb{R}_+$-bounded solution all vanish, thus proving that the map $\Psi_{x, \xi=qw_3}$ is injective.
\end{proof}

By a similar proof, the following is also true.

\begin{prop}\label{4thF}
The following boundary value problem is self-adjoint and elliptic for any $\beta, \lambda \in P^n$:
\begin{align}\label{BVP4}
&\quad\Delta_{++}\, \beta=\left[(\partial_{+}\partial_{-})^{\ast}(\partial_{+}\partial_{-})+(\partial_{+}\partial_{+}^{\ast})^2\right] \beta  = \lambda\,, \, \qquad\quad\text{on $M$}\\
&\left\{  \begin{aligned}
& \partial_{+}^\ast(\rho\; \beta)=0\,, \\
& \partial_{+}^\ast(\rho\; \partial_{+}\partial_{+}^{\ast} \beta)=0\,,\\
& \dpp\dpm\beta\in N_{--}\,,
\end{aligned}~\qquad \text {on $\partial M$}\,.\right.\nonumber
\end{align}
\end{prop}

With the help of the above two BVPs, we can derive the decompositions in Theorem \ref{Hdpp} following very similar arguments as that in the proof of Theorem \ref{Hdp}. For brevity, we will not write out the details.  The key here is that the BVP in \eqref{BVP3} implies the first decomposition and the BVP in \eqref{BVP4} implies the second one in Theorem \ref{Hdpp}. The third decomposition follows by combining the first two decompositions.

\subsection{Harmonic fields and boundary value problems}  The  Hodge decompositions in Section \ref{hodgede} can be applied to solve various boundary value problems.  We begin first with the Poincar\'e lemmas.

\begin{lemma}[Poincar\'e lemma for $\partial_{+}$]
Let $(\omega, J, g)$ be a compatible triple on a compact symplectic manifold with boundary.  Given a primitive form, $\lambda \in P^k$ with $k<n\,$, there exists a solution $\beta \in P^{k-1}$ to the equation  
\begin{equation*}
\partial_{+}\beta=\lambda
\end{equation*}
if and only if $\lambda$ satisfies the integrability conditions:
\begin{equation} \label{integ1}
\partial_{+}\lambda=0\, \qquad  \text{and}\qquad (\lambda, \gamma)=0\, ~\text{ for all}~~ \gamma \in P\mathcal{H}_{+, N_{+}}^k\,.
\end{equation} 
\end{lemma}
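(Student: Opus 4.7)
The plan is to apply decomposition 2.(ii) of Theorem \ref{Hdp}, namely
\[L^2 P^k \;=\; P\mathcal{H}_{+, N_+}^k \,\oplus\, \partial_+ H^1\!P^{k-1} \,\oplus\, \partial_+^*\, H^1\!P_{N_+}^{k+1},\]
which is the correct choice here because the $N_+$ boundary condition sits on the last summand, precisely where it is needed to kill boundary terms when pairing against $\partial_+$. The two conditions in \eqref{integ1} will be used to annihilate, respectively, the first summand and the last summand in the decomposition of $\lambda$.

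For necessity, suppose $\lambda = \partial_+ \beta$ for some $\beta \in P^{k-1}$. Then $\partial_+ \lambda = \partial_+^{\,2}\beta = 0$, and for any $\gamma \in P\mathcal{H}_{+, N_+}^k$ I would apply Lemma \ref{green} to $\partial_+$ to get
\[(\lambda, \gamma) \;=\; (\partial_+ \beta, \gamma) \;=\; (\beta, \partial_+^* \gamma) \,-\, \int_{\partial M} \langle \beta,\, \sigma_{\partial_+^*}(d\rho)\, \gamma \rangle\, dS \;=\; 0,\]
since $\partial_+^* \gamma = 0$ by harmonicity of $\gamma$ and the boundary integrand vanishes by $\gamma \in N_+$.

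For sufficiency, assume both integrability conditions hold and write $\lambda = \gamma + \partial_+ \varphi_1 + \partial_+^* \sigma$ via the Hodge decomposition, with $\gamma \in P\mathcal{H}_{+, N_+}^k$, $\varphi_1 \in H^1 P^{k-1}$, and $\sigma \in H^1 P_{N_+}^{k+1}$. The orthogonality assumption immediately forces $\gamma = 0$. To eliminate the last summand, I would apply $\partial_+$ and use $\partial_+^{\,2} = 0$ to obtain $\partial_+ \partial_+^* \sigma = \partial_+ \lambda = 0$. Pairing this identity with $\sigma$ and integrating by parts via Lemma \ref{green},
\[0 \;=\; (\partial_+ \partial_+^* \sigma, \sigma) \;=\; \|\partial_+^* \sigma\|^2 \,-\, \int_{\partial M} \langle \partial_+^* \sigma,\, \sigma_{\partial_+^*}(d\rho)\, \sigma \rangle\, dS \;=\; \|\partial_+^* \sigma\|^2,\]
where the boundary integral drops by $\sigma \in N_+$. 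Hence $\partial_+^* \sigma = 0$ and $\lambda = \partial_+ \varphi_1$.

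For smoothness of the solution, I would trace the construction of $\varphi_1$ in the proof of Theorem \ref{Hdp}: there $\varphi_1 = \partial_+^* \varphi$, where $\varphi$ solves the elliptic boundary value problem \eqref{BVP2} with right-hand side $\lambda - \gamma = \lambda$, which is smooth. The regularity statement in Lemma \ref{ell} then yields $\varphi \in C^\infty$, so $\beta := \varphi_1 = \partial_+^* \varphi$ is a smooth primitive $(k-1)$-form solving $\partial_+\beta = \lambda$. The main subtlety throughout is the vanishing of the two boundary integrals under the $N_+$ condition; once those are handled, the structure of decomposition 2.(ii) does almost all of the work, with the two hypotheses in \eqref{integ1} mapped exactly onto the two summands that need to be killed.
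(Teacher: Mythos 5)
Your proof is correct and follows essentially the same route as the paper: both directions rest on the Hodge decomposition 2.(ii) of Theorem \ref{Hdp}, with the orthogonality hypothesis killing the harmonic summand and the computation $0=(\partial_{+}\lambda,\sigma)=(\partial_{+}\partial_{+}^{\ast}\sigma,\sigma)=\|\partial_{+}^{\ast}\sigma\|^2$ (boundary term vanishing by the $N_{+}$ condition) killing the coexact summand. Your added details on the necessity direction and on regularity are consistent with, and slightly more explicit than, what the paper records.
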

\begin{proof}
For any $\lambda \in P^k$ with $k<n$, if $\lambda= \partial_{+} \beta$, then clearly $\lambda$ satisfies the integrability conditions of \eqref{integ1}.  For the converse statement, we make use of the decomposition 2.(ii) of Theorem \ref{Hdp} to express
\[
\lambda=\gamma'+\partial_{+}\beta+\partial_{+}^{\ast}\varphi.
\] 
for some $\gamma' \in P\mathcal{H}^k_{+, N_{+}},\, \beta \in P^{k-1}\,$, and $\varphi \in P_{N_{+}}^{k+1}\,$.
The first integrability condition $\dpp \lambda =0$ implies $\partial_{+}^{\ast}\varphi=0$ since
\begin{equation*}
0=(\dpp \lambda, \varphi) = (\partial_{+}\partial_{+}^{\ast}\varphi, \varphi)=(\partial_{+}^{\ast}\varphi, \partial_{+}^{\ast}\varphi)~.
\end{equation*}
The condition $(\lambda, \gamma)=0$ for any $\gamma\in P\mathcal{H}^k_{+, N_{+}}$ implies that $\gamma'=0$ since we can just set $\gamma = \gamma'$ and this would result in $(\lambda, \gamma')=(\gamma', \gamma')=0$. Therefore, $\lambda=\partial_{+}\beta$. 
\end{proof}
Similarly, we have the following Poincar\'e lemmas for the other symplectic differential operators, which we write down here for completeness.
\begin{lemma}[Poincar\'e lemma for $\partial^{\ast}_{+}$]
Given a $\lambda \in P^k$ with $k<n\,$, there exists a solution $\beta \in P^{k+1}$ to the equation
\begin{equation*}
\partial_{+}^{\ast}\,\beta=\lambda
\end{equation*}  
if and only if $\lambda$ obeys the integrability conditions:
\begin{equation*}
\partial_{+}^{\ast}\lambda=0\, \qquad \text{and}\qquad (\lambda, \gamma)=0\,~ \text{ for all}~~ \gamma \in P\mathcal{H}_{+, D_{+}}^k\,.
\end{equation*} 
\end{lemma}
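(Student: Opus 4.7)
The plan is to parallel the proof of the preceding Poincar\'e lemma for $\partial_{+}$, but to invoke Hodge decomposition 2.(i) of Theorem \ref{Hdp} in place of 2.(ii), since we now seek to realize $\lambda$ as the image of $\partial_{+}^{\ast}$ rather than $\partial_{+}$. The ``only if'' direction is immediate: if $\lambda = \partial_{+}^{\ast}\beta$, then $\partial_{+}^{\ast}\lambda = (\partial_{+}^{\ast})^{2}\beta = 0$ as a formal operator identity (adjoint of $\partial_{+}^{2}=0$); and for any $\gamma \in P\mathcal{H}^{k}_{+,D_{+}}$, the $D_{+}$ condition on $\gamma$ kills the boundary contribution in Green's formula, so
\[
(\lambda,\gamma) \;=\; (\partial_{+}^{\ast}\beta,\gamma) \;=\; (\beta,\partial_{+}\gamma) \;=\; 0
\]
since $\partial_{+}\gamma = 0$.

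For the converse, I would apply decomposition 2.(i) of Theorem \ref{Hdp} to express
\[
\lambda \;=\; \gamma' \,+\, \partial_{+}\alpha \,+\, \partial_{+}^{\ast}\beta,
\]
with $\gamma' \in P\mathcal{H}^{k}_{+,D_{+}}$, $\alpha \in H^{1}\!P^{k-1}_{D_{+}}$, and $\beta \in H^{1}\!P^{k+1}$, and then show that the two integrability conditions force $\partial_{+}\alpha = 0$ and $\gamma' = 0$, leaving $\lambda = \partial_{+}^{\ast}\beta$ as required (with smoothness of $\beta$ inherited from the elliptic regularity of the BVP \eqref{BVP1} used to construct the decomposition).

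The key step is showing $\partial_{+}\alpha = 0$. Since $\alpha \in D_{+}$, Green's formula gives $(\partial_{+}\alpha, \lambda) = (\alpha, \partial_{+}^{\ast}\lambda) = 0$ by the first integrability condition. Substituting the decomposition into $(\partial_{+}\alpha,\lambda)$, the cross term $(\partial_{+}\alpha, \gamma')$ vanishes by another integration by parts using $\alpha \in D_{+}$ together with $\partial_{+}^{\ast}\gamma' = 0$; the cross term $(\partial_{+}\alpha, \partial_{+}^{\ast}\beta)$ vanishes because Lemma \ref{pre} guarantees $\partial_{+}\alpha \in D_{+}$, which permits one further integration by parts yielding $(\partial_{+}^{2}\alpha, \beta) = 0$. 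What remains is $\|\partial_{+}\alpha\|^{2} = 0$.

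Finally, testing the second integrability condition against $\gamma = \gamma'$, the term $(\partial_{+}\alpha,\gamma')$ is zero from the previous step, and $(\partial_{+}^{\ast}\beta, \gamma')$ vanishes by Green's formula using $\gamma' \in D_{+}$ and $\partial_{+}\gamma' = 0$. Thus $0 = (\lambda,\gamma') = \|\gamma'\|^{2}$, forcing $\gamma' = 0$. The principal technical obstacle throughout is ensuring that each successive integration by parts produces no boundary contribution, and this is precisely what Lemma \ref{pre} is designed to secure: the preservation of $D_{+}$ under $\partial_{+}$ is exactly what enables the crucial double integration by parts that converts the middle mixed term into $(\partial_{+}^{2}\alpha,\beta)$ and so kills it via $\partial_{+}^{2} = 0$.
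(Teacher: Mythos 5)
Your proof is correct and is essentially the paper's intended argument: the paper gives no separate proof of this lemma, stating only that it follows ``similarly'' to the Poincar\'e lemma for $\partial_{+}$, and your argument is exactly that adaptation --- replace decomposition 2.(ii) of Theorem \ref{Hdp} with 2.(i) and run the symmetric integration-by-parts argument, using the $D_{+}$ boundary conditions (together with Lemma \ref{pre}) to kill every boundary term. The only cosmetic difference is that you obtain $\partial_{+}\alpha=0$ by expanding $(\partial_{+}\alpha,\lambda)$ over the three summands rather than via the shorter $0=(\alpha,\partial_{+}^{\ast}\lambda)=(\alpha,\partial_{+}^{\ast}\partial_{+}\alpha)=\|\partial_{+}\alpha\|^{2}$, but these are the same computation.
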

\begin{lemma}[Poincar\'e lemma for $\partial_{-}$]
Given a $\lambda \in P^k$ and $k<n$, there exists a solution $\beta\in P^{k+1}$ to the equation 
\begin{equation*}\partial_{-}\,\beta=\lambda
\end{equation*}  if and only if $\lambda$ obeys the integrability conditions:
\begin{equation*}
\partial_{-}\lambda=0\, \qquad \text{and}\qquad (\lambda, \gamma)=0\, ~\text{ for all}~~ \gamma \in P\mathcal{H}_{-, N_{-}}^k\,.
\end{equation*} 
\end{lemma}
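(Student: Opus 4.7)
The plan is to parallel the preceding Poincaré lemma for $\partial_+$, using the Hodge decomposition of Theorem \ref{Hdm}(ii) in place of Theorem \ref{Hdp}(ii). For necessity, if $\lambda=\partial_-\beta$ then $\partial_-\lambda=\partial_-^2\beta=0$ is immediate, and for any $\gamma\in P\mathcal{H}^k_{-,N_-}$, Green's formula (Lemma \ref{green}) together with $\partial_-^{\ast}\gamma=0$ yields
\begin{equation*}
(\lambda,\gamma)=(\partial_-\beta,\gamma)=-\int_{\partial M}\langle \beta,\sigma_{\partial_-^{\ast}}(d\rho)\gamma\rangle\,dS=0,
\end{equation*}
since the $N_-$ condition on $\gamma$ forces $\sigma_{\partial_-^{\ast}}(d\rho)\gamma|_{\partial M}=0$.

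For sufficiency, I would apply Theorem \ref{Hdm}(ii) to write
\begin{equation*}
\lambda=\gamma'+\partial_-\beta+\partial_-^{\ast}\varphi
\end{equation*}
with $\gamma'\in P\mathcal{H}^k_{-,N_-}$, $\beta\in H^1 P^{k+1}$, and $\varphi\in H^1 P^{k-1}_{N_-}$. Setting $\gamma=\gamma'$ in the orthogonality hypothesis gives $(\gamma',\gamma')=0$, so $\gamma'=0$. To eliminate $\partial_-^{\ast}\varphi$, I would use the first integrability condition: since $\partial_-\lambda=\partial_-\partial_-^{\ast}\varphi=0$, pairing with $\varphi$ and integrating by parts yields
\begin{equation*}
0=(\partial_-\partial_-^{\ast}\varphi,\varphi)=(\partial_-^{\ast}\varphi,\partial_-^{\ast}\varphi)-\int_{\partial M}\langle \partial_-^{\ast}\varphi,\sigma_{\partial_-^{\ast}}(d\rho)\varphi\rangle\,dS=\|\partial_-^{\ast}\varphi\|^2,
\end{equation*}
where the boundary integral vanishes because $\varphi\in N_-$. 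Hence $\lambda=\partial_-\beta$, and the smoothness of a primitive representative follows from standard elliptic regularity applied to the BVPs underlying Theorem \ref{Hdm}.

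The argument is essentially routine once the machinery of Section \ref{hodgede} is in hand. The only delicate point is matching the boundary condition carried by the $\partial_-^{\ast}$-component of the decomposition with the symbol term produced by Green's formula, so that the obstruction to $\partial_-^{\ast}\varphi=0$ genuinely vanishes. This is precisely why the $N_-$-version (ii) of the Hodge decomposition, rather than (i) or (iii), is the correct one to invoke, just as the $N_+$-decomposition was used in the preceding Poincaré lemma for $\partial_+$.
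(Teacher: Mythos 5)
Your proof is correct and follows exactly the route the paper intends: the paper proves the $\partial_{+}$ case in detail using decomposition 2.(ii) of Theorem \ref{Hdp} and states that the remaining Poincar\'e lemmas follow similarly, and your argument is precisely that mirrored proof via Theorem \ref{Hdm}.2.(ii), with the boundary terms in Green's formula handled correctly by the $N_{-}$ conditions.
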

\begin{lemma}[Poincar\'e lemma for $\partial_{-}^{\ast}$]
Given a $\lambda \in P^k$ and $k<n$, there exists a solution $\beta\in P^{k-1}$ to the equation 
\begin{equation*}
\partial_{-}^{\ast}\,\beta=\lambda
\end{equation*}  if and only if $\lambda$ obeys the integrability conditions: 
\begin{equation*}
\partial_{-}^{\ast}\lambda=0\, \qquad \text{and}\qquad (\lambda, \gamma)=0\,~ \text{ for all}~~ \gamma \in P\mathcal{H}_{-, D_{-}}^k\,.
\end{equation*} 

\end{lemma}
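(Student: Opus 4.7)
The plan is to mirror the proof of the Poincaré lemma for $\partial_{+}$, now building on the Hodge decomposition 2.(i) of Theorem \ref{Hdm}:
\[
L^2 P^k \,=\, P\mathcal{H}_{-, D_{-}}^k \,\oplus\, \partial_{-} H^1 P_{D_{-}}^{k+1} \,\oplus\, \partial_{-}^{\ast} H^1 P^{k-1}.
\]
The solution $\beta$ will simply be harvested from the right-most summand.

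For the \emph{only if} direction, assume $\lambda = \partial_{-}^{\ast}\beta$ for some $\beta \in P^{k-1}$. Then $\partial_{-}^{\ast}\lambda = (\partial_{-}^{\ast})^2 \beta = 0$ is immediate. For any $\gamma \in P\mathcal{H}_{-, D_{-}}^k$, Green's formula (Lemma \ref{green}) applied to $\partial_{-}$ gives
\[
(\lambda, \gamma) \,=\, (\partial_{-}^{\ast}\beta, \gamma) \,=\, (\beta, \partial_{-}\gamma) \,-\, \int_{\partial M}\langle \sigma_{\partial_{-}}(\drho)\,\gamma,\, \beta\rangle\, dS,
\]
and both terms on the right vanish: the first because $\partial_{-}\gamma = 0$ (as $\gamma$ is a harmonic field), and the second because $\gamma \in D_{-}$.

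For the \emph{if} direction, decompose $\lambda = \gamma' + \partial_{-}\varphi + \partial_{-}^{\ast}\beta$ with $\gamma' \in P\mathcal{H}_{-, D_{-}}^k$, $\varphi \in H^1 P^{k+1}_{D_{-}}$, and $\beta \in H^1 P^{k-1}$. Specializing the hypothesis $(\lambda, \gamma) = 0$ to $\gamma = \gamma'$ and using orthogonality of the three summands forces $\|\gamma'\|^2 = 0$, hence $\gamma' = 0$. To eliminate the middle term, combine orthogonality with Green's formula:
\[
\|\partial_{-}\varphi\|^2 \,=\, (\lambda,\, \partial_{-}\varphi) \,=\, (\partial_{-}^{\ast}\lambda,\, \varphi) \,=\, 0,
\]
where the middle equality holds because $\varphi \in D_{-}$ kills the boundary contribution $\int_{\partial M}\langle \sigma_{\partial_{-}}(\drho)\varphi, \lambda\rangle\, dS$, and the last equality is the assumption $\partial_{-}^{\ast}\lambda = 0$. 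Consequently $\partial_{-}\varphi = 0$ and $\lambda = \partial_{-}^{\ast}\beta$.

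The only point requiring any care is the vanishing of the two boundary integrals arising from integration by parts; but this is precisely what the $D_{-}$ condition built into the Hodge decomposition was designed to achieve, so no new obstacle appears. Regularity of $\beta$ is handled by the same framework already invoked for the preceding three Poincaré lemmas and needs no additional argument.
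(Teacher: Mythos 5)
Your proof is correct and follows essentially the same route the paper intends: the paper proves only the $\partial_{+}$ case in detail (via decomposition 2.(ii) of Theorem \ref{Hdp}) and declares the remaining Poincar\'e lemmas ``similar,'' and your argument is precisely that adaptation, using decomposition 2.(i) of Theorem \ref{Hdm}, killing the harmonic piece with the orthogonality hypothesis and the $\partial_{-}\varphi$ piece with $\partial_{-}^{\ast}\lambda=0$ together with the $D_{-}$ condition on $\varphi$. The only cosmetic difference is that you pair $\lambda$ against $\partial_{-}\varphi$ and integrate by parts, whereas the paper's template pairs $\partial_{+}\lambda$ against $\varphi$; the two computations are equivalent.
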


\begin{lemma}[Poincar\'e lemma for $\partial_{+}\partial_{-}$]
Given a $\lambda \in P^n$ , there exists a solution $\beta\in P^{n}$ to the equation 
\begin{equation*}
\partial_{+}\partial_{-}\,\beta=\lambda
\end{equation*}  if and only if $\lambda$ obeys the integrability conditions: 
\begin{equation*}
\partial_{-}\lambda=0\, \qquad \text{and}\qquad (\lambda, \gamma)=0\,~ \text{ for all}~~ \gamma \in P\mathcal{H}_{-, N_{--}}^n\,.
\end{equation*} 

\end{lemma}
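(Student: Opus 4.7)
My plan is to mirror the structure of the earlier Poincar\'e lemmas by invoking the Hodge decomposition 2.(ii) of Theorem \ref{Hdmm}, namely
\[
L^2 P^n \;=\; P\mathcal{H}^n_{-,N_{--}} \,\oplus\, (\partial_+\partial_-)\,H^2 P^n \,\oplus\, \partial_-^{\ast} H^1 P^{n-1}_{N_-}\,.
\]
Both directions of the equivalence will then reduce to combinations of Corollary \ref{D++equ} and Green's formula, with the boundary integrals handled by the $N_-$ and $N_{--}$ conditions carried by the summands of this decomposition.

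For the necessity direction, suppose $\lambda = \partial_+\partial_-\beta$ with $\beta \in P^n$. Given any $\gamma \in P\mathcal{H}^n_{-, N_{--}}$, Corollary \ref{D++equ} together with the defining relation $(\partial_+\partial_-)^{\ast}\gamma = \partial_-^{\ast}\partial_+^{\ast}\gamma = 0$ of a harmonic field yields $(\lambda,\gamma) = (\beta,(\partial_+\partial_-)^{\ast}\gamma) = 0$. The other condition $\partial_-\lambda = 0$ rests on the algebraic identity $\partial_-\partial_+\partial_- \equiv 0$ on $P^n$. To see it, set $\alpha = \partial_-\beta \in P^{n-1}$ (so $\partial_-\alpha = 0$ automatically) and expand $0 = d^2\alpha$ using $d = \partial_+ + L\partial_-$ together with $[L,\partial_+]=0$ and $[L, L\partial_-]=0$; every term but $L\partial_-\partial_+\alpha$ vanishes, and since $L\colon \Omega^{n-1}\to\Omega^{n+1}$ is a bijection this forces $\partial_-\partial_+\alpha = 0$, that is $\partial_-\lambda = 0$.

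For the sufficiency direction, apply the decomposition to write
\[
\lambda = \gamma' + \partial_+\partial_-\varphi + \partial_-^{\ast}\sigma,
\]
with $\gamma' \in P\mathcal{H}^n_{-,N_{--}}$, $\varphi \in H^2 P^n$, and $\sigma \in H^1 P^{n-1}_{N_-}$. Pair with $\gamma'$: the middle term dies by Corollary \ref{D++equ} and $(\partial_+\partial_-)^{\ast}\gamma' = 0$, while Green's formula for $\partial_-^{\ast}$ reduces the last term to $(\sigma,\partial_-\gamma')$ plus a boundary integral killed by $\sigma \in N_-$; since $\partial_-\gamma' = 0$ by definition of the harmonic field, we obtain $(\lambda,\gamma') = \|\gamma'\|^2$, which forces $\gamma' = 0$ by the orthogonality hypothesis. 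Now apply $\partial_-$ to what remains: the identity $\partial_-\partial_+\partial_-\varphi = 0$ established above leaves $\partial_-\partial_-^{\ast}\sigma = 0$. Taking the inner product with $\sigma$ and integrating by parts once more (the boundary integral again killed by $\sigma \in N_-$) yields $\|\partial_-^{\ast}\sigma\|^2 = 0$, and therefore $\partial_-^{\ast}\sigma = 0$. Thus $\lambda = \partial_+\partial_-\varphi$, and one takes $\beta = \varphi$, smooth by elliptic regularity for the $\Delta_{--}$ boundary value problem used to produce the decomposition.

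The main technical obstacle is the careful bookkeeping of the boundary integrals produced by the Green's formula steps: one must verify that the $N_-$ condition on $\sigma$ and the $N_{--}$ condition on $\gamma'$, which come attached to the corresponding summands of the decomposition, are exactly what is needed to annihilate every boundary contribution that appears. The only new algebraic ingredient beyond the preceding Poincar\'e lemmas is the vanishing $\partial_-\partial_+\partial_-|_{P^n}=0$, which as noted reduces cleanly to $d^2=0$ together with injectivity of the Lefschetz map on $\Omega^{n-1}$.
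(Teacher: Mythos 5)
Your proof is correct and takes essentially the same approach as the paper: the paper proves only the $\partial_+$ Poincar\'e lemma in detail and declares the remaining ones analogous, and your argument is precisely that template applied to the decomposition 2.(ii) of Theorem \ref{Hdmm}, with the boundary terms killed by the $N_-$ and $N_{--}$ conditions exactly as in the $\partial_+$ case. The one ingredient you supply explicitly that the paper leaves implicit (in the fact that \eqref{pecomp} is a complex) is the identity $\partial_-\partial_+\partial_-=0$ on $P^n$, and your derivation of it from $d^2=0$ and the injectivity of $L$ on $\Omega^{n-1}$ is valid.
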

\begin{lemma}[Poincar\'e lemma for $(\partial_{+}\partial_{-})^{\ast}$]
Given a $\lambda \in P^n$ , there exists a solution $\beta\in P^{n}$ to the equation 
\begin{equation*}
(\partial_{+}\partial_{-})^{\ast}\,\beta=\lambda
\end{equation*}  if and only if $\lambda$ obeys the integrability conditions: 
\begin{equation*}
\partial_{+}^{\ast}\lambda=0\, \qquad \text{and}\qquad (\lambda, \gamma)=0\,~ \text{ for all}~~ \gamma \in P\mathcal{H}_{+, D_{++}}^n\,.
\end{equation*} 

\end{lemma}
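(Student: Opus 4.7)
The plan is to mimic the template established by the preceding Poincaré lemmas, now invoking the Hodge decomposition for $\Delta_{++}$ from Theorem \ref{Hdpp}, part 2(i):
\begin{equation*}
L^2 P^n \,=\, P\mathcal{H}^n_{+, D_{++}} \,\oplus\, \partial_+\, H^1\!P^{n-1}_{D_+} \,\oplus\, (\partial_+\partial_-)^*\, H^2\!P^n.
\end{equation*}
The strategy is that the two integrability conditions on $\lambda$ will correspond exactly to orthogonality against the first two summands, forcing $\lambda$ to lie in the third.

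For the necessity direction, suppose $\lambda = (\partial_+\partial_-)^*\beta$. The second condition is immediate: for any $\gamma \in P\mathcal{H}^n_{+, D_{++}}$, Corollary \ref{D++equ} applies since $\gamma \in D_{++}$, giving $(\lambda, \gamma) = (\gamma,(\partial_+\partial_-)^*\beta) = (\partial_+\partial_-\gamma, \beta) = 0$ because $\partial_+\partial_-\gamma = 0$ by definition of a harmonic field. The first condition $\partial_+^*\lambda = 0$ follows from the identity $\partial_+\partial_-\circ\partial_+ = 0$ on $P^{n-1}$, which is precisely the statement that the symplectic elliptic complex of Section \ref{symplap} is a complex at the middle term; taking adjoints yields $\partial_+^*(\partial_+\partial_-)^* = 0$ on $P^n$, and hence $\partial_+^*\lambda = 0$.

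For sufficiency, apply the Hodge decomposition to write $\lambda = \gamma' + \partial_+\beta_1 + (\partial_+\partial_-)^*\beta_2$ with $\gamma' \in P\mathcal{H}^n_{+, D_{++}}$, $\beta_1 \in H^1\!P^{n-1}_{D_+}$, and $\beta_2 \in H^2\!P^n$. Pairing with $\gamma'$ and using the mutual $L^2$-orthogonality of the three summands yields $(\lambda, \gamma') = \|\gamma'\|^2$; the second integrability hypothesis applied to $\gamma = \gamma'$ then forces $\gamma' = 0$. To eliminate $\partial_+\beta_1$, compute
\begin{equation*}
\|\partial_+\beta_1\|^2 \,=\, (\lambda, \partial_+\beta_1) \,-\, ((\partial_+\partial_-)^*\beta_2, \partial_+\beta_1),
\end{equation*}
where the second term vanishes by the orthogonality built into the Hodge decomposition. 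For the first term, Green's formula (Lemma \ref{green}) gives $(\lambda, \partial_+\beta_1) = (\partial_+^*\lambda, \beta_1)$, since the boundary contribution $\int_{\partial M}\langle\sigma_{\partial_+}(d\rho)\beta_1, \lambda\rangle\, dS$ vanishes by $\beta_1 \in D_+$. The hypothesis $\partial_+^*\lambda = 0$ then yields $\partial_+\beta_1 = 0$, so $\lambda = (\partial_+\partial_-)^*\beta_2$ and we take $\beta = \beta_2$.

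The only nonroutine input is the identity $\partial_+\partial_-\circ\partial_+ = 0$ on $P^{n-1}$ used in the necessity step; once that is in hand, the argument is a direct reading-off of orthogonalities from the Hodge decomposition combined with a single integration by parts exploiting the $D_+$ boundary data, exactly as in the proofs of the earlier Poincaré lemmas.
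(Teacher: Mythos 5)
Your proof is correct and follows exactly the route the paper intends: the paper proves only the Poincar\'e lemma for $\partial_{+}$ in detail and leaves the remaining ones (including this one) as analogous applications of the corresponding Hodge decomposition, which is precisely what you carry out using Theorem 4.10.2(i), Corollary 3.15, and Green's formula together with the $D_{+}$ boundary condition. The one input you flag, $\partial_{+}\partial_{-}\circ\partial_{+}=0$ on $P^{n-1}$, is indeed available (it is the statement that the primitive elliptic complex of Section 2.4 is a complex at the middle term), so there is no gap.
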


Another application of the Hodge decompositions in Section 4.2 is to show by studying certain BVPs that the spaces of harmonic fields, $P\mathcal{H}_{+}^k$ and $P\mathcal{H}_{-}^k$ , are infinite-dimensional if no boundary condition is imposed.  For simplicity, we will just describe the $k<n$ case below.

\begin{prop}\label{bvpi}
Given a pair of primitive forms, $\lambda \in P^k$  and $\psi \in P^{k-1}$, with $k<n$, there exists a solution $\beta \in P^{k-1}$of the boundary value problem 
\begin{align*}
\partial_{+}\beta&=\lambda\, \quad\quad \quad \quad \text{~on~} M \\
\partial_{+}(\rho\, \beta)&=\partial_{+}(\rho\, \psi) \qquad \!\text{on~}\partial M
\end{align*}
if and only if $\lambda$ and $ \psi$ obey the integrability conditions:  
\begin{equation}
\partial_{+} \lambda=0\, \qquad \text{and}\qquad (\lambda, \gamma)=\int_{\partial M}\langle \partial_{+}(\rho\,  \psi), \gamma\rangle\,dS ~ \text{ for all}~~ \gamma \in P\mathcal{H}_{+}^k\,.
\end{equation} 
Moreover, the solution $\beta$ can be chosen to satisfy $\partial_{+}^{\ast} \beta=0$.
\end{prop}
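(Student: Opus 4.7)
The plan is to dispatch necessity with Green's formula, reduce sufficiency to a homogeneous boundary value problem, apply the Hodge decomposition 2.(i) of Theorem \ref{Hdp} to obtain a first solution, and finally use decomposition 2.(iii) to arrange the supplementary condition $\partial_+^*\beta=0$. For necessity, if $\beta$ is a solution then $\partial_+\lambda = \partial_+^2\beta = 0$ is automatic, and for any $\gamma \in P\mathcal{H}_+^k$ Green's formula for $\partial_+$ yields
\begin{equation*}
(\lambda,\gamma) \;=\; (\partial_+\beta,\gamma) \;=\; (\beta,\partial_+^*\gamma) + \int_{\partial M}\!\langle \partial_+(\rho\beta),\gamma\rangle\,dS\,,
\end{equation*}
whose first term on the right vanishes since $\partial_+^*\gamma=0$, and whose boundary integrand coincides with $\partial_+(\rho\psi)$ by the prescribed boundary data.

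For sufficiency I would set $\beta = \psi + \tilde\beta$ and reduce to finding $\tilde\beta \in P^{k-1}_{D_+}$ with $\partial_+\tilde\beta = \tilde\lambda := \lambda - \partial_+\psi$. A direct Green's-formula computation shows the hypothesis on $(\lambda,\psi)$ translates into the two homogeneous conditions $\partial_+\tilde\lambda = 0$ and $(\tilde\lambda,\gamma) = 0$ for every $\gamma \in P\mathcal{H}_+^k$. Invoking decomposition 2.(i) of Theorem \ref{Hdp},
\begin{equation*}
\tilde\lambda \;=\; \tilde\gamma_1 + \partial_+\tilde\beta_1 + \partial_+^*\tilde\varphi_1\,,
\end{equation*}
with $\tilde\gamma_1 \in P\mathcal{H}_{+,D_+}^k$, $\tilde\beta_1 \in H^1\!P^{k-1}_{D_+}$ and $\tilde\varphi_1 \in H^1\!P^{k+1}$, the inclusion $P\mathcal{H}_{+,D_+}^k \subset P\mathcal{H}_+^k$ combined with $L^2$-orthogonality of the decomposition immediately forces $\tilde\gamma_1 = 0$.

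The crucial step is then to kill $\partial_+^*\tilde\varphi_1$. Since $\partial_+\tilde\lambda = 0$ gives $\partial_+\partial_+^*\tilde\varphi_1 = 0$ while $\partial_+^*\partial_+^*\tilde\varphi_1 = 0$ is trivial, $\partial_+^*\tilde\varphi_1$ is itself an unrestricted harmonic field lying in $P\mathcal{H}_+^k$, so the integrability hypothesis applies to it. The cross terms $(\tilde\gamma_1, \partial_+^*\tilde\varphi_1)$ and $(\partial_+\tilde\beta_1, \partial_+^*\tilde\varphi_1)$ both vanish via Green's formula, using that $\tilde\gamma_1, \tilde\beta_1 \in D_+$ and hence $\partial_+\tilde\beta_1 \in D_+$ by Lemma \ref{pre}, so pairing $\tilde\lambda$ with $\partial_+^*\tilde\varphi_1$ yields $\|\partial_+^*\tilde\varphi_1\|^2 = 0$. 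Thus $\tilde\lambda = \partial_+\tilde\beta_1$ with $\tilde\beta_1 \in D_+$, and $\beta_0 := \psi + \tilde\beta_1$ solves the stated BVP (smoothness of $\tilde\beta_1$ follows from the usual elliptic bootstrap for the underlying BVP).

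Finally, to secure the auxiliary condition $\partial_+^*\beta = 0$, I would apply decomposition 2.(iii) of Theorem \ref{Hdp} to $\beta_0 \in P^{k-1}$, write $\beta_0 = \gamma_0 + \partial_+\alpha_1 + \partial_+^*\alpha_2$ with $\alpha_1 \in H^1\!P^{k-2}_{D_+}$, and set $\beta := \beta_0 - \partial_+\alpha_1$. Then $\partial_+\beta = \lambda$ is preserved since $\partial_+^2 = 0$, the boundary data is preserved because $\alpha_1 \in D_+$ implies $\partial_+\alpha_1 \in D_+$ by Lemma \ref{pre}, and $\partial_+^*\beta = \partial_+^*\gamma_0 + \partial_+^*\partial_+^*\alpha_2 = 0$. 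The main obstacle I anticipate is precisely the step $\partial_+^*\tilde\varphi_1 = 0$: the subtle point is to recognize that this auxiliary form itself lies in the unrestricted harmonic-field space $P\mathcal{H}_+^k$, which is exactly what makes the integrability hypothesis --- formulated against the infinite-dimensional $P\mathcal{H}_+^k$ rather than a finite-dimensional subspace --- applicable.
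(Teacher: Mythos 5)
Your proposal is correct in its overall architecture but takes a genuinely different route from the paper's. The paper does not reduce to a homogeneous problem: it Hodge-decomposes $\lambda$ itself via Theorem \ref{Hdp}.2.(iii) as $\lambda=\nu+\partial_{+}\varphi+\partial_{+}^{\ast}\sigma$ (killing $\partial_{+}^{\ast}\sigma$ using $\partial_{+}\lambda=0$ together with $\sigma\in N_{+}$), then manufactures a corrected datum $\widetilde{\psi}=\psi-\partial_{+}\varphi_{\psi}$ with the same boundary trace as $\psi$ but $\partial_{+}^{\ast}\widetilde{\psi}=0$, decomposes $\widetilde{\lambda}=\partial_{+}\widetilde{\psi}=\widetilde{\nu}+\partial_{+}\widetilde{\varphi}$, and sets $\beta=\varphi+\widetilde{\psi}-\widetilde{\varphi}$; the second integrability condition is only invoked at the very end, to force $\widetilde{\nu}=\nu$. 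Your reduction $\beta=\psi+\tilde\beta$ with $\tilde\lambda=\lambda-\partial_{+}\psi$ is cleaner: it folds the inhomogeneous boundary datum into the single condition $\tilde\beta\in D_{+}$ from the outset, and both your necessity computation and the final $\partial_{+}^{\ast}$-correction via 2.(iii) are fine.

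The one step that needs more care is the assertion that $\partial_{+}^{\ast}\tilde\varphi_1\in P\mathcal{H}_{+}^k$. As defined, $P\mathcal{H}_{+}^k$ sits inside $H^1\!P^k$, whereas decomposition 2.(i) only gives $\tilde\varphi_1\in H^1\!P^{k+1}$, so a priori $\partial_{+}^{\ast}\tilde\varphi_1$ is merely $L^2$ and the integrability hypothesis cannot be quoted against it without first upgrading its regularity. This is fillable --- $\tilde\lambda$ is smooth, so the solution of the elliptic BVP \eqref{BVP1} that produces the decomposition is smooth by Lemma \ref{ell}, and hence so is $\tilde\varphi_1=\partial_{+}\varphi$ --- but it must be said. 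Alternatively you can sidestep the issue entirely by decomposing $\tilde\lambda$ with 2.(iii) rather than 2.(i): there the $\partial_{+}^{\ast}$-summand is $\partial_{+}^{\ast}\sigma$ with $\sigma\in N_{+}$, so $\partial_{+}\tilde\lambda=0$ gives $0=(\partial_{+}\partial_{+}^{\ast}\sigma,\sigma)=\|\partial_{+}^{\ast}\sigma\|^2$ with no boundary term and no need to identify $\partial_{+}^{\ast}\sigma$ as a harmonic field, while the $\partial_{+}$-summand still carries the $D_{+}$ condition you need. This is essentially how the paper handles the corresponding term.
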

\begin{proof}
If there exists a solution $\beta\in P^{k-1}$ to the above BVP, then clearly $\lambda$ and $\psi$ satisfy the integrability conditions.  Conversely, for $k<n$, we first decompose $\lambda$ by the Hodge decomposition 2.(iii) of Theorem \ref{Hdp} and write
\begin{equation*}
\lambda=\nu+\partial_{+}\varphi+\partial_{+}^{\ast}\sigma.
\end{equation*} 
where $\nu \in P\mathcal{H}^k_{+}\,,\, \varphi \in P^{k-1}_{D_{+}}\,$, and $\sigma \in P^{k+1}_{N_{+}}\,$.  The first integrability condition $\partial_{+}\lambda=0$ gives the condition that $\partial_{+}\partial_{+}^{\ast}\sigma=0\,$, which implies $\partial_{+}^{\ast}\sigma=0$ since
\[0=(\partial_{+}\partial_{+}^{\ast}\sigma, \sigma)=(\partial_{+}^{\ast}\sigma, \partial_{+}^{\ast}\sigma)\,.\] 
The second integrability condition with the presence of $\psi$ does not imply $\nu=0$.  Let us introduce another primitive form $\widetilde{\psi}\in P^{k-1}$ with the property that 
\begin{equation}\label{tpsi}
\partial_{+}(\rho\,\widetilde{\psi})~|_{\partial M}=\partial_{+}(\rho\, \psi)~|_{\partial M} \qquad  \text{and~} \qquad \partial_{+}^{\ast}\widetilde{\psi}=0\,.
\end{equation} 
This is possible since by the Hodge decomposition 2.(i) of Theorem \ref{Hdp}, we can write 
\[ \psi  = \nu_{\psi} + \dpp \varphi_{\psi} + \partial_{+}^{\ast}\sigma_{\psi} \]
where $\nu_{\psi} \in P\mathcal{H}^{k-1}_{+, D_+},\, \varphi_{\psi} \in P^{k-2}_{D_{+}}$, and $\sigma_{\psi} \in P^{k}$.  Since $\dpp \varphi_{\psi} \in D_+\,$ by Proposition \ref{pre}, we can simply set $\widetilde{\psi}= \psi - \dpp \varphi_{\psi}$ which then satisfies the two conditions in \eqref{tpsi}.

Let $\widetilde{\lambda}=\partial_{+}\widetilde{\psi}$ and again Hodge decompose $\widetilde{\lambda}$ as we did above for $\lambda$:
\[
\widetilde{\lambda}=\widetilde{\nu}+\partial_{+}\widetilde{\varphi}
\] where $\widetilde{\nu} \in P\mathcal{H}_{+}^k$ and $\widetilde{\varphi} \in P^{k-1}_{D_{+}}$.  We can now define  $\beta=\varphi+\widetilde{\psi}-\widetilde{\varphi}\,$ which satisfies
\begin{align*}
&\partial_{+}\beta=\lambda+\widetilde{\nu}-\nu\,,\\
&\partial_{+}(\rho\, \beta)~|_{\partial{M}}=\partial_{+}(\rho\, \psi)~|_{\partial{M}}\,.
\end{align*}
The second integrability condition that for any $\gamma \in P\mathcal{H}_{+}^k\,$,  $(\lambda, \gamma) = ( \partial_{+}\beta - ( \widetilde{\nu}-\nu), \gamma) = \int_{\partial M}  \langle \partial_{+}(\rho\,  \psi), \gamma\rangle\,$ further implies
$\widetilde{\nu}-\nu = 0$.  Hence, $\beta$ is the solution for the boundary value problem.  Furthermore, $\varphi$ and $\widetilde{\varphi}$ can be chosen to be $\partial_{+}^{\ast}$-closed just as we argued for the existence of $\widetilde{\psi}$ above.  Therefore, $\beta$ can satisfy  $\partial_{+}^{\ast} \beta =0\,$ as well.
\end{proof}
The BVP of Proposition \ref{bvpi} can be easily modified to consider the $\dpm$ operator instead of $\dpp$, and also, the dual operators $\dpps$ and $\dpms$ as well.   For instance, the statement for the dual $\dpps$ would be as follows:  
\begin{cor}\label{bvpii}
Given a pair of primitive forms, $\lambda \in P^{k-1}$  and $\psi \in P^{k}$, with $0<k<n$, there exists a solution $\beta \in P^{k}$of the boundary value problem 
\begin{align*}
\partial_{+}^{\ast}\beta&=\lambda\, \quad\quad \quad \quad \text{~on~} M \\
\partial_{+}^{\ast}(\rho\, \beta)&=\partial_{+}^{\ast}(\rho\, \psi) \qquad \!\text{on~}\partial M
\end{align*}
if and only if $\lambda$ and $ \psi$ obey the integrability conditions:  
\begin{equation} \label{ibvpi}
\partial_{+}^{\ast} \lambda=0\, \qquad \text{and}\qquad (\lambda, \gamma)=\int_{\partial M}\langle \partial_{+}^{\ast}(\rho\,  \psi), \gamma\rangle\,dS ~ \text{ for all}~~ \gamma \in P\mathcal{H}_{+}^{k-1}\,.
\end{equation} 
Moreover, the solution $\beta$ can be chosen to satisfy $\partial_{+} \beta=0$.
\end{cor}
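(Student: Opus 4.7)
The plan is to mirror the proof of Proposition \ref{bvpi}, replacing $\dpp$ by $\dpps$ throughout and interchanging the roles of the $D_+$ and $N_+$ boundary conditions. Necessity is immediate: if $\beta$ solves the BVP, then $\dpps\lambda = (\dpps)^2\beta = 0$, and for any $\gamma \in P\mathcal{H}_+^{k-1}$ Green's formula combined with $\dpp\gamma=0$ gives
\[
(\lambda,\gamma) = (\dpps\beta,\gamma) = (\beta,\dpp\gamma) + \int_{\partial M}\langle\gamma,\dpps(\rho\,\beta)\rangle\,dS = \int_{\partial M}\langle\dpps(\rho\,\psi),\gamma\rangle\,dS,
\]
since the boundary values of $\dpps(\rho\,\beta)$ and $\dpps(\rho\,\psi)$ agree on $\partial M$.

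For sufficiency, I first apply the Hodge decomposition 2.(iii) of Theorem \ref{Hdp} to $\lambda$, writing $\lambda=\nu+\dpp\varphi+\dpps\sigma$ with $\nu\in P\mathcal{H}_+^{k-1}$, $\varphi\in P_{D_+}^{k-2}$, $\sigma\in P_{N_+}^k$. The hypothesis $\dpps\lambda=0$ forces $\dpps\dpp\varphi=0$; pairing against $\varphi$ and integrating by parts against the boundary condition $\varphi\in D_+$ yields $(\dpp\varphi,\dpp\varphi)=0$, so $\dpp\varphi=0$ and $\lambda=\nu+\dpps\sigma$. Next, I construct an auxiliary form $\widetilde{\psi}\in P^k$ satisfying $\dpps(\rho\,\widetilde{\psi})|_{\partial M}=\dpps(\rho\,\psi)|_{\partial M}$ and $\dpp\widetilde{\psi}=0$. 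Applying decomposition 2.(ii) of Theorem \ref{Hdp} to $\psi$, namely $\psi=\nu_\psi+\dpp\varphi_\psi+\dpps\sigma_\psi$ with $\sigma_\psi\in P_{N_+}^{k+1}$, I set $\widetilde{\psi}=\psi-\dpps\sigma_\psi$. By Lemma \ref{pre}, $\dpps\sigma_\psi\in N_+$, so the boundary datum is preserved; and $\dpp\widetilde{\psi}=\dpp\nu_\psi+\dpp^2\varphi_\psi=0$ because $\nu_\psi$ is a harmonic field. Setting $\widetilde{\lambda}=\dpps\widetilde{\psi}$ and decomposing by 2.(iii) gives $\widetilde{\lambda}=\widetilde{\nu}+\dpps\widetilde{\sigma}$ with $\widetilde{\nu}\in P\mathcal{H}_+^{k-1}$ and $\widetilde{\sigma}\in P_{N_+}^k$, after the same $\dpps$-closed argument applied to the $\dpp$-component.

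I now take $\beta=\sigma+\widetilde{\psi}-\widetilde{\sigma}$. Direct computation yields
\[
\dpps\beta = (\lambda-\nu) + \widetilde{\lambda} - (\widetilde{\lambda}-\widetilde{\nu}) = \lambda-\nu+\widetilde{\nu},
\]
while the boundary contributions from $\dpps(\rho\,\sigma)$ and $\dpps(\rho\,\widetilde{\sigma})$ vanish because $\sigma,\widetilde{\sigma}\in N_+$, leaving $\dpps(\rho\,\beta)|_{\partial M}=\dpps(\rho\,\widetilde{\psi})|_{\partial M}=\dpps(\rho\,\psi)|_{\partial M}$. Testing the second integrability hypothesis against $\gamma=\widetilde{\nu}-\nu\in P\mathcal{H}_+^{k-1}$, together with the Green's formula computation from the necessity step, produces $(\widetilde{\nu}-\nu,\widetilde{\nu}-\nu)=0$, so $\widetilde{\nu}=\nu$ and hence $\dpps\beta=\lambda$.

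The remaining point, and the main subtlety of the argument, is arranging $\dpp\beta=0$ without disturbing the boundary condition $\sigma,\widetilde{\sigma}\in N_+$. For this I refine $\sigma$ by applying decomposition 2.(ii) of Theorem \ref{Hdp} to $\sigma$ itself: $\sigma=\mu+\dpp\alpha+\dpps\xi$ with $\mu\in P\mathcal{H}^k_{+,N_+}$, $\alpha\in P^{k-1}$, $\xi\in P^{k+1}_{N_+}$. Replacing $\sigma$ by $\sigma-\dpps\xi=\mu+\dpp\alpha$ preserves $\dpps\sigma$ (since $(\dpps)^2=0$) and still lies in $N_+$ by Lemma \ref{pre} applied to $\dpps\xi$, while being $\dpp$-closed because $\dpp\mu=0$ and $\dpp^2\alpha=0$. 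The same reduction applied to $\widetilde{\sigma}$, combined with the already-established $\dpp\widetilde{\psi}=0$, yields $\dpp\beta=0$ as desired. The delicate verification is precisely that the subtraction of the $\dpps\xi$ term keeps the form inside $N_+$, which is exactly the preservation property supplied by Lemma \ref{pre}.
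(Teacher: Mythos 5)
Your argument is correct and is exactly the dualization of the paper's proof of Proposition \ref{bvpi} (swapping $\partial_{+}\leftrightarrow\partial_{+}^{\ast}$ and the roles of the $D_{+}$ and $N_{+}$ decompositions), which is precisely what the paper intends when it states that the corollary follows by ``easily modifying'' that proposition. The only addition beyond the paper's sketch is your explicit refinement of $\sigma$ and $\widetilde{\sigma}$ via decomposition 2.(ii) to achieve $\partial_{+}\beta=0$, and that step is carried out correctly using Lemma \ref{pre}.
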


We now use Corollary \ref{bvpii} to prove that the space of harmonic fields without imposing any boundary condition is infinite-dimensional.
\begin{thm}\label{inf}
On a compact symplectic manifold $(M, \omega, J, g)$ with smooth boundary, the space $P\mathcal{H}^k_{+}$ and $P\mathcal{H}^k_{-}$ are infinite-dimensional for $0<k<n$.
\end{thm}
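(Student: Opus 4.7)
The plan is to apply Corollary \ref{bvpii} with $\lambda=0$ to construct an infinite-dimensional family of harmonic fields in $P\mathcal{H}_+^k$; the statement for $P\mathcal{H}_-^k$ will then follow by conjugation via the operator $\mathcal{J}$ of Lemma \ref{conjugate} and the symmetry relations stated there.

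For each $\psi\in\ker\dpps\cap P^k$, the integrability condition of Corollary \ref{bvpii} is automatically satisfied (since $(\dpps\psi,\gamma)=0$ for every $\gamma\in P\mathcal{H}_+^{k-1}$), so the corollary produces a harmonic field $\beta_\psi\in P\mathcal{H}_+^k$ with boundary data
\[
\dpps(\rho\,\beta_\psi)\big|_{\partial M}=\dpps(\rho\,\psi)\big|_{\partial M}=\sigma_{\dpps}(d\rho)\,\psi\big|_{\partial M},
\]
the last equality using Remark \ref{diffequiv} together with the hypothesis $\dpps\psi=0$. Two inputs $\psi_1,\psi_2\in\ker\dpps\cap P^k$ with identical boundary symbols yield harmonic fields whose difference $\beta_{\psi_1}-\beta_{\psi_2}$ is $\dpp$- and $\dpps$-closed and satisfies the $N_+$ condition, hence lies in $P\mathcal{H}_{+,N_+}^k$, which is finite-dimensional by Theorem \ref{Hdp}. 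Therefore it suffices to exhibit an infinite sequence $\{\psi_i\}\subset\ker\dpps\cap P^k$ whose boundary symbols $\sigma_{\dpps}(d\rho)\psi_i\big|_{\partial M}$ are linearly independent.

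To produce such a family I would take $\psi_i=\dpps\mu_i$ for a carefully chosen sequence of smooth primitive $(k{+}1)$-forms $\mu_i$ on $M$: these satisfy $\dpps\psi_i=(\dpps)^2\mu_i=0$ automatically, so they belong to $\ker\dpps\cap P^k$. Choosing the $\mu_i$ supported in pairwise disjoint small collar neighborhoods of distinct boundary points, the resulting boundary symbols $\sigma_{\dpps}(d\rho)\,\dpps\mu_i\big|_{\partial M}$ are supported in disjoint coordinate patches of $\partial M$, hence linearly independent provided each is nontrivial.

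The main obstacle is verifying that a generic choice of $\mu_i$ indeed produces a nontrivial boundary symbol. Since $\dpps=d^*$ on primitive forms by Corollary \ref{c1}, a local computation in the Darboux frame of Section \ref{locD} shows that $\sigma_{\dpps}(d\rho)\,\dpps\mu_i\big|_{\partial M}$ is a second-order boundary differential expression in $\mu_i$ whose leading part involves the normal derivative $\partial_\rho\mu_i\big|_{\partial M}$. Varying this normal data over the infinite-dimensional space of primitive $(k{+}1)$-forms along $\partial M$ yields an infinite-dimensional family of nontrivial, linearly independent boundary symbols. The corresponding $\beta_{\psi_i}$ are then linearly independent in $P\mathcal{H}_+^k$ modulo the finite-dimensional $P\mathcal{H}_{+,N_+}^k$, forcing $P\mathcal{H}_+^k$ to be infinite-dimensional. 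Finally, applying $\mathcal{J}$ to each $\beta_{\psi_i}$ and invoking the conjugation relations $\mathcal{J}\dpp\mathcal{J}^{-1}=\dpms(H+R)$ and $\mathcal{J}\dpps\mathcal{J}^{-1}=(H+R)\dpm$ of Lemma \ref{conjugate} transfers the conclusion to $P\mathcal{H}_-^k$.
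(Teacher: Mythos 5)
Your argument is correct and follows essentially the same route as the paper: apply Corollary \ref{bvpii} with $\lambda=0$ and $\psi\in\dpps P^{k+1}$, observe that the kernel of the boundary-trace map is the finite-dimensional $P\mathcal{H}^k_{+,N_+}$, and conclude from the infinite-dimensionality of the achievable boundary data, transferring to $P\mathcal{H}^k_-$ via $\mathcal{J}$. The only difference is that you sketch a justification (disjointly supported $\mu_i$ with nontrivial boundary symbols) for the infinite-dimensionality of $\dpps(\rho\,\dpps P^{k+1})\big|_{\partial M}$, which the paper simply asserts.
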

\begin{proof} 
For $0<k< n$, let us consider the boundary map
\begin{align*}
\mathcal{B}:~~ P\mathcal{H}^k_{+} &\longrightarrow \quad\Omega^{k-1}~|_{\partial M}\\ 
\beta \quad &\longrightarrow \quad\partial_{+}^{\ast}(\rho\, \beta)~|_{\partial M}.
\end{align*}  
By the definition of $N_{+}$ in Definition \ref{sbc1def} (see also Remark \ref{diffequiv}), it is clear that $\mathcal{B}(\beta)=0$ if and only if $\beta\in N_{+}\,$.  Therefore, $\ker \mathcal{B} = P\mathcal{H}^k_{+, N_{+}}$, which is finite-dimensional as stated in Theorem \ref{Hdp}.

Further, we can show that the map $\mathcal{B}$ is surjective to the space $\partial_{+}^{\ast}(\rho\,\partial_{+}^{\ast}P^{k+1})~|_{\partial M}$. That is, for any $\psi \in  \partial_{+}^{\ast}P^{k+1}$, there is a $\beta \in P\mathcal{H}^k_{+}$ such that
\begin{align*}
&\partial_{+}\beta=0\,, \quad\partial_{+}^{\ast}\beta=0\,, \quad\text{on $M$}\\
&\partial_{+}^{\ast}(\rho\, \beta)=\partial_{+}^{\ast}(\rho\, \psi)\,, \quad \text{ on $\partial M$}.
\end{align*} 
From Corollary \ref{bvpii}, such a $\beta$ exists as long as the two integrability conditions in \eqref{ibvpi} are satisfied.  The first trivially holds since we are only interested in the $\lambda=0$ case.  The second gives the condition
\begin{equation}\label{2integ}
(\lambda, \gamma)=\int_{\partial M}\langle\partial_{+}^{\ast}(\rho\, \psi),\gamma \rangle\,dS = \int_{M}\langle\partial_{+}^{\ast}\psi,\gamma \rangle\,dS \,,
\end{equation}  
for any $\gamma \in P\mathcal{H}^{k-1}_{+}$ when $0<k< n$.  Clearly, this holds as well since here $\psi \in \partial_{+}^{\ast}P^{k+1}$ which thus results in a zero on both sides of \eqref{2integ}.  With the kernel  of $\mathcal{B}$ being finite-dimensional while $\partial_{+}^{\ast}(\rho\,\partial_{+}^{\ast}P^{k+1})~|_{\partial M}$ is infinite-dimensional, we therefore conclude that $P\mathcal{H}^k_{+}$ for $0<k < n$ must be infinite-dimensional. 

Concerning $P\mathcal{H}^k_{-}$, we can make use of the operator $\mathcal{J}$ defined in Section \ref{conjsec}.  By Lemma \ref{conjugate}, $\mathcal{J}$ maps the conditions of  $P\mathcal{H}^k_{+}$ into the conditions of $P\mathcal{H}^k_{-}$, and hence, it is an isomorphism between the two spaces.  This implies that $P\mathcal{H}^k_{-}$ for $0 < k < n$ is infinite-dimensional.
\end{proof}

\section{Symplectic cohomology }
In this section, we study absolute and relative primitive cohomologies on compact symplectic manifolds with boundary.

\subsection{Absolute primitive cohomologies }Recall the symplectic elliptic complex reviewed in Section 2:
\begin{align}\label{pecomp}\begin{CD}
0@>\partial_{+}>>P^0@>\partial_{+}>>P^1@>\partial_{+}>>\cdots @>\partial_{+}>>P^{n-1}@>\partial_{+}>>P^n\\
    @. @. @. @.   @.                                                                                                            @VV{\partial_{+}\partial_{-}}V\\
0@<\partial_{-}<<P^0@<\partial_{-}<<P^1@<\partial_{-}<<\cdots @<\partial_{-}<<P^{n-1}@<\partial_{-}<<P^n
\end{CD}
\end{align} Tseng and Yau studied the cohomologies of this complex in \cite{TY2}, which we shall write as follows:
\begin{align*}
PH^k_+(M)&=\dfrac{\ker \partial_{+} \cap P^k(M)}{ \partial_{+} P^{k-1}(M)}\,, \quad \text{for $k=0, 1, 2, \ldots, n-1 \,$},\\
PH^n_+(M)&=\frac{\ker \partial_{+}\partial_{-}\cap P^n(M)}{\partial_{+}P^{n-1}(M)}\,,\\
PH^n_-(M)&=\frac{\ker \partial_{-}\cap P^{n}(M)}{\partial_{+}\partial_{-}P^{n}(M)}\,,\\
PH^k_-(M)&=\frac{\ker \partial_{-}\cap P^{k}(M)}{\partial_{-} P^{k+1}(M)}\,, \quad\text{for $k=0, 1, 2 , \ldots, n-1 \,$}.
\end{align*} 
On closed manifolds, the ellipticity of the complex \eqref{pecomp} implies that the above cohomologies are finite-dimensional. (For their properties in the closed manifold case, see \cite{TY2,TTY}.)  In fact, the finite-dimensionality extends to the case of manifolds with boundary as we explained in the below proposition, where we also give a simple algebraic proof that the index of the elliptic complex \eqref{pecomp} is always zero. 
\begin{prop}\label{fv}
On a compact symplectic manifold with boundary, the corresponding cohomologies of primitive elliptic complex of \eqref{pecomp} are finite-dimensional and the index of the complex is zero.
\end{prop}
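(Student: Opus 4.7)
The plan is to split the proposition into two parts: finite-dimensionality of each primitive cohomology, handled via the Hodge decompositions of Section 4, followed by a purely algebraic argument for vanishing of the index based on the symmetry of the complex about its middle degree.

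For finite-dimensionality, the strategy is to produce, for each cohomology, a canonical isomorphism to a finite-dimensional space of harmonic fields carrying a suitable symplectic boundary condition. As the model case, take $PH^k_+(M)$ with $k<n$ and consider the map $\Phi\colon P\mathcal{H}^k_{+,N_+}\to PH^k_+(M)$, $\gamma\mapsto[\gamma]$. Surjectivity follows by taking any $\beta\in\ker\partial_+\cap P^k$ and applying the Hodge decomposition 2.(ii) of Theorem \ref{Hdp} to write $\beta=\gamma+\partial_+\varphi+\partial_+^{\ast}\sigma$ with $\sigma\in H^1P^{k+1}_{N_+}$; then $\partial_+\beta=\partial_+\partial_+^{\ast}\sigma=0$, and integrating by parts with Green's formula, the $N_+$ condition on $\sigma$ kills the boundary contribution, forcing $\partial_+^{\ast}\sigma=0$ and hence $[\beta]=[\gamma]$. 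Injectivity is similar: if $\gamma=\partial_+\alpha$ for some $\alpha\in P^{k-1}$, then $\|\gamma\|^2=(\partial_+\alpha,\gamma)$ equals $(\alpha,\partial_+^{\ast}\gamma)$ plus a boundary term, and both vanish because $\gamma$ is harmonic and $\gamma\in N_+$ annihilates $\sigma_{\partial_+^{\ast}}(d\rho)\gamma$ on $\partial M$. The same template, applied with Theorem \ref{Hdm}, gives $PH^k_-(M)\cong P\mathcal{H}^k_{-,N_-}$ for $k<n$. For the middle-degree cohomologies $PH^n_\pm(M)$, the same argument goes through with Theorems \ref{Hdpp} and \ref{Hdmm} in place of the second-order decompositions, using Corollary \ref{D++equ} to carry out the integration by parts against $(\partial_+\partial_-)^{\ast}$ and exploiting the $N_{--}$ (resp. $D_{++}$) condition on $\sigma$ to eliminate the boundary contribution. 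Each of the resulting harmonic-field spaces is finite-dimensional by the theorems cited, so all six primitive cohomologies are finite-dimensional.

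For the index, the algebraic observation is that each vector space $P^k$ appears twice in the complex \eqref{pecomp}: once at position $k$ on the ascending half and once at position $2n+1-k$ on the descending half. Since these positions have opposite parity, the formal alternating sum $\sum_i (-1)^i \dim C^i$ telescopes to zero pairwise. To promote this to a statement about cohomology, the plan is to apply Euler–Poincar\'e to the Hodge-theoretic splitting provided by part 2 of Theorems \ref{Hdp}, \ref{Hdm}, \ref{Hdpp}, and \ref{Hdmm}: at each position the space $P^k$ decomposes as a finite-dimensional harmonic piece plus two image pieces, and the image piece at position $i$ cancels the adjoint image piece at the neighbouring position $i\pm 1$ in the alternating sum. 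What remains is the alternating sum of dimensions of the finite-dimensional harmonic pieces, which by the symmetric placement of the $P^k$ in the complex equals the pointwise symbol Euler characteristic, hence zero.

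The main obstacle will be carrying out the cancellation rigorously across the junction at the middle degree. The ascending half contributes $N_+$ harmonic fields and the descending half contributes $N_-$ harmonic fields, and these two spaces are not directly identified under the $\mathcal{J}$-conjugation of Lemma \ref{conjugate} — that isomorphism sends $N_+$ to $D_-$ rather than $N_-$ (by Lemma \ref{rel}). Consequently, the symmetry that forces the alternating cancellation at the middle degree must be traced back to the algebraic self-duality of the complex itself rather than to $\mathcal{J}$. The fourth-order differential $\partial_+\partial_-$ adds a further subtlety, because the image/adjoint-image bookkeeping there involves $(\partial_+\partial_-)^{\ast}$ paired against $\partial_+\partial_-$, and verifying that the symbol-level cancellation descends faithfully through the Hodge decompositions \ref{Hdpp} and \ref{Hdmm} is where I expect the most care to be needed.
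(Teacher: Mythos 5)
Your finite-dimensionality argument is sound but takes a different route from the paper: you derive $PH^k_\pm(M)\cong P\mathcal{H}^k_{\pm,N_\pm}$ (resp.\ $N_{--}$ at degree $n$) directly from the Hodge decompositions of Section 4 — this is essentially Theorem \ref{absoluteN}, which the paper proves immediately afterwards and indeed notes gives an independent proof of finiteness. The paper's own proof of Proposition \ref{fv} instead quotes the isomorphisms \eqref{fva}--\eqref{fvb} identifying $PH^k_\pm(M)$ with kernels and cokernels of Lefschetz maps on the (finite-dimensional) de Rham cohomology. Both are legitimate; yours buys an explicit harmonic representative, the paper's buys exactly the bookkeeping needed for the index computation.

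The index argument, however, has a genuine gap, and it is not merely the "care" you flag at the junction. Your core claim is that the alternating sum of the dimensions of the harmonic pieces equals the pointwise symbol Euler characteristic of the complex, which vanishes because each $P^k$ occurs at two positions of opposite parity. On a manifold with boundary this principle is simply false: the index of a complex of smooth sections is not determined by the interior symbol. The de Rham complex is a counterexample — its symbol Euler characteristic $\sum_k(-1)^k\binom{2n}{k}$ vanishes, yet $\sum_k(-1)^k\dim H^k(M)=\chi(M)\neq 0$ for, say, $M=B^{2n}$. The underlying algebraic issue is that the telescoping identity $\sum_i(-1)^i\dim H^i=\sum_i(-1)^i\dim C^i$ requires the $C^i$ to be finite-dimensional; for infinite-dimensional $C^i$ the "pairwise cancellation" of image and coimage pieces across adjacent positions is a subtraction of infinities and carries no content. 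Nor can one cancel termwise at the level of harmonic spaces: the examples of Section \ref{Exsec} show $\dim P\mathcal{H}^k_{+,N_+}\neq\dim P\mathcal{H}^k_{-,N_-}$ in general (e.g.\ $9$ versus $5$ at $k=2$ for $I\times T^5$), so the vanishing of the index is a genuinely global cancellation. The paper obtains it by substituting \eqref{fva}--\eqref{fvb} into both alternating sums, telescoping each using the finite-dimensional rank identity \eqref{linrel}, and checking that the two resulting boundary terms at degree $n$ agree — an argument that cannot be replaced by an appeal to the symbol.
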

\begin{proof}
We recall the following isomorphisms from \cite{TTY} which hold on symplectic manifolds with boundary:
\begin{align}
PH^{k}_{+}(M)&\cong {\rm coker}[L\!: {H^{k-2}(M) \to H^k(M)}] \oplus \ker [L\!: {H^{k-1}(M)}\to H^{k+1}(M)]\label{fva}\\
PH^{k}_{-}(M)&\cong {\rm coker}[L\!: {H^{2n-k-1}(M)\to H^{2n-k+1}(M)}]  \label{fvb}\\
&\qquad\quad\oplus \ker [L\!: {H^{2n-k}(M)} \to H^{2n-k+2}(M)]\nonumber
\end{align}
Since the de Rham cohomology $H^{\ast}(M)$ is finite-dimensional for a manifold with boundary, the kernels and the cokernels of $L\!: H^{\ast}(M) \rightarrow H^{\ast}(M)$ are also finite-dimensional.  Therefore, the isomorphisms \eqref{fva}-\eqref{fvb} above imply that $PH^{k}_+(M)$ and $PH^{k}_-(M)$ are both finite-dimensional, for $0 \leq k \leq n$.

Consider the index of this complex:
\[
{\rm index}= \overset{n}{\underset{k=0}{\sum}}(-1)^k {\rm dim}\, PH^{k}_+(M)-\overset{n}{\underset{k=0}{\sum}}(-1)^k{\rm dim}\,PH^k_-(M).
\]
Since the Lefschetz map is a linear map on $H^*(M)$, we have the linear relation
\begin{align}\label{linrel}
\dim {\rm coker}\, L|_{H^j} - \dim {\rm ker}\, L|_{H^j} = \dim H^{j+2} - \dim H^j~.
\end{align}
Together with the isomorphism \eqref{fva} above, this imply
\begin{align*}
{\rm dim}\, PH^k_+& = {\rm dim}\, {\rm coker}\, L|_{H^{k-2}} + {\rm dim} \ker L|_{H^{k-1}} \\
& = {\rm dim} \,H^{k}- {\rm dim}\,H^{k-2}+{\rm dim} \ker \,L|_{H^{k-2}}+ {\rm dim} \ker L|_{H^{k-1}}
\end{align*}
Note that for $k=0,1$, this gives
 \begin{align*}
 {\rm dim}\,PH^0_+&={\rm dim} \,H^0\,, 
\\ {\rm dim}\,PH^1_+&={\rm dim}\,H^1+{\rm dim}\ker L|_{H^{0}}~.
 \end{align*}
The alternating sum of $\dim PH_+^k$ results in 
 \begin{align}\label{ppsum}
 \overset{n}{\underset{k=0}{\sum}}(-1)^k {\rm dim}\, PH^{k}_+=(-1)^{n}\left({\rm dim}\,H^n+ {\rm dim}\,\ker L|_{H^{n-1}} \right) + (-1)^{n-1}{\rm dim}\,H^{n-1}.
 \end{align} 
Similarly, for $PH^k_-$, we have
\begin{align*}
\dim PH^k_-& = {\rm dim}\, {\rm coker}\, L|_{H^{2n-k-1}} + {\rm dim} \ker L|_{H^{2n-k}} \\
& = {\rm dim}\, {\rm coker}\, L|_{H^{2n-k-1}} +{\rm dim}\, {\rm coker}\, L|_{H^{2n-k}} + {\rm dim} \,H^{2n-k}- {\rm dim}\,H^{2n-k+2}
\end{align*}
with 
\begin{align*}
{\rm dim}\,PH^0_-&={\rm dim} \,H^{2n}\,,\\
{\rm dim}\,PH^1_-&={\rm dim}\,H^{2n-1}+{\rm dim}\,{\rm coker}\, L|_{H^{2n-2}}~. \end{align*} 
This results in the alternating sum
\begin{align}\label{pmsum}
\overset{n}{\underset{k=0}{\sum}}(-1)^k {\rm dim} \,PH^{k}_-=(-1)^{n}\left({\rm dim}\,{\rm coker} \, L|_{H^{n-1}} + {\rm dim}\,H^{n}\right) + (-1)^{n-1}{\rm dim}\,H^{n+1}.
\end{align} 
Subtracting \eqref{pmsum} from \eqref{ppsum} and then applying again the relation \eqref{linrel}, we obtain that the index is zero.
\end{proof}

Now for each primitive absolute cohomology, we can identify a unique harmonic field representative for each cohomology class.   This follows immediately from the following Hodge decompositions for $k<n\,$,
\begin{align*}
P^k&=P\mathcal{H}_{+, N_{+}}^k\!\oplus \,\partial_{+}P^{k-1}\oplus \,\partial_{+}^{\ast}P_{N_{+}}^{k+1}~,\\
P^k&=P\mathcal{H}_{-,N_{-}}^k\!\oplus\, \partial_{-}P^{k+1}\oplus\, \partial_{-}^{\ast}P_{N_{-}}^{k-1}~,
\end{align*}
from Theorems \ref{Hdp}.2.(ii) and \ref{Hdm}.2.(ii), respectively, and in the case of $k=n\,$,
\begin{align*}
P^n &=P\mathcal{H}_{+,N_{+}}^n\oplus \partial_{+}P^{n-1}\oplus (\partial_{+}\partial_{-})^{\ast}P_{N_{--}}^{n}~,\\
P^n &=P\mathcal{H}_{-,N_{--}}^n\!\!\oplus (\partial_{+}\partial_{-})P^{n}\oplus \partial_{-}^{\ast}P_{N_{-}}^{n-1}~,
\end{align*}
from Theorems \ref{Hdpp}.2.(ii) and \ref{Hdmm}.2.(ii).  These four decompositions immediately gives an isomorphism between absolute primitive cohomology and the space of harmonic fields with $\{N_+, N_-, N_{--}\}$ boundary conditions.
\begin{thm}\label{absoluteN}
Let $(M, \omega)$ be a compact symplectic manifold with a smooth boundary. Let $(\omega, J, g)$ be a compatible triple on $M$. Then there are isomorphisms:  
 \begin{align}\label{abhar}
PH^k_+(M)&\cong P\mathcal{H}^k_{+, N_{+}}\!(M)\,,\quad PH^k_-(M)\cong P\mathcal{H}^k_{-,N_{-}}\!(M)\,,
\end{align}
for $k<n$ and 
\begin{align}\label{abhar2}
PH^n_+(M)&\cong P\mathcal{H}^n_{+, N_{+}}\!(M)\,,\quad PH^n_-(M)\cong P\mathcal{H}^n_{-,N_{--}}\!(M)\,.
\end{align}
\end{thm}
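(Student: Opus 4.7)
The plan is to use the four Hodge decompositions $2.(ii)$ from Theorems \ref{Hdp}, \ref{Hdm}, \ref{Hdpp}, and \ref{Hdmm} to produce the isomorphisms in a uniform way. The map in each case is the natural one sending a harmonic field $h$ to its cohomology class $[h]$, and I will verify surjectivity and injectivity for each of the four isomorphisms separately, the arguments being structurally identical.

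For surjectivity, take the case $PH^k_+(M)$ with $k<n$ first. Given a representative $\beta\in\ker\partial_+\cap P^k$, I would apply Theorem \ref{Hdp}.2.(ii) to write $\beta=h+\partial_+\alpha+\partial_+^*\gamma$ with $h\in P\mathcal{H}^k_{+,N_+}$, $\alpha\in H^1\!P^{k-1}$, $\gamma\in H^1\!P^{k+1}_{N_+}$. Applying $\partial_+$ kills $h$ and $\partial_+\alpha$, leaving $\partial_+\partial_+^*\gamma=0$. Pairing this with $\gamma$ and using the Green's formula for $\partial_+$ (Lemma \ref{green}), the boundary integral $\int_{\partial M}\langle\sigma_{\partial_+}(d\rho)\,\partial_+^*\gamma,\gamma\rangle\,dS=-\int_{\partial M}\langle\partial_+^*\gamma,\sigma_{\partial_+^*}(d\rho)\gamma\rangle\,dS$ vanishes since $\gamma\in N_+$, giving $\|\partial_+^*\gamma\|^2=0$. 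Hence $\beta=h+\partial_+\alpha$, so $[\beta]=[h]$. The argument for $PH^k_-$ with $k<n$ is the mirror image, using Theorem \ref{Hdm}.2.(ii) and the $N_-$ condition.

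The two middle-degree cases require slightly more care but follow the same pattern. For $PH^n_+(M)$, I take $\beta\in\ker\partial_+\partial_-\cap P^n$ and decompose via Theorem \ref{Hdpp}.2.(ii) as $\beta=h+\partial_+\alpha+(\partial_+\partial_-)^*\gamma$ with $\gamma\in P^n_{N_{--}}$. Applying $\partial_+\partial_-$ kills $h$ by definition of the harmonic field space, and kills $\partial_+\alpha$ because on $P^{n-1}$ one has $\partial_+\partial_-+\partial_-\partial_+=0$ (a consequence of $L\partial_+\partial_-=-L\partial_-\partial_+$ together with the injectivity of $L$ on $P^{n-1}$), so $\partial_+\partial_-\partial_+\alpha=-\partial_+^2\partial_-\alpha=0$. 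This leaves $\partial_+\partial_-(\partial_+\partial_-)^*\gamma=0$. Now I use Corollary \ref{D++equ}: since $\gamma\in N_{--}$, the identity $((\partial_+\partial_-)^*\gamma,(\partial_+\partial_-)^*\gamma)=(\gamma,\partial_+\partial_-(\partial_+\partial_-)^*\gamma)$ holds with no boundary defect, and the right side is zero. Hence $(\partial_+\partial_-)^*\gamma=0$ and $[\beta]=[h]$. The case $PH^n_-$ is dual, using Theorem \ref{Hdmm}.2.(ii), the relation $\partial_-\partial_+\partial_-=0$ on $P^n$, and the fact that $h\in P\mathcal{H}^n_{-,N_{--}}$ satisfies $(\partial_+\partial_-)^*h=\partial_-^*\partial_+^*h=0$ by definition.

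For injectivity, suppose first that $h\in P\mathcal{H}^k_{+,N_+}$ equals $\partial_+\alpha$ for some $\alpha\in P^{k-1}$. Then $\|h\|^2=(h,\partial_+\alpha)=(\partial_+^*h,\alpha)-\int_{\partial M}\langle\alpha,\sigma_{\partial_+^*}(d\rho)h\rangle\,dS$, and both terms vanish because $\partial_+^*h=0$ and $h\in N_+$. The $k<n$ case for $\partial_-$ is identical with Green's formula for $\partial_-$, and the middle-degree $PH^n_+$ case is the same (exact means $h=\partial_+\alpha$, and $h\in N_+$ with $\partial_+^*h=0$). For $PH^n_-$, if $h\in P\mathcal{H}^n_{-,N_{--}}$ equals $\partial_+\partial_-\xi$, then Corollary \ref{D++equ} applied to $h\in N_{--}$ gives $(h,\partial_+\partial_-\xi)=((\partial_+\partial_-)^*h,\xi)=0$, since $(\partial_+\partial_-)^*h=0$ by the harmonic field definition.

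The step that most deserves care is the cancellation $\partial_+\partial_-\partial_+=0$ on $P^{n-1}$, since the naive consequence of $L\partial_+\partial_-=-L\partial_-\partial_+$ produces an identity in the image of $L$; I need the injectivity of $L:P^{n-1}\to\Omega^{n+1}$ to descend this to the operator identity, and then invoke $\partial_+^2=0$. Apart from this, every other step is a bookkeeping exercise that reduces, via Corollary \ref{D++equ} or the first-order Green's formula, to the defining boundary condition of the relevant harmonic field space.
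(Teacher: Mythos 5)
Your proposal is correct and follows the same route as the paper: the paper derives these isomorphisms directly from the four Hodge decompositions 2.(ii) of Theorems \ref{Hdp}, \ref{Hdm}, \ref{Hdpp}, \ref{Hdmm}, exactly as you do. You simply spell out the surjectivity and injectivity verifications (including the anticommutation $\partial_{+}\partial_{-}+\partial_{-}\partial_{+}=0$ on $P^{n-1}$ and the use of Corollary \ref{D++equ} for the $N_{--}$ cases) that the paper leaves implicit.
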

Note that Theorem \ref{absoluteN} also implies the finiteness of the absolute primitive cohomologies since the spaces of harmonic fields on the right hand side of the isomorphisms in \eqref{abhar}-\eqref{abhar2}  are all finite-dimensional following Theorems \ref{Hdp}, \ref{Hdm}, \ref{Hdpp}, \ref{Hdmm}. More noteworthily, the above isomorphisms demonstrate that the dimensions of $P\mathcal{H}^k_{+, N_{+}}\!(M)$, $P\mathcal{H}^k_{-, N_{-}}\!(M)$, for $k< n$, and the dimensions of $P\mathcal{H}^n_{+, N_{+}}\!(M)$, $P\mathcal{H}^n_{-,N_{--}}\!(M)$ are all symplectic invariants and independent of the metric needed to define harmonic fields. In fact, the dimensions of the primitive harmonic fields with Dirichlet-type boundary conditions are also symplectic invariants.  This follows from Lemmas \ref{conjugate} and \ref{rel} which imply that the operator $\mathcal{J}$ induces the following isomorphisms on harmonic fields:
\begin{align}\label{harma}
P\mathcal{H}^{k}_{+, D_{+}}\!(M)\cong P\mathcal{H}^{k}_{-, N_{-}}\!(M)\,, \quad P\mathcal{H}^{k}_{-, D_{-}}\!(M)\cong P\mathcal{H}^{k}_{+, N_{+}}\!(M)\,,
\end{align}
for degree $k<n$ and 
\begin{align}\label{harmb}
P\mathcal{H}^{n}_{+, D_{++}}\!(M)\cong P\mathcal{H}^{n}_{-, N_{--}}\!(M)\,, \quad P\mathcal{H}^{n}_{-, D_{-}}\!(M)\cong P\mathcal{H}^{n}_{+, N_{+}}\!(M)\,.
\end{align}
Therefore, the space of harmonic fields with symplectic boundary conditions, i.e. $D_{\pm}, N_{\pm}, D_{++}$, and $N_{--}$, represent symplectic invariants.

\subsection{Relative primitive cohomologies}\label{RelSec}

For manifolds with boundary, the de Rham complex can be restricted to forms that satisfy the Dirichlet boundary condition
\begin{align*}
\begin{CD}
0@>>>\Omega_{D}^0@>d>>\Omega_{D}^1@>d>>\Omega_{D}^{2}@>d>>~~\cdots   
\end{CD}
\end{align*} 
The cohomology associated with this elliptic complex, 
$$H^k(M, \partial M) = \dfrac{\ker d \cap \Omega_D^k}{d\Omega_D^{k-1}}\,, \quad \text{for}~~ k=0, 1, \ldots, 2n\,,  $$
is called the relative cohomology with respect to the boundary since $\Omega^*_D$ consists of forms that vanish when pulled-back to the boundary manifold $\partial M$.

For primitive forms with boundary conditions, we can write down the following differential complex: 
\begin{align}\label{rc}
\begin{CD}
0@>>>P_{D_{+}}^0@>\partial_{+}>>P_{D_{+}}^1@>\partial_{+}>>\cdots @>\partial_{+}>>P_{D_{+}}^{n-1}@>\partial_{+}>>P_{D_{++}}^n\\
    @. @. @. @.   @.                                                                                                            @VV{\partial_{+}\partial_{-}}V\\
0@<\partial_{-}<<P^0_{D_{-}}@<\partial_{-}<<P^1_{D_{-}}@<\partial_{-}<<\cdots @<\partial_{-}<<P_{D_{-}}^{n-1}@<\partial_{-}<<P^n_{D_{-}}.
\end{CD}
\end{align} 
By Lemmas \ref{pre} and \ref{pree}, this complex is well-defined.  For instance, $\partial_{+}$ preserves the boundary condition $D_{+}$,  $\partial_{-}$ preserves $D_{-}$, and $\dpp\dpm$ maps a primitive form with $D_{++}$ condition into one with $D_-$ condition.  In analogy with the relative de Rham complex which imposes the Dirichlet boundary condition on forms, we call the cohomologies corresponding to the complex \eqref{rc} {\it relative primitive cohomologies} and denote them by 
\begin{align*}
PH^k_+(M, \partial M) &=\dfrac{\ker \partial_{+} \cap P^k_{D_+}(M)}{ \partial_{+}P_{D_{+}}^{k-1}(M)}\,, \quad \text{for $k=0, 1, 2, \ldots, n-1\,$},\\
PH^n_+(M, \partial M)&=\frac{\ker \partial_{+}\partial_{-}\cap P^n_{D_{++}}(M)}{\partial_{+}P_{D_+}^{n-1}(M)}\,,\\
PH^n_-(M, \partial M)&=\frac{\ker \partial_{-}\cap P_{D_-}^{n}(M)}{\partial_{+}\partial_{-}P_{D_{++}}^{n}(M)}\,,\\
PH^k_-(M, \partial M)&=\frac{\ker \partial_{-}\cap P_{D_-}^{k}(M)}{\partial_{-}P_{D_-}^{k+1}(M)}\,, \quad\text{for $k=0,1, 2, \ldots, n-1\,$}.
\end{align*} 
We emphasize that the standard Dirichlet and Neumann boundary conditions are not suitable here since they are not preserved by the differential operators $(\dpp, \dpm)$ in this complex.

Using the decompositions we obtained in Section \ref{hodgede}, we can immediately show that the relative cohomologies are isomorphic to the spaces of harmonic fields with $D_{+}, D_{-}$, or $D_{++}$ boundary conditions.
\begin{thm}\label{relD}
Let $(M, \omega)$ be a compact symplectic manifold with a smooth bound- ary. Let $(\omega, J, g)$ be a compatible triple on $M$.  We have the following isomorphisms: 
\begin{align}\label{relhar}
PH^k_+(M, \partial M) \cong P\mathcal{H}^k_{+, D_{+}}\!(M)\,, \quad PH^k_-(M, \partial M) \cong P\mathcal{H}^k_{-, D_{-}}\!(M)\,,
\end{align}
for $k<n$ and
\begin{align}\label{relhar2}
PH^n_+(M, \partial M) \cong P\mathcal{H}^n_{+, D_{++}}\!(M)\,, \quad PH^n_-(M, \partial M) \cong P\mathcal{H}^n_{-, D_{-}}\!(M)\,.
\end{align}
\end{thm}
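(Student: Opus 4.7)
The plan is to mirror the proof of Theorem \ref{absoluteN} for the absolute primitive cohomologies, replacing each Neumann-type Hodge decomposition (part 2.(ii) in Theorems \ref{Hdp}, \ref{Hdm}, \ref{Hdpp}, \ref{Hdmm}) by the corresponding Dirichlet-type decomposition (part 2.(i)). The main ingredients beyond these Hodge decompositions are the operator-preservation Lemmas \ref{pre} and \ref{pree}, which ensure that each Dirichlet-type boundary condition is propagated through the exact piece of the decomposition, together with Green's formula (Lemma \ref{green}) and its second-order analogue Corollary \ref{D++equ}.

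For $PH^k_+(M,\partial M)\cong P\mathcal{H}^k_{+,D_+}(M)$ with $k<n$, I would take $\beta \in \ker\dpp \cap P^k_{D_+}$ and apply Theorem \ref{Hdp}.2.(i) to write $\beta = \gamma + \dpp\varphi + \dpps\sigma$ with $\gamma\in P\mathcal{H}^k_{+,D_+}$, $\varphi\in P^{k-1}_{D_+}$, and $\sigma\in P^{k+1}$. Since $\dpp$ preserves $D_+$ by Lemma \ref{pre}, the residual $\dpps\sigma = \beta-\gamma-\dpp\varphi$ also lies in $D_+$. The closedness condition $\dpp\beta=0$ implies $\dpp\dpps\sigma=0$; pairing with $\sigma$, the boundary integral in Green's formula vanishes by the $D_+$ condition on $\dpps\sigma$, forcing $\dpps\sigma=0$. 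Thus the map $[\beta]\mapsto\gamma$ is well defined. Injectivity follows from a second Green-formula computation: if $\beta=\dpp\eta$ with $\eta\in P^{k-1}_{D_+}$, then $(\gamma,\gamma)=(\dpp(\eta-\varphi),\gamma)=(\eta-\varphi,\dpps\gamma)=0$, because the boundary term vanishes by the $D_+$ condition on $\eta-\varphi$ and $\dpps\gamma=0$. Surjectivity is immediate. The case $PH^k_-(M,\partial M)\cong P\mathcal{H}^k_{-,D_-}(M)$ is entirely parallel using Theorem \ref{Hdm}.2.(i).

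For $PH^n_+(M,\partial M)\cong P\mathcal{H}^n_{+,D_{++}}(M)$, I would decompose $\beta\in\ker\dpp\dpm\cap P^n_{D_{++}}$ via Theorem \ref{Hdpp}.2.(i) as $\beta=\gamma+\dpp\varphi+(\dpp\dpm)^*\sigma$ with $\varphi\in P^{n-1}_{D_+}$ and $\sigma\in P^n$. Lemma \ref{pree} guarantees $\dpp\varphi\in D_{++}$, so the residual $(\dpp\dpm)^*\sigma$ inherits $D_{++}$. After the auxiliary identity $\dpp\dpm(\dpp\varphi)=0$ (a quick check from $\dpp^2=0$ combined with the relation $L\dpp\dpm=-L\dpm\dpp$ and the injectivity of $L$ on $P^{n-1}$), the condition $\dpp\dpm\beta=0$ gives $\dpp\dpm(\dpp\dpm)^*\sigma=0$; pairing with $\sigma$ and invoking Corollary \ref{D++equ} in place of the first-order Green formula then produces $((\dpp\dpm)^*\sigma,(\dpp\dpm)^*\sigma)=0$. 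Injectivity uses $\dpps\gamma=0$, which is built into the definition of $P\mathcal{H}^n_{+,D_{++}}$, together with the $D_+$ condition on $\eta-\varphi$ whenever $\beta=\dpp\eta$. The remaining isomorphism $PH^n_-(M,\partial M)\cong P\mathcal{H}^n_{-,D_-}(M)$ follows the same template using Theorem \ref{Hdmm}.2.(i) and the decomposition $\beta=\gamma+(\dpp\dpm)\varphi+\dpms\sigma$ with $\varphi\in P^n_{D_{++}}$; here Corollary \ref{D++equ} enters the injectivity step to handle a representative of the form $\dpp\dpm\eta$ with $\eta\in D_{++}$.

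The principal obstacle is the boundary-condition bookkeeping in the fourth-order middle-degree isomorphisms: one must carefully track how each Dirichlet-type condition propagates through the Hodge decomposition, which is precisely the role of Lemma \ref{pree}, and ensure that the two-part definition of $D_{++}$ interacts correctly with integration by parts for the second-order operator $\dpp\dpm$, which is the content of Corollary \ref{D++equ}. Once these compatibility checks are secured, the arguments for $k=n$ proceed in exact parallel to the $k<n$ cases.
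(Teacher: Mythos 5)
Your proposal is correct and follows essentially the same route as the paper: the paper's own proof simply cites the Dirichlet-type Hodge decompositions of Theorems \ref{Hdp}.2.(i), \ref{Hdm}.2.(i), \ref{Hdpp}.2.(i), and \ref{Hdmm}.2.(i) and declares the isomorphisms to follow directly, while you supply the standard details (vanishing of the co-exact piece via Green's formula and Corollary \ref{D++equ}, well-definedness and injectivity via Lemmas \ref{pre} and \ref{pree}). Your bookkeeping for the middle-degree cases, including the identity $\partial_{+}\partial_{-}\partial_{+}=0$ on $P^{n-1}$, is accurate.
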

\begin{proof} The isomorphisms follows directly from the following Hodge decompositions: 
\begin{align*}
P^k&=P\mathcal{H}^k_{D_{+}} \oplus \partial_{+}P^{k-1}_{D_{+}} \oplus \partial_{+}^{\ast}P^{k+1}~,\\
P^k&=P\mathcal{H}^k_{D_{-}} \oplus \partial_{-}P^{k+1}_{D_{-}} \oplus \partial_{-}^{\ast}P^{k-1}~,
\end{align*}
of Theorem \ref{Hdp}.2.(i) and Theorem \ref{Hdm}.2.(i), respectively, in the case of $k< n$, and for $k=n$ 
\begin{align*}
P^n &=P\mathcal{H}_{+,D_{++}}^n\!\!\oplus \partial_{+}P_{D_{+}}^{n-1}\oplus (\partial_{+}\partial_{-})^{\ast}P^{n}~,\\
P^n &=P\mathcal{H}_{-,D_{-}}^n\oplus (\partial_{+}\partial_{-})P_{D_{++}}^{n}\oplus \partial_{-}^{\ast}P^{n-1}~,
\end{align*}
of Theorem \ref{Hdpp}.2.(i) and Theorem \ref{Hdmm}.2.(i)
\end{proof}

Interestingly, the relative primitive cohomology is naturally paired with the absolute primitive cohomology.
\begin{thm}\label{pairing}
On a compact symplectic manifold $(M, \omega)$ with smooth boundary $\partial M$, we have the following for $k=0, 1, \ldots, n$,
\begin{align}\label{pai}
PH^{k}_+(M) \cong PH^k_-(M, \partial M) \, , \qquad PH^k_-(M) \cong PH^{k}_+(M, \partial M)\,,
\end{align}
and the corresponding non-degenerate pairings
\begin{alignat}{4}\label{paa}
& PH^{k}_+(M) &&\otimes\, PH^k_-(M, \partial M) &&\longrightarrow ~~\quad\mathbb{R}\\
&~\,\quad [\beta]\quad &&\otimes\quad\quad [\lambda] &&\longrightarrow ~~(-1)^{\frac{k(k+1)}{2}}\!\!\int_M  \frac{\omega^{n-k}}{(n-k)!}\wedge \beta \wedge \lambda  \nonumber
\end{alignat}
\begin{alignat}{4}\label{pab}
&PH^k_-(M) &&\otimes\, PH^{k}_+(M, \partial M) &&\longrightarrow ~~\quad\mathbb{R} \\
&~\,\quad[\beta]\quad &&\otimes\quad \quad[\lambda] &&\longrightarrow ~~(-1)^{\frac{k(k+1)}{2}}\!\!\int_M  \frac{\omega^{n-k}}{(n-k)!}\wedge \beta \wedge \lambda\nonumber
\end{alignat} 
\end{thm}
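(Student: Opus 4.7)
The plan is to represent each cohomology class by a harmonic field using Theorems \ref{absoluteN} and \ref{relD}, reduce the pairings \eqref{paa}-\eqref{pab} to the $L^2$ inner product via the Weil identity for primitive forms, and conclude via the $\mathcal{J}$-induced bijections on harmonic fields from \eqref{harma}-\eqref{harmb}. The Weil identity $*\mu=\tfrac{(-1)^{k(k+1)/2}}{(n-k)!}\omega^{n-k}\wedge\mathcal{J}\mu$ for primitive $\mu\in P^k$, substituted into $(\beta,\mathcal{J}^{-1}\lambda)_{L^2}=\int_M\beta\wedge *(\mathcal{J}^{-1}\lambda)$, immediately yields
\begin{equation*}
(-1)^{k(k+1)/2}\int_M \frac{\omega^{n-k}}{(n-k)!}\wedge\beta\wedge\lambda=(\beta,\mathcal{J}^{-1}\lambda)_{L^2},
\end{equation*}
so both pairings coincide with the $L^2$ inner product twisted by $\mathcal{J}^{-1}$.

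Next I verify that the pairing descends to cohomology. For a shift $\beta\mapsto\beta+\partial_+\alpha$ paired against $\lambda\in\ker\partial_-\cap P^k_{D_-}$, Green's formula (Lemma \ref{green}) gives $(\partial_+\alpha,\mathcal{J}^{-1}\lambda)=(\alpha,\partial_+^*\mathcal{J}^{-1}\lambda)+\mathrm{bdy}$. The bulk term vanishes because Lemma \ref{conjugate} identifies $\partial_+^*\mathcal{J}^{-1}\lambda$ with $\mathcal{J}^{-1}(H+R)\partial_-\lambda=0$, and the boundary term vanishes because $\lambda\in D_-$ forces $\mathcal{J}\lambda\in N_+$ (Lemma \ref{rel}), hence $\mathcal{J}^{-1}\lambda\in N_+$ since $\mathcal{J}^{-1}=\pm\mathcal{J}$. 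The symmetric shift $\lambda\mapsto\lambda+\partial_-\sigma$ for $\sigma\in P^{k+1}_{D_-}$ with $k<n$ is handled analogously after rewriting $\mathcal{J}^{-1}\partial_-\sigma$ via Lemma \ref{conjugate} as $(n-k)^{-1}\partial_+^*\mathcal{J}^{-1}\sigma$ and using $\partial_+\beta=0$ together with $\mathcal{J}^{-1}\sigma\in N_+$. For the top degree $k=n$, the shift $\lambda\mapsto\lambda+\partial_+\partial_-\tau$ with $\tau\in P^n_{D_{++}}$ is treated by writing $(\beta,\mathcal{J}^{-1}\partial_+\partial_-\tau)=(\mathcal{J}\beta,\partial_+\partial_-\tau)$ via unitarity of $\mathcal{J}$ and applying Corollary \ref{D++equ} to obtain $(\tau,(\partial_+\partial_-)^*\mathcal{J}\beta)$; composing the two intertwining relations of Lemma \ref{conjugate} on $P^n$ yields $(\partial_+\partial_-)^*\mathcal{J}=-\mathcal{J}(\partial_+\partial_-)$ on $P^n$, so $(\partial_+\partial_-)^*\mathcal{J}\beta=-\mathcal{J}\partial_+\partial_-\beta=0$.

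Non-degeneracy is then essentially formal. The harmonic isomorphisms of Theorems \ref{absoluteN} and \ref{relD}, combined with the $\mathcal{J}$-isomorphism $P\mathcal{H}^k_{-,D_-}\cong P\mathcal{H}^k_{+,N_+}$ from \eqref{harma}-\eqref{harmb} (and, for $k=n$, the intertwining $(\partial_+\partial_-)^*\mathcal{J}=-\mathcal{J}(\partial_+\partial_-)$), reduce the pairing on harmonic representatives to $(\beta,\mathcal{J}^{-1}\lambda)_{L^2}$: a positive-definite form on a finite-dimensional space composed with a linear bijection. Vanishing of this pairing against every $\lambda$ (respectively every $\beta$) forces $\beta=0$ (respectively $\lambda=0$), establishing \eqref{pai} and the pairing \eqref{paa}. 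The pairing \eqref{pab} and the second isomorphism follow by exchanging $+\leftrightarrow -$ and $D\leftrightarrow N$ throughout and invoking the remaining halves of \eqref{harma}-\eqref{harmb}.

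The principal obstacle is the $k=n$ case, where one must compose the two relations of Lemma \ref{conjugate} on $P^n$, a space on which the scalar $H+R$ equals one in one factor and vanishes in the other, while tracking the signs coming from $\mathcal{J}^2=(-1)^n$. Coupling the second-order Green's identity of Lemma \ref{green2} with the $D_{++}$ condition via Corollary \ref{D++equ} to justify the integration by parts on $\partial_+\partial_-\tau$ is the most delicate ingredient of the argument.
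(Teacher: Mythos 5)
Your proposal is correct and follows essentially the same route as the paper's own proof: harmonic representatives via Theorems \ref{absoluteN} and \ref{relD}, the Weil identity converting the pairing into the $L^2$ inner product twisted by $\mathcal{J}$, Green's formula together with the conjugate relations of Lemma \ref{conjugate} and the boundary conditions to check that exact terms drop out, and positive-definiteness on harmonic fields for non-degeneracy. Your explicit derivation of $(\partial_{+}\partial_{-})^{\ast}\mathcal{J}=-\mathcal{J}\,\partial_{+}\partial_{-}$ on $P^n$, using $\partial_{+}\partial_{-}\beta=0$, is in fact a slightly more careful justification of the $k=n$ exact-term cancellation than the one-line argument given in the paper.
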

\begin{proof}
The isomorphisms between absolute and relative primitive cohomologies are obtained by the following: (i) isomorphisms of the cohomologies with the corresponding harmonic field spaces given in Theorems \ref{absoluteN} and \ref{relD}; (ii) the isomorphisms between the harmonic fields \eqref{harma}-\eqref{harmb}.

Regarding the pairing, we shall give the arguments for the first pairing \eqref{paa} as that for the second pairing \eqref{pab} are similar.  Let $(\om, J, g)$ be a compatible triple. We recall first the relation for primitive forms under the action of the Hodge star operator $\ast$ with respect to the metric $g$ (see e.g. \cite{TY2}):
\[
 \ast ~ \lambda_k= (-1)^{\frac{k(k+1)}{2}}\frac{\omega^{n-k}}{(n-k)!}\w \mathcal{J}(\lambda_k) \,,
\]
where $\lambda_k \in P^k$ and $\mathcal{J}$ is the conjugate operator defined in \eqref{conjop} with respect to $J$.  Using this, we can re-write the integral in \eqref{paa} as
\[
(-1)^{\frac{k(k+1)}{2}}\!\!\int_M \beta \w \frac{\omega^{n-k}}{(n-k)!}  \wedge \lambda=  \int_M \beta \w \ast ~ \mathcal{J}^{-1}(\lambda)= ( \mathcal{J}\beta, \lambda)\,.
\]

We show that the pairing \eqref{paa} is well-defined, that is, the integral only depends on the cohomology classes. Consider first taking $\beta + \dpp \varphi$ as the representative of $PH^{k}_+(M)$ with 
$\varphi \in P^{k-1}$. The additional $\dpp$- exact term has no contribution since
\begin{align*}
\left(\mathcal{J}\dpp\varphi, \lambda\right) &= (\mathcal{J} \dpp \mathcal{J}^{-1}(\mathcal{J}\varphi), \lambda) = \left(\partial_{-}^{\ast} (n-k+1) \mathcal{J}\varphi, \lambda\right) \\
& = (n-k+1) \left[ (\mathcal{J}\varphi, \dpm \lambda) - \int_{\partial M} \langle \mathcal{J}\varphi, \sigma_{\dpm}(d\rho)\lambda \rangle\,dS \right]~~ = 0~,
\end{align*} 
where, in the first line, the conjugate relation between $\dpp$ and $\partial_{-}^{\ast}$ of Lemma \ref{conjugate} was used, and the second line vanishes since $\lambda \in D_{-}$ and also $\dpm$-closed.  Alternatively, if we consider instead the representative $\lambda + \dpm \sigma$ for $PH^k_-(M, \partial M)$ with $\sigma \in P^{k+1}_{D_{-}}\,$ and $k<n$, or  $\lambda + \partial_{+}\partial_{-} \sigma$ for $PH^n_-(M, \partial M)$ with $\sigma \in P^{n}_{D_{++}}\,$, then the additional contribution would be
\begin{align*}
(\mathcal{J}\beta,\partial_{-}\sigma)
=(\partial_{-}^{\ast}(\mathcal{J}\beta),\sigma) = 0~,
\end{align*} or
\begin{align*}
(\mathcal{J}\beta, \partial_{+}\partial_{-}\sigma)
=(\partial_{+}^{\ast}\partial_{-}^{\ast}(\mathcal{J}\beta),\sigma)  = 0~,
\end{align*}
which similarly vanishes since $\partial_{+}\beta =0$ implies that $\partial_{-}^{\ast} (\mathcal{J}\beta)=0$ (again using Lemma \ref{conjugate}) and the boundary condition on $\sigma$.  Clearly, the exact terms do not contribute to the integral, and therefore, the pairing only depends on the cohomology classes.

To show non-degeneracy, we use the isomorphisms in \eqref{abhar}-\eqref{abhar2} and \eqref{relhar}-\eqref{relhar2} to choose $\beta\in PH^{k}_+(M)$ and $\lambda \in PH^k_-(M, \partial M)$ to be the harmonic representatives of their respective cohomology classes, i.e. $\beta\in P\mathcal{H}^{k}_{+, N_{+}}\!(M)$ and $\lambda \in P\mathcal{H}^{k}_{-, D_{-}}\!(M)$. Further, if we take $\lambda= \mathcal{J} \beta\,$, then the pairing becomes   
\[
\beta \otimes \lambda \rightarrow 
(\mathcal{J} \beta, \mathcal{J} \beta)~~ = ~~\|\mathcal{J}\beta\|^2~,
\] 
which is non-zero as long as $ \beta \neq 0$. 
\end{proof}

\subsection{Relative Lefschetz maps}\label{exactseq}
Recall that the kernels and cokernels of the Lefschetz maps 
\[
L: H^{k}(M) \rightarrow H^{k+2}(M) 
\]
can be characterized by various primitive cohomologies as in \eqref{fva}-\eqref{fvb}.  But with $\partial M$ not vanishing, we can additionally consider studying Lefschetz maps on forms with boundary conditions.  In fact, Lefschetz maps on $\Omega^*_D$, i.e. forms with the Dirichlet boundary condition, are well-defined since  
$$L: \Omega_D^k \rightarrow \Omega_D^{k+2}~.$$
To see this, suppose $\eta \in \Omega_D^k\,$, that is $w_1\wedge \eta~|_{\partial M}=0$ where locally $w_1 = d\rho$.  Then, clearly $L(\eta)=\omega\w \eta  \in \Omega_D^{k+2}$ since   
\begin{align*}
w_1\wedge L(\eta)~|_{\partial M}= \omega\wedge (w_1\wedge \eta)~|_{\partial M}=0~.
\end{align*}

With this property, we can ask whether the short exact sequences of Lefschetz maps on $\Omega^*$ without any boundary condition in \cite{TTY}
\begin{align}\label{ses}
\begin{CD}
@.0@>>>\Omega^{k-2}@>L>>\Omega^{k}@>\Pi>>P^{k}@>>>0~,\\
@.0@>>>\Omega^{n-1}@>L>>\Omega^{n+1}@>>> 0 \\
0@>>>P^{k}@>\ast_r>>\Omega^{2n-k}@>L>> \Omega^{2n-k+2}@>>>0@.
\end{CD}
\end{align}
for $k=0, 1, \ldots, n\,$, have analogues when the Dirichlet boundary condition is imposed.  It turns out that most but not all of the exact sequences above can be extended to the Dirichlet boundary condition case.  Let us first describe when Lefschetz maps on $\Omega^*_D$ are injective or surjective.
\begin{lemma}\label{rl} On a symplectic manifold $(M^{2n}, \om)$ with non-trivial boundary, the Lefschetz maps have the following properties: 
\begin{itemize}
\item $L\!:\, \Omega_D^{k-2} \rightarrow \Omega_D^{k}~$ is injective for $\,2 \leq k \leq n+1\,$;\\
\item $L\!:\, \Omega_D^{2n-k}\rightarrow \Omega_D^{2n-k+2}~$ is surjective for $\,2 \leq k \leq n\,$.
\end{itemize}
\end{lemma}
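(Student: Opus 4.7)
The injectivity statement is a direct consequence of the pointwise Lefschetz property on the exterior algebra of a symplectic vector space: the fiberwise map $\omega_p \wedge : \Lambda^{k-2} T^*_p M \to \Lambda^{k} T^*_p M$ is injective whenever $k \leq n+1$, as a standard consequence of the $sl(2)$-representation theory reviewed in Section 2. Since $\Omega^{k-2}_D \subset \Omega^{k-2}$, injectivity descends without modification to Dirichlet forms.

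For the surjectivity, the plan is to combine pointwise surjectivity of $L$ with a correction by an element of $\ker L$. Given $\xi \in \Omega^{2n-k+2}_D$ with $2 \le k \le n$, pointwise surjectivity of the bundle map $L : \Lambda^{2n-k} T^*M \to \Lambda^{2n-k+2} T^*M$ (valid since the target degree is $\ge n+1$) produces some $\eta^\ast \in \Omega^{2n-k}$ with $L\eta^\ast = \xi$, though $\eta^\ast$ need not be Dirichlet. Let $i : \partial M \hookrightarrow M$ denote the inclusion and $L_{\partial M} = (i^*\omega) \wedge$ the induced Lefschetz operator on forms on $\partial M$. By naturality, $L_{\partial M}(i^*\eta^\ast) = i^*(L\eta^\ast) = i^*\xi = 0$, so $i^*\eta^\ast$ lies in $\ker L_{\partial M}$. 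The problem therefore reduces to lifting $i^*\eta^\ast$ to some $\zeta \in \ker L \subset \Omega^{2n-k}$ with $i^*\zeta = i^*\eta^\ast$; setting $\eta := \eta^\ast - \zeta$ then produces the required preimage in $\Omega^{2n-k}_D$.

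To construct $\zeta$, I first produce a pointwise lift working in a Darboux basis near $\partial M$ with $w_1 = d\rho$ and $\omega = w_1 \wedge w_2 + \omega'$, following the local framework of Section \ref{locD}. Any $\sigma$ on $\partial M$ decomposes according to its $w_2$-content as $\sigma = w_2 \wedge \sigma' + \sigma''$, with $\sigma'$ and $\sigma''$ containing neither $w_1$ nor $w_2$; the constraint $\sigma \in \ker L_{\partial M}$ then forces $\omega' \wedge \sigma' = 0$ and $\omega' \wedge \sigma'' = 0$ in the $(2n-2)$-dimensional transverse symplectic subspace. Seeking a lift of the form $\tilde\sigma = \sigma + w_1 \wedge (w_2 \wedge \tau' + \tau'')$, expanding $L\tilde\sigma = 0$ and matching components by $\{1, w_1, w_2, w_1 w_2\}$ reduces the problem to solving $\omega' \wedge \tau' = -\sigma''$ on the transverse subspace (with $\tau'' = 0$ a valid choice), which is achievable by the pointwise surjectivity of $\omega' \wedge$ at the relevant degree on the $(2n-2)$-dimensional symplectic subspace, valid precisely under the hypothesis $k \le n$. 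To globalize, I invoke that $\ker L$ and $\ker L_{\partial M}$ are smooth subbundles of constant rank (by pointwise hard Lefschetz), so the pointwise surjective bundle map $i^* : \ker L|_{\partial M} \to \ker L_{\partial M}$ admits a smooth section; extending from a collar neighborhood of $\partial M$ to all of $M$ via a bump function preserves $\ker L$ since $L$ is $C^\infty(M)$-linear.

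The principal technical obstacle is the Darboux-coordinate verification of the pointwise lift, and in particular confirming that the boundary constraints $\omega' \sigma' = \omega' \sigma'' = 0$ provide exactly the data needed to solve for $\tau'$. Once the pointwise lift is in hand, the bundle-theoretic globalization and the correction $\eta = \eta^\ast - \zeta$ are routine.
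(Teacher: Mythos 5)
Your proof is correct, and its core strategy coincides with the paper's: both take a global $L$-preimage of $\xi$ (guaranteed by the third exact sequence of \eqref{ses}) and then correct it by an element of $\ker L$ prescribed along $\partial M$ so that the tangential part cancels, with the essential linear algebra carried out pointwise in a Darboux frame with $w_1=d\rho$. Where you differ is in how the correction term is produced. The paper Lefschetz-decomposes $\eta=\omega^{n-k+2}\wedge(\beta_{k-2}+\omega\wedge\xi_{k-4})$, pushes the Dirichlet condition through the four-term primitive decomposition \eqref{ldecomp} of $\beta_{k-2}$ (forcing $\tilde\beta^2_{k-3}|_{\partial M}=0$), and writes the preimage in closed form as $u=\omega^{n-k}\wedge(\beta_k+\omega\wedge\beta_{k-2}+\omega^2\wedge\xi_{k-4})$, where $\beta_k$ is a primitive $k$-form with boundary value $(H+1)\,\Theta_{12}\wedge\tilde\beta^4_{k-2}$; since $\omega^{n-k}\wedge\beta_k\in\ker L$, this is precisely your correction $\zeta$ made explicit. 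You instead phrase the correction as lifting through the surjective bundle map $i^*:\ker L|_{\partial M}\to\ker L_{\partial M}$, prove pointwise surjectivity via the $w_2$-content decomposition, and then take a smooth section plus a cutoff. Both are valid; the paper's explicit formula is what gets reused in the commutative diagrams of Section \ref{exactseq}, while your version isolates more transparently where $k\le n$ enters --- surjectivity of $\omega'\wedge$ on the transverse $(2n-2)$-dimensional symplectic subspace --- which also explains the failure of surjectivity for $L:\Omega^{n-1}_D\to\Omega^{n+1}_D$ recorded in Remark \ref{mnote}. Two points to tighten in your write-up: the extension of the boundary section into the interior must be done as a section of the constant-rank subbundle $\ker L\subset\Lambda^{2n-k}T^*M$ (not by pulling the form back along a collar projection, since $\omega$ varies), after which the cutoff argument is fine; and one should record that $\ker L_{\partial M}$ has constant rank because $i^*\omega$ has constant rank $2n-2$ at every boundary point.
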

\begin{proof}
The injective property follows from the first two exact sequences of \eqref{ses} and that $L\!: \Omega_D^{k-2} \rightarrow \Omega_D^{k}$ is well-defined.  For the surjective property, we need to show that for any $\eta \in \Omega^{2n-k+2}_D$ and  $2 \leq k \leq n\,$, there is an $u \in \Omega_D^{2n-k}$ such that $L(u)=\eta$.  But already, the third sequence of \eqref{ses} gives surjectivity when no boundary condition is imposed.  Hence, we only need to demonstrate surjectivity of the Lefschetz map at local neighborhoods of the boundary $\partial M$ with the Dirichlet boundary condition added.  For this near boundary analysis, it suffice to work in the local Darboux  basis $\{w_j\}$ of one-forms from Section \ref{locD}.

First note that we can decompose a $(2n-k+2)$-form, $\eta$, in the following way: 
\begin{align}\label{Dc0}
\eta=\omega^{n-k+2}\wedge \left(\beta_{k-2}+\omega\wedge \xi_{k-4}\right)
\end{align}
where $\beta_{k-2} \in P^{k-2}$ and $\xi_{k-4} \in \Omega^{k-4}$. That $\eta\in \Omega^{2n-k+2}_D$ imposes the condition
\begin{align}\label{Dc1}
0= w_1 \w \eta~ |_{\partial M} =\om^{n-k+2}\w \left( w_1 \w \beta_{k-2} + \om \w w_1 \w \xi_{k-4}\right)~|_{\partial M} ~.
\end{align}
Let us focus on the $w_1 \w \beta_{k-2}~|_{\partial M}$ term in \eqref{Dc1}.  We apply the local decomposition of \eqref{ldecomp} to $\beta_{k-2}$:
\begin{align}\label{Dc3}
\beta_{k-2}= w_1 \w \tbeta_{k-3}^1 + w_2 \w \tbeta_{k-3}^2 + \Theta_{12}\w\tbeta_{k-4}^3 + \tbeta_{k-2}^4
\end{align}
where the primitive forms $\tbeta^i$'s here do not have any components in $w_1$ or $w_2$.  Then 
\begin{align*}
w_1 \w \beta_{k-2}~|_{\partial M}  &= \left(w_1 \w w_2 \w \tbeta_{k-3}^2 + w_1 \w \Theta_{12}\w\tbeta_{k-4}^3 + w_1 \w \tbeta_{k-2}^4\right)|_{\partial M} \\
&= \left(w_1 \w \tbeta_{k-2}^4 + \left[\dfrac{H+1}{H+2} \Theta_{12} + \dfrac{1}{H+2}\om \right]\w \tbeta_{k-3}^2  + w_1 \w \Theta_{12}\w\tbeta_{k-4}^3\right)|_{\partial M}
\end{align*}
Substituting the above expression into \eqref{Dc1}, implies that $\tbeta_{k-3}^2~|_{\partial M} = 0$, since a non-vanishing $\tbeta_{k-3}^2$ would lead to terms that can not be cancelled out by the second term in \eqref{Dc1} which must contain a $w_1$. 
Therefore, if we write
\begin{align}\label{Dc2}
w_1 \w \beta_{k-2}~|_{\partial M} =(\varphi_{k-1} + \om \w \varphi_{k-3})~|_{\partial M}
\end{align}
where $\varphi_{k-1}, \varphi_{k-3}$ are primitive forms,  
then
\begin{align*}
\varphi_{k-1}~|_{\partial M}& = w_1 \w \tbeta_{k-2}^4~|_{\partial M} \\
\om \w \varphi_{k-3}~|_{\partial M} &=  w_1 \w \Theta_{12}\w\tbeta^3_{k-4}~|_{\partial M} ~.
\end{align*}
Note that \eqref{Dc1} imposes no condition on $\tbeta^4_{k-2}$ along $\partial M$, since by primitivity, $\om^{n-k+2} \w \varphi_{k-1} =0\,$.  On the other hand, for $\tbeta^3_{k-4}$, \eqref{Dc1} implies
\begin{align}\label{Dc4}
\left( w_1 \w \Theta_{12}\w\tbeta^3_{k-4} + \om \w w_1 \w \xi_{k-4}\right)~|_{\partial M} =0~.
\end{align}

We can now write down a $u\in \Omega^{2n-k}_D$ such that $L(u)=\eta\,$.  Define  
$$u=\omega^{n-k}\wedge \left(\beta_{k}+\omega\wedge \beta_{k-2}+\omega^2\wedge \xi_{k-4}\right)~,$$
where $\beta_{k-2}$ and $\xi_{k-4}$ are those in \eqref{Dc0} and $\beta_k \in P^k$ is a primitive $k$-form with its value on the boundary specified by $\beta_{k-2}$:  
\begin{align}\label{Dc5}
 \beta_k~|_{\partial M}&=(H+2)\, \sigma(\partial_{+}\partial_{-}^{\ast})(d\rho)\,\beta_{k-2}~|_{\partial M}\nonumber\\
 &= (H+2)\, \Pi(w_1\wedge w_2\wedge \beta_{k-2})~|_{\partial M}\nonumber \\
&= (H+1)\, \Theta_{12} \w \tbeta^4_{k-2}~|_{\partial M}
\end{align}
where in the second line, we have noted that $\sigma(\partial_{+}\partial_{-}^{\ast})(d\rho)\,\beta_{k-2}=\Pi(w_1\wedge w_2\wedge \beta_{k-2})$, and in the third line, we have substituted in the decomposition of \eqref{Dc3}. 
Clearly,
\begin{align*}
L(u)=\omega^{n-k+1}\wedge \left(\beta_{k}+\omega\wedge \beta_{k-2}+\omega^2\wedge \xi_{k-4}\right)
=\omega^{n-k+2}\wedge \left( \beta_{k-2}+\omega\wedge \xi_{k-4}\right)=\eta~.
\end{align*}
Moreover, we can check that $u$ also satisfies the Dirichlet boundary condition:
\begin{align*}
w_1 \wedge u~|_{\partial M}&=\omega^{n-k}\wedge \left(w_1\wedge\beta_{k}+\omega\wedge [w_1\wedge\beta_{k-2}]+ \omega^2\wedge w_1\wedge \xi_{k-4}\right)|_{\partial M}\\
&=\omega^{n-k}\wedge \Big(-w_1\w \om \w \tbeta^4_{k-2} + \om \w \left[w_1 \w \tbeta^4_{k-2} + w_1 \w \Theta_{12} \w \tbeta^3_{k-4}\right]\\
&\qquad\qquad\qquad\qquad \om^2 \w w_1 \w \xi_{k-4}\Big)~|_{\partial M}\\&=0~,
\end{align*}
having applied \eqref{Dc3}-\eqref{Dc5}.
\end{proof}
The injectivity and surjectivity of the Lefschetz maps on $\Omega^*_D$ can be incorporated into the following exact sequences. 
\begin{prop} \label{pres}The following sequences are exact for $0\leq k<n$:
\begin{align*}
\begin{CD}
 @.0@>>>\Omega^{k-2}_D@>L>>\Omega_D^{k}@>\Pi>>P^{k}_{D_{+}}@>>>0\\
@.0@>>>\Omega^{n-2}_D@>L>>\Omega_D^{n}@>\Pi>>P^{n}_{D_{+-}}@>>>0\\
0@>>>P^{n}_{D_{-}}@>\ast_r>>\Omega_D^{n}@>L>> \Omega^{n+2}_D@>>>0@.\\
0@>>>P^{k}_{D_{-}}@>\ast_r>>\Omega_D^{2n-k}@>L>> \Omega^{2n-k+2}_D@>>>0@. 
\end{CD}
\end{align*}
\end{prop}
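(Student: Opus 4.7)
The plan is as follows. The injectivity and surjectivity of the Lefschetz maps at the outer ends of each sequence are already supplied by Lemma \ref{rl}: $L\colon \Omega^{k-2}_D \to \Omega^k_D$ is injective for $2 \leq k \leq n+1$ (and trivially for $k = 0, 1$), while $L\colon \Omega^{2n-k}_D \to \Omega^{2n-k+2}_D$ is surjective for $2 \leq k \leq n$ (and trivially for $k = 0, 1$ since the codomain vanishes). Proposition \ref{prop4} supplies the surjectivity of $\Pi$ onto $P^k_{D_+}$ for $k < n$ and onto $P^n_{D_{+-}}$, as well as injectivity of $\ast_r$ from $P^k_{D_-}$ into $\Omega^{2n-k}_D$. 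What remains is exactness at the middle term of each sequence, which amounts to showing that the standard Dirichlet condition on the middle forms matches the appropriate symplectic boundary condition on the primitive part under the Lefschetz decomposition.

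For the first two sequences, the inclusion $L(\Omega^{k-2}_D) \subset \ker \Pi \cap \Omega^k_D$ is immediate from $\Pi \circ L = 0$. Conversely, given $\eta \in \Omega^k_D$ with $\Pi(\eta) = 0$, the boundary-free exact sequence \eqref{ses} produces a unique $\xi \in \Omega^{k-2}$ with $\eta = \omega \wedge \xi$; uniqueness uses pointwise injectivity of $L$ on $\Lambda^{k-2}T^\ast_p M$, which is valid since $k - 2 \leq n - 1$ in our range. The Dirichlet condition then reads $\omega \wedge (d\rho \wedge \xi)|_{\partial M} = 0$, and since $d\rho \wedge \xi$ has degree $k - 1 \leq n - 1$, pointwise injectivity of $L$ on $(k-1)$-forms at each boundary point forces $d\rho \wedge \xi|_{\partial M} = 0$, i.e.\ $\xi \in \Omega^{k-2}_D$.

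For the last two sequences, the inclusion $\ast_r(P^k_{D_-}) \subset \ker L$ follows from $\omega^{n-k+1} \wedge \beta = 0$ by primitivity. Conversely, if $u \in \Omega^{2n-k}_D \cap \ker L$, the boundary-free sequence writes $u = \omega^{n-k} \wedge \beta$ for a unique $\beta \in P^k$, and the task reduces to verifying $\beta \in D_-$. For $k < n$, the Lefschetz decomposition $d\rho \wedge \beta = \Pi(d\rho \wedge \beta) + \omega \wedge H^{-1}\Lambda(d\rho \wedge \beta)$ is valid (since $k + 1 \leq n$); because $\omega^{n-k} \wedge P^{k+1} = 0$ by primitivity, the first summand is killed when wedged with $\omega^{n-k}$, leaving
\begin{equation*}
0 = \omega^{n-k} \wedge (d\rho \wedge \beta) \big|_{\partial M} = \omega^{n-k+1} \wedge H^{-1}\Lambda(d\rho \wedge \beta) \big|_{\partial M}.
\end{equation*}
Bijectivity of $L^{n-k+1}\colon P^{k-1} \to \omega^{n-k+1} \wedge P^{k-1}$ then gives $H^{-1}\Lambda(d\rho \wedge \beta)|_{\partial M} = 0$, which by \eqref{dmdlam} is exactly $\sigma_{\partial_-}(d\rho)\beta|_{\partial M} = 0$, so $\beta \in D_-$. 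For $k = n$, applying the formula $d\beta_n = \omega \wedge \beta_{n-1}$ of Proposition 2.2 to the primitive form $\rho\beta \in P^n$ gives $d(\rho\beta) = \omega \wedge \widetilde\beta_{n-1}$ for some $\widetilde\beta_{n-1} \in P^{n-1}$, whence $d\rho \wedge \beta|_{\partial M} = \omega \wedge \widetilde\beta_{n-1}|_{\partial M}$; injectivity of $L$ on $P^{n-1}$ forces $\widetilde\beta_{n-1}|_{\partial M} = 0$, which is again the $D_-$ condition.

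The main obstacle is the bookkeeping for the Lefschetz decomposition of $d\rho \wedge \beta$ near the boundary, combined with identifying the primitive $(k-1)$-piece with the principal symbol of $\partial_-$. Once that identification is made, everything else reduces to the injectivity/surjectivity properties of $L$, $\Pi$, and $\ast_r$ already established in Lemma \ref{rl} and Proposition \ref{prop4}.
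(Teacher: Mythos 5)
Your proposal is correct and follows essentially the same route as the paper: the first two sequences are handled by exactly the paper's argument (injectivity of $L$ on forms of degree $\leq n-1$ applied to $d\rho\wedge\xi$), and for the last two your symbol-level computation with the Lefschetz decomposition of $d\rho\wedge\beta$ is, via Remark \ref{diffequiv} and Lemma \ref{pdpdm}, the same calculation the paper performs in the differential formulation $d(\rho\,\eta)\,|_{\partial M}=\omega^{n-k}\wedge[\partial_{+}(\rho\,\xi)+\omega\wedge\partial_{-}(\rho\,\xi)]\,|_{\partial M}$, which moreover treats $k\leq n$ uniformly and avoids your separate $k=n$ case.
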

\begin{proof}
By Proposition \ref{prop4} and Lemma \ref{rl}, these sequences are well-defined. To see the exactness of the first two set of sequences, we only need to show that $\ker \Pi~ |_{\Omega^{k}_D}\subset L(\Omega^{k-2}_D)$ for $k \leq n$. In this case, consider for any $\eta \in \Omega^{k}_D$ such that $\Pi \,\eta=0\,$.  Then we can write $\eta= \omega\wedge \xi$ for some  $\xi \in \Omega^{k-2}$.  Since $\eta \in D$, this gives the condition
\begin{align}\label{Dc7}
w_1\wedge\eta~|_{\partial M}=\omega\wedge(w_1\wedge \xi)~|_{\partial M} =0
\end{align}
But by \eqref{ses}, $L$ is injective when acting on $\Omega^{j}$ for $j\leq n-1\,$.  Hence, \eqref{Dc7} implies that $w_1\wedge \xi ~|_{\partial M}=0\,$ or $\xi \in \Omega^{k-2}_D\,$.

To see the exactness of the third and the fourth set of sequences, we only need to show that $\ker L|_{\Omega^{2n-k}_D} \subset\ast_r(P^k_{D_{-}})$ when $k\leq n$. Let now $\eta \in \Omega^{2n-k}_D$ for $k\leq n$ such that $\omega \wedge \eta=0\,$.  Then by the third exact sequence of \eqref{ses}, there exists an $\xi \in P^k$ such that $\eta= \ast_r \, \xi = \omega^{n-k}\wedge \xi$. Here, it is convenient to express the $D$ boundary condition on $\eta$ differentially as $d(\rho\, \eta)~|_{\partial M}=0\,$ as described in Remark \ref{diffequiv}. This implies
\begin{align*}
0&= d(\rho\, \eta)~|_{\partial M} =  d(\rho\, [\omega^{n-k} \w \xi])~|_{\partial M} = \omega^{n-k} \w d(\rho\, \xi)~|_{\partial M} \\
&= \omega^{n-k} \w \left[ \dpp(\rho\, \xi) +  \omega \w \dpm(\rho\, \xi) \right]~|_{\partial M}\\
&=  \omega^{n-k+1}\w \dpm(\rho\, \xi) ~|_{\partial M} = \ast_r \,\dpm(\rho\,\xi)~|_{\partial M}
\end{align*}
Hence, we obtain $\xi \in P^k_{D_{-}}\,$.
\end{proof}
\begin{remark}\label{mnote} With Proposition \ref{pres}, we have reproduced with boundary conditions the top and the bottom exact sequences of \eqref{ses}.   However, for the middle sequence, Lemma \ref{rl} tells us that
\begin{align*}
L\!:\, \Omega_D^{n-1} \rightarrow \Omega_D^{n+1}~,
\end{align*}
is injective, but not surjective in general.  We will see this in the discussion of examples in next section.
\end{remark}

That the Lefschetz operator $L$ has a well-defined action on  $\Omega_D^*$ allows us to consider the action of Lefschetz maps on relative de Rham cohomologies which are defined over $\Omega_D^*$: 
\[
L: H^{k}(M, \partial M) \rightarrow H^{k+2}(M, \partial M). 
\] 
These Lefschetz maps turn out to be related to the relative primitive cohomologies $PH^*(M,\partial M)$ analogous to the absolute case.  
Immediately, from the short exact sequences of Proposition \ref{pres}, we can write down two  commutative diagrams:
$$\begin{CD}\label{CD1}
   @.           @.\vdots                     @. \vdots                                   @.\\
0@>>> \Omega^0_D@>L>>\Omega^2_D@>\Pi>>     P^{2}_{D_{+}}@>>>0\\
  @.                         @VVdV                         @VVdV                                   @VV\partial_{+}V      @.\\
   @. \vdots            @.\vdots                   @. \vdots                                 @.\\
@.                         @VVdV                         @VVdV                                   @VV\partial_{+}V      @.\\
0@>>> \Omega^{n-3}_D@>L>>\Omega^{n-1}_D@>\Pi>>P^{n-1}_{D_{+}}@>>>0\\
@.                         @VVdV                         @VVdV                                   @VV\partial_{+}V      @.\\
0@>>> \Omega^{n-2}_D@>L>>\Omega^{n}_D@>\Pi>>P^{n}_{D_{+-}}@>>>0
\end{CD}$$ and
\vspace{1cm}
 $$\begin{CD}\label{CD2}
0@>>>P^n_{D_{-}}@>\ast_r>>\Omega^n_D@>L>> \Omega^{n+2}_{D}@>>>0\\
@. @VV\partial_{-}V                                 @VVdV                             @VVdV@.\\
0@>>>P^{n-1}_{D_{-}}@>\ast_r>>\Omega^{n+1}_D@>L>> \Omega^{n+3}_{D}@>>>0\\
@. @VV\partial_{-}V                                 @VVdV                             @VVdV@.\\
 @. \vdots             @.\vdots                     @. \vdots                                   @.\\
 @. @VV\partial_{-}V                                 @VVdV                             @VVdV@.\\
0@>>>P^{2}_{D_{-}}@>\ast_r>>\Omega^{2n-2}_D@>\Pi>>\Omega^n_D@>>>0\\
 @. \vdots             @.\vdots                     @.                                  @.\\
 \end{CD}$$
These two commutative diagrams imply two long exact sequences of cohomologies linking $PH^k_{\pm}(M, \partial M)$ with Lefschetz maps on $H^*(M, \partial M)$ for $k<n$.  However, by Remark \ref{mnote}, we are not able to extend the long exact sequence of cohomologies through $PH^n_{\pm}(M, \partial M)$ with Lefschetz maps.  To relate $PH^*_{\pm}(M,\partial M)$ with Lefschetz maps on $H^{*}(M, \partial M)$ for all $k=0, 1, \ldots, n$, we will make use of harmonic fields as in the proof of the theorem below.

\begin{thm}\label{relLef}
On a symplectic manifold $(M^{2n}, \om)$ with non-trivial boundary $\partial M$, we have the following isomorphisms:
\begin{align*}
PH^k_+(M, \partial M)&\cong {\rm coker}[L\!: H^{k-2}(M, \partial M)\rightarrow H^k(M, \partial M)]  \\ &\qquad\oplus \ker [L\!: H^{k-1}(M, \partial M)\rightarrow H^{k+1}(M, \partial M)]\,,\quad k=0,1,\ldots, n,\\
PH^k_-(M, \partial M)&\cong {\rm coker}[L\!: H^{2n-k-1}(M, \partial M)\rightarrow H^{2n-k+1}(M, \partial M)] \\& \qquad\oplus \ker [L\!: H^{2n-k}(M, \partial M)\rightarrow H^{2n-k+2}(M, \partial M)]\,,~~~ k=0,1,\ldots, n.
\end{align*}
\end{thm}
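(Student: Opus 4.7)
The plan is to adapt the long exact sequence argument used in the absolute case in \cite{TTY} (which yields the isomorphisms \eqref{fva}--\eqref{fvb}) to the relative setting, leveraging the short exact sequences of Dirichlet-type forms established in Proposition \ref{pres}.

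For $k<n$, I would argue as follows. The first two short exact sequences of Proposition \ref{pres}, together with the vertical operators $d$ and $\dpp$ commuting with the horizontal maps $L$ and $\Pi$, constitute a short exact sequence of cochain complexes $0 \to (\Omega^{\bullet-2}_D, d) \xrightarrow{L} (\Omega^\bullet_D, d) \xrightarrow{\Pi} (P^\bullet_{D_+}, \dpp) \to 0$, at least for degrees up through $n-1$. The induced long exact sequence in cohomology reads
\[
\cdots \to H^{k-2}(M, \partial M) \xrightarrow{L} H^k(M, \partial M) \xrightarrow{\Pi} PH^k_+(M, \partial M) \xrightarrow{\delta} H^{k-1}(M, \partial M) \xrightarrow{L} H^{k+1}(M, \partial M) \to \cdots.
\]
The crucial point is to identify the connecting homomorphism $\delta$ with the Lefschetz map $L$ (up to sign), which can be verified directly from the snake-lemma construction of $\delta$ using the decomposition $d = \dpp + \om \w \dpm$ along with $\dpp = \Pi\, d$ on primitive forms. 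Splitting the long exact sequence in the standard manner then yields the desired isomorphism for $PH^k_+(M, \partial M)$ when $k < n$. The analogous long exact sequence coming from the second commutative diagram (involving $\ast_r$, $L$, and $\dpm$) handles $PH^k_-(M, \partial M)$ for $k < n$.

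The case $k = n$ requires separate attention, because the middle Lefschetz map $L\!:\Omega^{n-1}_D \to \Omega^{n+1}_D$ fails to be surjective in general (see Remark \ref{mnote}) and the $n$-th term is not captured by the same long exact sequence. I would address this either by (i) combining the pairing isomorphism $PH^n_\pm(M, \partial M) \cong PH^n_\mp(M)$ from Theorem \ref{pairing} with the known absolute case, together with Poincar\'e--Lefschetz duality $H^j(M) \cong H^{2n-j}(M, \partial M)^*$ under which the Lefschetz map is self-dual and kernels and cokernels interchange; or (ii) working directly with the harmonic field representatives supplied by Theorem \ref{relD}, noting that for $\beta \in P\mathcal{H}^n_{+, D_{++}}$ one has $d\beta = L(\dpm \beta)$ with $\dpm \beta$ being $d$-closed, and then checking that the $D_{++}$ condition on $\beta$ propagates to an appropriate Dirichlet condition on $\dpm \beta$, so that the latter defines a class in $H^{n-1}(M, \partial M)$ lying in $\ker L$.

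The main obstacle will be the $k=n$ case. With approach (i), one must upgrade a dimension match (coming from duality) to a natural isomorphism. With approach (ii), the delicate point is verifying that the Dirichlet-type symplectic boundary conditions on the harmonic fields (namely $D_{++}$ or $D_{-}$) descend through the Lefschetz decomposition to genuine Dirichlet boundary conditions on the associated de Rham classes; this will rely on the local characterizations developed in Section \ref{locD}, together with Lemmas \ref{pre} and \ref{pree} on how these conditions are preserved under the symplectic differential operators. A secondary but important verification throughout is that the connecting homomorphism in the long exact sequence is indeed $L$ (and not some signed or rescaled variant that would force a cosmetic rewriting of the statement).
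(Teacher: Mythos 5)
Your proposal is correct, but it only partially coincides with the paper's argument, so a comparison is worth making. The paper proves the theorem uniformly for all $k=0,1,\ldots,n$ by a three-step reduction: first the pairing isomorphisms $PH^k_\pm(M,\partial M)\cong PH^k_\mp(M)$ of Theorem \ref{pairing}, then the absolute-case isomorphisms \eqref{fva}--\eqref{fvb}, and finally a conversion of kernels and cokernels of $L$ on $H^*(M)$ into cokernels and kernels of $L$ on $H^*(M,\partial M)$ via Lefschetz duality, implemented concretely by the Hodge star acting on harmonic representatives ($\ast:\mathcal{H}^j_N(M)\to\mathcal{H}^{2n-j}_D(M)$ together with $L^*=(-1)^k\ast L\,\ast$). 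This last step is exactly what resolves the worry you raise about ``upgrading a dimension match to a natural isomorphism'' in your option (i): the duality is realized by an explicit map, not just by equality of dimensions. Your long-exact-sequence route for $k<n$ is also valid and is essentially what the two commutative diagrams preceding the theorem are set up to deliver; the paper explicitly notes that these give the result for $k<n$ but cannot be pushed through degree $n$ (Remark \ref{mnote}), which is why it abandons that route in the actual proof. Two small remarks on your version: (a) you do not actually need to identify the connecting homomorphism $\delta$ with $L$ --- the map $H^{k-1}(M,\partial M)\to H^{k+1}(M,\partial M)$ appearing after $\delta$ in the long exact sequence is induced by the chain-level inclusion $L$ and hence \emph{is} the Lefschetz map by construction, so exactness alone gives the short exact sequence $0\to\mathrm{coker}\,L\to PH^k_+(M,\partial M)\to\ker L\to 0$, which splits because these are vector spaces; (b) your option (ii) for $k=n$ would require substantially more boundary-condition bookkeeping than option (i), and since option (i) is precisely the paper's method (and works for every $k$, making the $k<n$ long exact sequence argument redundant), it is the cleaner choice. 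In short, your hybrid argument is sound; the paper's proof is the special case in which your $k=n$ strategy (i) is applied across all degrees.
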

\begin{proof} 
From \eqref{pai} and \eqref{fva}-\eqref{fvb}, we have 
\begin{align*}
PH^k_+(M, \partial M) 
\cong PH^k_-(M) &\cong {\rm coker}[L\!: {H^{2n-k-1}(M)\to H^{2n-k+1}(M)}]  \\
&\qquad\quad\oplus \ker [L\!: {H^{2n-k}(M)} \to H^{2n-k+2}(M)]~,\\
PH^k_-(M, \partial M) \cong PH^k_+(M) &\cong {\rm coker}[L\!: {H^{k-2}(M) \to H^k(M)}] \\
&\qquad\quad\oplus \ker [L\!: {H^{k-1}(M)}\to H^{k+1}(M)]~.
\end{align*}
Thus, it suffices to show that 
\begin{align}
\ker [L\!: {H^{k}(M)} \to H^{k+2}(M)]& \cong {\rm coker}[L\!: {H^{2n-k-2}(M, \partial M) \to H^{2n-k}(M, \partial M)}]\label{kercok1}\\
{\rm coker}[L\!: {H^{k}(M) \to H^{k+2}(M)}] & \cong \ker [L\!: {H^{2n-k-2}(M, \partial M)} \to H^{2n-k}(M, \partial M)]\label{kercok2}
\end{align}
for all $k$.  To obtain such relations, we recall that by Lefschetz duality, $H^k(M) \cong H^{2n-k}(M,\partial M)$.  A way to see this follows from the equivalence of $H^k(M) \cong \mathcal{H}^k_N(M)$ and $H^k(M,\partial M)\cong \mathcal{H}^k_D(M)$ and that the map by the Hodge star, $\ast: \mathcal{H}^k_N(M) \to \mathcal{H}^{2n-k}_D(M)\,$, is an isomorphism (see, for example \cite{S}).  There is also a non-degenerate pairing that is well-defined on cohomology: 
\begin{alignat}{4}
& H^{k}(M) &&\otimes\, H^{2n-k}(M, \partial M) &&\longrightarrow ~~\quad\mathbb{R}\\
&~\,\quad [\eta]\quad &&\otimes\quad\quad [\xi] &&\longrightarrow ~~(-1)^k\!\!\int_M   \eta \wedge \xi  ~.\nonumber
\end{alignat}
With $\ast \,\ast = (-1)^k$ acting on $\Omega^k(M)$, we can express this pairing in terms of the usual inner product
\begin{align*}
(-1)^k\!\!\int_M   \eta \wedge \xi = \int_M   \eta \wedge \ast\,(\ast \xi) = (\eta, \ast\, \xi)    ~.
\end{align*}
And since the adjoint $L^*= (-1)^k \ast L \,\ast\,$, we have 
\begin{align*}
(L\,\phi\,, \ast\, \xi ) = (\phi\,, \ast\, L\, \xi )
\end{align*}
where $\phi\in \Omega^{k-2}(M)$.  It is then clear that for every $[\phi]\in \ker L|_{H^{k-2}(M)}$, there exists a corresponding $[\psi]\in H^{2n-k+2}(M, \partial M)$ such that $[\psi] \in {\rm coker}\, L|_{H^{2n-k}(M, \partial M)}$.  Such a cohomology pair, $([\phi], [\psi])$, is related as follows: let $\tilde{\phi} \in [\phi]$ be the harmonic representative i.e. ${\tilde\phi} \in \mathcal{H}^{k-2}_N(M)$, then $\ast\, {\tilde\phi}\in \mathcal{H}^{2n-k+2}_D(M, \partial M)$ and $\ast\, {\tilde\phi} = [\psi]$.  This gives an isomorphism between $\ker L|_{H^{k-2}(M)}$ and ${\rm coker}\, L|_{H^{2n-k}(M, \partial M)}$.

Likewise, if $[\xi]\in\ker L|_{H^{2n-k-2}(M, \partial M)}$ and ${\tilde \xi}\in [\xi]$ is the harmonic representative, i.e. ${\tilde \xi} \in\mathcal{H}^{2n-k-2}_D(M, \partial M)$, then $\ast\, {\tilde \xi}\in \mathcal{H}_N^{k+2}(M)$ and the associated cohomology class $[\ast\, {\tilde \xi}] \in {\rm coker}\, L|_{H^{k}(M)}\,$.  This gives an isomorphism between $\ker L|_{H^{2n-k-2}(M, \partial M)}$ and ${\rm coker}\, L|_{H^{k}(M)}$.
\end{proof}

\section{Examples}\label{Exsec}

We calculate here the absolute and relative primitive cohomologies for two symplectic manifolds with boundary: (i) an interval times a five-torus, $I\times T^5$; (ii) a three ball times a three-torus, $B^3 \times T^3$. For each case, we write down the basis of harmonic fields satisfying certain specific boundary conditions. These two simple examples will allow us to make evident some of the differences between primitive cohomology and de Rham cohomology on symplectic manifolds with boundary.

We note that the two examples we study are both K\"ahler.  However, in the case of a non-vanishing boundary, standard properties of closed K\"ahler manifolds may no longer hold.  For instance, the symplectic structure need not be in a non-trivial class and the Hard Lefschetz property may not hold.  Interestingly, in example (ii), we demonstrate clearly the dependence of the absolute and relative cohomologies on the symplectic structure.  In short, different symplectic structures on a manifold can give different dimensions for the absolute and relative cohomologies.  This is in contrast to the case of closed K\"ahler manifold where it was shown in \cite{TTY} that the dimension of primitive cohomologies are invariant under change of the K\"ahler class.

\subsection{$I \times T^5$}

Let $M= [0,1] \times T^5 $, the direct product of the $5-$torus and the interval. To set notation, let us define $M$ by modding out the following identification from $[0,1]\times \mathbb{R}^5$ :
\begin{equation*}
(x_1, y_1, x_2, y_2, x_3, y_3) \sim (x_1, y_1+a,x_2+b, y_2+c, x_3+d, y_3+e),
\end{equation*}  with $a, b, c, d,e \in \mathbb{Z}$. 
 We choose $\{dx_{i}, dy_{i}\}$ as the generating basis for $\Omega^{\ast}(M)$. The boundary is given by 
 \[
 \partial M= \{0\} \times T^5 \cup \{1\}\times T^5\, \text{with $d\rho= \pm dx_1, \vec{n}=\pm \frac{\partial}{\partial x_1}$},
 \]
where plus sign is for the $\{0\} \times T^5$ boundary and the minus sign for $\{1\}\times T^5$.  We consider the standard symplectic structure and Riemannian metric with
\[
\omega=\underset{i}{\sum}dx_i \w dy_i\,, \quad \mathcal{J} dx_i=dy_i\,.
\]  
The de Rham cohomology and primitive cohomology can be straightforwardly calculated and expressed in a basis of harmonic fields satisfying Neumann-type boundary conditions.  (For the tables in this section, the roman indices $\{i, j, l\}$ can take any value from $1$ to $3$ except as indicated, and we have suppressed the wedge product symbol  ``$\wedge$" in all the forms for notational simplicity.) 
 \begin{center}
\begin{tabular}{|c|c|c|}
\hline
$k$& $\dim H^{k}(M)$&  Basis in $\mathcal{H}^{k}_N(M)$ \\
\hline
$0$& $ 1$&$1$\\
\hline
$1$& $ 5$&  $dx_i, dy_j, i\neq 1$\\
  \hline
 $2$& $ 10$ & $dx_2 dx_3, dx_i  dy_j,  dy_j dy_l, i \neq 1$\\
 \hline
  $3$&$ 10$ & $d x_2 dx_3 dy_k, dx_i dy_j dy_l, dy_1dy_2dy_3, i \neq 1,$\\
  \hline
$4$ & $ 5$ & $dx_2 dx_3dy_j dy_l, d x_idy_1dy_2dy_3, i \neq 1$\\
  \hline
 $5$&$ 1$& $ dx_2 dx_3 dy_1 dy_2 dy_3$\\
\hline
$6$&$ 0$& $\emptyset$  \\
\hline
\end{tabular}\\
\end{center}
\begin{center}
\begin{tabular}{|c|c|c|}
\hline
$k$ & $\dim PH^{k}_+(M)$&  Basis in $P\mathcal{H}^{k}_{+, N_{+}}\!(M)$ \\
\hline
$0$&$1$& $ 1$\\
\hline
$1$& $5$ & $dx_i, dy_j, i\neq 1$\\
\hline
 $2$&$9$ & $dx_2 dx_3, dx_i dy_j, i \neq 1, i\neq j$\\
 &&$ dx_2 dy_2 - dx_3 dy_3 , dy_j dy_l  $\\
 \hline
 $3$&$10$ & $ dx_2 dx_3 dy_1,  dx_2 dy_1 dy_3, dx_3 dy_1 dy_2 , dy_1dy_2 dy_3,$\\
  & & $dy_1(dx_2 dy_2 - dx_3 dy_3), x_1dy_1(dx_2 dy_2 - dx_3 dy_3), $ \\
  & & $x_1dx_2 dx_3 dy_1, x_1 dx_2 dy_1 dy_3, x_1dx_3 dy_1 dy_2, x_1dy_1dy_2 dy_3$\\
 \hline\hline
$k$ & $\dim PH^{k}_-(M)$&  Basis in $P\mathcal{H}^{k}_{-, N_{-}}\!\!(M)$ or $P\mathcal{H}^3_{-, N_{--}}\!\!\!(M)$ \\
\hline
$0$&$ 0$& $\emptyset$\\
\hline
$1$& $ 1$ & $dy_1$\\
\hline
 $2$&$ 5$ & $dy_1dx_i, dy_1dy_i,  i \neq 1$\\
 && $dx_1 dy_1 -  \tfrac{1}{2}(dx_2dy_2-dx_3dy_3)$ \\
 \hline
$3$&$ 9$ & $dx_2 dx_3 dy_1, dx_2 dy_1 dy_3, dx_3 dy_1 dy_2 , dy_1dy_2 dy_3$\\
  & & $dx_2 (dx_1dy_1 - dx_3dy_3), dx_3 (dx_1dy_1-dx_2dy_2),  $ \\
  & & $ (dx_2 dy_2 - dx_3 dy_3)dy_1, (dx_1dy_1 - dx_3dy_3)dy_2, (dx_1dy_1-dx_2dy_2)dy_3$\\
\hline
\end{tabular}\\
\end{center}

The absolute primitive cohomology can be most easily calculated by Lefschetz maps as in \eqref{fva}-\eqref{fvb}. From the tables above, we find certain relations between de Rham cohomology and primitive cohomology. For instance, notice that the basis for $PH^{k}_+(M)$ are exactly the primitive subset of the basis of $H^{k}(M)$, for $k<3$. 
 
For relative cohomology, we find the following:

\begin{center}
\begin{tabular}{|c|c|c|}
\hline
$k$& $\dim H^{k}(M, \partial M)$& Basis in $\mathcal{H}^{k}_{D}(M)$\\
\hline
$0$&$ 0$& $\emptyset$\\
\hline
$1$& $ 1$ & $dx_1$\\
  \hline
$2$& $ 5$ & $dx_1 dx_i, dx_1  dy_j  $\\
 \hline
$3$&  $ 10$ & $d x_1 dx_2 dx_3, dx_1d x_i dy_j,  dx_1 dy_j dy_l $\\
\hline
 $4$& $ 10$ & $dx_1dx_2dx_3 dy_j, dx_1 dx_i dy_j dy_l, dx_1dy_1 dy_2 dy_3,$\\
  \hline
$5$&$ 5$& $ dx_1dx_2dx_3 dy_j dy_l, dx_1 dx_i dy_1dy_2 dy_3$\\
\hline
$6$&$ 1$&  $dx_1 dx_2 dx_3 dy_1dy_2  dy_3$ \\
\hline
\end{tabular}\\
\end{center}

\begin{center}
\begin{tabular}{|c|c|c|}
\hline
$k$& $\dim PH^{k}_+(M, \partial M)$ &  Basis in $P\mathcal{H}^{k}_{+, D_{+}}\!\!(M)$ or $P\mathcal{H}^{3}_{+, D_{++}}\!\!(M)$ \\
\hline
$0$& $ 0$& $\emptyset$\\
\hline
$1$& $ 1$ & $dx_1$\\
  \hline
$2$& $ 5$ & $dx_1 dx_i, dx_1  dy_i, \, i\neq 1$\\
 &&$dx_1 dy_1-\frac{1}{2}(dx_2 dy_2-dx_3 dy_3)$\\ 
 \hline
$3$& $9$ & $dx_1 dx_2 dx_3, dx_1 dx_2 dy_3, dx_1 dx_3 dy_2, $\\
  & & $dx_1dy_2 dy_3,  (dx_2 dy_2 - dx_3 dy_3)dx_1,$\\
  & & $ (dx_1dy_1 - dx_3dy_3)dx_2, (dx_1dy_1-dx_2dy_2)dx_3$\\
  & & $(dx_1dy_1 - dx_3dy_3)dy_2, (dx_1dy_1-dx_2dy_2)dy_3 $\\
  \hline\hline
 $k$& $\dim PH^{k}_-(M, \partial M)$ &  Basis in $P\mathcal{H}^{k}_{-, D_{-}}\!(M)$ \\
 \hline
$0$&$ 1$ & $1$\\
  \hline
$1$& $ 5$ & $dx_j, dy_i, i\neq 1$\\
\hline
 $2$&$ 9$ & $dy_2 dy_3, dx_i dy_j,  i \neq 1,i\neq j$\\
 &&$ dx_j dx_k, dx_2dy_2-dx_3 dy_3,  $\\
\hline
$3$&$ 10$ & $dx_1 dx_2 dx_3, dx_1 dx_2 dy_3, dx_1 dx_3 dy_2, dx_1dy_2 dy_3$\\
  & & $dx_1(dx_2 dy_2 - dx_3 dy_3), x_1dx_1(dx_2 dy_2 - dx_3 dy_3), $ \\
  & & $x_1dx_1 dx_2 dx_3, x_1dx_1 dx_2 dy_3, x_1dx_1 dx_3 dy_2 , x_1 dx_1dy_2 dy_3$\\
\hline
\end{tabular}\\
\end{center}
Here, the relative de Rham cohomology can be obtained by the standard long exact sequence $$ \ldots \longrightarrow H^k(M, \partial M) \longrightarrow H^k(M) \longrightarrow H^k(\partial M) \longrightarrow \ldots $$ 
while the relative primitive cohomology can be calculated using the Lefschetz map relations in Theorem \ref{relLef}.  

Clearly, the elements of the absolute cohomology are different from those of the relative ones.  For example, $dx_1$ is certainly $d$-exact and so is trivial in absolute cohomology.  However, it is a non-trivial element of $H^1(M, \partial M)$ and $PH^1_+(M, \partial M)$ since there is no linear function of $x_1$ that satisfies the Dirichlet condition at both ends of the interval, $x_1=0$ and $x_1=1$.    Notice also that the results of the above tables satisfy the pairing isomorphism of Theorem \ref{pairing}.  The pair of cohomologies - $\{PH^k_+(M), PH^k_-(M, \partial M)\}$ and $\{PH^k_-(M), PH^k_+(M, \partial M)\}$ - are related by a $\mathcal{J}$-conjugation.  Regarding Lefschetz maps on $\Omega^*_D$, it is clear that that $L\!: \Omega^{n-1}_D \to \Omega^{n+1}_D$ is not surjective (as noted in Remark \ref{mnote}) as, for example, the element $dx_1dy_1dy_2 dy_3 \in \Omega^4_D$ in the table above does not have a pre-image in $\Omega^2_D$ under the Lefschetz map.

\subsection{$B^3 \times T^3$}
Now consider $M=B^3 \times T^3$, the direct product of the unit ball in $R^3$ and a three-torus. Again to set notation, we define $M$ by modding out the following identification from $ B^3 \times R^3$:
\[
(x_1, x_2, x_3, y_1, y_2, y_3) \sim (x_1, x_2, x_3, y_1+a, y_2+b, y_3+c), a, b, c \in \mathbb{Z} 
\] with $x_1^2+x_2^2+x_3^2 \leq 1$. The boundary is given by 
\[ 
\partial M= S^2 \times T^3: \{x_1^2+x_2^2+x_3^2 =1\},~ {\rm with}~ d \rho= -\underset{i}{\sum}x_i dx_i, \, \vec{n}=-\underset{i}{\sum} x_i \frac{\partial }{\partial x_i}.
\]
We consider first the standard symplectic form and Riemannian metric: 
\[
\omega= \underset{i}{\sum}dx_i \w dy_i\,, \qquad \mathcal{J} dx_i =dy_i\,.
\]  
Then $\mathcal{J} d\rho= -\underset{i}{\sum}x_i dy_i$. Moreover, the symplectic form here is exact since $\omega= d \alpha$ with $\alpha= \underset{i}{\sum}x_i dy_i$. The boundary in this case is said to be of contact type and the Reeb vector field is given by  $\underset{i}{\sum}x_i \frac{\partial}{\partial y_i}$.

With $\om$ being exact, the Lefschetz map  $L\!: H^k(M) \rightarrow H^{k+2}(M)$ trivially maps all elements to zero. This leads to the following isomorphisms for $1 \leq k \leq n$: 
\[
PH^k_+(M) \cong H^{k-1}(M) \oplus H^{k}(M)\,,\quad PH^{k}_-(M) \cong H^{2n-k}(M)\oplus H^{2n-k+1}(M)\,.
\]
In particular, we find the following for the de Rham and primitive cohomology in the absolute case:
\begin{center}
\begin{tabular}{|c|c|c|}
\hline
$k$& $\dim H^{k}(M)$ &  Basis in $\mathcal{H}^k_N\!(M)$ \\
\hline
$0$&$ 1$& $ 1$\\
\hline
$1$&$ 3$ & $dy_1, dy_2, dy_3$\\
  \hline
$2$& $ 3$ & $dy_i \, dy_j $\\
 \hline
$3$& $ 1$ & $d y_1dy_2 \,dy_3$\\
  \hline
$4, 5, 6$&$ 0$ & $\emptyset$ \\
  \hline
\end{tabular}\\
\end{center}

\begin{center}
\begin{tabular}{|c|c|c|}
\hline
$k$& $\dim PH^{k}_+(M)$& Basis in $P\mathcal{H}^k_{+, N_{+}}\!(M)$ \\
\hline
$0$&$ 1$& $ 1$\\
\hline
$1$&$ 4$ & $dy_1, dy_2, dy_3, \alpha$\\
  \hline
$2$& $ 6$ & $dy_i dy_j,\alpha \, dy_i $\\
 \hline
 $3$&$ 4$ & $dy_1 dy_2 dy_3,\alpha \, dy_idy_j $\\
\hline\hline
 $k$& $\dim PH^{k}_-(M)$& Basis in $P\mathcal{H}^k_{-, N_{-}}\!\!(M)$ or $P\mathcal{H}^3_{-, N_{--}}\!\!(M)$ \\
 \hline
$0, 1, 2$& $ 0$ & $\emptyset$ \\
\hline
$3$&$ 1$& $dy_1dy_2dy_3$ \\
\hline
\end{tabular}\\
\end{center}
Of note here is the presence of $\alpha$ as a non-trivial element of $PH^1_+(M)$.  Since $d\alpha =\omega$, $\alpha$ is $\dpp$-closed but not $d$-closed.  
For relative cohomologies, we obtain the following:
\begin{center}
\begin{tabular}{|c|c|c|}
\hline
$k$& $\dim H^{k}(M, \partial M)$&Basis in $\mathcal{H}^k_{D}(M)$\\
\hline
$0, 1, 2$&$ 0$& $\emptyset$\\
\hline
$3$& $ 1$ & $d x_1 dx_2 dx_3$\\
  \hline
$4$&$ 3$ & $dx_ 1dx_2dx_3 dy_j$\\
  \hline
$5$&$ 3$& $ dx_1dx_2dx_3dy_i dy_j$\\
\hline
$6$&$ 1$& $dx_1 dy_1dx_2dy_2 dx_3 dy_3$ \\
\hline
\end{tabular}\\
\end{center}

\begin{center}
\begin{tabular}{|c|c|c|}
\hline
$k$& $\dim PH^{k}_+(M, \partial M)$ & Basis in $P\mathcal{H}^{k}_{+, D_{+}}\!\!(M)$ or $P\mathcal{H}^{3}_{+, D_{++}}\!\!(M)$\\
\hline
$0, 1, 2$&$ 0$& $\emptyset$\\
\hline
$3$&$ 1$ & $dx_1 dx_2 dx_3$\\
  \hline
\hline
 $k$& $\dim PH^{k}_-(M, \partial M)$ & Basis in $P\mathcal{H}^{k}_{-, D_{-}}\!(M)$ \\
\hline
 $0$& $ 1$ & $1$\\
  \hline
$1$&$ 4$ & $dx_1, dx_2, dx_3, d\rho$\\
  \hline
 $2$&$ 6$& $ dx_idx_j, d\rho \, dx_i, $\\
\hline
$3$ & $4$ & $ dx_1 dx_2 dx_3, d\rho \, dx_i dx_j$\\
\hline
\end{tabular}\\
\end{center} 
Here, the dimension of $PH^k_-(M, \partial M)$ is greater than that of $H^{2n-k}(M, \partial M)$, again in contrast to that in the first example.

For closed K\"ahler manifold, it is known that the dimension of $PH^{k}_+(M)$ is a constant with respect to different K\"ahler structures \cite{TTY}. This is due to the existence of the hard Lefschetz property which implies a Lefschetz decomposition of the de Rham cohomology. However the hard Lefschetz property do not in general hold when the boundary is not vanishing. Hence, in the case of manifold with boundary, the dimension of the cohomology $PH^k_{\pm}(M)$ may vary as the symplectic structure varies.  To demonstrate this, let us consider  again $M^6 = B^3 \times T^3$ but now with a different symplectic form and complex structure:
\begin{align*}
{\tomega}&=dx_1\w dx_2+dy_1\w dy_2+dy_3\w dx_3\, ,\\
\tmJ &dx_1=dx_2\,,\quad \tmJ dy_1=dy_2\,,\quad \tmJ dy_3=dx_3\,.
\end{align*}
Though this symplectic form is not exact, it still represents a K\"ahler structure. Moreover, $\tmJ d\rho=-x_1 dx_2+x_2dx_1+x_3dy_3$ whose corresponding vector is $\vec{v}=-x_1 \frac{\partial}{\partial x_2}+x_2\frac{\partial}{\partial x_1}+x_3 \frac{\partial}{\partial y_3}$. 
Of course, the de Rham cohomology and the relative de Rham cohomology being topological remains unchanged. However, the primitive cohomology and relative primitive cohomology are now different.
\begin{center}
\begin{tabular}{|c|c|c|}
\hline
$k$& $\dim PH^{k}_+(M)$& Basis in $P\mathcal{H}^k_{+, N_{+}}\!(M)$ \\
\hline
$0$&$ 1$& $1$\\
\hline
$1$&$ 3$ & $dy_1, dy_2, dy_3$\\
  \hline
$2$& $ 4$& $dy_1dy_3, dy_2dy_3, $ \\
&&$ (x_2dx_1-x_1dx_2+2x_3dy_3)dy_i,~~ i=1, 2$\\
\hline
$3$& $ 3$ & $ (x_1 dx_2 - x_2 dx_1)( dy_1 dy_2 - dy_3 dx_3) + x_3 dy_3 (dx_1 dx_2 - dy_1 dy_2),$\\
&& $(x_1 dx_2 - x_2 dx_1)dy_3 dy_i,~~ i=1, 2$ \\
\hline\hline
 $k$& $\dim PH^{k}_-(M)$& Basis in  $P\mathcal{H}^3_{-, N_{+-}}\!\!(M)$\\
 \hline
$0, 1, 2$& $ 0$& $\emptyset$\\
\hline
$3$&$ 1$& $(dx_1 dx_2 - dy_1 dy_2) dy_3$ \\
\hline
\end{tabular}\\
\end{center}
\begin{center}
\begin{tabular}{|c|c|c|}
\hline
$k$& $\dim PH^{k}_+(M, \partial M)$&Basis in Basis in $P\mathcal{H}^{k}_{+, D_{+}}\!\!(M)$ or $P\mathcal{H}^{3}_{+, D_{++}}\!\!(M)$\  \\
\hline
$0,1, 2$&$ 0$& $\emptyset$\\
\hline
$3$&$ 1$& $(dx_1 dx_2 - dy_1 dy_2) dx_3$ \\
 \hline\hline
 $k$& $\dim PH^{k}_-(M, \partial M)$ &Basis in $P\mathcal{H}^k_{-, D_{-}}\!\!(M)$  \\
\hline
 $0$& $ 1$& $1$ \\
  \hline
$1$&$ 3$& $dy_1, dy_2, dx_3$\\
  \hline
 $2$&$ 4$&$dy_1dx_3, dy_2dx_3,$\\
 &&$(x_1dx_1+x_2dx_2-2x_3dx_3)dy_i, ~~i=1,2$\\
 \hline
 $3$ &  $3$ & $(x_1 dx_2 + x_2 dx_1)( dy_1 dy_2 - dy_3 dx_3)$ \\
 && \quad$ - x_3 dy_3 (dx_1 dx_2 - dy_1 dy_2),$\\
 & & $(x_1 dx_1 + x_2 dx_2)dy_3 dy_i,~~i=1, 2$\\
\hline
\end{tabular}\\
\end{center}
Clearly, the dimensions of $PH^{k}_+(M)$ and $PH^{k}_-(M, \partial M)$ differ for the symplectic structure $\tomega$ as compared to those for ${\omega}$. 

\section{Discussion}

In this paper, we established Hodge theory for primitive cohomologies on symplectic manifolds with boundary.  In order to obtain a unique harmonic representative in each primitive cohomology class, we are required to impose on harmonic fields new Dirichlet- and Neumann-type boundary conditions that are dependent on the symplectic structure.  For those cohomologies associated with fourth-order symplectic Laplacians, the natural boundary conditions additionally involve derivatives.

We associated harmonic fields with Dirichlet-type symplectic boundary conditions with what we have called relative primitive cohomologies.  In differential topology, relative de Rham cohomology is well-defined for any submanifold $N$ embedded in $M$.  Let $i\!:\! N\hookrightarrow M$ be the inclusion map.  Then, there is a relative de Rham complex  defined by elements $\Omega^k_R(M, N)= \Omega^k(M) \oplus \Omega^{k-1}(N)$ with the differential $d$ given by 
\begin{align*}
d(\eta, \xi) = (d\eta, i^*\eta - d\xi)~.
\end{align*} 
Such a differential squares to zero and results in the relative de Rham cohomology, which we shall denote here by $H^k_R(M, N)$.  (For a reference, see \cite{BT}.) In the case of $N=\partial M$, it is well-known that  
\begin{align*}
H^k_R(M, \partial M) \cong H^k(M, \partial M)
\end{align*}
with $H^k(M, \partial M)$ being the standard de Rham cohomology defined over $\Omega^k_D(M)$, i.e. forms satisfying the Dirichlet boundary conditions.  

The isomorphism above begs the question whether the relative primitive cohomologies defined over forms with $\{D_+, D_{++}, D_-\}$ boundary conditions in Section \ref{RelSec} also have a description in terms of a ``relative" complex similar to the de Rham case.  To just generalize the relative de Rham complex by restricting $\Omega^*$ to primitive forms and replacing the differential with the appropriate symplectic operator from the triplet $(\dpp, \dpm, \,\dpp\dpm)$ that appear in the primitive elliptic complex of \eqref{pecomp} would run into an immediate obstacle: $N=\partial M$ is odd-dimensional, and hence, there is no general notion of a primitive form defined on $\partial M$.  (If $N$ happens to be a symplectic submanifold of $M$, then such a relative complex would make sense \cite{TTY2}.)  

To side-step this issue, we propose here considering a relative complex not with respect to $N$, but instead with respect to a {\it closed} tubular neighborhood of $N$ which we will label by $\tN$.  With the map $i: \tN \hookrightarrow M$ be the inclusion, the pullback $i^{\ast} \omega$ then defines a symplectic structure on $\tN$.   This would allow us to proceed to define a relative complex $(\CP_R(M, \tN), \partial)$ with elements  $\CP^l_R(M, \tN)= \CP^l(M) \oplus \CP^{l-1}(\tN)\,$. Here, the vector space $\CP^l$ with $l=0, 1, 2, \ldots, 2n+1,$ are just primitive spaces but sequenced by the order of their appearance in the primitive elliptic complex in \eqref{pecomp}.  Specifically, 
\begin{align}\label{pplexdef}
\CP^l= \begin{cases}
P^l &  \text{if~~ } 0\leq l \leq n~,\\
P^{2n+1-l} & \text{if ~~}  n+1 \leq l \leq 2n +1 ~.
\end{cases}
\end{align} 
which following \eqref{pecomp} is acted upon by the differential operator 
\begin{align}\label{pplexdif}
\partial_l= \begin{cases}
~\dpp &  \text{if~~ } 0\leq l < n-1~,\\
-\dpp\dpm & \text{if ~~} l = n~,\\
-\dpm & \text{if~~ } n+1 \leq j \leq 2n +1 ~.
\end{cases}
\end{align} 
(The extra minus signs make $(\CP^*, \partial_l)$ coincide with the algebra $\mathcal{F}^{p=0}$ in \cite{TTY}.)  The differential $\partial$ acting on the relative element $(\beta, \gamma)\in\CP^l_R(M, \tN)$ would then be standardly given by
\begin{align*}
\partial\,(\beta, \gamma) = (\partial_l\,\beta\,,\, i^*\beta\, - \,\partial_{l-1}\gamma\,)~.
\end{align*}
We will denote the resulting relative cohomology by $PH_{R}^*(M, \tN)$.  In the case, where $N=\partial M$, $N_T=\tpM $ would be a closed collar neighborhood of $\partial M$.  We then expect that  $PH_{R}^*(M, \tpM)$ is isomorphic to the relative cohomology $PH^*(M, \partial M)$ defined in Section \ref{RelSec}. 

We emphasize that the above relative primitive cohomology $PH_{R}^*(M, \tN)$ can be defined for any embedded submanifold $N$ of $M$ and this includes the interesting case where $N$ is a Lagrangian submanifold.  This is of particular relevance for a system of equations that arose in physics which constrains six-dimensional, symplectic Calabi-Yau manifolds with special Lagrangians playing the role of source charges \cite{Tomasiello, TY3}.   (Here, we follow the usage of the term ``Calabi-Yau"  to mean the existence of an $S\!U(3)$ holonomy structure with respect to a connection that may have torsion.). A six-dimensional, symplectic Calabi-Yau can be labelled by $(M^6, \om, \Om)$, where $\Om$ here is a non-vanishing $(3,0)$-form that defines an almost complex structure on $M^6$ and $\om$ is a symplectic $(1,1)$-form.  The physical system requires that the $(3,0)$ form $\Omega$ satisfies:
\begin{align*}
d \,\text{Re}~ \Om &= 0\\
dd^{\Lambda} e^{-2f} \text{ Im}~ \Om &= \rho_L
\end{align*}
with $\rho_L$ being the Poincar\'e-dual current of a special Lagrangian submanifold $L \subset M$ and 
$$e^{-2f}= \dfrac{3}{4} \dfrac{i\,\Omega \w {\overline{\Omega}} }{ \om^3}\,.$$  In \cite{TY3}, the above system was related to a Maxwell type system for (Re $\Om$).  Hence, in analogy with the relationship between Maxwell's equations and relative de Rham cohomology, we expect that the relative primitive cohomology $PH^4_{R}(M, L_T)$ should be relevant for measuring the source charges of the physical system and in understanding its space of solutions.  It is also an interesting question whether $PH^*_R(M, L_T)$ can be described by forms with certain prescribed boundary conditions when asymptotically close to $L$.  (When $\rho_L=0$, a geometric flow for this symplectic system was recently introduced in \cite{Fei}.) 

Lastly, primitive forms and their cohomologies are the special ($p=0$) case of the more general $p$-filtered forms and their filtered cohomologies described in Tsai-Tseng-Yau \cite{TTY}.  The description here should be straightforwardly generalizable to the $p$-filtered case by replacing the $(\dpp, \dpm,\, \dpp\dpm\,)$ operators with the more general $(d_+, d_-, \,\dpp\dpm\,)$ operators defined in \cite{TTY}.

\appendix

\section{Form Decomposition}
At times, it is useful to show explicitly a primitive form's dependence on certain directional components.  For instance, around a neighborhood $U\subset M$ of a point $x\in \partial M$, we can choose to work in a local Darboux basis of one-forms, $\{w_i\}$, for $i=1,\ldots, 2n$, such that $w_1=d\rho$.  Then, with respect to this Darboux basis, we can make explicit the dependence on the $\{w_1, w_2\}$ components for any $\beta_k \in P^k(U)$ by means of the decomposition in \eqref{ldecomp}:
 \begin{align}\label{ldecompa}
\beta_k= w_1 \w \beta_{k-1}^1 + w_2 \w \beta_{k-1}^2 + \Theta_{12}\w\beta_{k-2}^3 + \beta_k^4\,,
\end{align}
where $\beta^i$'s are primitive forms without any components in $w_1$ and $w_2$, and 
\begin{align}\label{Thdef}
\Theta_{12}=  w_1 \w w_2 - \dfrac{1}{H+1} \sum_{i=2}^{n} w_{2i-1} \w w_{2i}\,.
\end{align}  
The proof of this decomposition, which is algebraic in nature, is given in \cite[Lemma 2.3]{TY2}.  Note that when $k=n$, the term $\beta_{k=n}^4$ is identically zero as there is no primitive $n$-form without a component in either $w_1$ or $w_2$.

The decomposition above can be applied repeatedly.  Specifically, we can further extract out the dependence on the $\{w_3, w_4\}$ components for each $\beta^i$'s on the right hand side of \eqref{ldecompa}.  Each $\beta^i$ would in general break up into four terms, and we would end up in all with the following decomposition of 16 terms:
\begin{align}\label{ldecompb}
\beta_k 
 &=w_1 \w \beta_{k-1}^1 + w_2 \w \beta_{k-1}^2 + \Theta_{12}\w\beta_{k-2}^3 + \beta_k^4\nonumber\\
&= w_1\w \left[w_3 \w \ga^{13}_{k-2} + w_4\w \ga^{14}_{k-2} + \Thp \w\ga^{134}_{k-3} + \ga^1_{k-1} \right]\nonumber\\
& \quad +w_2 \w \left[ w_3 \w \ga^{23}_{k-2} + w_4\w \ga^{24}_{k-2} + \Thp \w\ga^{234}_{k-3} + \ga^2_{k-1} \right]\nonumber\\
& \quad +\Th \w \left[ w_3 \w\ga^{123}_{k-3} + w_4 \w\ga^{124}_{k-3} + \Thp \w\ga^{1234}_{k-4} + \ga^{12}_{k-2}\right] \\
&\qquad \ + \left[w_3 \w\ga^3_{k-1} + w_4 \w\ga^4_{k-1} + \Thp \w\ga^{34}_{k-2} + \ga^0_k\right] \nonumber
\end{align}
where the $\ga$'s in each of the sixteen terms above are primitive forms that do not have any components involving $\{\wa,\wb, \wc, \wdd\}$, and also
\begin{align}\label{Thpdef}
\Theta'_{34}=  w_3 \w w_4 - \dfrac{1}{H} \sum_{i=3}^{n} w_{2i-1} \w w_{2i}\,.
\end{align}  
Here, $\Thp$ is the analogous object to $\Th$ in \eqref{Thdef} but in dimension $d=2n-2$ with $\{w_3, w_4\}$ components singled out.   
Let us further point out that when $k=n-1$, the last term $\ga^0_{n-1}$ in \eqref{ldecompb} is identically zero.  And when $k=n$, the primitive forms $\{\ga^0_n, \ga^1_{n-1}, \ga^2_{n-1}, \ga^3_{n-1}, \ga^4_{n-1}, \ga^{34}_{n-2}\}$  are all identically zero in \eqref{ldecompb}.

\section{Symbols of Symplectic Differential Operators}\label{AppB}
For  $\zeta \in T^*_xM\!\setminus\!\{0\}$, the principal symbols $\sigma_{\mathcal P}(\zeta)\beta_k$ for any symplectic differential operator $\mathcal{P}$ discussed in this paper can be computed straightforwardly, though the calculations can be rather long and tedious.  We here write down some of the principal symbols that are relevant for the proof of the ellipticity of BVPs in Propositions \ref{2ndE} and \ref{4thE}.  In our principal symbols, we will not include any $i=\sqrt{-1}$ factors that are often included in the definition of the principal symbols, as these imaginary factors do not make a difference in our calculations.

Since calculating the symbol is a point-wise computation, we can without loss of generality, choose to work in the standard basis $\{w_i\}$ with $\om = w_1 \w w_2 + w_3 \w w_4 + \ldots$ and  $g_{ij}=\delta_{ij}$, assuming as we do that the metric is compatible to $\om$.  Of relevance for the proofs of Propositions \ref{2ndE} and \ref{4thE} is calculating the principal symbol when the covector $\zeta \in T^*_xM\!\setminus\!\{0\}$ takes the form $\zeta = \xa \wa + \xb \wb + \xc \wc$ which will be our focus here. 

{\it Notation}: In the expressions for the principal symbols below, the integer $h=n-k$, where $n=(\dim M)/2$ and $k$ is the degree of the primitive form acted upon by the principal symbol.  We will use the notation $w_{ij}=w_i \w w_j$ to denote the wedge product of two basis 1-forms.  Furthermore, to simplify expressions, we will drop the wedge product symbol (exterior product between forms will be assumed) and also leave out the subscript that denotes the degree of the forms $\{\beta_{k-1}^1, \ldots, \beta_k^4\}$ in \eqref{ldecompa} and the 16 $\gamma$'s in \eqref{ldecompb}.

The symbols of the first order operators $\{\dpp, \dpm, \dpps, \dpms\}$ in terms of $\{\beta^1, \ldots, \beta^4\}$ in \eqref{ldecompa} and the 16 $\gamma$'s in \eqref{ldecompb} can be expressed as follows:
\begin{align*}
\sigma_{\dpp}&(\xa \wa + \xb \wb + \xc \wc) \beta_k = 
 \zeta_1( w_1 \beta^4 + \tfrac{h}{h+1} \Theta_{12} \beta^2) + \zeta_2 (w_2 \beta^4 - \tfrac{h}{h+1} \Theta_{12} \beta^1)\\
& -\zeta_3 (w_{13}\ga^1 + w_{23}\ga^2 - \Theta_{12}w_3(\tfrac{h(h+2)}{(h+1)^2}\ga^{12}+\tfrac{1}{h-1}\ga^{34})- w_3 \ga^0) \\
&- \zeta_3 \tfrac{h}{h+1}(w_1\Thp\ga^{14}+w_2\Thp\ga^{24} -\Th\Thp\ga^{124} + \tfrac{1}{h}\Th\ga^4 - \tfrac{(h-1)(h+1)}{h^2}\Thp\ga^4)
\\%\end{align*}
%\begin{align*}
\sigma_{\dpps}&(\xa \wa + \xb \wb + \xc \wc) \beta_k = -\zeta_1(w_2 \beta^3 + \beta^1)+ \zeta_2 (w_1 \beta^3  - \beta^2)\\
&+\zeta_3(w_1\ga^{13}+w_2\ga^{23}-(\Th+\tfrac{1}{h+2}\Thp)\ga^{123}-\ga^3)\\
&+\zeta_3(w_{14}\ga^{134}+ w_{24}\ga^{234}-\Th w_4\ga^{1234}-w_4(\ga^{34}-\tfrac{1}{h+1}\ga^{12}))
\\%\end{align*}
%\begin{align*}
\sigma_{\dpm}&(\xa \wa + \xb \wb + \xc \wc) \beta_k =-\tfrac{1}{h+1}\left[\zeta_1( w_1 \beta^3-\beta^2) + \zeta_2 (w_2  \beta^3+\beta^1) \right]\\
&-\zeta_3\tfrac{1}{h+1}(w_1\ga^{14}+w_2\ga^{24}-(\Th+\tfrac{1}{h+2}\Thp)\ga^{124}-\ga^4)\\
&+\zeta_3\tfrac{1}{h+1}(w_{13}\ga^{134}+w_{23}\ga^{234}-\Th w_3\ga^{1234}-w_3(\ga^{34}-\tfrac{1}{h+1}\ga^{12}))
\\%\end{align*}
%\begin{align*}
\sigma_{\dpms}&(\xa \wa + \xb \wb + \xc \wc) \beta_k = 
\zeta_1(- \tfrac{1}{h} w_2 \beta^4 + \tfrac{1}{h+1} \Theta_{12} \beta^1) + \zeta_2 (\tfrac{1}{h} w_1\beta^4  + \tfrac{1}{h+1} \Theta_{12}\beta^2)\\
&-\zeta_3\tfrac{1}{h+1}(w_1\Thp\ga^{13}+w_2\Thp\ga^{23}-\Th\Thp\ga^{123}+\tfrac{1}{h}\Th\ga^3-\tfrac{(h-1)(h+1)}{h^2}\Thp\ga^3)\\
&+\zeta_3\tfrac{1}{h}(w_{14}\ga^1+w_{24}\ga^2-\Th w_4 (\tfrac{h(h+2)}{(h+1)^2}\ga^{12}+\tfrac{1}{h+1}\ga^{34})-w_4\ga^0)
\end{align*}
Note that the expressions for the symbols above take a simpler form when $\xc=0$.  Proceeding further, we can compose the first-order symbols above to obtain the symbols for the Laplacians and their boundary conditions.  The calculations below were performed with the aid of the symbolic computation program Mathematica.

For the second order Laplacian $\Delta_+ = \dpp\dpps + \dpps\dpp$, the principal symbol can be expressed relatively simply when $\zeta=\xa \wa + \xb \wb$.
\begin{align}\label{Dpsyma}
\sigma_{\Delta_+} &(\zeta_1w_1+\zeta_2 w_2) \beta_k 
 = -w_1 \left[\left(\zeta_1^2 +\tfrac{h}{h+1}\zeta_2^2\right) \beta^1 + \tfrac{1}{h+1}(\zeta_1\zeta_2) \beta^2\right] \\
& - w_2  \left[ \tfrac{1}{h+1}(\zeta_1\zeta_2) \beta^1 +\left(\zeta_2^2 +\tfrac{h}{h+1}\zeta_1^2\right) \beta^2\right]- (\zeta_1^2 + \zeta_2^2) \left(\tfrac{h+1}{h+2} \Th \beta^3 + \beta^4\right).\nonumber
\end{align}
More generally, for $\zeta=\xa \wa + \xb \wb+\xc\wc$ we need to use the 16-term decomposition for $\beta_k$ in \eqref{ldecompb}.   The symbol  $\sigma_{\Delta_+}(\xa \wa + \xb \wb + \xc \wc)$ acting on the 16 $\ga$'s in \eqref{ldecompb} can be separated into four independent parts:
\begin{align*}
&\sigma_{\Delta_+} (\xa \wa + \xb \wb + \xc \wc)\beta_k=\\
&~\sigma_{\Delta_+} (\xa \wa + \xb \wb + \xc \wc)\left(\ga^0+\wc \ga^3+ \Thpp\ga^{1234}\right)\\
&+\sigma_{\Delta_+} (\xa \wa + \xb \wb + \xc \wc)\left(\wa \ga^1+\wb \ga^{2}+ \wdd \ga^{4}\right)\\
&+\sigma_{\Delta_+} (\xa \wa + \xb \wb + \xc \wc)\left(\wa\Thp \ga^{134}+ \Th \wc \ga^{123}+\wb\Thp \ga^{234}+ \Th \wdd \ga^{124}\right)\\
&+\sigma_{\Delta_+} (\xa \wa + \xb \wb + \xc \wc)\left(w_{13}\ga^{13}+ w_{24}\ga^{24}+w_{14} \ga^{14}+ w_{23} \ga^{23}+ \Thp \ga^{34}+ \Th \ga^{12}\right)
\end{align*}
with the action on each of the four parts given as follows: 
\begin{align*}
\sigma_{\Delta_+} (\xa \wa + \xb \wb + \xc \wc)(\ga^0 + \wc \ga^3 + \Thpp\ga^{1234})
= -(\xas + \xbs+\xcs) \left[\ga^0 + \wc \ga^3  -\tfrac{h+1}{h+2}\Thpp\ga^{1234}\right]
\end{align*}
\begingroup
\renewcommand*{\arraystretch}{1.25}
% your pmatrix expression
\begin{align*}
\sigma_{\Delta_+} (\xa \wa + &\xb \wb + \xc \wc)
\begin{pmatrix} 
\wa \ga^1\\
\wb \ga^{2} \\
\wdd \ga^{4} 
\end{pmatrix}
=
\begin{pmatrix} 
-\xas -\frac{h}{h+1}\xbs -\xcs& -\frac{\xa\xb}{h+1}& -\frac{\xb\xc}{h+1} \\
-\frac{\xa\xb}{h+1}&-\frac{h}{h+1}\xas -\xbs -\xcs& \frac{\xa\xc}{h+1} \\
-\frac{\xb\xc}{h+1}&\frac{\xa\xc}{h+1}&-\xas -\xbs -\frac{h}{h+1}\xcs
\end{pmatrix}
\begin{pmatrix} 
 \wa \ga^1\\
 \wb\ga^{2} \\
\wdd \ga^{4} 
\end{pmatrix}
\end{align*}
\begin{align*}
\sigma_{\Delta_+} (\xa \wa + \xb \wb + \xc \wc)
\SmallMatrix{
%\begin{pmatrix} 
\wa\Thp \ga^{134} \\
\Th \wc \ga^{123}\\
 \wb\Thp \ga^{234} \\
\Th \wdd \ga^{124}}
%\end{pmatrix}
=
\SmallMatrix{
%\begin{pmatrix} 
-\xas -\frac{h}{h+1}\xbs-\frac{h+1}{h+2}\xcs& -\frac{\xa\xc}{h+2} & -\frac{\xa\xb}{h+1}&-\frac{\xb\xc}{(h+1)(h+2)}\\
-\frac{\xa\xc}{h+2}& -\frac{h+1}{h+2}(\xas+\xbs) -\xcs & -\frac{\xb\xc}{h+2}& 0\\
-\frac{\xa\xb}{h+1}& -\frac{\xb\xc}{h+2}& -\frac{h}{h+1}\xas -\xbs-\frac{h+1}{h+2}\xcs& \frac{\xa\xc}{(h+1)(h+2)} \\
-\frac{\xb\xc}{(h+1)(h+2)}& 0 &  \frac{\xa\xc}{(h+1)(h+2)}& -\frac{h+1}{h+2}(\xas+\xbs) -\frac{h}{h+1}\xcs}
%\end{pmatrix}
\SmallMatrix{
%\begin{pmatrix} 
\wa\Thp \ga^{134} \\
\Th \wc \ga^{123}\\
 \wb\Thp \ga^{234} \\
\Th \wdd \ga^{124}}
%\end{pmatrix}
\end{align*}
\begin{align*}
&\sigma_{\Delta_+} (\xa \wa + \xb \wb + \xc \wc)
\SmallMatrix{
%\begin{pmatrix} 
w_{13}\ga^{13}\\ 
w_{24}\ga^{24}\\
 w_{14}\ga^{14}\\ w_{23}\ga^{23}\\  \Thp \ga^{34}\\ \Th\ga^{12}}
%\end{pmatrix}
=\\
&\SmallMatrix{
%\begin{pmatrix} 
-\xas -\frac{h}{h+1}\xbs-\xcs& 0 & 0 &-\frac{\xa\xb}{h+1}&\frac{\xb\xc}{h+1}&-\frac{\xb\xc}{(h+1)^2}\\
0& -\frac{h}{h+1}(\xas+\xcs)-\xbs &-\frac{\xa\xb}{h+1} &0&0& \frac{\xb\xc}{(h+1)}   \\
0& -\frac{\xa\xb}{(h+1)} & -\xas -\frac{h}{h+1}(\xbs+\xcs)& 0 & 0 & \frac{\xa\xc}{h+1} \\
-\frac{\xa\xb}{(h+1)}& 0&0& -\frac{h}{h+1}\xas -\xbs-\xcs & -\frac{\xa\xc}{h+1} & \frac{\xa\xc}{(h+1)^2}\\
\frac{h\, \xb\xc}{(h+1)^2} &0&0& -\frac{h}{(h+1)^2}\xa\xc & -\xas -\xbs -\frac{h^2+h+1}{(h+1)^2}\xcs & -\frac{h \xcs}{(h+1)^3}\\
-\frac{\xb\xc}{(h+1)(h+2)} & \frac{\xb\xc}{h+2} & \frac{\xa\xc}{h+2} & \frac{\xa\xc}{(h+1)(h+2)} & -\frac{\xcs}{(h+1)(h+2)}&
 -\frac{h+1}{h+2}(\xas+\xbs) -\frac{h^3+4h^2+5h+1}{(h+1)^2(h+2)}\xcs
}%\end{pmatrix}
%\begin{pmatrix} 
\SmallMatrix{
w_{13}\ga^{13}\\ 
w_{24}\ga^{24}\\
w_{14}\ga^{14}\\ w_{23}\ga^{23}\\  \Thp \ga^{34}\\ \Th\ga^{12}
}
%\end{pmatrix}
\end{align*}

Concerning the fourth-order Laplacian, $\Delta_{++}$, it acts on the middle-degree primitive form, $\beta_n$.  When $\zeta=\xa \wa + \xb \wb$, (i.e. $\xc=0$), the symbol can be expressed simply,
\begin{align}\label{Dppsyma}
\sigma_{\Delta_{++}}(\zeta_1w_1+\zeta_2 w_2) \beta_n = (\zeta_1^2 + \zeta_2^2)^2\left[w_1  \beta^1 + w_2  \beta^2  + \tfrac{1}{4}\Theta_{12} \beta^3\right].
\end{align}

More generally, we use the decomposition \eqref{ldecompb} for $\beta_n$, which has only 10 non-zero $\ga$ terms.  The symbol $\sigma_{\Delta_{++}} (\xa \wa + \xb \wb + \xc \wc)$ acting on the ten terms can be separated into three independent parts. 
\begin{align*}
&\sigma_{\Delta_{++}} (\xa \wa + \xb \wb + \xc \wc)\beta_n\\
&=\sigma_{\Delta_{++}} (\xa \wa + \xb \wb + \xc \wc)\left(\Thpp\ga^{1234}\right) \\
&+\sigma_{\Delta_{++}} (\xa \wa + \xb \wb + \xc \wc)\left(\wa\Thp \ga^{134}+ \Th \wc \ga^{123}+\wb\Thp \ga^{234}+ \Th \wdd \ga^{124}\right)\\
&+\sigma_{\Delta_{++}} (\xa \wa + \xb \wb + \xc \wc)\left(w_{13}\ga^{13}+w_{24}\ga^{24}+w_{14} \ga^{14}+ w_{23} \ga^{23}+ \Th \ga^{12}\right)
\end{align*}
with the action 
on each of the three parts given as follows:
\begin{align*}
\sigma_{\Delta_{++}} (\xa \wa + \xb \wb + \xc \wc)\left(\Thpp\ga^{1234}\right)  = \tfrac{1}{4}(\xas + \xbs +  \xcs)^2  \left(\Thpp\ga^{1234}\right)\qquad\quad
\end{align*}
\begin{align*}
&\sigma_{\Delta_{++}} (\xa \wa + \xb \wb + \xc \wc)
\begin{pmatrix} 
\wa\Thp \ga^{134} \\
\Th \wc \ga^{123}\\
\wb\Thp \ga^{234}\\
\Th \wdd \ga^{124}
\end{pmatrix}
=\\
&\tfrac{1}{4}\left(\xas+\xbs+\xcs\right)\begin{pmatrix} 
4(\xas+\xbs) +\xcs& 3\xa\xc &0& -3\xb\xc \\
3\xa\xc& \xas + \xbs +4 \xcs & 3 \xb\xc & 0 \\
0& 3\xb\xc& 4(\xas + \xbs) +\xcs& 3\xa\xc \\
-3\xb\xc&0&3\xa\xc&\xas+\xbs +4\xcs
\end{pmatrix}
\begin{pmatrix} 
\wa\Thp \ga^{134} \\
\Th \wc \ga^{123}\\
\wb\Thp \ga^{234}\\
\Th \wdd \ga^{124}
\end{pmatrix}
\end{align*}
\begin{align*}
&\sigma_{\Delta_{++}} (\xa \wa + \xb \wb + \xc \wc)
\SmallMatrixE{
%\begin{pmatrix} 
w_{13}\ga^{13}\\ 
w_{24}\ga^{24}\\w_{14} \ga^{14}\\ w_{23} \ga^{23}\\ \Th \ga^{12}
%\end{pmatrix}
}=\\
&
\SmallMatrixE{(\xas+\xbs)^2 + \tfrac{1}{2}(4\xas+\xbs)\xcs + \xcf& -\tfrac{3}{2}\xbs\xcs &-\tfrac{3}{2}\xa\xb\xcs &\tfrac{3}{2}\xa\xb\xcs &-\tfrac{3}{2}\xb\xc(\xas+\xbs-\xcs)\\
-\tfrac{3}{2}\xbs\xcs &(\xas+\xbs)^2 + \tfrac{1}{2}(4\xas+\xbs)\xcs + \xcf&-\tfrac{3}{2}\xa\xb\xcs &\tfrac{3}{2}\xa\xb\xcs &-\tfrac{3}{2}\xb\xc(\xas+\xbs-\xcs)\\
-\tfrac{3}{2}\xa\xb\xcs & -\tfrac{3}{2}\xa\xb\xcs & \xaf +\tfrac{1}{2}\xas(4\xbs+\xcs)+(\xbs+\xcs)^2& \frac{3}{2}\xas\xcs &-\tfrac{3}{2}\xa\xc(\xas+\xbs-\xcs) \\
\tfrac{3}{2}\xa\xb\xcs &\tfrac{3}{2}\xa\xb\xcs &\tfrac{3}{2}\xas\xcs & \xaf +\tfrac{1}{2}\xas(4\xbs+\xcs)+(\xbs+\xcs)^2&\tfrac{3}{2}\xa\xc(\xas+\xbs-\xcs)\\
-\tfrac{3}{4}\xb\xc(\xas+\xbs-\xcs) &
-\tfrac{3}{4}\xb\xc(\xas+\xbs-\xcs)  & -\tfrac{3}{4}\xa\xc(\xas+\xbs-\xcs) &\frac{3}{4}\xa\xc(\xas+\xbs-\xcs)& \frac{1}{4}(\xas+\xbs)^2+14(\xas+\xbs)\xcs + \xcf
%\end{smallmatrix}\right)
}
\SmallMatrixE{%\left(\begin{smallmatrix}
w_{13}\ga^{13}\\ 
w_{24}\ga^{24}\\
w_{14} \ga^{14}\\ w_{23}\ga^{23}\\ \Th \ga^{12}
%\end{smallmatrix}\right)
}
\end{align*}
\endgroup

\section{Injectivity Proof of Case (A2) for Propositions \ref{2ndE} and \ref{4thE}}\label{AppC}

For Proposition \ref{2ndE} and Proposition \ref{4thE}, we here give the proof of injectivity of the $\Psi_{x,\xi}$ map in the (A2) case where $\xi= pw_2 + qw_3$ with both $p\neq0$ and $q\neq0$.  As with in Appendix B, the calculations below were performed with the aid of Mathematica.

\subsection{Proposition \ref{2ndE}}\label{AppC1}
To write out the ordinary differential system \eqref{ode2}, we use the calculation of the principal symbol $\sigma_{\Delta_+}(\zeta_1 w_1 + \zeta_2 w_2 + \zeta_3 w_3)$ in  Appendix \ref{AppB}, substituting $\zeta_1 = -i\,\partial_t$, $\zeta_2=p$ and $\zeta_3=q$.  The general $\mathbb{R}_+$-bounded solution of  $\sigma(\Delta_+) ( -i \, w_1 \partial_t + p\, w_2 + q\, w_3)\, \phi_k(t)=0$ with both $p$ and $q$ non-zero can be expressed in terms of 16 constant primitive forms, labelled by $c^l$ for $l=1, \ldots, 16$.  Each $c^l$ has no components in $\{\wa, \wb, \wc, \wdd\}$.  In the solutions below, $r=\sqrt{p^2+q^2}>0$ and we also express $\phi_k(t)$ in terms of the 16 $\ga(t)$'s as in \eqref{bexpand2}, which is identical to the decomposition of \eqref{ldecompb}. 
\begingroup
\renewcommand*{\arraystretch}{1.25}
\begin{align*}
&\ga_k^0(t)= c_k^1 e^{-rt} \,,\quad  \ga^3_{k-1}(t)=  c_{k-1}^2 e^{-rt}\,, \quad \ga_{k-4}^{1234}(t)= c_{k-4}^3 e^{-rt}\,, \\
\end{align*}
\begin{align*}
\begin{pmatrix} 
\ga_{k-1}^1\\
\ga_{k-1}^{2} \\
 \ga_{k-1}^{4} 
\end{pmatrix}&=
c_{k-1}^4\begin{pmatrix}-\frac{(2h+1)r}{p}-p t\\ i r t\\ q t\end{pmatrix} e^{-rt} + c_{k-1}^5 \begin{pmatrix} q\\ 0\\ p\end{pmatrix}e^{-rt}
+ c_{k-1}^6 \begin{pmatrix} i\,r\\ p \\ 0\end{pmatrix}e^{-rt},\\
\end{align*}
\begin{align*}
\begin{pmatrix} 
 \ga_{k-3}^{134} \\
 \ga_{k-3}^{123}\\
 \ga_{k-3}^{234} \\
 \ga_{k-3}^{124}
\end{pmatrix}&=
c_{k-3}^7\begin{pmatrix}-2 (h+1)^2 q r + q^3 t \\  i q^2+ 2i (h+1)(h+2)r^2-i  q^2 r t\\ 0\\  p q^2 t\end{pmatrix} e^{-rt} 
+ c_{k-3}^8\begin{pmatrix}-iq + i q r t \\ - 2(h+2)r + q^2 t\\ p q t\\ 0\end{pmatrix} e^{-rt} \\
&\qquad
+ c_{k-3}^9\begin{pmatrix}q \\ -i r\\ 0 \\ p\end{pmatrix} e^{-rt} 
+ c_{k-3}^{10} \begin{pmatrix} ir \\q\\ p\\ 0\end{pmatrix}e^{-rt},\\
\end{align*}
\begin{align*}
\begin{pmatrix} 
\ga_{k-2}^{13}\\ 
\ga_{k-2}^{24}\\  
\ga_{k-2}^{14}\\ 
\ga_{k-2}^{23}\\  
\ga_{k-2}^{34}\\ 
\ga_{k-2}^{12}
\end{pmatrix}&=
c_{k-2}^{11}\begin{pmatrix}-\frac{(2h+1)r}{p}  - p\,t \\ \frac{(2h+1)r}{p}  - pt\\ -i r t \\ i r t\\ -\frac{h}{h+1}qt\\ q t \end{pmatrix}e^{-rt}
+c_{k-2}^{12}\begin{pmatrix}\frac{q}{h+1} \\ q\\ 0 \\ 0 \\0 \\ p \end{pmatrix}e^{-rt}
+c_{k-2}^{13}\begin{pmatrix}-q\\0\\0\\0\\p\\0\end{pmatrix}e^{-rt}
+c_{k-2}^{14}\begin{pmatrix}ir \\0\\0\\p\\0\\0\end{pmatrix}e^{-rt}
+c_{k-2}^{15}\begin{pmatrix}0\\-ir\\p\\0\\0\\0\end{pmatrix}e^{-rt}\\
&\qquad+c_{k-2}^{16}\begin{pmatrix}
4h(h+1)(2h+1)p + \frac{2h (4h^2 +6h +3) q^2}{p} + 4h(h+1)pr t - p q^2 t^2\\
\frac{2(h+1)(p^2 + (4h+1)r^2)}{p}-4(h+1)prt + p q^2 t^2 \\ 
-2i(p^2+ (2h+1)r^2)t -  i q^2 r t^2\\ 
-2i(q^2+ 2 h (h+1) r^2) t +  i q^2 r t^2\\ 
4h^2 q r t -\frac{h}{h+1} q^3t^2\\
q^3 t^2 \end{pmatrix}e^{-rt}.
\end{align*}  
\endgroup      
For injectivity, we study the kernel of $\Psi_{x, \xi=pw_2+qw_3}$ which imposes the following conditions:
\begin{align*}
b_1(x, -i\, w_1 \partial_t + p w_2 + qw_3)\phi_k(t)\big|_{t=0}\,=0
&\Longrightarrow  \left\{\begin{aligned}
&\ga^{0}(0) =0, \ga^{2}(0) =0, \,\ga^{3}(0) =0, \,\ga^{4}(0) =0,\\
& \ga^{234}(0) =0, \,  \ga^{23}(0) =0, \,\ga^{24}(0) =0, \ga^{34}(0) =0, 
\end{aligned}
\right.\\
b_2(x, -i\, w_1\partial_t + p w_2 + q w_3)\phi_k(t)\big|_{t=0}\,=0
&\Longrightarrow\left\{
\begin{aligned}
&\pa_t\ga^{123}(0)=0,\, \pa_t\ga^{1234}(0)=0,\\
&i\pa_t \ga^{1}(0) - p \ga^2(0) -  q \ga^3(0) =0,\\
&i \pa_t \ga^{124}(0) + q \ga^{234}(0) =0,\\
&i \pa_t \ga^{134}(0) -p \ga^{234}(0)- \tfrac{q}{h+2} \ga^{123}(0) =0, \\
&i\pa_t \ga^{13}(0) - p \ga^{23}(0) =0,\\
&i\pa_t \ga^{12}(0) + q \ga^{23}(0) =0,\\
&i \pa_t \ga^{14}(0) -p \ga^{24}(0)- q \ga^{34}(0) + \tfrac{q}{h+1}\ga^{12}(0) =0.\\
\end{aligned}
\right. 
\end{align*}
It can then be checked that imposing the above 16 boundary condition equations on the 16 $\ga$'s at $t=0$ requires that all 16 constants $c^{l}$, for $l=1,\ldots, 16$, in the general $\mathbb{R_+}$-bounded solutions are identically zero.  Therefore, $\Psi_{x, \xi=pw_2+qw_3}$ is injective when both $p\neq0$ and $q\neq0$.

\subsection{Proposition \ref{4thE}}\label{AppC2}

Using the calculation of the principal symbol of $\Delta_{++}$ in Appendix \ref{AppB}, the general $\mathbb{R}_+$-bounded solution $\phi_n(t) \in \mathcal{M}^{+}_{x, \xi = pw_2+qw_3}$ for  $\sigma_{\Delta_{++}} ( -i \, w_1 \partial_t + p \,w_2 + q\, w_3)\, \phi_n(t)=0$ with both $p\neq 0$ and $q\neq0$ can be solved and expressed in terms of the 10 $\ga(t)$'s of \eqref{bexpand4}, which is identical to the decomposition of \eqref{ldecompb} when $k=n$.
\begingroup
\renewcommand*{\arraystretch}{1.2}
\begin{align*}
\ga_{n-4}^{1234}= (c_{n-4}^{1}+c_{n-4}^{2}t) e^{-rt},
\end{align*}
\begin{align*}
\begin{pmatrix} 
 \ga_{n-3}^{134} \\
\ga_{n-3}^{123}\\
 \ga_{n-3}^{234}\\
 \ga_{n-3}^{124}
\end{pmatrix}&= c_{n-3}^3\begin{pmatrix}1 \\ 0 \\ 0 \\ 0 \end{pmatrix}e^{-rt}
+c_{n-3}^4\begin{pmatrix}0 \\ 1 \\ 0 \\ 0 \end{pmatrix}e^{-rt}
+c_{n-3}^5\begin{pmatrix}0 \\ 0 \\ 1 \\ 0 \end{pmatrix}e^{-rt}
+c_{n-3}^6\begin{pmatrix}0 \\ 0 \\ 0 \\ 1 \end{pmatrix}e^{-rt}
+c_{n-3}^7\begin{pmatrix}i r t \\ qt\\p t\\0 \end{pmatrix}e^{-rt}\\
&\quad
+c_{n-3}^8\begin{pmatrix}qt \\ -i r t\\0\\pt \end{pmatrix}e^{-rt}
+c_{n-3}^{9}\begin{pmatrix}-2iq t+irqt^2\\-\frac{16r}{3}t+q^2t^2\\pqt^2\\0\end{pmatrix}e^{-rt}
+c_{n-3}^{10}\begin{pmatrix} \frac{4}{3}rt+q^2t^2\\ 2iq t - ir qt^2\\0\\ pqt^2\end{pmatrix}e^{-rt},\\
%\end{align*}
%
%\begin{align*}
\begin{pmatrix} 
 \ga_{n-2}^{13}\\ 
 \ga_{n-2}^{24}\\
 \ga_{n-2}^{14}\\ 
 \ga_{n-2}^{23}\\ 
 \ga_{n-2}^{12}
\end{pmatrix}&=
(c_{n-2}^{11}+c_{n-2}^{12}t)\begin{pmatrix}-1 \\ 1\\ 0\\0\\0 \end{pmatrix}e^{-rt} 
+c_{n-2}^{13}\begin{pmatrix}-ir \\ 0\\ p\\0\\0 \end{pmatrix}e^{-rt} 
+c_{n-2}^{14}\begin{pmatrix}i-irt \\ 0\\ pt\\0\\0 \end{pmatrix}e^{-rt} \\
&\,
+c_{n-2}^{15}\begin{pmatrix}ir \\ 0\\ 0\\p\\0 \end{pmatrix}e^{-rt} 
+c_{n-2}^{16}\begin{pmatrix}-i+ irt \\ 0\\ 0\\pt\\0 \end{pmatrix}e^{-rt}
+c_{n-2}^{17}\begin{pmatrix}2q \\ 0\\ 0\\0\\p \end{pmatrix}e^{-rt} 
+c_{n-2}^{18}\begin{pmatrix}-2r+2q^2t \\ 0\\ 0\\0\\pqt \end{pmatrix}e^{-rt}  \\
&\, 
+c_{n-2}^{19}\begin{pmatrix}-\frac{6p^2+10r^2}{3p} - pq^2t^2 \\ -pq^2 t^2\\-irq^2 t^2\\ irq^2 t^2\\ q^3 t^2 \end{pmatrix}e^{-rt}
+c_{n-2}^{20}\begin{pmatrix}
-16pr-\frac{2q^2(2p^2 + 5q^2)}{p} -3pq^2rt^2 - pq^4t^3\\
-3pq^2rt^2 - pq^4t^3\\
-3ip^2q^2t^2- i r q^4 t^3\\
3ip^2q^2t^2+ i r q^4 t^3\\
q^5t^3
\end{pmatrix}e^{-rt},
\end{align*}
\endgroup
where all twenty primitive forms $c^l$ for $l=1, \ldots, 20$ are constant  forms without components in $\{w_1, w_2, w_3, w_4\}$ and $r=\sqrt{p^2+q^2}>0$.

For injectivity, the image of the $\Psi_{x, \xi=pw_2+qw_3}$ map vanishes imposes the following conditions: 
\begin{align*}
b_1(-i\, w_1\partial_t + p w_2 + q w_3)\phi_n(t)\big|_{t=0}\,=0%&\Longrightarrow \phi^2_n(0)=0,\\
&\Longrightarrow \ga^{234}(0)=0,\ \ga^{23}(0)=0,\ \ga^{24}(0)=0, \\
b_2(-i\, w_1\partial_t + p w_2 + q w_3)\phi_n(t)\big|_{t=0}\,=0
&\Longrightarrow \left\{\begin{aligned}
&\pa_t\ga^{234}(0) - ip \ga^{134}(0) + iq\ga^{124} =0,\\
&\pa_t \ga^{23}(0) -ip \ga^{13}(0) + 2iq\ga^{12}=0,\\
&\pa_t \ga^{24}(0) - ip \ga^{14}(0) =0,
\end{aligned}
\right.
\\
%\end{align*}
%\begin{align*}
b_3(-i\, w_1\partial_t + p w_2 + q w_3)\phi_n(t)\big|_{t=0}\,=0
&\Longrightarrow \left\{
\begin{aligned}
&\pa_t\ga^{1234}(0)=0,\ \pa_t\ga^{123}=0\\
&i\pa_t\ga^{134}(0)- p \ga^{234}(0)- \tfrac{q}{2} \ga^{123}(0)  =0,\\
&i\pa_t\ga^{124}(0)+q\ga^{234}(0)=0,\\
&i\pa_t\ga^{13}(0)-p\ga^{23}(0)=0,\\
&i\pa_t\ga^{12}(0)+q\ga^{23}(0)=0,\\
&i\pa_t\ga^{14}(0)-p\ga^{24}(0)+q\ga^{12}=0,
\end{aligned}
\right.
\end{align*}
\begin{align*}
&b_4(-i\, w_1\partial_t + p w_2 + q w_3)\phi_n(t)\big|_{t=0}\,=0
\Longrightarrow \\
&\left\{
\begin{aligned}
&\pa_t(\pa_t^2 - r^2)\ga^{1234}\big|_{t=0}=0,\\
&\left\{i(\pa_t^3 -(p^2+\tfrac{3q^2}{4})\pa_t)\ga^{134} -\tfrac{q}{4}(3\pa_t^2-(3p^2+2q^2))\ga^{123} -p(\pa_t^2 -(p^2+\tfrac{3q^2}{4}))\ga^{234}
\right\}\big|_{t=0}=0,\\
&\left\{i(\pa_t^3 -(p^2+2q^2))\ga^{123}-q\pa_t^2\ga^{134}  -ipq\pa_t\ga^{234} 
\right\}\big|_{t=0}=0,\\
&\left\{i(\pa_t^3 -r^2\pa_t)\ga^{124}+q(\pa_t^2-2p^2 -q^2)\ga^{234}  +ipq\pa_t\ga^{134} -pq^2\ga^{123} 
\right\}\big|_{t=0}=0,\\
&\left\{i(\pa_t^3 -r^2\pa_t) \ga^{13} -p(\pa_t^2 - r^2)\ga^{23}
\right\}\big|_{t=0}=0,\\
&\left\{i(\pa_t^3 -(p^2 +\tfrac{q^2}{2})\pa_t)\ga^{14} -p(\pa_t^2 -p^2 -\tfrac{q^2}{2})\ga^{24} + \tfrac{3}{2}q(\pa_t^2-p^2-\tfrac{q^2}{3})\ga^{12} - \tfrac{i}{2} q^2\pa_t^2\ga^{23} - \tfrac{1}{2} pq^2\ga^{13}
\right\}\big|_{t=0}=0,\\
&\left\{i(\pa_t^3 -(p^2 +3q^2)\pa_t)\ga^{12} +q(\pa_t^2 -2r^2)\ga^{23} + q\pa_t^2\ga^{14} +ipq\pa_t\ga^{13} +ipq\pa_t\ga^{24}
\right\}\big|_{t=0}=0.\\
\end{aligned}
\right.
\end{align*}
It can then be checked that the above 20 boundary condition equations are only satisfied if the twenty primitive constants, $c^I$ for $I=1, \dots, 20$, of the general $\mathbb{R_+}$-bounded solutions all vanish, thus proving that the map $\Psi_{x, \xi=pw_2+qw_3}$ is injective when both $p\neq0$ and $q\neq0$.

%%%%%%%%%%%%%%%%%%%%%%%%%%%%%%%%%%
\begin{bibdiv}
\begin{biblist}
\bib{A}{book}{
   author={Agranovich, M. S.},
   title={Sobolev spaces, their generalizations and elliptic problems in
   smooth and Lipschitz domains},
   series={Springer Monographs in Mathematics},
   note={Revised translation of the 2013 Russian original},
   publisher={Springer, Cham},
   date={2015},
   pages={xiv+331},
   }

\bib{AB}{article}{
   author={Atiyah, M. F.},
   author={Bott, R.},
   title={A Lefschetz fixed point formula for elliptic complexes. I},
   journal={Ann. of Math. (2)},
   volume={86},
   date={1967},
   pages={374--407},
}

\bib{BT}{book}{
author={Bott, R.},
   author={Tu, L. W.},
   title={Differential forms in algebraic topology},
   series={Graduate Texts in Mathematics},
   volume={82},
   publisher={Springer-Verlag, New York-Berlin},
   date={1982},
}

\bib{Eastwood}{article}{
author={Bryant, R. L.},
author={Eastwood, M. G.},
author={Gover, A. R.},
author={Neusser, K.}, 
title={Some differential complexes within and beyond parabolic geometry}, note={arXiv:1112.2142},
}

\bib{E1}{article}{
author={Eastwood, M. G.},
title={Extensions of the coeffective complex},
journal={Illinois J. Math.},
volume={57},
date={2013},
number={2},
pages={373--381},
}

\bib{Eliashberg}{article}{
author={Eliashberg, Y.},
title={A few remarks about symplectic filling},
journal={Geom. Topol.},
volume={8},
date={2004},
pages={277-293},
}

\bib{EGH}{article}{
author={Eliashberg, Y.},
   author={Givental, A.},
   author={Hofer, H.},
   title={Introduction to symplectic field theory},
   note={GAFA 2000 (Tel Aviv, 1999)},
   journal={Geom. Funct. Anal.},
   date={2000},
   number={Special Volume},
   pages={560--673},
}

\bib{Etnyre}{article}{
   author={Etnyre, J. B.},
   title={On symplectic fillings},
   journal={Algebr. Geom. Topol.},
   volume={4},
   date={2004},
   pages={73--80},
}

\bib{Fei}{article}{
author={Fei, T.},
author={Phong, D. H.},
author={Picard, S.},
author={Zhang, X.}, 
title={Geometric flows for the Type IIA string}, 
note={arXiv:2011.03662},
}

 \bib{F}{article}{
   author={Friedrichs, K. O.},
   title={Differential forms on Riemannian manifolds},
   journal={Comm. Pure Appl. Math.},
   volume={8},
   date={1955},
   pages={551--590},
}

\bib{G}{book}{
   author={Gilkey, P. B.},
   title={Invariance theory, the heat equation, and the Atiyah-Singer index
   theorem},
   series={Mathematics Lecture Series},
   volume={11},
   publisher={Publish or Perish Inc.},
   place={Wilmington, DE},
   date={1984},
   pages={viii+349},
   }

\bib{H3}{book}{
   author={H\"ormander, L.},
   title={The analysis of linear partial differential operators. III},
   series={Classics in Mathematics},
   note={Pseudo-differential operators;
   Reprint of the 1994 edition},
   publisher={Springer},
   place={Berlin},
   date={2007},
   pages={viii+525},

}

\bib{M1}{article}{
   author={Morrey, C. B., Jr.},
   title={A variational method in the theory of harmonic integrals. II},
   journal={Amer. J. Math.},
   volume={78},
   date={1956},
   pages={137--170},  
}
 
 \bib{M2}{book}{
   author={Morrey, C. B., Jr.},
   title={Multiple integrals in the calculus of variations},
   series={Classics in Mathematics},
  % note={Reprint of the 1966 edition [MR0202511]},
   publisher={Springer-Verlag},
   place={Berlin},
   date={2008},
   pages={x+506},
   
}

\bib{SM}{book}{
   author={Schechter, M.},
   title={Modern methods in partial differential equations},
   %note={An introduction},
   publisher={McGraw-Hill International Book Co., New
   York-Bogot\'a-Auckland},
   date={1977},
   pages={xiv+245},

}

\bib{S}{book}{
   author={Schwarz, G.},
   title={Hodge decomposition - a method for solving boundary value
   problems},
   series={Lecture Notes in Mathematics},
   volume={1607},
   publisher={Springer-Verlag},
   place={Berlin},
   date={1995},
   pages={viii+155},

}

\bib{Smith}{article}{
author={Smith, R. T.}, 
title={Examples of elliptic complexes},  
journal={Bull. Amer. Math. Soc.}, volume={82}, date={1976}, number={2}, pages={297--299},
}

\bib{T}{book}{
   author={Taylor, M. E.},
   title={Partial differential equations I. Basic theory},
   series={Applied Mathematical Sciences},
   volume={115},
   edition={2},
   publisher={Springer},
   place={New York},
   date={2011},
}

\bib{Tomasiello}{article}{
   author={Tomasiello, A.},
   title={Reformulating supersymmetry with a generalized Dolbeault operator},
   journal={J. High Energy Phys.},
   date={2008},
   number={2},
   pages={010, 25},
}

\bib{TTY}{article}{
author={Tsai, C.-J.},
   author={Tseng, L.-S.},
   author={Yau, S.-T.},
   title={Cohomology and Hodge theory on symplectic manifolds: III},
   journal={J. Differential Geom.},
   volume={103},
   date={2016},
   number={1},
   pages={83--143},
}

\bib{TTY2}{article}{
author={Tsai, C.-J.},
   author={Tseng, L.-S.},
   author={Yau, S.-T.},
   title={Filtered cohomologies and Mayer-Vietoris sequence},
    journal={in preparation},	
}

\bib{TY1}{article}{
   author={Tseng, L.-S.},
   author={Yau, S.-T.},
   title={Cohomology and Hodge theory on symplectic manifolds: I},
   journal={J. Differential Geom.},
   volume={91},
   date={2012},
   number={3},
   pages={383--416},

}

\bib{TY2}{article}{
   author={Tseng, L.-S.},
   author={Yau, S.-T.},
   title={Cohomology and Hodge theory on symplectic manifolds: II},
   journal={J. Differential Geom.},
   volume={91},
   date={2012},
   number={3},
   pages={417--443},

}

\bib{TY3}{article}{
   author={Tseng, L.-S.},
   author={Yau, S.-T.},
   title={Generalized cohomologies and supersymmetry},
   journal={Comm. Math. Phys.},
   volume={326},
   date={2014},
   number={3},
   pages={875--885},
}

\end{biblist}
\end{bibdiv}

\end{document}